\newcommand\myiso{\stackrel{\mathclap{\normalfont\mbox{\small $p$}}}{-}}
\definecolor{mylinkcolor}{rgb}{0.8,0,0}
\definecolor{myurlcolor}{rgb}{0,0,0.8}
\definecolor{mycitecolor}{rgb}{0,0,0.8}
\DeclareSymbolFont{cyrletters}{OT2}{wncyr}{m}{n}
\DeclareMathSymbol{\Sha}{\mathalpha}{cyrletters}{"58}
\newtheorem{defn}{Definition}[section]
\newtheorem{definition}[defn]{Definition}
\newtheorem{corollary}[defn]{Corollary}
\newtheorem{lemma}[defn]{Lemma}
\newtheorem{thm}[defn]{Theorem}
\newtheorem{theorem}[defn]{Theorem}
\newtheorem{proposition}[defn]{Proposition}
\theoremstyle{definition}
\newtheorem*{ack}{Acknowledgements}
\newtheorem{remark}[defn]{Remark}
\newtheorem{example}[defn]{Example}
\newcommand{\QQ}{\mathbb Q}
\newcommand{\ZZ}{\mathbb Z}
\newcommand{\Z}{\mathbb Z}
\newcommand{\Q}{\mathbb{Q}}
\newcommand{\PP}{\mathbb P}
\newcommand{\Rats}{\Q}
\newcommand{\arrow}{\longrightarrow}
\newcommand{\Gal}{\operatorname{Gal}}
\newcommand{\GQ}{\Gal(\overline{\Rats}/\Rats)}
\newcommand{\GL}{\operatorname{GL}}
\newcommand{\tor}{\mathrm{tors}}
\begin{document}

\title[Isogeny graphs]{Infinite Families of Isogeny-Torsion Graphs}

\author{Garen Chiloyan}
\address{Department of Mathematics, University of Connecticut, Storrs, CT 06269, USA}
\email{garen.chiloyan@uconn.edu}
\urladdr{\url{https://sites.google.com/view/garenmath/home}}

\maketitle

\begin{abstract}
		Let $\mathcal{E}$ be a $\Q$-isogeny class of elliptic curves defined over $\Q$. The isogeny graph associated to $\mathcal{E}$ is a graph which has a vertex for each element of $\mathcal{E}$ and an edge for each $\Q$-isogeny of prime degree that maps one element of $\mathcal{E}$ to another element of $\mathcal{E}$, with the degree recorded as a label of the edge. The isogeny-torsion graph associated to $\mathcal{E}$ is the isogeny graph associated to $\mathcal{E}$ where, in addition, we label each vertex with the abstract group structure of the torsion subgroup over $\Q$ of the corresponding elliptic curve. The main result of the article is a determination of which isogeny-torsion graphs associated to $\Q$-isogeny classes of elliptic curves defined over $\Q$ correspond to infinitely many \textit{j}-invariants.  
	\end{abstract}

\section{Introduction}

Let $E/\QQ$ be an elliptic curve. It is well known that $E$ has a following Weierstrass model of the form
$$y^{2}z + a_{1}xyz + a_{3}yz^{2} = x^{3} + a_{2}x^{2}z + a_{4}xz^{2} + a_{6}z^{3}.$$
The Weierstrass model is usually dehomogenized. Moreover, $E$ has the structure of an abelian group with group identity $\mathcal{O} = [0:1:0]$. By the Mordell--Weil theorem, the set of points on $E$ defined over $\QQ$, denoted $E(\QQ)$ has the structure of a finitely generated abelian group. Thus, the set of points on $E$ defined over $\Q$ of finite order, denoted $E(\QQ)_{\text{tors}}$ is a finite group. By Mazur's theorem, $E(\QQ)_{\texttt{tors}}$ is isomorphic to one of fifteen groups (see Theorem \ref{thm-mazur}). Moreover, these fifteen groups occur infinitely often. Let $E'/\QQ$ be an elliptic curve. An isogeny mapping $E$ to $E'$ is a rational morphism $\phi \colon E \to E'$ such that $\phi$ maps the identity of $E$ to the identity of $E'$. If there is a non-constant isogeny defined over $\QQ$, mapping $E$ to $E'$, we say that $E$ is $\QQ$-isogenous to $E'$. This relation is an equivalence relation and the set of elliptic curves defined over $\QQ$ that are $\QQ$-isogenous to $E$ is called the $\QQ$-isogeny class of $E$.

Non-constant isogenies have finite kernels. We are particularly interested in non-constant isogenies with cyclic kernels. The isogeny graph associated to the $\QQ$-isogeny class of $E$ is a visual description of the $\QQ$-isogeny class of $E$. Denote the $\QQ$-isogeny class of $E$ by $\mathcal{E}$. The isogeny graph associated to $\mathcal{E}$ is a graph which has a vertex for each element of $\mathcal{E}$ and an edge for each $\Q$-isogeny of prime degree that maps one element of $\mathcal{E}$ to another element of $\mathcal{E}$, with the degree recorded as a label of the edge. The isogeny-torsion graph associated to $\mathcal{E}$ is the isogeny graph associated to $\mathcal{E}$ where, in addition, we label each vertex with the abstract group structure of the torsion subgroup over $\Q$ of the corresponding elliptic curve.

\begin{example}\label{T4 example}

There are four elliptic curves in the $\Q$-isogeny class with LMFDB label \texttt{17.a} which we will denote $E$, $E'$, $E''$, and $E'''$. The isogeny graph associated to \texttt{17.a} is on the left and the isogeny-torsion graph associated to \texttt{17.a} is on the right.

\begin{center} $\begin{tikzcd}
     & E'''                                 &      \\
     & E \arrow[u, "2", no head] \arrow[ld, "2"', no head] \arrow[rd, "2", no head] &      \\
E' &                                      & E''
\end{tikzcd} \hspace{10mm} \begin{tikzcd}
     & \Z / 2 \Z                                 &      \\
     & \Z / 2 \Z \times \Z / 2 \Z \arrow[u, "2", no head] \arrow[ld, "2"', no head] \arrow[rd, "2", no head] &      \\
\Z / 4 \Z &                                      & \Z / 4 \Z
\end{tikzcd}$
\end{center}

\end{example}

The classification of isogeny graphs associated to $\Q$-isogeny classes of elliptic curves over $\Q$ was probably known in the 1980's as it follows directly from the classification of non-cuspidal $\Q$-rational points on the modular curve $\operatorname{X}_{0}(N)$ for positive integers $N$. Nonetheless, a proof can be found in section 6 of \cite{gcal-r}.

\begin{thm}
	There are $26$ isomorphism types of isogeny graphs that are associated to elliptic curves defined over $\Q$. More precisely, there are $16$ types of (linear) $L_k$ graphs of $k = 1$-$4$ vertices, $3$ types of (nonlinear two-primary torsion) $T_k$ graphs of $k = 4$, $6$, or $8$ vertices, $6$ types of (rectangular) $R_k$ graphs of $k = 4$ or $6$ vertices, and $1$ (special) $S$ graph.
\end{thm}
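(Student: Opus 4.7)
The plan is to reduce the statement to the classification of non-cuspidal rational points on the modular curves $\operatorname{X}_0(N)$, and then do a finite case analysis. Recall that every vertex in the isogeny graph corresponds to a curve $\Q$-isogenous to a fixed $E$, equivalently to a $\Q$-rational cyclic subgroup of $E$, and every edge of prime degree $\ell$ from a vertex $E'$ corresponds to a $\Q$-rational subgroup of order $\ell$ in $E'$, i.e.\ to a non-cuspidal $\Q$-rational point on $\operatorname{X}_0(\ell)$. By the combined theorems of Mazur, Kenku, and others, the only primes $\ell$ that arise as the degree of a cyclic $\Q$-isogeny of an elliptic curve over $\Q$ are $\ell\in\{2,3,5,7,11,13,17,19,37,43,67,163\}$, and the full list of integers $N$ such that $\operatorname{X}_0(N)$ has a non-cuspidal non-CM $\Q$-point together with the CM exceptions is known explicitly. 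This already bounds the size of any isogeny class by $8$, so the graphs in question are finite.

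Next I would reorganize the data by the set $S(E)$ of primes $\ell$ for which $E$ admits a $\Q$-rational cyclic $\ell$-isogeny. Kenku's theorem tells us $|S(E)|\le 2$, and the number of independent $\Q$-rational subgroups of order $\ell$ at a given vertex is at most $3$ (for $\ell=2$, when $E[2]\subset E(\Q)$) and otherwise at most $2$. This essentially determines the local shape of the graph at each vertex. I would then split into three mutually exclusive cases:

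\textbf{Case 1 (one prime, $\ell\neq 2$).} The vertices form a path whose edges are all of degree $\ell$, of length at most $\log_\ell N$ for the largest $N$ with $\operatorname{X}_0(N)(\Q)$ non-cuspidal a power of $\ell$; running through $\ell\in\{3,5,7,11,13,17,19,37,43,67,163\}$ and checking $\operatorname{X}_0(\ell^2)(\Q)$, $\operatorname{X}_0(\ell^3)(\Q)$ produces the linear graphs $L_k$ with $\ell$-labels. \textbf{Case 2 (one prime $\ell=2$).} Here the possibilities come from $\operatorname{X}_0(2^k)(\Q)$ for $k\le 4$ and from configurations where $E[2]\subset E(\Q)$ so that an $E'$ in the class has three distinct $2$-isogenies: this yields the remaining linear $2$-chains $L_k$ together with the three nonlinear graphs $T_4, T_6, T_8$. \textbf{Case 3 (two primes $\ell_1\neq\ell_2$).} Then $E$ admits a $\Q$-rational cyclic subgroup of order $\ell_1\ell_2$, and the graph is essentially a product of two prime-power chains, giving the rectangular graphs $R_k$; the final ``special'' graph $S$ arises from the exceptional $N=15$ configuration, where the existence of an extra degree-$2$ isogeny at one vertex adds a side to the rectangle.

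In each case I would simply tabulate the $N$'s allowed by the classification of $\operatorname{X}_0(N)(\Q)$, compute how many cyclic $\Q$-subgroups sit inside $E$ at each vertex (this is equivalent to specifying which modular curves of higher level $\operatorname{X}_0(M)$ with $\ell\mid M$ contribute a point lying above the point of $\operatorname{X}_0(\ell)$ given by $E$), and draw the resulting graph. The main obstacle is the bookkeeping in Case 3 and in the $2$-primary Case 2: one has to verify, for each local shape forced by the Mazur--Kenku list, that it actually occurs globally (hence the use of explicit LMFDB isogeny classes such as \texttt{17.a} in the examples), and that no further edges are forced, e.g.\ ruling out ``diagonal'' isogenies of composite prime degree that would collapse two graphs. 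Once this verification is complete, counting the distinct isomorphism types in the three cases yields exactly $16+3+6+1=26$.
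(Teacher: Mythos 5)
Your overall strategy is the same one the paper relies on: the paper deduces this theorem from Theorem \ref{thm-kenku} together with the classification of non-cuspidal $\Q$-rational points on $\operatorname{X}_0(N)$ (Theorem \ref{thm-ratnoncusps}), and defers the detailed case analysis to Section 6 of \cite{gcal-r}. So the skeleton of your plan is fine, but two concrete assertions in your sketch are wrong, and they would derail exactly the bookkeeping on which the count $16+3+6+1$ depends.

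First, your local count of $2$-isogenies is incorrect. A $\Q$-rational subgroup of order $2$ is generated by a rational $2$-torsion point, so a vertex carries either one or three rational $2$-isogenies, never exactly two; the clause ``otherwise at most $2$'' fails precisely for $\ell=2$ when $E[2]\not\subset E(\Q)$. This is the key fact that eliminates linear $2$-chains of length greater than $2$: in any path $E'-E-E''$ of $2$-isogenies the middle curve has two independent rational order-$2$ subgroups, hence full rational two-torsion and a third edge. Consequently $\operatorname{X}_0(4)$, $\operatorname{X}_0(8)$, $\operatorname{X}_0(16)$ do not contribute linear graphs $L_3(4)$, $L_4(8)$, etc.; all $2$-power structure beyond a single $2$-isogeny is absorbed into the nonlinear graphs $T_4$, $T_6$, $T_8$, and the only linear $2$-graph is $L_2(2)$. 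Your Case 2, as described (``the remaining linear $2$-chains $L_k$'' coming from $\operatorname{X}_0(2^k)$), would therefore produce a wrong list, and the total of $16$ linear types (which is $1 + 12 + 2 + 1$, for $L_1$, the twelve $L_2(p)$, $L_3(9)$, $L_3(25)$, and $L_4$) would not come out.

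Second, the special graph $S$ is misattributed. It has nothing to do with $N=15$: by item (4) of Theorem \ref{thm-kenku}, $C_5(E)=2$ forces $C(E)\leq 4$, so an isogeny class with a cyclic $15$-isogeny can never acquire an extra $2$-isogeny, and the rectangle $R_4(15)$ cannot ``grow a side.'' The $S$ graph arises from classes with $C_2(E)=4$ and $C_3(E)=2$, i.e.\ a curve with full rational two-torsion and a rational $3$-isogeny; it is the $8$-vertex graph obtained by crossing the $T_4$ claw with a $3$-isogeny edge (this is how it appears in the paper, e.g.\ in the $C_2(E)=4$, $C(E)=8$ case of the proof of Lemma \ref{No Points of Order 4} and in Section \ref{S Graphs Section}). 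As written, your Case 3 would both posit an impossible configuration and miss the actual source of $S$, so the concluding tally is not justified by the plan as it stands.
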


In the case of a linear graph of $L_{2}$ or $L_{3}$ type or in the case of a rectangular graph of $R_{4}$ type, the degree of the maximal finite, cyclic $\Q$-isogeny of the isogeny graph is written in parentheses to distinguish it from other isogeny-torsion graphs of the same size and shape, but with different isogeny degree. For example, there are $L_{2}(2)$ graphs; graphs of $L_{2}$ type generated by an isogeny of degree $2$ and there are $L_{2}(3)$ graphs; isogeny graphs of $L_{2}$ type generated by an isogeny of degree $3$. Relying only on the size and shape of isogeny graphs of $L_{2}$ type is not enough to distinguish $L_{2}(2)$ and $L_{2}(3)$ isogeny graphs. The main theorem in \cite{gcal-r} was the classification of isogeny-torsion graphs associated to $\Q$-isogeny classes of elliptic curves over $\Q$.

\begin{thm}[Chiloyan, Lozano-Robledo, \cite{gcal-r}]
There are $52$ isomorphism types of isogeny-torsion graphs that are associated to $\Q$-isogeny classes of elliptic curves defined over $\Q$. In particular, there are $23$ isogeny-torsion graphs of $L_k$ type, $13$ isogeny-torsion graphs of $T_k$ type, $12$ isogeny-torsion graphs of $R_k$ type, and $4$ isogeny-torsion graphs of $S$ type.
\end{thm}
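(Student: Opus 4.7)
The plan is to leverage the existing classification of the $26$ isogeny graph shapes stated just above. For each shape $G$, the problem reduces to determining which assignments of Mazur's $15$ torsion groups to the vertices of $G$ can occur as the torsion labels of some $\Q$-isogeny class whose underlying isogeny graph is $G$. Since Mazur's theorem bounds the torsion at each vertex, the candidate set at each vertex is finite, so the task becomes a finite combinatorial check carried out $26$ times.

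I would proceed shape by shape. The core tool is the behavior of $\Q$-torsion under a $\Q$-rational prime-degree isogeny $\phi\colon E\to E'$: the induced maps on $\Q$-points relate $E(\Q)_{\tor}$ and $E'(\Q)_{\tor}$ via the $\Q$-rational parts of $\ker(\phi)$ and $\ker(\hat{\phi})$, so the two torsion orders can differ only by a factor involving $\deg(\phi)$. Combined with the fact that $E$ admits a $\Q$-rational cyclic subgroup of order $n$ if and only if $E$ gives a noncuspidal $\Q$-rational point on $X_0(n)$, this rules out most torsion assignments. For example, if the isogeny class contains a $\Q$-rational $\ell$-isogeny for $\ell\in\{11,17,19,37,43,67,163\}$, then by Mazur no vertex can carry $\Q$-rational $\ell$-torsion, and the remaining torsion is forced by the small-prime part of the graph. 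I would then assemble the full torsion at each vertex prime by prime, handling the $2$-primary, $3$-primary, $5$-primary, and $7$-primary parts separately since these correspond to essentially independent sub-features of the graph.

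Once a short list of candidate labelings is produced for each shape, realizability is established by exhibiting an explicit $\Q$-isogeny class for each candidate; such examples can be drawn from the LMFDB or parametrized via the modular curves $X_0(N)$ and $X_1(N)$. Adding the counts across the $26$ shapes should give the claimed $23+13+12+4=52$ isogeny-torsion graphs.

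The hardest part will be the two-primary cases, namely the $T_k$ graphs (for $k=4,6,8$), the $R_k$ graphs (for $k=4,6$), and the linear $L_k$ graphs with multiple $2$-edges. Under a $\Q$-rational $2$-isogeny, the $\FF_2$-dimension of $E(\Q)[2]$ can go up by $1$, stay the same, or go down by $1$, depending on whether the dual kernel acquires a $\Q$-rational point; and the global $2$-primary torsion must be simultaneously consistent across every vertex of the graph. Tracking this consistency requires a careful joint enumeration using the rational points on $X_0(2^k)$ and $X_1(2^k)$ and a case analysis of the Galois module structure of $E[4]$ and $E[8]$. Most of the work in the proof lies in this two-primary case analysis.
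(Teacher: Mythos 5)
This theorem is not proved in the present paper at all: it is imported verbatim from \cite{gcal-r}, so the only meaningful comparison is with the proof given there. Your outline --- reduce to the $26$ isogeny graph shapes, bound the possible vertex labels by Mazur's theorem, propagate torsion along $\Q$-rational prime-degree isogenies via the rationality of kernels and dual kernels, realize the surviving configurations by explicit isogeny classes, and concentrate the real work in the two-primary ($T_k$, $R_k$, multi-$2$-edge $L_k$) cases --- is essentially the strategy carried out in \cite{gcal-r}, where the propagation step is formalized in lemmas of exactly the kind quoted in this paper (Lemmas \ref{lem-necessity-for-point-rationality}--\ref{lem-subsequent-rational-pts}). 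The one substantive difference is in how the stubborn two-primary and $S$-type non-examples are eliminated: rational points on $\operatorname{X}_0(2^k)$ and $\operatorname{X}_1(2^k)$ alone do not see the joint torsion behavior of all curves in the class, and \cite{gcal-r} instead invokes the Rouse--Zureick-Brown classification of $2$-adic images \cite{Rouse} (mod-$4$, mod-$8$, mod-$16$ level data, including the curve $\operatorname{X}_{24e}$ for two elusive $S$-type candidates); your closing remark about analyzing the Galois module structure of $E[4]$ and $E[8]$ gestures at this, but as written your plan would need that finer image-level input, not just modular curve points, to close the enumeration.
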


For each of the fifteen torsion subgroups, $G$, there are infinitely many \textit{j}-invariants, that correspond to elliptic curves $E$ defined over $\QQ$ such that $\textit{j}(E) = \textit{j}$ and $E(\QQ)_{\text{tors}} \cong G$. It is natural to ask if there is an analogous result for isogeny-torsion graphs. In other words, for which isogeny-torsion graphs $\mathcal{G}$, do there exist infinitely many \textit{j}-invariants that correspond to elliptic curves $E$ defined over $\QQ$ with $\textit{j}(E) = \textit{j}$ and the isogeny-torsion graph associated to the $\QQ$-isogeny class of $E$ is $\mathcal{G}$?

Let $\textit{j}(\mathcal{G})$ be the set of all \textit{j}-invariants of all elliptic curves $E$ defined over $\Q$ such that the isogeny-torsion graph associated to the $\Q$-isogeny class of $E$ is $\mathcal{G}$. We will say that $\textit{j}(\mathcal{G})$ is the set of \textit{j}-invariants associated to $\mathcal{G}$. The main result of this article is a determination of the isogeny-torsion graphs $\mathcal{G}$ such that $\textit{j}(\mathcal{G})$ is infinite. In the case that $\textit{j}(\mathcal{G})$ is infinite, we will say that $\mathcal{G}$ corresponds to an infinite set of $\textit{j}$-invariants and in the case that $\textit{j}(\mathcal{G})$ is finite, we will say that $\mathcal{G}$ corresponds to a finite set of $\textit{j}$-invariants.

Let $\mathcal{V}_{\mathcal{G}}$ be a vertex of $\mathcal{G}$. Let $\textit{j}(\mathcal{V}_{\mathcal{G}})$ be the set of \textit{j}-invariants of all elliptic curves $E$ defined over $\Q$ such that the isogeny-torsion graph associated to the $\Q$-isogeny class of $E$ is $\mathcal{G}$ and $E$ is represented by the vertex $\mathcal{V}_{\mathcal{G}}$. We will say that $\textit{j}(\mathcal{V}_{\mathcal{G}})$ is the set of \textit{j}-invariants associated to the vertex $\mathcal{V}_{\mathcal{G}}$. By Theorem \ref{thm-kenku}, each isogeny-torsion graph has at most $8$ vertices. Hence, an isogeny-torsion graph corresponds to an infinite set of \textit{j}-invariants if and only if any one of the vertices on the isogeny-torsion graph corresponds to an infinite set of \textit{j}-invariants.

\begin{example}\label{l4 example}

Consider a $\Q$-isogeny class of elliptic curves over $\QQ$ that contains four elliptic curves over $\Q$, $E_{1}$, $E_{2}$, $E_{3}$, and $E_{4}$ such that there is an isogeny $\phi \colon E_{1} \to E_{4}$ defined over $\QQ$, which has a cyclic kernel of order $27$. In other words, the isogeny-torsion graph associated to the $\QQ$-isogeny class is of $L_{4}$ (see the following diagram).

\begin{center}
    \includegraphics[scale=.15]{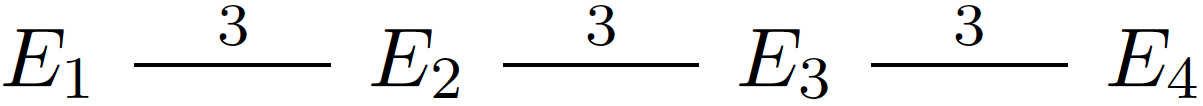}
\end{center}

The \textit{j}-invariants of $E_{1}$ and $E_{4}$ are equal to $-12288000$ and the \textit{j}-invariants of $E_{2}$ and $E_{3}$ are equal to $0$. Hence, there are two \textit{j}-invariants associated to an isogeny-torsion graph of $L_{4}$ type, regardless of torsion configuration. Thus, the two isogeny-torsion graphs of $L_{4}$ type both correspond to a finite set of $\textit{j}$-invariants.
\end{example}

\begin{example}
Let $\mathcal{G}$ be the isogeny-torsion graph in Example \ref{T4 example}.

\begin{center} $\begin{tikzcd}
     & E'''                                 &      \\
     & E \arrow[u, "2", no head] \arrow[ld, "2"', no head] \arrow[rd, "2", no head] &      \\
E' &                                      & E''
\end{tikzcd} \hspace{10mm} \begin{tikzcd}
     & \Z / 2 \Z                                 &      \\
     & \Z / 2 \Z \times \Z / 2 \Z \arrow[u, "2", no head] \arrow[ld, "2"', no head] \arrow[rd, "2", no head] &      \\
\Z / 4 \Z &                                      & \Z / 4 \Z
\end{tikzcd}$
\end{center}

The center vertex of the isogeny-torsion graph represents elliptic curves over $\QQ$ which

\begin{enumerate}
    \item have full two-torsion defined over $\Q$,
    \item do not contain a cyclic $\Q$-rational subgroup of order $4$,
    \item are $\Q$-isogenous to two non-isomorphic elliptic curves over $\Q$ with a point of order $4$ defined over $\Q$.
    \end{enumerate}
    
    Elliptic curves over $\Q$ represented by the center vertex of $\mathcal{G}$ correspond to non-cuspidal $\QQ$-rational points on the modular curve $\operatorname{X}_{24e}$ in the notation of \cite{Rouse}. Moreover, they have \textit{j}-invariant equal to 
    $$J = 2^{8} \cdot \frac{(t^{2}+t+1)^{3} \cdot (t^{2}-t+1)^{3}}{t^{4} \cdot (t^2+1)^{2}}$$
    for some non-zero rational $t$ or $J = 1728$. An argument using Hilbert's Irreducibility Theorem (see Section \ref{HIT} where we will elaborate on this idea further) shows that $\mathcal{G}$ corresponds to an infinite set of \textit{j}-invariants. 

\end{example}

The goal of this article is to prove the following statement:

\begin{thm}\label{main thm}

Let $\mathcal{G}$ be an isogeny-torsion graph associated to the $\Q$-isogeny class of an elliptic curve defined over $\Q$.

Then $\mathcal{G}$ corresponds to a finite set of \textit{j}-invariants if and only if

\begin{itemize}

\item $\mathcal{G}$ is of $L_{2}(p)$ type with isogeny degree $p = 11, 17, 19, 37, 43, 67$, or $163$, or

\item $\mathcal{G}$ is of $L_{4}$ type, or

\item $\mathcal{G}$ is of $R_{4}(pq)$ type with maximal, cyclic, isogeny degree $pq = 14, 15,$ or $21$.

\end{itemize}
\end{thm}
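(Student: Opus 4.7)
The strategy is to attach to each vertex $\mathcal{V}$ of each isogeny-torsion graph $\mathcal{G}$ a modular curve $X_{\mathcal{V}}$ whose non-cuspidal $\Q$-rational points parameterize (up to $\Q$-isomorphism) elliptic curves $E/\Q$ realizing the cyclic $\Q$-isogeny structure and the $\Q$-torsion structure prescribed at $\mathcal{V}$. Depending on the vertex, $X_{\mathcal{V}}$ is either a classical curve $\operatorname{X}_{0}(N)$ (with $N$ the largest cyclic $\Q$-isogeny degree at $\mathcal{V}$) or a finer modular curve of Rouse--Zureick-Brown / Sutherland type, as with the curve $\operatorname{X}_{24e}$ recalled in the excerpt. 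Since by Kenku's theorem each graph has at most $8$ vertices, the set $\textit{j}(\mathcal{G})$ is infinite if and only if for some vertex $\mathcal{V}$ of $\mathcal{G}$ the set $X_{\mathcal{V}}(\Q)$ is infinite and contributes infinitely many specializations whose actual isogeny-torsion graph is exactly $\mathcal{G}$, rather than some strict enlargement of it.

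For the ``only if'' direction, I would verify the three listed families directly using classical results on rational points on modular curves. A vertex of an $L_{2}(p)$ graph with $p\in\{11,17,19,37,43,67,163\}$ maps to $\operatorname{X}_{0}(p)$; these are exactly the non-genus-zero modular curves in Mazur's isogeny theorem, each having only finitely many non-cuspidal $\Q$-rational points (mostly arising from CM). A center vertex of an $L_{4}$ graph maps to $\operatorname{X}_{0}(27)$, a rank-zero elliptic curve whose non-cuspidal rational points give $\textit{j}\in\{0,-12288000\}$ as indicated in Example \ref{l4 example}. A vertex of an $R_{4}(pq)$ graph with $pq\in\{14,15,21\}$ maps to $\operatorname{X}_{0}(pq)$, again a rank-zero genus-$1$ curve in each case. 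Hence each such $\mathcal{G}$ has finitely many $\textit{j}$-invariants. That the other types $L_{2}(p)$, $L_{3}(p^{2})$, etc., whose defining modular curves happen to have few rational points do not appear in the Chiloyan--Lozano-Robledo classification at all already takes care of them implicitly.

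For the ``if'' direction I would show every remaining isogeny-torsion graph $\mathcal{G}$ has infinitely many associated $\textit{j}$-invariants by producing, for each such $\mathcal{G}$, a vertex $\mathcal{V}$ whose modular curve $X_{\mathcal{V}}$ has genus $0$ and admits a rational point; such a curve is $\Q$-isomorphic to $\PP^{1}_{\Q}$, yielding a parametric family $E_{t}/\Q(t)$ realizing the vertex. One then checks that the $\textit{j}$-map $t\mapsto \textit{j}(E_{t})\in\Q(t)$ is non-constant, so that infinitely many specializations $t\in\Q$ produce pairwise non-$\Q$-isomorphic elliptic curves. Such a $\mathcal{V}$ exists in each case because the remaining isogeny-torsion graphs are built from $\Q$-isogeny degrees in the Mazur list $\{1,\ldots,10,12,13,16,18,25\}$, possibly refined by torsion data pinned down by genus-zero components in \cite{Rouse}; the catalogue in \cite{gcal-r} together with those tables supplies the required $j$-line equations.

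The main obstacle is the Hilbert irreducibility step flagged in the excerpt. For a generic rational specialization $t\in\Q$, one must rule out any strict enhancement of the vertex structure---an extra cyclic $\Q$-isogeny of $E_{t}$, an extra $\Q$-rational torsion point, or extra $\Q$-rational isogenies at one of the other vertices of the graph---for otherwise the isogeny-torsion graph of $E_{t}$ would be strictly larger than $\mathcal{G}$. Each such enhancement is detected by the existence of a $\Q$-rational point on some modular curve properly covering $X_{\mathcal{V}}$; these covers cut out proper closed subvarieties of $\PP^{1}_{\Q}$, i.e.\ finite sets of rational specializations, so by Hilbert's Irreducibility Theorem the $t$ avoiding all of them form a Zariski-dense (hence infinite) subset of $\Q$ on which $E_{t}$ realizes exactly $\mathcal{G}$. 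The remaining work is a finite case-by-case verification across the $52$ isogeny-torsion graphs of the classification in \cite{gcal-r}, identifying for each ``infinite'' $\mathcal{G}$ a witness vertex and checking that its enhancement locus is thin.
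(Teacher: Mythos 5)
Your outline does track the paper's overall strategy (finiteness of the three listed families via Theorem \ref{thm-ratnoncusps}, infinitude of the rest via genus-zero modular curves from the Rouse--Zureick-Brown and Sutherland--Zywina tables plus Hilbert irreducibility), but there is a genuine gap at the key step. You assert that the ``enhancement'' loci are cut out by covers which are ``proper closed subvarieties of $\PP^{1}_{\Q}$, i.e.\ finite sets of rational specializations.'' That is false: the enhancement covers are finite surjective morphisms of degree at least $2$, very often between genus-zero curves with infinitely many rational points, so their images in $X_{\mathcal{V}}(\Q)$ are typically infinite. For instance, for $\mathcal{L}_{2}^{1}(5)$ the locus of parameters where the curve acquires an extra rational $2$-isogeny is the image of $\operatorname{X}_{0}(10)(\Q)$ under the degree-$3$ map $\operatorname{X}_{0}(10)\to\operatorname{X}_{0}(5)$, which is infinite. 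What rescues the argument is that such images are \emph{thin} in Serre's sense while $\PP^{1}(\Q)$ is not thin; this is exactly the content of Proposition \ref{HIT proposition} and of the explicit arguments in the paper for the $R_{4}(6)$, $L_{2}(5)$, $L_{2}(3)$, $L_{2}(2)$ and $L_{1}$ cases, where one must actually enumerate the relevant overgroups/covers (the groups $G_{1},\dots,G_{4}$ in the $R_{4}(6)$ case), check that each map has degree at least $2$, and dispose of enhancements landing on finitely-many-point curves separately via Proposition \ref{Finite Graphs Proposition}. As written your justification is internally inconsistent (genuinely finite exceptional sets would not need Hilbert irreducibility at all) and fails precisely in the cases where the theorem is hardest.

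The second gap is that the proposal postpones essentially all of the content: the claim that every one of the remaining $37$ isogeny-torsion graphs admits a vertex whose associated modular curve has genus zero, \emph{and} that points of that curve with exact (not smaller) image realize that precise torsion configuration, is what actually has to be proved, graph by graph. Distinguishing, say, $\mathcal{T}_{6}^{1}$ from $\mathcal{T}_{6}^{2}$, or $\mathcal{T}_{8}^{3}$ from $\mathcal{T}_{8}^{4}$ and $\mathcal{T}_{8}^{5}$, requires exhibiting explicit mod-$8$ and mod-$16$ images (the groups $H_{98e}$, $H_{98h}$, $H_{215c}$, $H_{215l}$, $H_{215k}$, \ldots) and computing the torsion subgroup of every curve in the class via Lemma \ref{lem-necessity-for-point-rationality}; nothing in your plan produces these groups or these verifications. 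Relatedly, the paper obtains many configurations cheaply by quadratic twisting an already-settled one (the $S$, $R_{6}$, $L_{3}(9)$, $L_{3}(25)$, $L_{2}(7)$ families), together with the lemmas controlling which rational torsion survives a twist; this mechanism is absent from your proposal, so each configuration would have to be attacked directly, which is harder than what you sketch. Finally, a small slip in the finiteness direction: in the $L_{4}$ graph it is the outer vertices, not the central ones, that correspond to non-cuspidal rational points of $\operatorname{X}_{0}(27)$ (with $j=-12288000$); the central curves carry a cyclic $9$-isogeny together with an independent $3$-isogeny and have $j=0$. The finiteness conclusion is unaffected, but the map you invoke from the center vertex to $\operatorname{X}_{0}(27)$ does not exist.
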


\subsection{Philosophy and structure of the paper}

The main ideas motivating in this paper is to think about elliptic curve theory, not necessarily from the viewpoint of individual elliptic curves over $\QQ$ but $\QQ$-isogeny classes of elliptic curves defined over $\QQ$ and the groups which generate said $\QQ$-isogeny classes.

Let $E/\QQ$ be an elliptic curve. Then $E(\QQ)_{\text{tors}}$ is isomorphic to one of fifteen groups. The main result in \cite{gcal-r} is the classification of the isogeny-torsion graphs associated to $\QQ$-isogeny classes of elliptic curves defined over $\QQ$. Originally, the authors in \cite{gcal-r} wanted to classify the torsion subgroups of a pair of $\QQ$-isogenous elliptic curves defined over $\QQ$, which was extended to classifying the torsion subgroups of all elliptic curves in a $\QQ$-isogeny class, which was extended to the main result of \cite{gcal-r}.

Consider again the isogeny graph of $L_{4}$ type like in Example \ref{l4 example}. Let $\mathcal{E}$ be a $\QQ$-isogeny class such that the isogeny graph associated to $\mathcal{E}$ is of $L_{4}$ type. By the classification of isogeny graphs, the isogeny graph of $L_{4}$ type is the only isogeny graph with an isogeny of degree $27$. Instead of thinking about the four individual elliptic curves in the isogeny graph, this paper considers the collection of the four elliptic curves in $\mathcal{E}$ simultaneously. Actually, the best way to think about $\mathcal{E}$ is that it is a $\QQ$-isogeny class containing an elliptic curve with a cyclic, $\QQ$-rational subgroup $H$ of order $27$. In other words, $\mathcal{E}$ contains an elliptic curve, $E$ corresponding to a non-cuspidal, $\QQ$-rational point on $\operatorname{X}_{0}(27)$; the modular curve generated by $B_{27} = \left\{\begin{bmatrix} \ast & \ast \\ 0 & \ast \end{bmatrix}\right\} \subseteq \operatorname{GL}(2, \ZZ / 27 \ZZ)$. The group $H$ is represented by the left column, $\left\{\begin{bmatrix} \ast \\ 0 \end{bmatrix} \right\}$ of $B_{27}$. In some sense, the group $H$ generates $\mathcal{E}$ because $H$ contains four distinct subgroups, each generating a unique elliptic curve that is $\QQ$-isogenous to $E$.

On the other hand, a similar but ultimately distinct analysis can be done with the the isogeny-torsion graph in Example \ref{T4 example}. Let $E/\QQ$ be an elliptic curve represented by the center vertex in the isogeny-torsion graph in Example \ref{T4 example}. Then 

\begin{itemize}
\item the $\QQ$-isogeny class of $E$ has four elliptic curves,

\item $E(\QQ)_{\text{tors}} \cong \ZZ / 2 \ZZ \times \ZZ / 2 \ZZ$,

\item $E$ is $\QQ$-isogenous to two non-isomorphic elliptic curves over $\QQ$, each with a point of order $4$ defined over $\QQ$
\end{itemize}
and hence, either the \textit{j}-invariant of $E$ is equal to $1728$ or $E$ is non-CM and corresponds to a non-cuspidal, $\QQ$-rational point on the modular curve $\operatorname{X}_{24e}$ using the notation in \cite{Rouse} and \texttt{4.24.0.7} using LMFDB notation.

Let $N$ be a positive integer such that $\operatorname{X}_{0}(N)$ has finitely many non-cuspidal $\QQ$-rational points. Then $N \in S = \{11$, $14, 15, 17, 19, 21, 27, 37, 43, 67, 163\}$ and the genus of $\operatorname{X}_{0}(N)$ is greater than or equal to $2$. The main result of this paper is obvious: Let $\mathcal{E}$ be a $\QQ$-isogeny class of elliptic curves over $\QQ$ and let $\mathcal{G}$ be its associated isogeny graph. Then $\mathcal{G}$ corresponds to a finite set of \textit{j}-invariants if and only if $\mathcal{E}$ contains an elliptic curve over $\QQ$ that corresponds to a non-cuspdial $\QQ$-rational point on $\operatorname{X}_{0}(N)$ for some $N$ in $S$. One direction of the previous statement is proven in Proposition \ref{Finite Graphs Proposition}. The other direction (proving that all other isogeny graphs correspond to infinite sets of \textit{j}-invariants) requires case by case analysis.

In section 2, we will go over the elementary algebraic properties of elliptic curves over $\QQ$ including Mazur's theorem and Kenku's theorem (the classification of non-cuspidal, $\QQ$-rational points on $\operatorname{X}_{0}(N)$). In section 3, we will summarize the work done by Rouse and Zureick-Brown (\cite{Rouse}) in classifying the $2$-adic Galois images attached to non-CM elliptic curves defined over $\QQ$ and the work done by Sutherland and Zywina in classifying the modular curves of prime-power level with infinitely many $\QQ$-rational points (\cite{SZ}). Section 4 covers lemmas from group theory that will be helpful. Section 5 contains the proof of Proposition \ref{Finite Graphs Proposition}, one direction of our main result. Section 7 contains the proof that all isogeny-torsion graphs of $S$ type correspond to infinite sets of \textit{j}-invariants, followed by section 8 which contains the proof that all isogeny-torsion graphs of $T_{k}$ type correspond to infinite sets of \textit{j}-invariants, followed by section 9 which determines which isogeny-torsion graphs of $R_{k}$ type correspond to infinite sets of \textit{j}-invariants, and concluded by section 10 which determines which isogeny-torsion graphs of $L_{k}$ type correspond to infinitely many \textit{j}-invariants.

Most of the proofs follow the same recipe. We determine whether a fixed isogeny-torsion graph $\mathcal{G}$ corresponds to an infinite set of \textit{j}-invariants. This can be done by seeing, for example, if $\mathcal{G}$ contains a unique point defined over $\QQ$. For example, the isogeny-torsion graph $\mathcal{L}_{2}^{1}(7)$ is the isogeny-torsion graph of $L_{2}(7)$ with torsion configuration $([7],[1])$ (see below).

\begin{center} \begin{tikzcd}
\mathbb{Z} / 7 \mathbb{Z} \arrow[r, "7", no head] & \left\{ \mathcal{O} \right\}
\end{tikzcd} \end{center}

Moreover, $\mathcal{L}_{2}^{1}(7)$ is the only isogeny-torsion graph containing a point of order $7$ defined over $\QQ$. As there are infinitely many \textit{j}-invariants corresponding to elliptic curves over $\QQ$ that have a point of order $7$ defined over $\QQ$, $\mathcal{L}_{2}^{1}(7)$ corresponds to an infinite set of \textit{j}-invariants. All isogeny-torsion graphs that are acquired from quadratic twisting an elliptic curve in $\mathcal{G}$ also correspond to an infinite set of \textit{j}-invariants and we use this method many times. For example, we use quadratic twists to prove that the isogeny-torsion graph $\mathcal{L}_{2}^{2}(7)$; the isogeny graph of $L_{2}(7)$ type with torsion configuration $([1],[1])$ corresponds to an infinite set of \textit{j}-invariants (see below).
\begin{center} \begin{tikzcd}
\left\{\mathcal{O}\right\} \arrow[r, "7", no head] & \left\{ \mathcal{O} \right\}
\end{tikzcd} \end{center}
We may simply take an elliptic curve $E/\QQ$ such that $E(\QQ)_{\text{tors}} \cong \ZZ / 7 \ZZ$ and twist it by an appropriate integer and the isogeny-torsion graph of the quadratic twist will be $\mathcal{L}_{2}^{2}(7)$.

We also make use of Hilbert's irreducibility theorem to prove some isogeny-torsion graphs correspond to infinite sets of \textit{j}-invariants. For example, the isogeny-torsion graph $\mathcal{L}_{2}^{1}(5)$ is the ``simplest'' isogeny-torsion graph containing a point of order $5$ defined over $\QQ$ (see below).

\begin{center} \begin{tikzcd}
\mathbb{Z} / 5 \mathbb{Z} \arrow[r, "5", no head] & \left\{ \mathcal{O} \right\}
\end{tikzcd} \end{center}
For example, we use Hilbert's irreducibility theorem to prove that $\mathcal{L}_{2}^{1}(5)$ corresponds to an infinite set of \textit{j}-invariants using the fact that the sets of \textit{j}-invariants corresponding to the isogeny-torsion graphs that properly contain $\mathcal{L}_{2}^{1}(5)$ constitute thin sets in the language of Serre. The isogeny graph of $\mathcal{L}_{1}$ type corresponds to an infinite set of \textit{j}-invariants. The proof of this is the final proof of the paper and again makes use of Hilbert's irreducibility theorem.

\begin{ack}
The author would like to express his gratitude to his advisor \'Alvaro Lozano-Robledo for his patience and many helpful conversations on this topic. The author would also like to thank Harris Daniels for conversations about non-trivial entanglements. The author would also like to thank those, including John Voight, who initially asked which isogeny-torsion graphs correspond to an infinite set of \textit{j}-invariants. The author would like to thank the referee for their many helpful comments.
\end{ack}

\section{Background}

Let $E$ be an elliptic curve defined over $\QQ$. Denote the set of $\QQ$-rational points on $E$ by $E(\QQ)$. Then $E(\QQ)$ has the structure of a finitely generated abelian group. Denote the set of points on $E$ defined over $\QQ$ of finite order by $E(\QQ)_{\text{tors}}$. Then, $E(\QQ)_{\text{tors}}$ is isomorphic to one of $15$ groups.

\begin{thm}[Mazur \cite{mazur1}]\label{thm-mazur}
		Let $E/\Q$ be an elliptic curve. Then
		\[
		E(\Q)_\tor\simeq
		\begin{cases}
		\Z/M\Z &\text{with}\ 1\leq M\leq 10\ \text{or}\ M=12,\ \text{or}\\
		\Z/2\Z\oplus \Z/2N\Z &\text{with}\ 1\leq N\leq 4.
		\end{cases}
		\]
	\end{thm}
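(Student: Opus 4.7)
The plan is to follow the classical route of Mazur's original proof (subsequently streamlined by Mazur himself and others), which splits cleanly into an \emph{existence} direction and a much harder \emph{non-existence} direction. For existence, I would first exhibit an explicit one-parameter family (a Tate normal form or a Kubert parametrization) realizing each of the fifteen listed torsion groups as $E(\Q)_{\tor}$ for infinitely many non-isomorphic elliptic curves over $\Q$; these families are essentially the universal elliptic curves over the genus-zero modular curves $\operatorname{X}_1(N)$ for the relevant $N$. This step is concrete and algorithmic and is not where the difficulty lies.

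The main content is showing that no other group can occur. The key translation is that if $E/\Q$ has a point of order $N$, then $E$ corresponds to a non-cuspidal $\Q$-rational point on $\operatorname{X}_1(N)$. So it suffices to prove that $\operatorname{X}_1(N)(\Q)$ consists only of cusps whenever $N$ is outside the allowed list. By elementary reductions (writing $N$ as a product of prime powers and using that a point of order $mn$ gives points of orders $m$ and $n$), one narrows the problem down to the prime powers $N = 11, 13, 14, 15, 16, 17, 18, 19, 20, 21, 22, 24, 25$ and to primes $N \geq 11$. Earlier results (Ogg, and others) dispose of several small cases; what remains is the genuinely deep input.

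The hard core is to show $\operatorname{X}_1(N)(\Q) = \{\text{cusps}\}$ for $N = 11$ and all primes $N \geq 13$, and for certain composite levels such as $N = 16, 18, 21, 25$. The plan here is the Mazur approach via modular Jacobians: study $J_0(N)$ and $J_1(N)$ over $\Q$, produce the Eisenstein quotient $J_0(N)^{\mathrm{e}}$ using the Hecke-algebra structure and the Eisenstein ideal, and prove it has Mordell--Weil rank zero over $\Q$. One then shows that the composition $\operatorname{X}_1(N) \to \operatorname{X}_0(N) \hookrightarrow J_0(N) \twoheadrightarrow J_0(N)^{\mathrm{e}}$ is a formal immersion at a well-chosen cuspidal point in characteristic $\ell$ for a suitable prime $\ell$ of good reduction. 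A formal-immersion argument (going back to Mazur and refined by Kamienny) then forces any hypothetical non-cuspidal rational point to coincide with the chosen cusp after reduction, and a direct mod-$\ell$ computation rules this out. The main obstacle — and the whole depth of Mazur's paper — is exactly the Eisenstein-ideal analysis needed to establish the rank-zero statement for $J_0(N)^{\mathrm{e}}$ and to verify the formal-immersion condition; everything else is essentially bookkeeping around this core.

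Given that reproducing Mazur's theorem from scratch is out of scope for this article, I would in practice just cite \cite{mazur1} and use the statement as a black box, since all subsequent arguments (isogeny-torsion graphs, Kenku's theorem, classification of $\operatorname{X}_0(N)(\Q)$) build on it rather than reprove it.
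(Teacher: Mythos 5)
Your outline of the classical argument (Kubert/Tate normal forms for existence, and the Eisenstein-quotient plus formal-immersion machinery for the non-existence of points on $\operatorname{X}_1(N)$) is an accurate summary of the known proof, and your conclusion matches the paper exactly: this theorem is stated only as background, cited from \cite{mazur1} with no proof given, and used as a black box in everything that follows. Nothing further is needed.
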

Let $N$ be a positive integer. The points on $E$ of order dividing $N$ with coordinates in $\overline{\Q}$ form a finite group, denoted $E[N]$ which is isomorphic to $\Z / N \Z \times \Z / N \Z$. An element of $E[N]$ is called an $N$-torsion point. The group $G_{\Q}:= \operatorname{Gal}(\overline{\Q}/\Q)$ acts on $E[N]$ for all positive integers $N$. From this action, we have the mod-$N$ Galois representation attached to $E$: $\overline{\rho}_{E,N} \colon G_{\Q} \to \operatorname{Aut}(E[N])$.

After identifying $E[N] \cong \Z / N \Z \times \Z / N \Z$ and fixing a set of (two) generators of $E[N]$, we may consider the mod-$N$ Galois representation attached to $E$ as $\overline{\rho}_{E,N} \colon G_{\Q} \to \operatorname{GL}(2,N)$. Let $u$ be an element of $\left(\Z / N \Z\right)^{\times}$. By the properties of the Weil pairing, there exists an element of the image of $\overline{\rho}_{E,N}$ that has determinant $u$. A subgroup $H$ of $\operatorname{GL}(2, \Z / N \Z)$ is said to have \textit{full determinant mod-N} if $\det(H)=(\Z/N\Z)^\times$. Moreover, the image of $\overline{\rho}_{E,N}$ contains an element that represents complex conjugation. If $E$ is non-CM, then by Lemma 2.8 in \cite{SZ}, complex conjugation is represented by $\left(\begin{array}{cc}
    1 & 1 \\
    0 & -1
\end{array}\right)$ or $\left(\begin{array}{cc}
    1 & 0 \\
    0 & -1
\end{array}\right)$. Matrices in $\operatorname{GL}(2, \Z / N \Z)$ that are conjugate to $\left(\begin{array}{cc}
    1 & 1 \\
    0 & -1
\end{array}\right)$ or $\left(\begin{array}{cc}
    1 & 0 \\
    0 & -1
\end{array}\right)$ will be called \textit{representatives of complex conjugation mod-N}.

Let $E$ and $E'$ be elliptic curves defined over $\Q$. An isogeny mapping $E$ to $E'$ is a non-constant rational morphism $\phi \colon E \to E'$ that maps the identity of $E$ to the identity of $E'$. Isogenies are group homomorphisms with kernels of finite order. The degree of an isogeny agrees with the order of its kernel.

\begin{defn}
Let $E/\QQ$ be an elliptic curve. A subgroup $H$ of $E$ of finite order is said to be $\Q$-rational if $\sigma(H) = H$ for all $\sigma \in G_{\Q}$.
\end{defn}

\begin{remark}

Note that for an elliptic curve $E / \QQ$, a group generated by a point $P$ on $E$ defined over $\QQ$ of finite order is certainly a $\QQ$-rational group but in general, the elements of a $\QQ$-rational subgroup of $E$ need not be \textit{fixed} by $G_{\QQ}$. For example, $E[3]$ is a $\QQ$-rational group but certainly, the group $G_{\QQ}$ fixes one or three of the nine points of $E[3]$ by Theorem \ref{thm-mazur}.

\end{remark}

\begin{lemma}[Proposition III.4.12, \cite{Silverman}]\label{Q-rational}

Let $E$ be an elliptic curve over $\Q$ and let $N$ be a positive integer. Then for each cyclic, $\Q$-rational subgroup $H$ of $E$ of order $N$, there is a unique elliptic curve defined over $\Q$ up to isomorphism denoted $E / H$, and an isogeny $\phi_{H} \colon E \to E / H$ with kernel $H$.

\end{lemma}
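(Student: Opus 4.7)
The plan is Galois descent. Silverman's Proposition III.4.12 is formulated over an algebraically closed field, so over $\overline{\Q}$ it already produces an elliptic curve $\widetilde{E}$ and a separable isogeny $\widetilde{\phi}\colon E\to\widetilde{E}$ with $\ker\widetilde{\phi}=H$, characterized uniquely up to unique isomorphism of pairs. What remains is to show that $(\widetilde{E},\widetilde{\phi})$ descends to $\Q$, and that the descended model is unique up to $\Q$-isomorphism. Note that the cyclicity of $H$ plays no role in the argument; the only hypothesis used is that $H$ is finite and $\Q$-rational.

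First I would fix $\sigma\in G_{\Q}$ and apply $\sigma$ to $\widetilde{\phi}$. Since $E$ is defined over $\Q$ we have $E^{\sigma}=E$, so $\widetilde{\phi}^{\sigma}\colon E\to\widetilde{E}^{\sigma}$ is a separable isogeny whose kernel is $\sigma(H)=H$, where the last equality uses the $\Q$-rationality hypothesis on $H$. The uniqueness clause of Silverman's proposition then yields a unique isomorphism $\lambda_{\sigma}\colon\widetilde{E}^{\sigma}\to\widetilde{E}$ with $\lambda_{\sigma}\circ\widetilde{\phi}^{\sigma}=\widetilde{\phi}$. Applying $\sigma$ to the analogous identity for $\tau$ and composing with $\lambda_{\sigma}$ forces the cocycle relation $\lambda_{\sigma\tau}=\lambda_{\sigma}\circ\lambda_{\tau}^{\sigma}$, so the family $\{\lambda_{\sigma}\}$ assembles into descent data for the pair.

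The key simplification is that the pair $(\widetilde{E},\widetilde{\phi})$ has trivial automorphism group: any endomorphism $\alpha$ of $\widetilde{E}$ satisfying $\alpha\circ\widetilde{\phi}=\widetilde{\phi}$ must be the identity because $\widetilde{\phi}$ is surjective. Consequently the descent datum $\{\lambda_{\sigma}\}$ is automatically effective, producing an elliptic curve $E/H$ and an isogeny $\phi_{H}\colon E\to E/H$ defined over $\Q$ whose base change to $\overline{\Q}$ recovers $(\widetilde{E},\widetilde{\phi})$. Uniqueness up to $\Q$-isomorphism is then an immediate repeat of the same argument: given two $\Q$-rational models, the unique $\overline{\Q}$-isomorphism identifying them commutes with all $\sigma\in G_\Q$ by uniqueness, hence is itself defined over $\Q$.

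The principal obstacle is the cocycle bookkeeping in the descent step, but the triviality of the automorphism group of $(\widetilde{E},\widetilde{\phi})$ makes this automatic. As a concrete alternative that sidesteps the abstract descent formalism entirely, one can invoke V\'elu's formulas: since $H$ is stable under $G_{\Q}$, the elementary symmetric functions of the coordinates of the nontrivial points of $H$ lie in $\Q$, and V\'elu's explicit recipe then outputs both a Weierstrass equation for $E/H$ and rational functions defining $\phi_{H}$ with coefficients in $\Q$, giving existence and rationality simultaneously; uniqueness is then recovered from Silverman's $\overline{\Q}$-uniqueness together with the descent-of-isomorphisms argument above.
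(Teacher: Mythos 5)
Your argument is correct, but it is worth saying up front that the paper itself offers no proof of this lemma: it is quoted from Silverman (Proposition III.4.12 gives existence and uniqueness of the quotient isogeny for an arbitrary finite subgroup, and the rationality statement is Silverman's accompanying remark/exercise, classically settled by Weil descent or by V\'elu's formulas). What you have done is reconstruct exactly the argument that citation encapsulates: quotient over $\overline{\Q}$, then a Weil descent datum $\{\lambda_{\sigma}\}$ built from the uniqueness clause, with the cocycle identity $\lambda_{\sigma\tau}=\lambda_{\sigma}\circ\lambda_{\tau}^{\sigma}$ forced because any automorphism $\alpha$ of $\widetilde{E}$ with $\alpha\circ\widetilde{\phi}=\widetilde{\phi}$ is the identity; and your uniqueness-over-$\Q$ argument (the unique $\overline{\Q}$-isomorphism between two rational models is Galois-equivariant, hence rational) is exactly right, as is the observation that cyclicity is irrelevant to this lemma and only matters for the bijection with curves in the isogeny class discussed in the paper's subsequent remark.

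One small imprecision: triviality of $\operatorname{Aut}(\widetilde{E},\widetilde{\phi})$ does not by itself make the descent datum \emph{effective}; it only guarantees that the $\lambda_{\sigma}$ are unique and hence automatically satisfy the cocycle condition (and, since $H$ is finite, that everything lives over a finite Galois extension, giving continuity). Effectiveness --- the existence of a $\Q$-model realizing the datum --- is Weil's descent theorem for quasi-projective varieties, which you should invoke explicitly (or note that $\widetilde{E}$ carries the rational point $\widetilde{\phi}(\mathcal{O})$ and descend a Weierstrass model). Your V\'elu alternative closes this gap completely, since it produces a Weierstrass equation for $E/H$ and equations for $\phi_{H}$ with coefficients in $\Q$ directly from the Galois-stability of $H$, so the proof as a whole stands.
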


\begin{remark}

Note that it is only the elliptic curve that is unique (up to isomorphism) but the isogeny is not. For any isogeny $\phi$, the isogeny $-\phi$ has the same domain, codomain, and kernel as $\phi$.

Moreover, for any integer $N$ the isogeny $\phi$ and the isogeny $[N] \circ \phi$ have the same domain and the same codomain. This is why the bijection in Lemma \ref{Q-rational} is with \textit{cyclic}, $\Q$-rational subgroups of an elliptic curve instead of with all $\Q$-rational subgroups of an elliptic curve.

\end{remark}

Let $E / \Q$ be an elliptic curve and let $N$ be a positive integer. Then there is a one to one bijective correspondence between the non-cuspidal, $\Q$-rational points on the modular curve $\operatorname{X}_{0}(N)$ and elliptic curves over $\Q$ up to isomorphism with a cyclic, $\Q$-rational subgroup of order $N$. Work by Fricke, Kenku, Klein, Kubert, Ligozat, Mazur and Ogg, among others contributed to the classification of the non-cuspidal, $\Q$-rational points on $\operatorname{X}_{0}(N)$ (see the summary tables in \cite{lozano0}).

\begin{thm}\label{thm-ratnoncusps} Let $N\geq 2$ be a positive integer such that $\operatorname{X}_0(N)$ has a non-cuspidal, $\Q$-rational point. Then:
		\begin{enumerate}
			\item $2 \leq N \leq 10$, or $N= 12,13, 16,18$ or $25$. In this case $\operatorname{X}_0(N)$ is a curve of genus $0$ and its $\Q$-rational points form an infinite $1$-parameter family, or
			
			\item $N=11,14,15,17,19,21$, or $27$. In this case $\operatorname{X}_0(N)$ is a curve of genus $1$, i.e.,~$\operatorname{X}_0(N)$ is an elliptic curve over $\Q$, but in all cases the Mordell-Weil group $\operatorname{X}_0(N)(\Q)$ is finite, or 
			
			\item $N=37,43,67$ or $163$. In this case $\operatorname{X}_0(N)$ is a curve of genus $\geq 2$ and (by Faltings' theorem) there are only finitely many $\Q$-rational points, which are known explicitly.
		\end{enumerate}
	\end{thm}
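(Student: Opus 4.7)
The plan is to handle the three cases according to the genus of $\operatorname{X}_0(N)$, proving both that each listed $N$ really does admit a non-cuspidal rational point and that no other $N$ does.

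First I would compute the genus of $\operatorname{X}_0(N)$ using the classical formula
\[
g(\operatorname{X}_0(N)) = 1 + \frac{[\operatorname{SL}_2(\Z) : \Gamma_0(N)]}{12} - \frac{\nu_2}{4} - \frac{\nu_3}{3} - \frac{\nu_\infty}{2},
\]
where $\nu_2,\nu_3$ count elliptic points of order $2,3$ and $\nu_\infty$ counts cusps. This pins down exactly the $N$ in case (1) (genus $0$) and case (2) (genus $1$). For case (1), $\operatorname{X}_0(N)$ is a smooth genus-$0$ curve over $\Q$ with a rational cusp, hence isomorphic to $\PP^1_\Q$, which yields the $1$-parameter family of $\Q$-points; I would exhibit explicit Hauptmoduln for each such $N$ to make the parametrization concrete. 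For case (2), I would fix a $\Q$-rational cusp as the origin to put $\operatorname{X}_0(N)$ into Weierstrass form, and then compute its Mordell--Weil group by $2$-descent (or by looking up the LMFDB label of the corresponding elliptic curve). In each of the seven listed cases this group turns out to be finite and explicitly computable, and so one directly lists all $\Q$-points and checks which are non-cuspidal. For case (3), Faltings' theorem immediately gives finiteness, and the explicit lists of non-cuspidal $\Q$-rational points for $N = 37, 43, 67, 163$ (corresponding to CM $j$-invariants) can be cited from Mazur's original work.

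The harder direction is exhaustiveness: showing that no $N$ outside the listed set yields a non-cuspidal $\Q$-rational point on $\operatorname{X}_0(N)$. For prime $N$, this is precisely Mazur's theorem on rational isogenies, whose proof uses the Eisenstein quotient of $J_0(N)$ and a careful analysis of its Mordell--Weil rank; I would invoke this as a black box. For composite $N = p_1 p_2 \cdots$, one reduces to the prime-power case by observing that a cyclic $\Q$-rational isogeny of degree $N$ factors as a composition of cyclic isogenies of prime-power degree, each of which gives a $\Q$-rational point on a lower-level modular curve; combining Mazur's result with Kenku's case analysis eliminates all remaining $N$ (e.g.\ ruling out $N = 22, 25$ with non-cuspidal points, etc., using explicit finiteness arguments on $\operatorname{X}_0(N)$ for the handful of borderline values).

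The main obstacle is the exhaustiveness step. The genus computations, the explicit models in genus $0$ and $1$, and the invocation of Faltings in genus $\geq 2$ are all essentially mechanical once the right formulas are in hand; by contrast, ruling out the infinitely many remaining $N$ is the substantive content. It is here that one is forced to rely on the deep theorems of Mazur and Kenku rather than direct computation, and any self-contained proof would require reproducing that machinery. For the purposes of this paper, I would simply cite Mazur's theorem and Kenku's completion of it, and concentrate the written argument on verifying the explicit list of non-cuspidal $\Q$-points for each $N$ in the three stated cases.
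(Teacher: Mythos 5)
There is no proof of this statement in the paper to compare against: the paper treats it as a classical result, attributing it to work of Fricke, Kenku, Klein, Kubert, Ligozat, Mazur and Ogg and pointing to the summary tables in \cite{lozano0}. Your outline is the standard route by which that literature establishes the theorem, and your assessment of where the real content lies is accurate: the genus computation, the identification of the genus-$0$ curves with $\PP^1_\Q$ via a rational cusp and a Hauptmodul, the Mordell--Weil computations in genus $1$, and Faltings in genus $\geq 2$ are routine, while exhaustiveness rests on Mazur's theorem on prime-degree rational isogenies together with the treatment of composite (and prime-power) levels by Kenku, Ligozat, Ogg et al.\ --- which is precisely what the paper itself is implicitly citing, so invoking those as black boxes is consistent with the paper's treatment.

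Two factual slips should be corrected. First, $N=25$ cannot be offered as an example of a level to be ``ruled out'': $\operatorname{X}_0(25)$ has genus $0$ with infinitely many non-cuspidal $\Q$-rational points and appears in case (1) of the statement; a correct example of a borderline composite level with no non-cuspidal rational point would be $N=22$ or $N=35$. Second, the non-cuspidal rational points of $\operatorname{X}_0(37)$ do \emph{not} correspond to CM $j$-invariants; the two points give the non-CM values $j=-7\cdot 11^{3}$ and $j=-7\cdot 137^{3}\cdot 2083^{3}$ (as listed in Table \ref{Finite graphs}), and it is only for $N=43,67,163$ (and for $N=27$ in case (2)) that the points are CM. Neither slip affects the overall architecture of the argument, but as written the parenthetical claims are false and should not be spliced into a proof.
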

	Kenku's theorem is a reformulation of Theorem \ref{thm-ratnoncusps} and gives a clear way to classify the isogeny graphs associated to $\Q$-isogeny classes of elliptic curves over $\Q$. But before we state Kenku's theorem, we need a definition.

\begin{defn}
		Let $E/\Q$ be an elliptic curve. We define $C(E)$ to be the number of finite, cyclic, $\Q$-rational subgroups of $E$ (including the trivial subgroup), and we define $C_p(E)$ similarly to $C(E)$ but only counting cyclic, $\Q$-rational subgroups of order a power of $p$ (like in the definition of $C(E)$, this includes the trivial subgroup), for each prime $p$. 
	\end{defn}

	\begin{thm}[Kenku, \cite{kenku}]\label{thm-kenku} There are at most eight $\Q$-isomorphism classes of elliptic curves in each $\Q$-isogeny class. More concretely, let $E / \Q$ be an elliptic curve. Then $C(E)=\prod_p C_p(E)\leq 8$ and each factor $C_p(E)$ is bounded as follows:
		\begin{center}
			\begin{tabular}{c|ccccccccccccc}
				$p$ & $2$ & $3$ & $5$ & $7$ & $11$ & $13$ & $17$ & $19$ & $37$ & $43$ & $67$ & $163$ & \text{else}\\
				\hline 
				$C_p(E) \leq $ & $8$ & $4$ & $3$ & $2$ & $2$ & $2$ & $2$ & $2$ & $2$ & $2$ & $2$ & $2$ & $1$.
			\end{tabular}
		\end{center}
		Moreover:
		\begin{enumerate}
			\item If $C_{p}(E) = 2$ for a prime $p$ greater than $7$, then $C_{q}(E) = 1$ for all other primes $q$. 
			\item If $C_{7}(E) = 2$, then $C(E) \leq 4$. Moreover, we have $C_{3}(E) = 2$, or $C_{2}(E) = 2$, or $C(E) = 2$.
			\item $C_{5}(E) \leq 3$ and if $C_{5}(E) = 3$, then $C(E) = 3$.
			\item If $C_{5}(E) = 2$, then $C(E) \leq 4$. Moreover, either $C_{3}(E) = 2$, or $C_{2}(E) = 2$, or $C(E) = 2$. 
			\item $C_{3}(E) \leq 4$ and if $C_{3}(E) = 4$, then $C(E) = 4$. 
			\item If $C_{3}(E) = 3,$ then $C(E) \leq 6$. Moreover, $C_{2}(E) = 2$ or $C(E) = 3$.
			\item If $C_{3}(E) = 2$, then $C_{2}(E) \leq 4$.
		\end{enumerate}
	\end{thm}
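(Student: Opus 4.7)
The input is Theorem \ref{thm-ratnoncusps}. Each cyclic $\Q$-rational subgroup $H\subseteq E$ of order $N$ defines a non-cuspidal $\Q$-rational point on $\operatorname{X}_0(N)$ via the moduli interpretation of $Y_0(N)$, so $N$ must belong to the finite set
$$ S_0 = \{1,2,\ldots,10,11,12,13,14,15,16,17,18,19,21,25,27,37,43,67,163\}. $$
The multiplicativity $C(E)=\prod_p C_p(E)$ is elementary: a cyclic subgroup of order $n=\prod_p p^{n_p}$ uniquely decomposes as the internal direct sum of its $p$-primary parts, each cyclic of order $p^{n_p}$, and each primary part is Galois-stable if and only if the whole subgroup is.

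For each prime $p$, the maximum exponent $n_p$ with $p^{n_p}\in S_0$ is $n_2=4$, $n_3=3$, $n_5=2$, $n_7=1$, $n_p=1$ for $p\in\{11,13,17,19,37,43,67,163\}$, and $n_p=0$ otherwise; this already yields $C_p(E)=1$ for every other prime. To bound $C_p(E)$, I would fix a maximal cyclic $\Q$-rational $p$-power subgroup $H^{\max}$ of order $p^{n_p}$ and parametrize the cyclic $\Q$-rational $p$-power subgroups as Galois-stable sublattices of the Tate module $T_p(E)$, stratified by their position in the Bruhat--Tits tree of $\operatorname{GL}(2,\Q_p)$. For $p\ge 5$, any extra cyclic $\Q$-rational subgroup of order $p$ not contained in $H^{\max}$ would combine with $H^{\max}$ to produce a cyclic $\Q$-rational subgroup of order $p^{n_p+1}$, contradicting maximality; the remaining subgroups all lie inside $H^{\max}$, giving $C_5\le 3$, $C_7\le 2$, and $C_p\le 2$ for $p\ge 11$. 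For $p=2$ and $p=3$ I would instead enumerate the Galois-stable sublattice structures of $E[p^{n_p}]$ directly, using the classification of subgroups of $\operatorname{GL}(2,\Z/p^{n_p}\Z)$ with surjective determinant that contain a representative of complex conjugation; these images are few in number (for $p=2$ this is the Rouse--Zureick-Brown list of \cite{Rouse}), and case-by-case each produces at most $8$, resp.\ $4$, cyclic $\Q$-rational subgroups.

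The cross-prime assertions (1)--(7) reduce to a finite check. Whenever $C_p(E)\ge 2$ and $C_q(E)\ge 2$ for distinct primes $p,q$, the order-$p$ and order-$q$ subgroups sum to a cyclic $\Q$-rational subgroup of order $pq$, forcing $pq\in S_0$. The only composite products in $S_0$ involving two distinct primes are $6,10,14,15,21$, immediately yielding (1): any $p\ge 11$ excludes every other $C_q\ge 2$. The remaining items are iterations of this composite-order argument applied to $p^aq^b$; for instance, $C_7(E)=C_2(E)=2$ forces any further cyclic $2$-power subgroup to produce $28\notin S_0$, so $C_2(E)=2$ exactly, and combined with the forced $C_3(E)=1$ this gives $C(E)\le 4$, recovering (2). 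Items (3)--(7) follow in the same spirit by testing products like $50$, $45$, $54$, $75$, $81$ against $S_0$ and noting none of them appears, which is what drives each of the stated incompatibilities.

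The main obstacle will be the sharp bounds $C_2(E)\le 8$ and $C_3(E)\le 4$: the combinatorial chain argument above alone gives only weaker estimates of the form $1+3+\cdots$, so pinning down the precise maxima requires either the classification of mod-$p^{n_p}$ Galois images (\cite{Rouse} for $p=2$, and an analogous inspection of subgroups of $\operatorname{GL}(2,\Z/27\Z)$ for $p=3$) or an independent argument bounding the number of Galois-stable cyclic flags in $E[p^{n_p}]$, which does not follow formally from Theorem \ref{thm-ratnoncusps} alone.
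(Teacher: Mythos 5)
The paper itself offers no proof of Theorem \ref{thm-kenku}: it is quoted from Kenku \cite{kenku}, with Theorem \ref{thm-ratnoncusps} serving only as motivation (the phrase ``reformulation'' in the surrounding text is loose). So your proposal has to stand on its own as a derivation from Theorem \ref{thm-ratnoncusps}, and as written it does not. The most concrete error is the step bounding $C_p(E)$ for $p\ge 5$: if $H^{\max}$ is cyclic of order $p^{a}$ and $H$ is an independent $\Q$-rational subgroup of order $p$, then the subgroup they generate contains $E[p]$ and is \emph{not} cyclic, so no cyclic $\Q$-rational subgroup of order $p^{a+1}$ of $E$ is produced. The correct manoeuvre is to pass to the isogenous curve $E/H$: the composition of the dual of $E\to E/H$ with $E\to E/H^{\max}$ is a cyclic $\Q$-rational isogeny of degree $p^{a+1}$, and one applies Theorem \ref{thm-ratnoncusps} to $E/H$, not to $E$. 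The same repair is needed in your cross-prime ``composite-order'' tests whenever the two subgroups being combined do not sit on the same curve of the class (e.g.\ full two-torsion contributes two independent order-$2$ subgroups, and the forbidden composite degree only appears on a quotient curve).

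Even after that repair, membership of cyclic isogeny degrees in your set $S_0$ does not yield the stated bounds. For instance, three pairwise independent $\Q$-rational subgroups of order $5$ force no cyclic isogeny of degree outside $S_0$ on any curve of the class (on $E/H_1$ the images of the other two subgroups coincide with the kernel of the dual), yet this configuration must be excluded to get $C_5(E)\le 3$; what excludes it is that three stable lines in $E[5]$ force the mod-$5$ image into the scalars, contradicting surjectivity of the determinant coming from the Weil pairing --- an input beyond Theorem \ref{thm-ratnoncusps}. The analogous $2$- and $3$-adic configurations (e.g.\ full two-torsion with all six cyclic order-$4$ subgroups $\Q$-rational, which would give $C_2(E)=10$) are equally invisible to the composite-degree test, and this is precisely why you cannot reach $C_2(E)\le 8$ and $C_3(E)\le 4$; you concede this and defer to \cite{Rouse} or an inspection of subgroups of $\operatorname{GL}(2,\Z/27\Z)$, but that concession means the proposal is a reduction of Kenku's theorem to its hardest part rather than a proof, and \cite{Rouse} in any case covers only non-CM curves, so the CM classes (which do occur here, e.g.\ $j=0$ and $j=-12288000$ in the $L_4$ graph) would still need separate treatment. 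Finally, the ``Moreover'' items are only gestured at, and your parenthetical ``forced $C_3(E)=1$'' in the discussion of item (2) is false in general: $C_7(E)=2$ is compatible with $C_3(E)=2$ (the degree-$21$ classes); $C_3(E)=1$ is forced only in the sub-case $C_2(E)=2$.
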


\begin{remark}\label{2-powers}

For the rest of the paper, if an elliptic curve $E / \Q$ has full two-torsion defined over $\Q$, then we will say that $E[2] = E[2](\Q) = \left\langle P_{2}, Q_{2} \right\rangle$ for some $P_{2}, Q_{2} \in E[2]$. For an integer $M \geq 2$, let $P_{2^{M}} \in E\left[2^{M}\right]$ such that $[2]P_{2^{M}} = P_{2^{M-1}}$ and let $Q_{2^{M}} \in E\left[2^{M}\right]$ such that $[2]Q_{2^{M}} = Q_{2^{M-1}}$.

If an elliptic curve $E / \Q$ has full two-torsion defined over $\Q$ and contains a cyclic, $\Q$-rational subgroup of order $4$, then we may assume without loss of generality, that the cyclic groups $\{\mathcal{O}\}$, $\left\langle P_{2} \right\rangle$, $\left\langle Q_{2} \right\rangle$, $\left\langle P_{2} + Q_{2} \right\rangle$, $\left\langle Q_{4} \right\rangle$, and $\left\langle P_{2} + Q_{4} \right\rangle$ are $\Q$-rational. If an elliptic curve $E / \Q$ has full two-torsion defined over $\Q$ and $C_{2}(E) = C(E) = 8$, then either $P_{4}, P_{4} + Q_{2}, Q_{4},$ and $P_{2}+Q_{4}$ generate distinct $\Q$-rational subgroups of order $4$; or $Q_{8}$ and $P_{2} + Q_{8}$ generate distinct $\Q$-rational subgroups of order $8$. Thus, after a relabeling of the elliptic curves in the $\Q$-isogeny class of $E$, the $\Q$-rational subgroups of $E$ are $\{\mathcal{O}\}$, $\left\langle P_{2} \right\rangle$, $\left\langle Q_{2} \right\rangle$, $\left\langle P_{2} + Q_{2} \right\rangle$, $\left\langle Q_{4} \right\rangle$, $\left\langle P_{2} + Q_{4} \right\rangle$, $\left\langle Q_{8} \right\rangle$, and $\left\langle P_{2} + Q_{8} \right\rangle$. As a consequence, if $E(\Q)_{\text{tors}} \cong \Z / 2 \Z \times \Z / 4 \Z$, then $E(\Q)_{\text{tors}} = \left\langle P_{2}, Q_{4} \right\rangle$ and if $E(\Q)_{\text{tors}} \cong \Z / 2 \Z \times \Z / 8 \Z$, then $E(\Q)_{\text{tors}} = \left\langle P_{2}, Q_{8} \right\rangle$.

\end{remark}

\begin{defn}
Let $\mathcal{E}$ be a $\Q$-isogeny class of elliptic curves defined over $\Q$. The isogeny graph associated to $\mathcal{E}$ is a graph, which has a vertex for each elliptic curve in $\mathcal{E}$ and an edge for each $\Q$-isogeny of prime degree that maps one element of $\mathcal{E}$ to another element of $\mathcal{E}$, with the degree recorded as the label of the edge.
\end{defn}
The classification of isogeny graphs associated to $\Q$-isogeny classes of elliptic curves defined over $\Q$ follows from Theorem \ref{thm-kenku}. A detailed proof is outlined in Section 6 of \cite{gcal-r}.

\begin{theorem}\label{thm-mainisogenygraphs}
	There are $26$ isomorphism types of isogeny graphs associated to $\Q$-isogeny classes of elliptic curves defined over $\Q$. More precisely, there are $16$ types of (linear) $L_k$ graphs, $3$ types of (non-linear, two-primary torsion) $T_k$ graphs, $6$ types of (rectangular) $R_k$ graphs, and $1$ (special) $S$ graph. The degree of the maximal, finite, cyclic $\Q$-isogeny is written in parentheses for the $L_{2}$, $L_{3}$, and $R_{4}$ isogeny graphs.
	\end{theorem}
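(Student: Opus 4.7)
The plan is to reformulate the statement as a combinatorial enumeration of the lattice of cyclic $\Q$-rational subgroups of any one representative $E$ of the $\Q$-isogeny class $\mathcal{E}$, and then to apply Kenku's Theorem \ref{thm-kenku} prime by prime. By Lemma \ref{Q-rational}, the map $H \mapsto E/H$ is a bijection between cyclic $\Q$-rational subgroups of $E$ and vertices of the isogeny graph of $\mathcal{E}$, and a prime-degree edge of the graph corresponds to a pair of cyclic $\Q$-rational subgroups $H \subsetneq H'$ with $[H':H]$ prime. Since cyclic subgroups of coprime orders multiply to cyclic subgroups, this lattice factors as the Cartesian product of local lattices $\mathcal{L}_p$ indexed by primes $p$ with $C_p(E) > 1$, in agreement with the factorization $C(E) = \prod_p C_p(E)$. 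It therefore suffices to enumerate the admissible $p$-local shapes and determine which combinations are compatible with the numerical bounds and exclusions recorded in items (1)--(7) of Theorem \ref{thm-kenku}.

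The enumeration of $\mathcal{L}_p$ splits cleanly by prime. For $p \in \{11,13,17,19,37,43,67,163\}$ the only non-trivial configuration is a $2$-chain (since $C_p \leq 2$), and Kenku rules out any non-trivial $\mathcal{L}_q$ for $q \neq p$, producing the graphs $L_2(p)$. For $p = 7, 5, 3$ the $p$-local lattice is a chain whose length is bounded by $2, 3, 4$ respectively, producing $L_2(p)$, $L_3(p^2)$, and (in the case $p=3$) $L_4(27)$ in the single-prime case. For $p = 2$ the analysis is richer: if $E[2](\Q)$ is cyclic then $\mathcal{L}_2$ is a chain, giving $L_2(2)$ and $L_3(4)$; if $E[2] \subseteq E(\Q)$ then $\mathcal{L}_2$ is a branching poset whose explicit form is controlled by the explicit generators listed in Remark \ref{2-powers}, yielding exactly the three $T$-shapes $T_4, T_6, T_8$. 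Forming products of two non-trivial chain lattices across different primes then produces the rectangular graphs $R_4$ (from $C_pC_q = 4$) and $R_6$ (from $C_pC_q = 6$), and combining a $T_4$-shape at $p=2$ with a chain of length $2$ at $p=3$ yields the unique $8$-vertex $S$-graph.

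The main obstacle will be the $p=2$ enumeration, where the interaction between full rational $2$-torsion and cyclic $\Q$-rational subgroups of high $2$-power order admits several case distinctions. My strategy there is to use Remark \ref{2-powers} to write down explicit generators of every cyclic $\Q$-rational $2$-power subgroup and then argue by the Galois action on $E[2^\infty]$ that any such lattice must be one of the five listed shapes; in particular, once $E[2] \subseteq E(\Q)$ and a cyclic $\Q$-rational subgroup of order $4$ or $8$ exists, the lattice of $2$-power subgroups is forced to branch at the unique vertex labelled $\mathbb{Z}/2\mathbb{Z} \times \mathbb{Z}/2\mathbb{Z}$, matching precisely the $T_k$ descriptions. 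Once the $p$-local shapes are settled, the cross-prime combinations are pinned down by the numerical restrictions of Theorem \ref{thm-kenku}, and realisability of each abstract graph over $\Q$ is automatic from Theorem \ref{thm-ratnoncusps}, which guarantees non-cuspidal $\Q$-rational points on $\operatorname{X}_0(N)$ for every admissible cyclic isogeny degree $N$. Tallying the contributions produces the asserted $26$ isomorphism types, subdivided into $L_k, T_k, R_k$, and $S$ as in the statement.
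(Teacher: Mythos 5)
Your overall route---identifying vertices with cyclic $\Q$-rational subgroups of one representative via Lemma \ref{Q-rational}, factoring the lattice into $p$-local pieces, and enumerating shapes with the bounds (1)--(7) of Theorem \ref{thm-kenku}---is essentially the route the paper itself relies on: the paper offers no proof of this theorem, deferring to Section 6 of \cite{gcal-r}, which argues exactly along these lines. The problem is that your $p=2$ enumeration is wrong at a decisive point: you assert that when $E[2](\Q)$ is cyclic the $2$-local lattice is a chain, ``giving $L_2(2)$ and $L_3(4)$.'' There is no $L_3(4)$ graph. If $E$ has a cyclic $\Q$-rational subgroup $\langle P_4\rangle$ of order $4$ and $\phi\colon E\to E'=E/\langle 2P_4\rangle$ is the corresponding $2$-isogeny, then $\phi(E[2])$ and $\phi(\langle P_4\rangle)$ are two distinct $\Q$-rational subgroups of $E'$ of order $2$, so $E'$ has full two-torsion over $\Q$; its third order-$2$ subgroup $K$ is then also $\Q$-rational, and $\phi^{-1}(K)$ is a second cyclic $\Q$-rational subgroup of $E$ of order $4$. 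Hence a rational cyclic $4$-isogeny forces $C_2(E)\geq 4$, so $C_2(E)\in\{1,2,4,6,8\}$ (never $3$), and the $2$-primary shapes are exactly $L_1$, $L_2(2)$, $T_4$, $T_6$, $T_8$. Your list as written yields $17$ linear graphs and $27$ types in total, contradicting the statement; the correct linear count is $16 = 1\,(L_1) + 12\,(L_2(p)) + 2\,(L_3(9),L_3(25)) + 1\,(L_4)$.

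A secondary, repairable imprecision: for $p=3,5$ the $p$-local lattice need not be a chain. $C_5(E)=3$ can be realized by two independent $\Q$-rational $5$-subgroups (this is precisely the configuration the paper uses for $L_3(25)$), and $C_3(E)=4$ by a cyclic $9$ together with an independent $3$ (the $j=0$ case of $L_4$ discussed in the paper); likewise $C_3(E)=3$ can come from two independent $3$-isogenies. These non-chain lattices still produce path-shaped graphs, but to justify the labels $L_3(9)$, $L_3(25)$, $L_4$ (which record the maximal cyclic isogeny degree in the class) you must observe that composing through $E$ produces a cyclic $9$- resp.\ $25$-isogeny between the outer curves; your ``chain'' claim silently skips both the existence of these configurations and this verification. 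Fixing the $p=2$ case as above and treating the non-chain odd-$p$ configurations would bring your argument in line with the proof outlined in \cite{gcal-r}.
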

	
\begin{defn}
Let $\mathcal{E}$ be a $\Q$-isogeny class of elliptic curves defined over $\Q$. The isogeny-torsion graph associated to $\mathcal{E}$ is the isogeny graph associated to $\mathcal{E}$ with the vertices labeled with the abstract group structure of the torsion subgroup over $\Q$ of the corresponding elliptic curve.
\end{defn}

See Example \ref{T4 example} for an example of an isogeny graph and an isogeny-torsion graph associated to the $\Q$-isogeny class of an elliptic curve over $\Q$. In \cite{gcal-r} the authors classified the isogeny-torsion graphs associated to $\Q$-isogeny classes of elliptic curves over $\Q$.

\begin{thm}[\cite{gcal-r}]\label{thm-main2} There are $52$ isomorphism types of isogeny-torsion graphs that are associated to $\Q$-isogeny classes of elliptic curves over $\Q$. In particular, there are $23$ isogeny-torsion graphs of $L_{k}$ type, $13$ isogeny-torsion graphs of $T_{k}$ type, $12$ isogeny-torsion graphs of $R_{k}$ type, and $4$ isogeny-torsion graphs of $S$ type.
\end{thm}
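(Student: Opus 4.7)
The plan is to start from Theorem \ref{thm-mainisogenygraphs}, which gives the $26$ underlying isogeny graphs, and then for each such graph enumerate the admissible labelings of its vertices by the fifteen abelian groups permitted by Mazur's theorem (Theorem \ref{thm-mazur}). The job splits cleanly into two halves: (i) proving that the labelings listed (totaling $52$) are the only ones that can occur, and (ii) exhibiting an elliptic curve over $\Q$ realizing each labeling. Part (ii) can be handled uniformly by pointing to one isogeny class in the LMFDB for each configuration, so the real content is part (i).

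The engine for part (i) is the way prime-degree $\Q$-isogenies constrain torsion. If $\phi\colon E\to E'$ is a $\Q$-isogeny of prime degree $p$, then for each prime $\ell\neq p$ the map $\phi$ induces a $G_\Q$-equivariant isomorphism $E[\ell^\infty]\cong E'[\ell^\infty]$, so the $\ell$-primary rational torsion is locally constant on the isogeny graph away from edges labeled $\ell$. On the other hand, along a $p$-edge $\phi$ induces an injection $E(\Q)_{\tor}/(\ker\phi\cap E(\Q))\hookrightarrow E'(\Q)_{\tor}$, and the same holds for the dual isogeny; this pins down how the $p$-primary part can change across a single edge to a short list of possibilities. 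Combined with Kenku's classification (Theorem \ref{thm-kenku}) of the cyclic $\Q$-rational subgroup structure, and with the explicit enumeration of $\Q$-rational subgroups summarized in Remark \ref{2-powers} for the $2$-power case, these constraints determine the set of admissible vertex labelings for each of the $26$ graph shapes.

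Concretely, I would organize the case analysis by the ``shape class'' of the graph. For the $L_k(p)$ graphs with $p\in\{11,17,19,37,43,67,163\}$, only the $p$-primary part can vary and Mazur forces it to be trivial at every vertex, giving one configuration per graph. For the remaining $L_k$ graphs (isogenies of degrees drawn from $\{2,3,5,7,13\}$ and their small products), I track the rational $p$-primary torsion from one end of the chain to the other using the injection/dual-injection pair above. For the $T_k$ and $S$ graphs, all isogeny degrees are $2$ (and, for $S$, also $3$), so I use Remark \ref{2-powers} to coordinate labels on the vertices with the list of cyclic $\Q$-rational $2$-power subgroups, checking which configurations are self-consistent under the $G_\Q$-action on $E[2^\infty]$. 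The $R_k$ analysis is analogous but mixes two primes, so $2$- and $3$- (or $2$- and $5$-, etc.) primary parts must be cross-referenced between pairs of parallel edges.

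The main obstacle will be the $2$-power bookkeeping inside the $T_8$, $R_6$, and $S$ graphs: one must rule out spurious configurations in which, for instance, two different vertices carry full $2$-torsion in a way incompatible with only one rational Sylow-$2$ structure. This requires knowing exactly how the $\Q$-rational subgroup lattice of $E[2^\infty]$ embeds into the graph (which is precisely the content of Remark \ref{2-powers}) and then checking, for each candidate labeling, that the implied collection of rational $2$-power subgroups matches that lattice. Once these finitely many compatibility checks are carried out graph by graph, the count of surviving configurations yields exactly $23+13+12+4=52$, matching the statement.
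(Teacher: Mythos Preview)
The paper does not prove this theorem; it is quoted from \cite{gcal-r} as background for the present article's main result. So there is no ``paper's own proof'' to compare against here.

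That said, your outline is exactly the strategy carried out in \cite{gcal-r}: begin with the $26$ isogeny-graph shapes of Theorem~\ref{thm-mainisogenygraphs}, and for each one determine the admissible vertex labelings by tracking how rational torsion propagates along prime-degree isogenies (using that an $\ell$-isogeny is an isomorphism on prime-to-$\ell$ torsion, together with Lemmas such as \ref{lem-necessity-for-point-rationality}, \ref{lem-2torspt-all-have-2torspt}, \ref{lem-Maximality-Of-Rational-2-Power-Groups}, and \ref{lem-subsequent-rational-pts}), then realize each surviving configuration by an explicit isogeny class. One small imprecision: for the $L_2(p)$ graphs with $p\in\{11,17,19,37,43,67,163\}$ you wrote that ``only the $p$-primary part can vary and Mazur forces it to be trivial.'' The actual reason every vertex has trivial torsion is that $C(E)=C_p(E)=2$ rules out any rational $\ell$-subgroup for $\ell\neq p$ (hence any rational $\ell$-torsion point), while Mazur rules out rational $p$-torsion; your phrasing obscures the first half of that. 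Otherwise the plan is sound and matches the cited proof, including the acknowledgment that the delicate work is in the $2$-power bookkeeping for $T_k$ and $S$ graphs, which in \cite{gcal-r} is handled via the Rouse--Zureick-Brown classification of $2$-adic images rather than purely by the subgroup-lattice combinatorics you sketch.
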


\begin{remark}
		An application of Hilbert's Irreducibility Theorem (see Proposition \ref{HIT proposition} and Remark \ref{Hilbert's Irreducibility Theorem}) can be used to prove that, in terms of density, the wide majority of $\Q$-isogeny classes of elliptic curves over $\Q$ have an associated isogeny graph of $L_{1}$ type. In other words, an elliptic curve over $\Q$ with a finite, cyclic $\QQ$-rational subgroup of order $\geq 2$ is rare.
		\end{remark}

\subsection{Quadratic Twists}

Let $E : y^{2} = x^{3} + Ax + B$ be an elliptic curve defined over $\Q$. Let $d$ be a non-zero, square-free integer. The quadratic twist of $E$ by $d$ is the elliptic curve $E^{(d)} : dy^{2} = x^{3} + Ax + B$. Note that $\textit{j}(E) = \textit{j}(E^{(d)})$.

Let $N$ be a positive integer. Let $\rho_{N} \colon G_{\Q} \to \operatorname{Aut}(E[N])$ and $\rho^{d}_{N} \colon G_{\Q} \to \operatorname{Aut}(E^{d}[N])$ be the mod-$N$ Galois representations attached to $E$ and $E^{(d)}$ respectively. There is a quadratic character $\chi_{d}$ such that $\chi_{d} \circ \rho_{N} = \rho_{N}^{d}$. Let $G_{1}$ and $G_{2}$ be subgroups of $\operatorname{GL}(2, \Z / N \Z)$. We will say that $G_{1}$ is a quadratic twist of $G_{2}$ if $\left\langle G_{1}, \operatorname{-Id} \right\rangle = \left\langle G_{2}, \operatorname{-Id} \right\rangle$.

\subsection{Maps to the \textit{j}-line}

For more information, on this following subsection, see Section 2 in \cite{SZ}.

Let $N$ be a positive integer. Let $\mathcal{F}_{N}$ denote the field of meromorphic functions of the Riemann surface $\operatorname{X}(N)$ whose $q$-expansions have coefficients in $K_{N} := \Q(\zeta_{N})$. For $f \in \mathcal{F}_{N}$ and $\gamma \in \operatorname{SL}(2, \Z)$, let $f \textbar_{\gamma} \in \mathcal{F}$ denote the modular function satisfying $f \textbar_{\gamma}(\tau) = f(\gamma \tau)$. For each $d \in (\Z / N \Z)^{\times}$, let $\sigma_{d}$ be the automorphism of $K_{N}$ such that $\sigma_{d}(\zeta_{N}) = \zeta_{N}^{d}$.

\begin{proposition}
The extension $\mathcal{F}_{N}$ of $\mathcal{F}_{1} = \Q(\textit{j})$ is Galois. There is a unique isomorphism
$$ \theta_{N} \colon \operatorname{GL}(2, \Z / N \Z) / \{\pm I\} \to \operatorname{Gal}(\mathcal{F}_{N} / \Q(\textit{j})) $$
such that the following hold for all $f \in \mathcal{F}_{N}$:

\begin{enumerate}

    \item for $g \in \operatorname{SL}(2, \Z / N \Z)$, we have $\theta_{N}(g)f = f \textbar_{\gamma^{t}}$ where $\gamma$ is any matrix in $\operatorname{SL}(2, \Z)$ that is congruent to $g$ modulo $N$.
    
    \item For $g = \left(\begin{array}{cc}
        1 & 0 \\
        0 & d
    \end{array}\right) \in \operatorname{GL}(2, \Z / N \Z)$, we have $\theta_{N}(g)f = \sigma_{d}(f)$.

\end{enumerate}
\end{proposition}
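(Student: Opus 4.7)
The plan is to establish the proposition in two stages, first by analyzing the geometric extension $\mathcal{F}_N / K_N(j)$ and then by adjoining the cyclotomic part to get down to $\Q(j)$. For the geometric stage, I would use the complex analytic model of the modular curve: after base change to $\mathbb{C}$, the field $\mathcal{F}_N$ becomes the function field of the Riemann surface $\operatorname{X}(N)$, which is the compactification of the quotient $\Gamma(N)\backslash \mathfrak{H}$. The standard quotient map $\operatorname{X}(N) \to \operatorname{X}(1)$ realizes $\mathbb{C}(\operatorname{X}(1)) = \mathbb{C}(j)$ as the fixed field of the covering group $\operatorname{SL}(2,\Z)/\Gamma(N)\cdot\{\pm I\} \cong \operatorname{SL}(2,\Z/N\Z)/\{\pm I\}$ acting by the slash operator. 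One checks that the slash action preserves the subfield $\mathcal{F}_N$ (because the $q$-expansion coefficients of $f|_\gamma$ still lie in $K_N$ when those of $f$ do), so this action descends and gives an injection of $\operatorname{SL}(2,\Z/N\Z)/\{\pm I\}$ into $\operatorname{Gal}(\mathcal{F}_N/K_N(j))$, which must be an isomorphism by comparing degrees.

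For the arithmetic stage, the key identity to prove is $\mathcal{F}_N \cap \overline{\Q} = K_N$, i.e.\ the field of constants of $\mathcal{F}_N$ is exactly $\Q(\zeta_N)$. The containment $K_N \subseteq \mathcal{F}_N$ is seen by exhibiting $\zeta_N$ as the value of the Weil pairing $e_N(P_1,P_2)$ on the canonical pair of $N$-torsion sections (equivalently, by writing $\zeta_N$ as a ratio of Siegel units), so $\zeta_N$ is a value of a function in $\mathcal{F}_N$ and hence a constant in $\mathcal{F}_N$. The reverse containment is the nontrivial geometric input: $\operatorname{X}(N)$ is geometrically connected over $K_N$ but the canonical model splits over $\Q$ into exactly $\varphi(N)$ geometric components permuted simply transitively by $\operatorname{Gal}(K_N/\Q)$, which bounds the field of constants. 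Combined with the Stage~1 computation, this yields $[\mathcal{F}_N:\Q(j)] = |\operatorname{SL}(2,\Z/N\Z)/\{\pm I\}|\cdot \varphi(N) = |\operatorname{GL}(2,\Z/N\Z)/\{\pm I\}|$, forcing $\mathcal{F}_N/\Q(j)$ to be Galois of the right order.

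To produce the map $\theta_N$, I would define it on the generators of $\operatorname{GL}(2,\Z/N\Z)/\{\pm I\}$ by condition (1) on $\operatorname{SL}(2,\Z/N\Z)/\{\pm I\}$ and by condition (2) on the diagonal subgroup $\{\operatorname{diag}(1,d) : d \in (\Z/N\Z)^\times\}$; the arithmetic action $\sigma_d$ on $q$-expansion coefficients preserves $\mathcal{F}_N$ by definition, fixes $j$ (which has rational $q$-expansion), and agrees with the Galois action on $K_N \subseteq \mathcal{F}_N$. One then verifies the defining compatibility $\theta_N(g_1 g_2) = \theta_N(g_1)\theta_N(g_2)$ on generators, which reduces to checking how conjugation of an $\operatorname{SL}_2$ matrix by $\operatorname{diag}(1,d)$ interacts with the $\sigma_d$-twist of the slash action; this is the classical reciprocity-type identity for $q$-expansions. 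Uniqueness is automatic since conditions (1) and (2) pin down $\theta_N$ on a set of generators, and the image has the right order to equal the full Galois group.

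The main obstacle is the field-of-constants identity $\mathcal{F}_N \cap \overline{\Q} = K_N$, since every other step reduces to a routine verification on $q$-expansions or a degree count. Establishing this cleanly requires either Shimura's construction of the canonical model of $\operatorname{X}(N)$ over $K_N$ (using the Weil pairing to pin down the component structure) or, equivalently, the explicit determination of the constant field via the theory of Siegel units; I would quote Shimura's \emph{Introduction to the Arithmetic Theory of Automorphic Functions}, Chapter~6, rather than redo the construction. With this fact in hand the rest of the proof is essentially a bookkeeping exercise comparing two finite groups of the same order, one acting faithfully inside the other.
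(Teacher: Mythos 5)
The paper does not actually prove this proposition: it is stated as background in the subsection on maps to the $j$-line, with the reader referred to Section 2 of \cite{SZ}, where the result is in turn quoted from Shimura's \emph{Introduction to the Arithmetic Theory of Automorphic Functions} (Chapter 6), so there is no internal argument to compare yours against. What you have written is essentially the classical Shimura-style proof that sits behind that citation, and in outline it is correct: identify $\operatorname{Gal}(\mathcal{F}_N/K_N(j))$ with $\operatorname{SL}(2,\Z/N\Z)/\{\pm I\}$ via the slash action, establish that the exact field of constants of $\mathcal{F}_N$ is $K_N=\Q(\zeta_N)$, and then combine the geometric action with the cyclotomic automorphisms $\sigma_d$ and an Artin-type order/degree count to obtain the full $\operatorname{GL}_2/\{\pm I\}$ statement; outsourcing the constant-field/canonical-model input to Shimura is reasonable, since that is the only genuinely hard step. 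Three points would need tightening to make the sketch watertight. First, defining $\theta_N$ on generators and ``checking compatibility on generators'' does not by itself yield a homomorphism; you should invoke the decomposition of $\operatorname{GL}(2,\Z/N\Z)$ as the semidirect product of $\operatorname{SL}(2,\Z/N\Z)$ by the torus $\{\operatorname{diag}(1,d)\}$, so that it suffices to check that the two partial maps are homomorphisms together with the single conjugation compatibility you mention (or, as in the classical treatment, define the action directly on the Fricke functions $f_a$, $a\in(\Z/N\Z)^2$, where multiplicativity is manifest, and then verify properties (1) and (2)). Second, ``$\zeta_N$ is a value of a function in $\mathcal{F}_N$ and hence a constant in $\mathcal{F}_N$'' is not a valid inference as phrased; the correct assertion is that the Weil pairing of the canonical pair of $N$-torsion sections is a \emph{constant} function on the modular curve equal to a primitive $N$-th root of unity (equivalently, $\zeta_N$ is exhibited through $q$-expansions of Siegel/Fricke units with coefficients in $K_N$), and that is what gives $K_N\subseteq\mathcal{F}_N$. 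Third, your Stage 1 ``isomorphism by comparing degrees'' already presupposes the constant-field fact (linear disjointness of $\mathcal{F}_N$ from $\CC$ over $K_N$), and the faithfulness of the assembled $\operatorname{GL}(2,\Z/N\Z)/\{\pm I\}$-action should be recorded before the final order count; both are easy, but the two stages are not logically independent in the way the write-up suggests.
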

A subgroup $G$ of $\operatorname{GL}(2, \Z / N \Z)$ containing $\operatorname{-Id}$ and with full determinant mod-$N$ acts on $\mathcal{F}_{N}$ by $g \cdot f = \theta_{N}(g)f$ for $g \in G$ and $f \in \mathcal{F}_{N}$. Let $\mathcal{F}_{N}^{G}$ denote the subfield of $\mathcal{F}_{N}$ fixed by the action of $G$. The modular curve $\operatorname{X}_{G}$ associated to $G$ is the smooth projective curve with function field $\mathcal{F}^{G}_{N}$. The inclusion of fields $\mathcal{F}_{1} = \Q(\textit{j}) \subseteq \mathcal{F}_{N}$ induces a non-constant morphism
$$ \pi_{G} \colon \operatorname{X}_{G} \longrightarrow \operatorname{Spec}\Q[\textit{j}] \cup \{\infty\} = \PP^{1}_{\Q} $$
of degree $[\operatorname{GL}(2, \Z / N \Z) : G]$. If there is an inclusion of groups, $G \subseteq G' \subseteq \operatorname{GL}(2, \Z / N \Z)$, then there is an inclusion of fields $\Q(\textit{j}) \subseteq \mathcal{F}_{N}^{G'} \subseteq \mathcal{F}_{N}^{G}$ which induces a non-constant morphism $g \colon X_{G} \longrightarrow X_{G'}$ of degree $[G' \colon G]$ such that the following diagram commutes.
$$\begin{tikzcd}
X_{G} \arrow[dd, "g"'] \arrow[rrdd, "\pi_{G}"] &  &                \\
                                               &  &                \\
X_{G'} \arrow[rr, "\pi_{G'}"']                 &  & \mathbb{P}^{1}
\end{tikzcd}$$
For an elliptic curve $E/ \Q$ with $\textit{j}(E) \neq 0, 1728$ the group $\overline{\rho}_{E,N}(G_{\Q})$ is conjugate in $\operatorname{GL}(2,\Z / N \Z)$ to a subgroup of $G$ if and only if $\textit{j}(E)$ is an element of
$\pi_{G}(\operatorname{X}_{G}(\Q))$.

\section{Work by Rouse--Zureick-Brown and Sutherland-Zywina}

\begin{thm}[Rouse, Zureick-Brown, \cite{Rouse}]\label{thm-rzb} Let $E$ be an elliptic curve over $\Q$ without complex multiplication. Then, there are exactly $1208$ possibilities for the $2$-adic image $\overline{\rho}_{E,2^\infty}(\GQ)$, up to conjugacy in $\GL(2,\Z_2)$. Moreover, the index of $\rho_{E,2^\infty}(\Gal(\overline{\Q}/\Q))$ in $\GL(2,\Z_2)$ divides $64$ or $96$.
\end{thm}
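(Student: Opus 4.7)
The plan is to enumerate the possible $2$-adic images by ascending the tower $\GL(2,\Z/2^n\Z)$ for $n=1,2,3,\ldots$, and to show at some finite level every orbit of images has either been realized by infinitely many non-CM elliptic curves or has been ruled out by the non-existence of non-cuspidal non-CM rational points on the associated modular curve. First I would fix the obvious necessary conditions on a candidate subgroup $H\subseteq \GL(2,\Z/2^n\Z)$: it must have full determinant (by the Weil pairing, as noted in the excerpt for any $N$), it must contain a representative of complex conjugation, and its reductions to $\GL(2,\Z/2^m\Z)$ for $m<n$ must themselves be among the previously catalogued admissible groups. Serre's open image theorem tells us that for a fixed non-CM $E$, the image $\rho_{E,2^\infty}(G_\Q)$ is open in $\GL(2,\Z_2)$, so it suffices to work modulo some fixed power of $2$; one then iterates from the base case that the mod-$2$ image is one of the three possible transitive subgroups of $\GL(2,\F_2)$ (trivial, $C_3$, or $S_3$) or one of its two non-surjective complements.

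Having listed candidate groups at each level, I would then separate them into those that arise infinitely often and those that potentially arise only finitely often. For a subgroup $H$ at level $n$, the candidates $\textit{j}$-invariants giving image contained in $H$ are parametrized by the non-cuspidal $\Q$-rational points of the modular curve $X_H$, constructed via the formalism recalled in the \emph{Maps to the \textit{j}-line} subsection of the excerpt. I would compute the genus of each $X_H$ using the Riemann--Hurwitz formula applied to $\pi_H\colon X_H\to \PP^1_{\Q}$, whose degree is $[\GL(2,\Z/2^n\Z):H]$ and whose ramification over $j=0,1728,\infty$ is controlled by the orbits of $H$ on cosets of the relevant cyclic stabilizers. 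Groups whose modular curve has genus $0$ with a rational point contribute an infinite one-parameter family; genus $1$ curves with positive rank likewise contribute; for the remaining curves we need to determine $X_H(\Q)$ explicitly.

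The main obstacle is this last step: rigorously computing $X_H(\Q)$ for every $X_H$ of genus $\ge 1$ that appears in the enumeration. Genus $1$ cases reduce, via the map to the $\textit{j}$-line, to Mordell--Weil computations on elliptic curves (or to showing the corresponding Jacobian has rank $0$ and enumerating torsion). Genus $\ge 2$ cases must be handled by Chabauty--Coleman when the Mordell--Weil rank is less than the genus, or by elliptic-curve Chabauty or the Mordell--Weil sieve when it is not, and one must in each case rule out that any of the finitely many rational points other than the cusps and CM points correspond to non-CM elliptic curves (so that those points do \emph{not} contribute new $2$-adic images). This is a substantial but finite computation: one keeps track of exactly which group $H$ each rational point lands in, verifies it is the actual image (rather than merely a containing group) for some specific curve, and records the result. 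CM $\textit{j}$-invariants with $2$-adic CM must be excluded at this stage as per hypothesis.

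Finally, the index bound follows as a consequence of the classification: since the enumeration terminates at a bounded level and every group appearing in the final list has index dividing $64$ or $96$ in $\GL(2,\Z_2)$, one reads off the bound from the completed table. One would verify that no index larger than this appears by a direct check against the explicit list of $1208$ groups, grouped by their index and level. The count $1208$ itself is an output of the enumeration rather than an a priori invariant; its correctness is ensured by cross-checking the containment lattice of the catalogued subgroups against the fiber products of modular curves, so that every $H$ with $X_H(\Q)$ yielding a non-CM $\textit{j}$-invariant has all its conjugacy-distinct refinements accounted for.
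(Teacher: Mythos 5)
There is nothing in the paper for your proposal to be measured against: Theorem \ref{thm-rzb} is stated as a citation of Rouse--Zureick-Brown \cite{Rouse} and is used in this article purely as a black box, so the ``paper's own proof'' does not exist here. What you have written is, in broad strokes, a recollection of the strategy of \cite{Rouse} itself (enumerate admissible subgroups $H\subseteq\GL(2,\Z/2^n\Z)$ with full determinant and a complex-conjugation element, attach modular curves $\operatorname{X}_H$, and determine their rational points), so the roadmap is not wrong; but as a proof it has genuine gaps rather than merely suppressed routine detail.

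Concretely: first, your iteration has no termination argument. You invoke Serre's open image theorem, but that only says each individual non-CM curve has open image; it gives no uniform level, and without a group-theoretic input (a Frattini-type statement controlling when an open subgroup of $\GL(2,\Z_2)$ is the full preimage of its reduction modulo $2^n$, together with the computational fact that one eventually reaches only ``arithmetically maximal'' groups whose curves have no non-cuspidal non-CM rational points) you cannot conclude that the enumeration stops at any finite level, let alone at level $32$ with indices dividing $64$ or $96$. Second, the step you label ``a substantial but finite computation'' is precisely the mathematical content of the theorem: provably determining $\operatorname{X}_H(\Q)$ for the genus $\geq 2$ curves that arise (via Chabauty--Coleman, \'etale descent, elliptic-curve Chabauty, and the Mordell--Weil sieve, including several curves that resist the naive methods) is not something one can assert in a sentence, and the numbers $1208$, $64$, $96$ are outputs of that computation, not consequences of the outline. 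Minor but symptomatic: your base case for the mod-$2$ image is garbled --- up to conjugacy $\GL(2,\Z/2\Z)\cong S_3$ has exactly four subgroups (trivial, order $2$, order $3$, and $S_3$), not ``three transitive subgroups plus two complements.'' So the proposal should be regarded as a sketch of the cited work's method, not a proof of the statement.
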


The authors of \cite{gcal-r} made extensive use of the database in \cite{Rouse} to eliminate non-examples of isogeny-torsion graphs of type $T_{4}$, $T_{6}$, and $T_{8}$. They also used the parametrization of the modular curve $\operatorname{X}_{24e}$ in the database compiled in \cite{Rouse} in a step to eliminate two ``elusive'' non-examples of isogeny-torsion graphs of $S$ type (see Section 13 in \cite{gcal-r}). In a reversal of styles, the author of \textit{this} paper used one-parameter families of curves in the database from \cite{Rouse} to prove each of the isogeny-torsion graphs of $T_{4}$, $T_{6}$, and $T_{8}$ type correspond to infinite sets of \textit{j}-invariants.

\begin{thm}[Sutherland, Zywina, \cite{SZ}]
Up to conjugacy, there are $248$ open subgroups $G$ of $\operatorname{GL}(2,\widehat{\Z})$ of prime-power level
containing $\operatorname{-Id}$, a representative of complex conjugation mod-$N$, and having full determinant mod-$N$ for which $\operatorname{X}_{G}$ has infinitely many non-cuspidal $\Q$-rational points. Of these $248$ groups, there are $220$ of genus $0$ and $28$ of genus $1$.
\end{thm}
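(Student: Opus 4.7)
The plan is to reduce the infinitude of non-cuspidal $\Q$-rational points on $\operatorname{X}_G$ to a purely group-theoretic/geometric condition, and then carry out a classification by enumeration at each prime power. By Faltings' theorem, a smooth projective curve over $\Q$ has infinitely many $\Q$-rational points only if its genus is $0$ or $1$; and in those cases infinitude holds if and only if $X_G$ has a rational point and, in the genus $1$ case, the Jacobian has positive Mordell--Weil rank over $\Q$. So I would recast the theorem as: classify the subgroups $G \subseteq \operatorname{GL}(2,\widehat{\Z})$ of prime-power level, containing $\operatorname{-Id}$ and a representative of complex conjugation, with surjective determinant, for which the associated modular curve $\operatorname{X}_G$ has $g(\operatorname{X}_G) \leq 1$ and infinitely many $\Q$-points.

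The first concrete step is to bound the primes $p$ and the levels $p^n$ that need to be examined. For $p$ large, results going back to Mazur (torsion and isogenies), combined with the classification of maximal subgroups of $\operatorname{GL}(2,\F_p)$ due to Dickson, force the image of Galois to be all of $\operatorname{GL}(2,\Z_p)$ on a Zariski-dense set of $j$-invariants, so only finitely many primes $p$ can produce a proper $G$ with $\operatorname{X}_G$ having infinitely many rational points; explicit bounds are known (e.g.\ via Serre's uniformity-type results and the table of rational points on $\operatorname{X}_0(N)$ recorded in Theorem \ref{thm-ratnoncusps}). For each admissible prime, one further bounds $n$: raising the level strictly increases the index in $\operatorname{GL}(2,\Z_p)$ and hence the genus; the Riemann--Hurwitz formula applied to the cover $\operatorname{X}_G \to \operatorname{X}(1)$ quickly forces $g(\operatorname{X}_G) \geq 2$ past a small cutoff for each $p$. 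In practice one can rely on the detailed $2$-adic analysis of Rouse--Zureick-Brown (Theorem \ref{thm-rzb}) to confine $n$ at $p=2$, and analogous analyses for $p=3,5,7,11,13$.

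Next, for each fixed prime power $p^n$ in the bounded range, I would enumerate subgroups of $\operatorname{GL}(2,\Z/p^n\Z)$ up to conjugation using the standard lattice-of-subgroups algorithms (for example in \textsc{Magma} or \textsc{GAP}), discarding any $G$ failing the three required conditions. For each survivor, compute the genus by Riemann--Hurwitz:
\[
g(\operatorname{X}_G) \;=\; 1 + \frac{d}{12} - \frac{\nu_2}{4} - \frac{\nu_3}{3} - \frac{\nu_\infty}{2},
\]
where $d = [\operatorname{PSL}(2,\Z):\overline{G}\cap \operatorname{PSL}(2,\Z)]$ and $\nu_2,\nu_3,\nu_\infty$ are read off from the action of specific conjugacy classes on cosets. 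Discard all $G$ with $g > 1$. For $g = 0$, check solvability locally to confirm $\operatorname{X}_G(\Q) \neq \emptyset$ (for the groups surviving the earlier filters this is automatic, since the CM $j$-invariants or a modular unit already provide a rational point). For $g = 1$, identify the Jacobian, find a rational point to make $\operatorname{X}_G$ an elliptic curve, and compute its Mordell--Weil rank via descent; keep only those with positive rank.

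The hard part is not the recipe but the execution: organizing the enumeration so it terminates, producing provably complete lists of subgroups up to conjugacy at each level, and rigorously certifying the genus-$1$ Mordell--Weil ranks. The latter is the most delicate ingredient, because a wrong rank computation would shift a curve between the finite and infinite columns; one should combine $2$-descent (or higher descent) with point searches to pin down the rank unconditionally whenever possible, and flag the (rare) cases that rely on BSD or visibility arguments. A secondary subtlety is handling the congruence condition ``contains a representative of complex conjugation mod $N$'': this is imposed to rule out spurious subgroups that could never arise from an elliptic curve over $\Q$, and the classification must check it at each level before computing genera, not after, to keep the search tractable.
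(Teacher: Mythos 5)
This statement is not proved in the paper at all: it is quoted verbatim from Sutherland--Zywina \cite{SZ}, so the only meaningful comparison is with the proof in \cite{SZ} itself. Your outline does mirror their actual strategy (Faltings reduces infinitude to genus $\leq 1$ plus, in genus $1$, a rational point and positive Mordell--Weil rank; one bounds the admissible primes via the maximal-subgroup analysis of $\operatorname{GL}(2,\FF_p)$ and the known arithmetic of $\operatorname{X}_0(p)$ and the Cartan-normalizer curves; one bounds the level; one enumerates subgroups with $\operatorname{-Id}$, full determinant and a complex-conjugation representative; one computes genera by Riemann--Hurwitz and ranks by descent). So the architecture is right, but as a proof it has concrete gaps beyond the obvious one that the numbers $248 = 220 + 28$ can only come from actually executing and certifying the enumeration, which is the entire content of the theorem.

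Two steps are wrong or unjustified as stated. First, your level bound rests on the claim that raising the level ``strictly increases the index \ldots and hence the genus''; increasing the index does \emph{not} strictly increase the genus (there are proper inclusions of congruence subgroups with equal genus), and Riemann--Hurwitz alone does not ``quickly force'' $g \geq 2$. What is true, and what one must use, is that the genus is nondecreasing when passing to subgroups together with an effective bound on the level and index of congruence subgroups of genus $\leq 1$ (Cox--Parry/Dennin type bounds, or the induction through arithmetically maximal subgroups that \cite{SZ} actually performs); without this the enumeration is not provably finite. Second, in genus $0$ the existence of a non-cuspidal rational point is \emph{not} automatic for groups satisfying the three group-theoretic conditions: there are such genus-zero modular curves that are pointless conics or whose rational points are all cusps or CM points, and a CM $j$-invariant need not lift to a point of $\operatorname{X}_{G}$ for a given $G$; \cite{SZ} must verify curve by curve that infinitely many non-cuspidal points exist. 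Finally, invoking ``Serre uniformity-type results'' is both unnecessary and inadmissible (uniformity is a conjecture); the prime bound follows from the fact that a proper $G$ reduces mod $p$ into a Borel, a Cartan normalizer, or an exceptional subgroup, and the corresponding covers force the known curves $\operatorname{X}_0(p)$, $\operatorname{X}_{\mathrm{sp}}^{+}(p)$, $\operatorname{X}_{\mathrm{ns}}^{+}(p)$ to have infinitely many rational points, which by genus and rank computations already restricts $p$ to the small list that occurs. You should also make explicit that a genus-$1$ curve must first be shown to possess a rational point before it is identified with its Jacobian and a rank is computed.
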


The work in \cite{SZ} classified the modular curves of prime-power level with infinitely many $\Q$-rational points. Additionally, it gave a formulization of Hilbert's Irreducibility Theorem that proved key in classifying which isogeny-torsion graphs correspond to infinitely many \textit{j}-invariants.

\subsection{Hilbert's Irreducibility Theorem} \label{HIT}

\begin{theorem}[Hilbert]

Let $f_{1}(X_{1}, \ldots, X_{r}, Y_{1}, \ldots, Y_{s}), \ldots, f_{n}(X_{1}, \ldots, X_{r}, Y_{1}, \ldots, Y_{s})$ be irreducible polynomials in the ring $\Q(X_{1}, \ldots, X_{r})[Y_{1}, \ldots, Y_{s}]$. Then there exists an $r$-tuple of rational numbers $(a_{1}, \ldots, a_{r})$ such that $f_{1}(a_{1}, \ldots, a_{r}, Y_{1}, \ldots, Y_{s}), \ldots, f_{n}(a_{1}, \ldots, a_{r}, Y_{1}, \ldots, Y_{s})$ are irreducible in the ring $\Q[Y_{1}, \ldots, Y_{s}]$.

\end{theorem}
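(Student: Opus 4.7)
The plan is to reduce the general statement to a single-variable, single-polynomial base case and then establish the base case via a thin-set/counting argument. Throughout I will use the language of ``Hilbertian sets'': a subset $H \subseteq \Q^r$ is Hilbertian for a collection of polynomials if specialization at points of $H$ preserves irreducibility, and the goal is to prove such an $H$ is nonempty (in fact, has a complement of density zero).

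First I would reduce to the case $n=1$. Since a finite intersection of Hilbertian sets is Hilbertian (the complements being thin sets whose union is thin), it is enough to handle each $f_i$ separately. Next I would reduce to $s=1$ by Kronecker's substitution $Y_j \mapsto Y^{N^{j-1}}$ for a large integer $N$: for $N$ larger than the total $Y$-degree of $f$, irreducibility of the substituted univariate polynomial is equivalent to irreducibility of the multivariate polynomial, because the monomial-to-integer map becomes injective and factorizations correspond bijectively. Finally, I would reduce to $r=1$ by induction: given irreducible $f(X_1,\dots,X_r,Y) \in \Q(X_1,\dots,X_r)[Y]$, specialize $X_r \mapsto a_r$ for generic rational $a_r$ so that $f(X_1,\dots,X_{r-1},a_r,Y)$ remains irreducible over $\Q(X_1,\dots,X_{r-1})$, then apply the inductive hypothesis.

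The heart of the argument is the base case: given an irreducible $f(X,Y) \in \Q[X,Y]$ of positive degree in $Y$, produce infinitely many $a \in \Q$ such that $f(a,Y) \in \Q[Y]$ is irreducible. Let $C \colon f(X,Y) = 0$ be the affine plane curve, viewed as a cover $\pi \colon C \to \PP^1$ by projection onto $X$, and let $\widetilde{C} \to \PP^1$ be its Galois closure, with Galois group $G \subseteq S_d$ where $d = \deg_Y f$. A specialization $a \in \Q$ makes $f(a,Y)$ reducible precisely when the ``Frobenius'' (i.e., the decomposition behavior of the fiber over $a$) lies in a proper subgroup of $G$ corresponding to an intermediate cover $\widetilde{C} \to C' \to \PP^1$ of degree $> 1$. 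Hence the set of bad specializations is the union $\bigcup_{C'} \pi'(C'(\Q))$ taken over the finitely many maximal intermediate covers.

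The final and hardest step is the counting estimate showing this bad set is thin. For each intermediate cover $C' \to \PP^1$ of degree $e \geq 2$, one needs to bound $\#\{a \in \Z : |a| \leq N, a \in \pi'(C'(\Q))\}$ by $o(N)$. For genus $\geq 1$ components this follows from Siegel's theorem on integral points (or Faltings for $\geq 2$); for genus $0$ components of degree $e \geq 2$, one uses a height argument: a rational point of $C'$ mapping to an integer of size $\leq N$ has height $O(N^{1/e})$, giving at most $O(N^{2/e}) = o(N)$ such images. Summing over the finitely many intermediate covers yields a bad set of size $o(N)$, whereas there are $2N+1$ integers in $[-N,N]$, so infinitely many good specializations exist. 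The main obstacle is verifying the height estimate uniformly across all intermediate covers, which in modern treatments is packaged into the statement that $\PP^1(\Q)$ is not a thin set, proved by an elementary sieve over integers of bounded height.
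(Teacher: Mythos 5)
The paper does not actually prove this statement: it quotes Hilbert's classical irreducibility theorem as background and only uses the consequence recorded in Proposition \ref{HIT proposition}, following the discussion in Section 7 of \cite{SZ}. So what has to be assessed is the internal correctness of your sketch, and there is a genuine gap at its very first substantive step. The reduction from several $Y$-variables to one via Kronecker substitution does not work as claimed: the correspondence between factorizations is one-way only. A factorization of $f$ induces a factorization of the substituted polynomial, but the substituted polynomial can perfectly well be reducible while $f$ is irreducible. For example $f = Y_1^{2} + X Y_{2}$ is irreducible over $\Q(X)$, yet with $N = 3$ the substitution $Y_{1} \mapsto Y$, $Y_{2} \mapsto Y^{3}$ gives $Y^{2} + X Y^{3} = Y^{2}(1 + XY)$. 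Since the substituted polynomial need not be irreducible over $\Q(X_{1}, \ldots, X_{r})$, your base case cannot even be applied to it, and the reduction to $s = 1$ collapses. (The standard reductions are genuinely different, e.g.\ treating $Y_{1}, \ldots, Y_{s-1}$ as additional parameters, or Bertini-type substitutions of the $Y_{j}$ by linear forms in new parameters.)

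Two further steps are thinner than they look. In the reduction to $r = 1$ you specialize $X_{r} \mapsto a_{r}$ ``for generic rational $a_{r}$'' so that irreducibility over $\Q(X_{1}, \ldots, X_{r-1})$ is preserved; but this is not a Zariski-open condition on $a_{r}$ (already $Y^{2} - X_{2}$ stays irreducible over $\Q(X_{1})$ only for non-square values of $X_{2}$), so you are implicitly invoking a relative Hilbert-type statement over a function field together with the fact that the specialization can be taken in $\Q$ rather than in $\Q(X_{1},\ldots,X_{r-1})$; that needs its own argument, not the word ``generic.'' Finally, in the base case the counting estimate you give fails in exactly the hardest case: for a genus-$0$ intermediate cover of degree $e = 2$, your bound $O(N^{2/e})$ is $O(N)$, which is not $o(N)$ and yields no conclusion; obtaining $o(N)$ (indeed $O(N^{1/2}\log N)$) for degree-two covers is precisely the nontrivial sieve/counting input in Serre's treatment of thin sets, which you allude to but do not supply. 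The overall architecture --- reduce to one variable, pass to the Galois closure, observe that bad specializations lie in images of rational points of quotient covers, then count --- is the standard one, but as written these three steps do not go through.
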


In Section $7$ of \cite{SZ} there is the following statement:

Let $G$ be a subgroup of prime-power level such that the modular curve, $\operatorname{X}_{G}$ generated by $G$ is a curve of genus $0$. Define the set
$$\mathcal{S}_{G} := \bigcup_{G'} \pi_{G',G}(X_{G'}(\Q))$$
where $G'$ varies over the proper subgroups of $G$ that are conjugate to one of the groups that define a modular curve of prime-power level with infinitely many non-cuspidal, $\Q$-rational points from \cite{SZ} and $\pi_{G',G} \colon \operatorname{X}_{G'} \to \operatorname{X}_{G}$ is the natural morphism induced by the inclusion $G' \subseteq G$.

Then $\operatorname{X}_{G} \cong \PP^{1}$ and $\mathcal{S}_{G}$ is a \textit{thin} subset (see Chapter 3 of \cite{MR2363329} for the definition and properties of \textit{thin} sets) of $\operatorname{X}_{G}(\Q)$ in the language of Serre. The field $\Q$ is Hilbertian and in particular, $\PP^{1}(\Q) \cong \operatorname{X}_{G}(\Q)$ is not thin; this implies that $\operatorname{X}_{G}(\Q) \setminus S_{G}$ cannot be thin and is infinite.

\begin{proposition} \label{HIT proposition}
Let $N$ be a positive integer. Let $G$ be a subgroup of $\operatorname{GL}(2, \ZZ / N \ZZ)$ such that
\begin{enumerate}
    \item $G$ contains $\operatorname{-Id}$,
    \item $G$ contains a representative of complex conjugation mod-$N$,
    \item $G$ has full determinant modulo $N$.
\end{enumerate}
If the modular curve $\operatorname{X}_{G}$ defined by $G$ is a genus $0$ curve that contains infinitely many non-cuspidal, $\Q$-rational points, then there are infinitely many \textit{j}-invariants corresponding to elliptic curves over $\Q$ such that the image of the mod-$N$ Galois representation is conjugate to $G$ itself (not a proper subgroup of $G$).
\end{proposition}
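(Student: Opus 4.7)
The plan is to formalize the informal argument sketched immediately before the statement, using Hilbert's irreducibility theorem to discard the rational points on $\operatorname{X}_G$ that come from proper sub-moduli. Since $\operatorname{X}_G$ has genus zero and possesses at least one $\Q$-rational point, it is $\Q$-isomorphic to $\PP^1_\Q$; in particular $\operatorname{X}_G(\Q)$ is Hilbertian, meaning it is not a thin set in the sense of Serre.

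First I would enumerate the (finitely many) proper subgroups $G' \subsetneq G$, up to conjugacy in $\operatorname{GL}(2,\Z/N\Z)$, that satisfy the three standing hypotheses: they contain $\operatorname{-Id}$, contain a representative of complex conjugation mod $N$, and surject onto $(\Z/N\Z)^{\times}$ via the determinant. These are exactly the subgroups that can arise as $\langle \overline{\rho}_{E,N}(G_\Q), \operatorname{-Id} \rangle$ for some elliptic curve $E/\Q$ whose mod-$N$ image is strictly smaller than $G$ (up to conjugacy). Since $G$ is a finite group, only finitely many such conjugacy classes occur. For each such $G'$, the natural morphism $\pi_{G',G}\colon \operatorname{X}_{G'} \to \operatorname{X}_G$ is non-constant of degree $[G:G'] \geq 2$, so its image $\pi_{G',G}(\operatorname{X}_{G'}(\Q))$ is a thin subset of $\operatorname{X}_G(\Q)$ of type $C_2$. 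Let $\mathcal{S}_G$ be the union of these finitely many thin sets together with the finitely many rational points of $\operatorname{X}_G$ lying over $\textit{j} = 0$, $\textit{j} = 1728$, or the cusps. A finite union of thin sets is thin, and a thin set plus a finite set is thin, so $\mathcal{S}_G$ is thin; because $\operatorname{X}_G(\Q)$ is not thin, $\operatorname{X}_G(\Q) \setminus \mathcal{S}_G$ is infinite.

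Then, for any $P \in \operatorname{X}_G(\Q) \setminus \mathcal{S}_G$, the standard correspondence recalled in Section 2 of the excerpt yields an elliptic curve $E_P/\Q$ with $\textit{j}(E_P) = \pi_G(P)$ whose mod-$N$ image is contained in some conjugate of $G$. If that image were strictly smaller than $G$, then after enlarging by $\operatorname{-Id}$ (which does not alter the $\textit{j}$-invariant, only the quadratic twist class) it would sit inside one of the enumerated $G'$, forcing $\pi_G(P) \in \pi_{G',G}(\operatorname{X}_{G'}(\Q)) \subseteq \mathcal{S}_G$, a contradiction. Hence for all such $P$ the mod-$N$ image is conjugate to $G$ itself. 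Because $\pi_G$ has finite fibers, the infinitely many points of $\operatorname{X}_G(\Q) \setminus \mathcal{S}_G$ produce infinitely many distinct $\textit{j}$-invariants, which is the desired conclusion.

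The main obstacle is the bookkeeping step of verifying that the list of proper subgroups $G'$ really is exhaustive (every way that $\langle \overline{\rho}_{E,N}(G_\Q), \operatorname{-Id} \rangle$ could be a proper subgroup of $G$ must be captured) and that each morphism $\pi_{G',G}$ does have degree at least $2$, so that Serre's $C_2$-thinness kicks in. Once that is settled, the rest is a direct application of Hilbert's irreducibility theorem over the Hilbertian field $\Q$.
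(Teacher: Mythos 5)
Your overall strategy coincides with the paper's, which simply defers to the thin-set discussion in Section 7 of \cite{SZ}: identify $\operatorname{X}_{G}$ with $\PP^{1}_{\Q}$, remove the thin set swept out by the modular curves of smaller groups, and use that $\Q$ is Hilbertian. However, there is a genuine gap in the descent step where you pass from ``$\pi_{G}(P)\notin\mathcal{S}_{G}$'' to ``the mod-$N$ image of $E_{P}$ is conjugate to $G$ itself.'' Your justification is that if the image $H$ were strictly smaller than $G$, then $\langle H,\operatorname{-Id}\rangle$ would sit inside one of the enumerated proper subgroups $G'$. That is false precisely when $H$ is an index-two subgroup of $G$ with $\operatorname{-Id}\notin H$: then $\langle H,\operatorname{-Id}\rangle=G$, the modular curve attached to $H$ is the same as $\operatorname{X}_{G}$ (the covering has degree one), and no thin-set argument can exclude such points. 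The same issue makes your earlier claim false that the enumerated $G'$ are ``exactly'' the groups $\langle\overline{\rho}_{E,N}(G_{\Q}),\operatorname{-Id}\rangle$ arising from curves with image strictly smaller than $G$. Concretely, take $N=5$ and $G=\langle\mathcal{B}_{5},\operatorname{-Id}\rangle$: an elliptic curve with a rational point of order $5$ and image exactly $\mathcal{B}_{5}$ has image strictly smaller than $G$, yet $\operatorname{X}_{\mathcal{B}_{5}}=\operatorname{X}_{G}$ and such $j$-invariants need not lie in your $\mathcal{S}_{G}$; since $E_{P}$ is only determined by its $j$-invariant up to twist, it may well be such a curve, and then its image is not conjugate to $G$.

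The conclusion of the proposition survives because it only asks for \emph{some} curve with the given $j$-invariant whose image is $G$, and the missing ingredient is a quadratic-twist argument: for $P\notin\mathcal{S}_{G}$ you do get that $\langle\overline{\rho}_{E_{P},N}(G_{\Q}),\operatorname{-Id}\rangle$ is conjugate to $G$; if the image itself is a proper index-two subgroup $H$, choose a squarefree $d$ with $\Q(\sqrt{d})\not\subseteq\Q(E_{P}[N])$ and replace $E_{P}$ by $E_{P}^{(d)}$, whose $j$-invariant is unchanged and whose mod-$N$ image is $\pm H=G$. Your parenthetical about quadratic twist classes gestures at this, but the twisting step is exactly what is needed to close the argument, and as written the step you give in its place would fail. (A minor additional point: since your enumeration keeps only those $G'$ containing a representative of complex conjugation in the non-CM sense of the paper, you should also discard the finitely many CM $j$-invariants, not only $0$ and $1728$.)
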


\begin{remark}\label{Hilbert's Irreducibility Theorem}
The proof of Proposition \ref{HIT proposition} follows directly from the discussion in Section $7$ of \cite{SZ}. In the case that $\operatorname{X}_{G}$ is genus $0$ and contains infinitely many non-cuspidal $\Q$-rational points, the set $\operatorname{X}_{G}(\QQ) \setminus S_{G}$ is infinite. Hence, the set of non-cuspidal, $\Q$-rational points on $\operatorname{X}_{G}$ that are not on the modular curves defined by any proper subgroup of $G$ is infinite.
\end{remark}

\section{Some Lemmas}

\begin{lemma}[\cite{gcal-r}, Lemma 5.5a]\label{lem-necessity-for-point-rationality} Let $E / \Q$ and $E' / \Q$ be elliptic curves. Let $\phi \colon E \to E'$ be an isogeny such that the kernel of $\phi$ is a finite, cyclic, $\Q$-rational group, $H$. Then, for an arbitrary $P \in E$, the point $\phi(P) \in E'$ is defined over $\Q$ if and only if $\sigma(P) - P \in H$ for all $\sigma \in G_{\Q}$.
\end{lemma}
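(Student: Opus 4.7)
The plan is to exploit the Galois-equivariance of $\phi$, so that the rationality of $\phi(P)$ translates directly into a containment in $H$. First I would note that, since $H$ is $\Q$-rational and $E'/\Q$ is given, $\phi$ may be taken to be defined over $\Q$: Lemma \ref{Q-rational} produces an isogeny $\phi_H\colon E\to E/H$ defined over $\Q$ with kernel $H$, the codomain $E/H$ is $\Q$-isomorphic to $E'$, and composing with that $\Q$-isomorphism turns $\phi_H$ into $\phi$ (or into $-\phi$, which has the same kernel and same effect on $\phi(P)$). Consequently, for every $\sigma \in G_{\Q}$ and every $P \in E(\overline{\Q})$ one has the fundamental commutation
\[
\sigma(\phi(P)) \;=\; \phi(\sigma(P)).
\]

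With this in hand the equivalence is essentially a single line. The point $\phi(P) \in E'$ is defined over $\Q$ if and only if $\sigma(\phi(P)) = \phi(P)$ for every $\sigma \in G_{\Q}$. Using the commutation above, this becomes $\phi(\sigma(P)) = \phi(P)$ for every $\sigma$, i.e.\ $\phi(\sigma(P) - P) = \mathcal{O}$ for every $\sigma$, which by definition of the kernel is the condition $\sigma(P) - P \in H$ for every $\sigma \in G_{\Q}$. Both directions of the biconditional run through these same equalities, so the lemma follows.

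The only point that requires any thought is the first one, namely that $\phi$ itself (and not just some twist of it) is $G_{\Q}$-equivariant. Once the uniqueness statement of Lemma \ref{Q-rational} is invoked this is immediate, and I would not expect any further obstacle; the remainder of the argument is purely formal manipulation with the identity $\sigma\circ\phi = \phi\circ\sigma$.
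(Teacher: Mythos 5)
Your chain of equivalences --- $\phi(P)\in E'(\Q)$ iff $\sigma(\phi(P))=\phi(P)$ for all $\sigma\in G_{\Q}$, iff $\phi(\sigma(P)-P)=\mathcal{O}$, iff $\sigma(P)-P\in H$ --- is exactly the argument given for Lemma 5.5 in \cite{gcal-r}, and it is complete the moment one knows that $\phi$ commutes with the Galois action, i.e.\ that $\phi$ is defined over $\Q$.

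The step you flag as ``the only point that requires any thought'' is, however, where the gap lies, and it cannot be closed from the hypotheses as literally stated. Lemma \ref{Q-rational} produces a $\Q$-rational isogeny $\phi_{H}\colon E\to E/H$, but its uniqueness statement concerns the pair consisting of the quotient curve together with that $\Q$-rational isogeny; it does not say that an arbitrary curve $E'/\Q$ admitting \emph{some} isogeny with kernel $H$ is $\Q$-isomorphic to $E/H$ (a priori $E'$ is only $\overline{\Q}$-isomorphic to $E/H$ and could be a twist), and even when $E'=E/H$ two isogenies with the same kernel differ by an element of $\Aut_{\overline{\Q}}(E')$, which is $\{\pm 1\}$ only for $j(E')\neq 0,1728$. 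In fact, without a rationality assumption on $\phi$ the statement itself is false: take $H=\{\mathcal{O}\}$, let $E\colon y^{2}=x^{3}+Ax+B$ have a rational point $P$ with $y\neq 0$, and let $\phi$ be the $\overline{\Q}$-isomorphism $(x,y)\mapsto (x,y/\sqrt{d})$ onto the nontrivial quadratic twist $E^{(d)}\colon dy^{2}=x^{3}+Ax+B$; then $H$ is cyclic and $\Q$-rational and $\sigma(P)-P=\mathcal{O}\in H$ for all $\sigma$, yet $\phi(P)$ is not defined over $\Q$ (composing with any $\Q$-rational isogeny gives higher-degree examples). So the lemma must be read, as it is in \cite{gcal-r} and as it is used throughout this paper, with $\phi$ a $\Q$-isogeny, the $\Q$-rationality of $\phi$ being part of the hypotheses rather than something deduced from the $\Q$-rationality of $H$. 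With that reading, the remainder of your argument coincides with the paper's proof.
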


\begin{lemma}[\cite{gcal-r}, Lemma 5.6]\label{lem-2torspt-all-have-2torspt} Let $E/\Q$ be an elliptic curve with a point of order $2$ defined over $\Q$. Then, every elliptic curve over $\Q$ that is $\Q$-isogenous to $E$ also has a point of order $2$ defined over the rationals.
\end{lemma}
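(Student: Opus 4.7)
The plan is to argue by induction on the degree $n$ of a $\Q$-isogeny $\phi\colon E \to E'$. The base case $n=1$ is immediate, since a $\Q$-isomorphism identifies $\Q$-rational torsion. For the inductive step, fix a point $T\in E(\Q)$ of order $2$ and examine $\phi(T)\in E'$. Because $\phi$ is a group homomorphism, $\phi(T)$ lies in $E'[2]$, and because both $T$ and $\phi$ are defined over $\Q$, one has $\sigma(\phi(T))=\phi(\sigma(T))=\phi(T)$ for every $\sigma\in G_{\Q}$, so $\phi(T)\in E'(\Q)$. If $\phi(T)\neq \mathcal{O}$, then $\phi(T)$ is the desired $\Q$-rational point of order $2$ on $E'$ and we are done.

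The remaining case is $\phi(T)=\mathcal{O}$, i.e., $T\in\ker\phi$. Since $T$ is $\Q$-rational, $\langle T\rangle$ is a $\Q$-rational cyclic subgroup of order $2$, so Lemma \ref{Q-rational} produces a $\Q$-isogeny $\pi\colon E\to E'':=E/\langle T\rangle$ of degree $2$ with kernel $\langle T\rangle$. Because $\ker\pi\subseteq\ker\phi$, the isogeny $\phi$ factors as $\phi=\psi\circ\pi$ for a $\Q$-isogeny $\psi\colon E''\to E'$ of degree $n/2<n$. To apply the induction hypothesis to $\psi$, I need $E''$ to possess a $\Q$-rational point of order $2$. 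This is furnished by the dual isogeny $\hat\pi\colon E''\to E$, which is a $\Q$-isogeny of degree $2$ whose kernel is a $\Q$-rational subgroup of $E''[2]$ of order $2$. Its unique non-trivial element must be fixed by $G_{\Q}$ (as it is the only non-identity element of a $G_{\Q}$-stable two-element set), hence it is a $\Q$-rational point of order $2$ on $E''$. Induction now yields a $\Q$-rational point of order $2$ on $E'$, completing the argument.

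There are no real obstacles. The only verifications worth noting, both standard, are (i) that the factorization $\phi=\psi\circ\pi$ exists and that $\psi$ is automatically $\Q$-rational because $\pi$ and $\phi$ are, and (ii) that a $\Q$-rational cyclic subgroup of prime order $2$ automatically contains a $\Q$-rational generator. If one preferred to avoid induction, an equivalent route is to invoke the connectivity of the isogeny graph (implicit in Theorem \ref{thm-kenku}) to reduce at once to a single prime-degree $\Q$-isogeny, handle odd primes $p=\deg\phi$ via the $G_{\Q}$-equivariant isomorphism $E[2]\to E'[2]$ induced by $\phi$ (injective since $\ker\phi\cap E[2]=\{\mathcal{O}\}$, and bijective by order comparison), and treat $p=2$ exactly as in the dual-isogeny step above.
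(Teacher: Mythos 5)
Your argument is correct. Note that this paper does not reprove the lemma at all --- it simply cites Lemma 5.6 of \cite{gcal-r} --- so the relevant comparison is with the argument there, which (like your closing remark) reduces to a single prime-degree $\Q$-isogeny: for odd $p$ the kernel meets $E[2]$ trivially, so $\phi$ restricts to a $G_{\Q}$-equivariant isomorphism $E[2]\to E'[2]$ and the rational $2$-torsion point survives; for $p=2$ one either has $T\notin\ker\phi$, in which case $\phi(T)$ is the desired point, or one uses the image $\phi(E[2])=\ker\hat\phi$, a Galois-stable group of order $2$ whose nontrivial element is necessarily fixed. Your main route packages exactly these two observations into a strong induction on the degree of an arbitrary $\Q$-isogeny, using the factorization $\phi=\psi\circ\pi$ through $E/\langle T\rangle$ when $T\in\ker\phi$; this has the small advantage of not needing the (true, but usually quoted) fact that any two curves in a $\Q$-isogeny class are linked by a chain of prime-degree $\Q$-isogenies, at the cost of having to check that $\psi$ is defined over $\Q$, which you do. The two verifications you flag --- rationality of the factor $\psi$ (Galois descent plus surjectivity of $\pi$) and the fact that a Galois-stable order-$2$ subgroup has a rational generator --- are indeed the only points needing care, and both are handled correctly.
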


\begin{lemma}[\cite{gcal-r}, Lemma 5.11]\label{lem-Maximality-Of-Rational-2-Power-Groups} Let $E / \Q$ and $E' / \Q$ be elliptic curves and let $P$ be a point of $E$ of order $2^{M}$ with $M \geq 1$. Suppose $P$ generates a $\Q$-rational group and the two cyclic groups of order $2^{M+1}$ that contain $P$ are not $\Q$-rational. Let $\phi \colon E \to E'$ be an isogeny with kernel $\langle P \rangle$. Then, $E'(\Q)_{\text{tors}}$ is cyclic.
\end{lemma}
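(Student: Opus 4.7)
The plan is to argue by contradiction. Suppose $E'(\Q)_{\tor}$ is not cyclic; by Mazur's theorem (Theorem \ref{thm-mazur}) the torsion must then contain $\Z/2\Z\times\Z/2\Z$, so $E'[2]\subseteq E'(\Q)$. My goal is to manufacture from this assumption a $\Q$-rational cyclic subgroup of $E$ of order $2^{M+1}$ containing $\langle P\rangle$, which would contradict the hypothesis of the lemma.

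First I would examine the restricted homomorphism $\phi|_{E[2]}\colon E[2]\to E'[2]$. Its kernel is $\langle P\rangle\cap E[2]$, namely the unique order-$2$ subgroup of the cyclic group $\langle P\rangle$ (for $M\geq 1$ this is $\langle [2^{M-1}]P\rangle$). Hence the image of $\phi|_{E[2]}$ inside $E'[2]\cong(\Z/2\Z)^2$ has order $2$, and so two of the three nontrivial $2$-torsion points of $E'$, call them $R_2$ and $R_3$, fail to lie in $\phi(E[2])$. Next I would pick arbitrary preimages $Q_2,Q_3\in E$ with $\phi(Q_i)=R_i$.

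The key claim is that each $Q_i$ has order exactly $2^{M+1}$, that $\langle P\rangle\subseteq\langle Q_i\rangle$, and that $\langle Q_2\rangle\neq\langle Q_3\rangle$. Because $R_i$ has order $2$, the point $[2]Q_i$ lies in $\ker\phi=\langle P\rangle$, say $[2]Q_i=[k]P$. A parity analysis on $k$ finishes everything: if $k$ were even (including $k=0$), then $Q_i$ would differ from a scalar multiple of $P$ by a $2$-torsion point, forcing $R_i\in\phi(E[2])$ and contradicting the choice of $R_i$. Hence $k$ is odd, which makes $[2^M]Q_i=[2^{M-1}k]P\neq\mathcal{O}$, so $Q_i$ has order $2^{M+1}$; and since $k$ is a unit modulo $2^M$, $P$ is an odd multiple of $[2]Q_i$ and therefore lies in $\langle Q_i\rangle$. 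The equality $\langle Q_2\rangle=\langle Q_3\rangle$ would express $Q_3$ as an odd multiple of $Q_2$, whence $R_3$ would equal $R_2$, a contradiction.

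Finally I would check that $\langle Q_2\rangle$ is $\Q$-rational by invoking Lemma \ref{lem-necessity-for-point-rationality}: since $R_2\in E'(\Q)$, for every $\sigma\in G_\Q$ we have $\sigma(Q_2)-Q_2\in\ker\phi=\langle P\rangle\subseteq\langle Q_2\rangle$, so $\sigma(Q_2)\in\langle Q_2\rangle$ and $\langle Q_2\rangle$ is Galois-stable. This exhibits a $\Q$-rational cyclic subgroup of $E$ of order $2^{M+1}$ containing $\langle P\rangle$, contradicting the hypothesis and completing the proof. I do not anticipate any serious obstacle; the only delicate point is the parity bookkeeping that pins down the orders of the $Q_i$, and this reduces to the straightforward analysis of $\phi|_{E[2]}$ carried out in the second step.
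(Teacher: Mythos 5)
Your proof is correct: the contradiction via Mazur's theorem (non-cyclic torsion forces $E'[2]\subseteq E'(\Q)$), the parity analysis showing a preimage $Q_2$ of a rational $2$-torsion point outside $\phi(E[2])$ has order $2^{M+1}$ with $\langle P\rangle\subseteq\langle Q_2\rangle$, and the appeal to Lemma \ref{lem-necessity-for-point-rationality} to make $\langle Q_2\rangle$ a $\Q$-rational cyclic group of order $2^{M+1}$ containing $P$ all check out. The paper itself only cites this statement from \cite{gcal-r} (Lemma 5.11), and your argument is essentially the same as the proof given there, so there is nothing further to add.
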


\begin{lemma}[\cite{gcal-r}, Lemma 5.16]\label{lem-subsequent-rational-pts}
		Let $E / \Q$ be an elliptic curve and let $p$ be an odd prime. Suppose $E$ has a point $P_1$ of order $p$ defined over $\Q$, and suppose $C_{p}(E) = m+1 > 1$ such that $\langle P_{m} \rangle$ is a cyclic $\Q$-rational group of order $p^m$ containing $P_{1}$. For $0 \leq j \leq m$, let $P_{j} \in \langle P_{m} \rangle$ such that $P_0=\mathcal{O}$ and $[p]P_{j} = P_{j-1}$, and  let $\phi_{j} \colon E \to E_j$ be an isogeny with kernel $\langle P_{j} \rangle$. 
		\begin{enumerate}
			\item If $1\leq j \leq m-1$, then $E_j$ has a point of order $p$ defined over $\Q$. Further, for $\phi_{{{m-1}}}\colon E \to E_{m-1}$, the curve $E_{m-1}$ has a point of order $p$ defined over $\Q$ but no points of order $p^{2}$ defined over $\Q$. 
			\item If $j=m$, the curve $E_m$ has no points of order $p$ defined over $\Q$.
		\end{enumerate}
	\end{lemma}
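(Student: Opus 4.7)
The plan is to first settle the positive assertion in part (1), then handle the negative assertion in (1) together with (2) by a single pullback-plus-Galois argument.

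For the positive statement in (1), fix $1 \leq j \leq m-1$; I would show that $\phi_j(P_{j+1}) \in E_j$ has order $p$ and is defined over $\Q$. The order is immediate: $[p]\phi_j(P_{j+1}) = \phi_j(P_j) = \mathcal{O}$, while $\phi_j(P_{j+1}) \neq \mathcal{O}$ since $P_{j+1} \notin \langle P_j \rangle$. For rationality, by Lemma \ref{lem-necessity-for-point-rationality} it is enough to check that $\sigma(P_{j+1}) - P_{j+1} \in \langle P_j \rangle$ for every $\sigma \in G_\Q$. Because $\langle P_{j+1} \rangle$ is $\Q$-rational, $\sigma(P_{j+1}) = a_\sigma P_{j+1}$ for some $a_\sigma \in (\Z/p^{j+1}\Z)^\times$; applying $[p^j]$ and using $P_1 = [p^j]P_{j+1} \in E(\Q)$ forces $a_\sigma \equiv 1 \pmod p$, and writing $a_\sigma = 1 + p b_\sigma$ gives $\sigma(P_{j+1}) - P_{j+1} = [b_\sigma]P_j \in \langle P_j \rangle$.

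For the remaining two statements I would argue by contradiction via a uniform setup. Put $(j,i) = (m-1,2)$ when addressing the negative assertion of (1) and $(j,i) = (m,1)$ when addressing (2), and suppose $E_j$ has a $\Q$-rational point $R$ of order $p^i$. Define $H := \phi_j^{-1}(\langle R \rangle) \subseteq E$, a $\Q$-rational subgroup of order $p^{j}\cdot p^{i} = p^{m+1}$ that contains $\ker \phi_j = \langle P_j \rangle$. As a subgroup of $E[p^{m+1}] \cong (\Z/p^{m+1}\Z)^2$, the group $H$ is either cyclic of order $p^{m+1}$ or of rank $2$. If $H$ is cyclic, then it is a cyclic $\Q$-rational subgroup of $E$ of order $p^{m+1}$, distinct from each $\langle P_i \rangle$ purely by cardinality, and hence $C_p(E) \geq m+2$, contradicting the standing hypothesis $C_p(E) = m+1$. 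If $H$ has rank $2$, then $|H[p]| = p^2 = |E[p]|$, so $E[p] \subseteq H$. The restriction $\phi_j|_{E[p]}$ has kernel $\langle P_j \rangle \cap E[p] = \langle P_1 \rangle$, so its image is the unique order-$p$ subgroup of $\langle R \rangle$, which is generated by the $\Q$-rational point $R$ (if $i=1$) or $[p]R$ (if $i=2$). Choose $T \in E[p]$ mapping to this generator; then $T \notin \langle P_1 \rangle$, so $\{P_1,T\}$ is an $\mathbb{F}_p$-basis of $E[p]$.

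The delicate step is the final Galois-theoretic identification. Because $\phi_j(T)$ is $\Q$-rational and $\sigma$ preserves $E[p]$, we have $\sigma(T) \in E[p] \cap \phi_j^{-1}(\phi_j(T)) = T + \langle P_1 \rangle$; combined with $\sigma(P_1) = P_1$, the matrix of $\overline{\rho}_{E,p}(\sigma)$ in the basis $\{P_1, T\}$ is upper unitriangular and therefore has determinant $1$. The Weil pairing identifies this determinant with the mod-$p$ cyclotomic character $\chi_p(\sigma)$, so $\chi_p$ would be trivial, which is absurd for the odd prime $p$ since $\zeta_p \notin \Q$. This contradiction simultaneously yields the negative half of (1) and all of (2). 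The main obstacle is organizing the dichotomy on $H$ correctly: the cyclic case must be eliminated by elementary counting against $C_p(E) = m+1$, while the non-cyclic case must be shown to force $E[p] \subseteq H$, which is precisely what enables the cyclotomic-character contradiction.
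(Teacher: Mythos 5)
Your proposal is correct. One caveat about the comparison you are asking for: this paper does not prove the lemma at all — it is imported verbatim from \cite{gcal-r} (Lemma 5.16) — so the only in-paper material to measure against is the toolkit it records, and your argument stays inside that toolkit: the positive half of (1) is exactly the expected application of Lemma \ref{lem-necessity-for-point-rationality}, via $\sigma(P_{j+1})=[1+pb_\sigma]P_{j+1}$ and hence $\sigma(P_{j+1})-P_{j+1}=[b_\sigma]P_j\in\langle P_j\rangle$, and your treatment of the negative assertions — pulling back $\langle R\rangle$ to $H=\phi_j^{-1}(\langle R\rangle)$ of order $p^{m+1}$ and splitting into the cyclic case (an extra cyclic $\Q$-rational subgroup, contradicting $C_p(E)=m+1$) versus the rank-two case (forcing $E[p]\subseteq H$, a basis $\{P_1,T\}$ on which Galois acts unitriangularly, and hence triviality of $\det\overline{\rho}_{E,p}=\chi_p$, impossible for odd $p$) — is a clean packaging of the same two obstructions that any proof of this statement must exploit. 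Two cosmetic remarks, neither a gap: you recycle the letter $i$ both as the exponent in $p^i$ and as the index in ``$\langle P_i\rangle$''; and in the boundary case $m=1$ the pair $(j,i)=(0,2)$ makes $\phi_0$ an isomorphism, so $H$ is automatically cyclic and only your first case occurs — which is fortunate, since the identity $\langle P_j\rangle\cap E[p]=\langle P_1\rangle$ used in the rank-two branch requires $j\geq 1$. A sentence noting that the rank-two branch is only invoked for $j\geq 1$ would make the write-up airtight.
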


\begin{definition}
Let $N$ be a positive integer. The subgroup of $\operatorname{GL}(2, \Z / N \Z)$ consisting of all matrices of the form $\left(\begin{array}{cc}
    1 & b \\
    0 & d
\end{array}\right)$ will be denoted $\mathfrak{B}_{N}$.
\end{definition}

\begin{lemma}\label{Rational Points}

Let $p$ be an odd prime and $u$ a generator of $\left(\Z / p \Z\right)^{\times}$. Then up to conjugation, the only proper subgroup of $\mathfrak{B}_{p}$ which has an element with determinant $u$ is the subgroup of diagonal matrices of $\mathfrak{B}_{p}$.

\end{lemma}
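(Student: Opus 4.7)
The plan is to exploit the semidirect product structure of $\mathfrak{B}_p$ and then invoke an elementary coprime order argument (effectively Schur--Zassenhaus).

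First I would record the structural facts about $\mathfrak{B}_p$. The group has order $p(p-1)$. The determinant map $\pi \colon \mathfrak{B}_p \to (\Z/p\Z)^\times$, sending $\smallmat{1}{b}{0}{d}$ to $d$, is a surjective homomorphism whose kernel is the unipotent subgroup $U = \{\smallmat{1}{b}{0}{1} : b \in \Z/p\Z\}$, of order $p$. The subgroup $D$ of diagonal matrices is a section of $\pi$, so $\mathfrak{B}_p = U \rtimes D$.

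Now let $H \subsetneq \mathfrak{B}_p$ be a proper subgroup that has an element of determinant $u$. Since $u$ generates $(\Z/p\Z)^\times$, we have $\pi(H) = (\Z/p\Z)^\times$, and so $(p-1) \mid |H|$. On the other hand, $H \cap U$ is a subgroup of $U \cong \Z/p\Z$, hence either trivial or equal to $U$. If $H \cap U = U$, then combined with $\pi(H) = (\Z/p\Z)^\times$ we get $H = \mathfrak{B}_p$, contradicting that $H$ is proper. So $H \cap U = \{I\}$, which forces $|H| = p-1$ and $\pi|_H$ to be an isomorphism. In particular, $H$ is a complement of $U$ in $\mathfrak{B}_p$.

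The diagonal subgroup $D$ is also a complement of $U$, so the claim reduces to showing any two complements of $U$ are conjugate. Since $\gcd(|U|, p-1) = \gcd(p, p-1) = 1$, this follows from Schur--Zassenhaus. For a self-contained argument one can compute directly: because $\pi|_H$ is a bijection, for each $d \in (\Z/p\Z)^\times$ there is a unique $b(d) \in \Z/p\Z$ with $\smallmat{1}{b(d)}{0}{d} \in H$, and the multiplication rule in $\mathfrak{B}_p$ forces the cocycle relation $b(d_1 d_2) = b(d_2) + b(d_1) d_2$ with $b(1) = 0$; solving this (for instance, by induction on the exponent $k$ in $d = u^k$) gives $b(d) = c(d-1)$ for some constant $c \in \Z/p\Z$. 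A short calculation then shows that conjugation of $H$ by the unipotent element $\smallmat{1}{-c}{0}{1}$ yields exactly $D$.

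The argument is essentially immediate from the semidirect product structure; the only point requiring a small amount of care is verifying that $H$ is forced to be a complement of $U$ (i.e.\ that $|H| = p-1$ rather than some other divisor of $p(p-1)$), which is where the hypothesis that $u$ is a generator of $(\Z/p\Z)^\times$ is used.
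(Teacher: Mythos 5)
Your proof is correct, but it is organized around a different structural fact than the paper's. The paper argues directly with the single element $h=\smallmat{1}{x}{0}{u}$ of determinant $u$: it conjugates $h$ to the diagonal generator $d$ by the unipotent matrix $\smallmat{1}{x/(1-u)}{0}{1}$, observes that $\langle h\rangle$ has order $p-1$ and hence prime index $p$ in $\mathfrak{B}_p$, so $\langle h\rangle$ is maximal, and concludes that any $H\supseteq\langle h\rangle$ is either $\langle h\rangle$ (conjugate to the diagonal subgroup) or all of $\mathfrak{B}_p$. You instead pass through the determinant homomorphism: surjectivity of $\det|_H$ plus the dichotomy $H\cap U\in\{\{I\},U\}$ forces a proper $H$ to be a complement of the unipotent kernel $U$, and then conjugacy of complements (Schur--Zassenhaus for the coprime orders $p$ and $p-1$, or your explicit cocycle computation $b(d)=c(d-1)$) finishes the argument. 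The two routes converge at the end -- your conjugating element $\smallmat{1}{-c}{0}{1}$ with $c=x/(u-1)$ is literally the paper's matrix $\smallmat{1}{x/(1-u)}{0}{1}$ -- but the intermediate mechanisms differ: the paper's ``cyclic subgroup of prime index is maximal'' step is arguably the quickest path for this one lemma, while your complement/cohomological framing is more robust (it would identify all full-determinant subgroups of a Borel modulo conjugacy even when the image of $\det$ is not the whole of $(\Z/p\Z)^\times$ generated by a single element, and it isolates exactly where the hypothesis that $u$ generates is used). One tiny point worth making explicit in your write-up: defining $c$ requires $u\neq 1$, i.e.\ $u-1$ invertible mod $p$, which holds because $p$ is odd and $u$ is a generator (the paper makes the same implicit use when dividing by $1-u$).
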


\begin{proof}

Let $H$ be a subgroup of $\mathfrak{B}_{p}$ of full determinant mod-$p$. Then $H$ contains an element of the form $h = \left(\begin{array}{cc}
    1 & x \\
    0 & u
\end{array}\right)$ for some $x \in \Z / p \Z$. The matrix $h$ has order $p-1$ because $h$ is conjugate to $d = \left(\begin{array}{cc}
    1 & 0 \\
    0 & u
\end{array}\right)$ by the matrix $\left(\begin{array}{cc}
    1 & \frac{x}{1-u} \\
    0 & 1
\end{array}\right)$. Notice that $d$ generates the subgroup of $\mathfrak{B}_{p}$ of diagonal matrices. The group $\mathfrak{B}_{p}$ is of order $(p-1)p$ and $\left\langle h \right\rangle$ is a subgroup of $\mathfrak{B}_{p}$ of index $p$. As $p$ is prime, $\left\langle h \right\rangle$ is a maximal subgroup of $\mathcal{B}_{p}$. The group $H$ contains $\left\langle h \right\rangle$, and either $H = \left\langle h \right\rangle$ and hence, is conjugate to $\left\langle d \right\rangle$ or $H$ properly contains $\left\langle h \right\rangle$ and hence, $H = \mathfrak{B}_{p}$.
\end{proof}

\begin{lemma}\label{Unique Index 2 Subgroup}
Let $p$ be an odd prime. Then $\mathfrak{B}_{p}$ and the subgroup of diagonal matrices of $\mathcal{B}_{p}$ both have a single subgroup of index $2$.
\end{lemma}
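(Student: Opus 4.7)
My plan is to reduce the problem to counting index-$2$ subgroups of a cyclic group via the determinant map. First I will handle the diagonal subgroup $D$ (which is immediate), and then pass from $\mathfrak{B}_p$ to its cyclic quotient after showing that every index-$2$ subgroup of $\mathfrak{B}_p$ must contain the unipotent normal subgroup.

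Observe that $|\mathfrak{B}_p| = p(p-1)$, and that the diagonal subgroup $D := \left\{\begin{pmatrix}1 & 0 \\ 0 & d\end{pmatrix} : d \in (\Z/p\Z)^\times\right\}$ is cyclic of order $p-1$, since the projection to the lower-right entry gives an isomorphism $D \cong (\Z/p\Z)^\times$. Because $p$ is odd, $p-1$ is even, and by the fundamental theorem of cyclic groups $D$ has a unique subgroup of each divisor of $p-1$; in particular, a unique subgroup of index $2$ (the subgroup of squares). This settles the claim for $D$.

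For $\mathfrak{B}_p$, set $N := \left\{\begin{pmatrix}1 & b \\ 0 & 1\end{pmatrix} : b \in \Z/p\Z\right\}$; this is the kernel of the surjective determinant map $\det \colon \mathfrak{B}_p \to (\Z/p\Z)^\times$, so $N$ is normal of order $p$ with cyclic quotient $\mathfrak{B}_p / N \cong (\Z/p\Z)^\times$ of even order $p-1$. Let $H \leq \mathfrak{B}_p$ have index $2$; then $H$ is automatically normal. The key step is $N \subseteq H$: otherwise, since $N$ has prime order, $N \cap H = \{I\}$, and then $|NH| = |N|\,|H|/|N \cap H| = p \cdot p(p-1)/2 = p^{2}(p-1)/2$, which exceeds $|\mathfrak{B}_p| = p(p-1)$ for every odd prime $p$, a contradiction. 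Hence every index-$2$ subgroup of $\mathfrak{B}_p$ contains $N$ and so corresponds bijectively to an index-$2$ subgroup of the cyclic group $\mathfrak{B}_p/N$, of which there is exactly one by the previous paragraph.

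The only genuine obstacle is the step ruling out index-$2$ subgroups that do not contain $N$, and the coarse cardinality comparison above disposes of it at once; the remainder is the standard subgroup structure of finite cyclic groups.
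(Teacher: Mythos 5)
Your proof is correct and follows essentially the same route as the paper: both arguments come down to showing that every index-$2$ subgroup of $\mathfrak{B}_p$ must contain the order-$p$ unipotent part, and then reducing the count to the cyclic group of order $p-1$. The only difference is cosmetic — the paper gets the unipotent generator $t$ into $H$ by noting that an index-$2$ subgroup contains all squares and that $t$ has odd order (and then writes $H=\langle d^{2},t\rangle$ explicitly), whereas you force $N\subseteq H$ by a cardinality comparison and finish via the correspondence with subgroups of the cyclic quotient $\mathfrak{B}_p/N\cong(\Z/p\Z)^{\times}$.
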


\begin{proof}

Let $u$ be a generator of $\left(\ZZ / p \ZZ\right)^{\times}$. Let $d = \left(\begin{array}{cc}
    1 & 0 \\
    0 & u
\end{array}\right)$ and let $t = \left(\begin{array}{cc}
    1 & 1 \\
    0 & 1
\end{array}\right)$. Then $\mathcal{B}_{p} = \left\langle d, t\right\rangle$. Let $H$ be a subgroup of $\mathfrak{B}_{p}$ of index $2$. Then $H$ is a normal subgroup of $\mathcal{B}_{p}$ and all squares of $\mathfrak{B}_{p}$ are elements of $H$. Thus, $t^{2}$ is an element of $H$. The matrix $t$ has order $p$ which is an odd prime and so, the group generated by $t$ is equal to the group generated by $t^{2}$. Hence, $t$ is an element of $H$. Thus, $H = \left\langle d^{2}, t \right\rangle$.

Note that the group of diagonal matrices of $\mathcal{B}_{p}$ is generated by $d$. As $\left\langle d \right\rangle$ is a cyclic group, it has a single subgroup of index $2$.
\end{proof}

\begin{corollary}\label{Corollary Unique Index 2 Subgroup}
Let $p$ be an odd prime and let $E$ be an elliptic curve defined over $\Q$ that has a point of order $p$ defined over $\Q$. Then when $p \equiv 1 \bmod 4$, $\QQ(\sqrt{p})$ is the only quadratic subfield of $\QQ(E[p])$ and when $p \equiv 3 \bmod 4$, $\QQ(\sqrt{-p})$ is the only quadratic subfield of $\QQ(E[p])$.
\end{corollary}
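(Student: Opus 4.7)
The plan is to use the two preceding lemmas together with the Weil pairing to first show that $\QQ(E[p])$ has a unique quadratic subfield, and then identify it via the cyclotomic character.

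First, since $E/\QQ$ has a $\QQ$-rational point of order $p$, a suitable choice of basis for $E[p]$ puts the image $G := \overline{\rho}_{E,p}(G_\QQ)$ inside $\mfrak{B}_p$. By the properties of the Weil pairing, $G$ has full determinant mod $p$. Lemma \ref{Rational Points} then forces $G$ to be either $\mfrak{B}_p$ itself or (up to conjugation) the diagonal subgroup $\langle d\rangle$. In both cases, Lemma \ref{Unique Index 2 Subgroup} tells us that $G$ has a unique subgroup of index $2$, so by Galois theory, $\QQ(E[p])$ has exactly one quadratic subfield $F$.

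Next, I would identify $F$ using the cyclotomic character. Since $\det \circ \overline{\rho}_{E,p}$ is the mod-$p$ cyclotomic character, we have $\QQ(\zeta_p) \subseteq \QQ(E[p])$. The field $\QQ(\zeta_p)$ has a unique quadratic subfield, namely $\QQ(\sqrt{p^{\ast}})$ where $p^{\ast} = (-1)^{(p-1)/2}\, p$; this is the classical fact stemming from the evaluation of the quadratic Gauss sum. Since $\QQ(\sqrt{p^{\ast}})$ is a quadratic subfield of $\QQ(E[p])$ and the latter has only one such subfield, we conclude $F = \QQ(\sqrt{p^{\ast}})$. Unpacking cases gives $F = \QQ(\sqrt{p})$ when $p \equiv 1 \bmod 4$ and $F = \QQ(\sqrt{-p})$ when $p \equiv 3 \bmod 4$, as claimed.

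There is no real obstacle; the whole argument is essentially bookkeeping once the two lemmas are in hand. The only point worth being careful about is making sure the uniqueness of index-$2$ subgroups proved in Lemma \ref{Unique Index 2 Subgroup} applies to \emph{both} possibilities for $G$ from Lemma \ref{Rational Points}, which it does since that lemma addresses $\mfrak{B}_p$ and its diagonal subgroup simultaneously.
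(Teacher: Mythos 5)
Your proposal is correct and follows essentially the same route as the paper: reduce to the image being $\mathfrak{B}_{p}$ or its diagonal subgroup via Lemma \ref{Rational Points}, invoke the unique index-$2$ subgroup from Lemma \ref{Unique Index 2 Subgroup} to get a unique quadratic subfield, and identify it as $\QQ\bigl(\sqrt{(-1)^{(p-1)/2}p}\bigr)$ inside $\QQ(\zeta_{p})\subseteq\QQ(E[p])$. Your explicit mention of the Weil pairing for full determinant and of the Gauss-sum fact only makes the bookkeeping a bit more detailed than the paper's version.
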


\begin{proof}
The $p$-division field $E[p]$ contains the field generated by the $p$-th roots of unity, $\QQ(\zeta_{p})$. Moreover, $\QQ\left(\zeta_{p}\right)$ contains $\sqrt{(-1)^{(p-1)/2}\cdot p}$. By the fundamental theorem of Galois theory, subfields of $\QQ(E[p])$ of degree $2$ correspond to subgroups of the image of the mod-$p$ Galois representation attached to $E$ of index $2$. By Lemma \ref{Rational Points} the image of the mod-$p$ Galois representation attached to $E$ is conjugate to $\mathcal{B}_{p}$ or the group of diagonal matrices of $\mathcal{B}_{p}$. By Lemma \ref{Unique Index 2 Subgroup}, $\mathcal{B}_{p}$ and the group of diagonal matrices of $\mathcal{B}_{p}$ both contain a single subgroup of index $2$.
\end{proof}

\begin{lemma}\label{Three Subgroups of Index 2}
Let $p$ be an odd prime. Then the group $\left\langle \mathcal{B}_{p}, \operatorname{-Id} \right\rangle$ contains three subgroups of index $2$.
\end{lemma}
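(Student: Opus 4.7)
The plan is to enumerate the index-$2$ subgroups of $G := \left\langle \mathfrak{B}_{p}, \operatorname{-Id} \right\rangle$ by reducing to a finite abelian quotient. First I would note that $\operatorname{-Id}$ has $(1,1)$-entry $-1 \neq 1$, so $\operatorname{-Id} \notin \mathfrak{B}_p$, and every element of $G$ can be written uniquely in the form
$$ \begin{pmatrix} \pm 1 & b \\ 0 & d \end{pmatrix} \quad \text{with } b \in \ZZ/p\ZZ, \; d \in (\ZZ/p\ZZ)^{\times}. $$
In particular $|G| = 2p(p-1)$.

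Next I would isolate the unipotent subgroup
$$ U = \left\{ \begin{pmatrix} 1 & b \\ 0 & 1 \end{pmatrix} : b \in \ZZ/p\ZZ \right\}, $$
which is normal in $G$ and has order $p$. For any subgroup $H \leq G$ of index $2$, the intersection $U \cap H$ has index dividing $2$ in $U$; since $|U| = p$ is odd, this forces $U \subseteq H$. Hence, taking preimages under the projection $G \to G/U$ yields a bijection between the index-$2$ subgroups of $G$ and the index-$2$ subgroups of $G/U$.

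Then I would identify $G/U$ with the abelian group $\{\pm 1\} \times (\ZZ/p\ZZ)^{\times}$ via $\begin{pmatrix} a & b \\ 0 & d \end{pmatrix}U \mapsto (a, d)$, and count index-$2$ subgroups of this quotient by counting nontrivial homomorphisms into $\ZZ/2\ZZ$. Since $p$ is odd, $(\ZZ/p\ZZ)^{\times}$ is cyclic of even order $p-1$ and admits a unique surjection onto $\ZZ/2\ZZ$ (the Legendre symbol), while $\{\pm 1\}$ admits another. Thus $\operatorname{Hom}(\{\pm 1\} \times (\ZZ/p\ZZ)^{\times}, \ZZ/2\ZZ)$ has order $2 \cdot 2 = 4$, giving $4 - 1 = 3$ nontrivial homomorphisms and therefore three index-$2$ subgroups in $G/U$, which pull back to three index-$2$ subgroups of $G$.

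The only step of any substance is the parity observation that $U$ must be contained in every index-$2$ subgroup; once that is in hand, the remainder is the standard count of index-$2$ subgroups of a finite abelian group via its mod-$2$ cokernel, and I foresee no obstacle.
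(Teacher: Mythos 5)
Your proof is correct, and it takes a genuinely different route from the paper's. The paper argues with explicit generators: writing $\left\langle \mathfrak{B}_{p}, \operatorname{-Id} \right\rangle = \left\langle d, t, \operatorname{-Id} \right\rangle$ with $d$ diagonal of order $p-1$ and $t$ unipotent, it notes that an index-$2$ subgroup is normal and contains every square, hence contains $t$ (since $\langle t \rangle = \langle t^{2} \rangle$ for $t$ of odd order $p$) and $d^{2}$, and then a short case analysis on whether $d$, $-d$, or $\operatorname{-Id}$ lies in the subgroup produces the three explicit subgroups $H_{1} = \langle t, d\rangle$, $H_{2} = \langle t, -d\rangle$, $H_{3} = \langle t, d^{2}, \operatorname{-Id}\rangle$ and shows these are the only ones. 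You instead isolate the unipotent subgroup $U$ of order $p$, use the same parity observation (odd order forces $U$ into every index-$2$ subgroup) to pass to the abelian quotient $G/U \cong \{\pm 1\} \times (\ZZ/p\ZZ)^{\times}$, and count index-$2$ subgroups there as kernels of the three nontrivial homomorphisms to $\ZZ/2\ZZ$; all the intermediate claims (that $U$ is normal in $G$, that the diagonal map has kernel exactly $U$, that the correspondence theorem gives the bijection) are routine and hold. Your approach is cleaner, yields ``exactly three'' with no case analysis, and generalizes readily; what the paper's hands-on enumeration buys is the explicit list $H_{1}, H_{2}, H_{3}$, which is then cited by name in the proof of Corollary \ref{Corollary Three Subgroups of Index 2}. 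If you wanted your argument to serve that later purpose, you would only need to add that the three characters (the sign of the upper-left entry, the quadratic residue character of the lower-right entry, and their product) have kernels precisely $\mathfrak{B}_{p}$, $\langle t, d^{2}, \operatorname{-Id}\rangle$, and $\langle t, -d\rangle$, recovering the same three subgroups.
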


\begin{proof}

Let $G = \left\langle \mathcal{B}_{p}, \operatorname{-Id} \right\rangle$. As $\operatorname{-Id}$ is not an element of $\mathcal{B}_{p}$, $\mathcal{B}_{p}$ is an index-$2$ subgroup of $G$ and hence, the order of $G$ is equal to $2(p-1)p$. Let $u$ be a generator of $\left(\ZZ / p \ZZ\right)^{\times}$. Let $d = \left(\begin{array}{cc}
    1 & 0 \\
    0 & u
\end{array}\right)$ and $t = \left(\begin{array}{cc}
    1 & 1 \\
    0 & 1
\end{array}\right)$. Then $G = \left\langle d, t, \operatorname{-Id} \right\rangle$. Let $H_{1} = \mathcal{B}_{p} = \left\langle t, d \right\rangle$, let $H_{2} = \left\langle t, -d \right\rangle$, and let $H_{3} = \left\langle t, d^{2}, \operatorname{-Id} \right\rangle$. First we will prove that the orders of $H_{1}$, $H_{2}$, and $H_{3}$ are all equal to $(p-1)p$. It is clear to see that the order of $H_{1} = \mathcal{B}_{p}$ is equal to $(p-1)p$. We will now prove that the order of $H_{2}$ is equal to $(p-1)p$. Note that $\left\langle t \right\rangle$ is a normal subgroup of $H_{2}$ and intersects $\left\langle -d \right\rangle$ trivially. Thus, $H_{2}$ is isomorphic to the product $\left\langle t \right\rangle \cdot \left\langle -d \right\rangle$. The group $\left\langle -d \right\rangle$ contains $\left\langle d^{2} \right\rangle$ as a subgroup of index $2$ and hence, the order of $\left\langle -d \right\rangle$ is equal to $p-1$. Now we will prove that the order of $H_{3}$ is equal to $(p-1)p$. Clearly, $\operatorname{-Id}$ is not contained in $H_{3}' = \left\langle t, d^{2} \right\rangle$. Hence, $H_{3}'$ is a subgroup of $H_{3}$ of index $2$. Using a similar analysis as we did for the order of $H_{2}$, we see that the order of $H_{3}'$ is equal to $(p-1)p/2$. Hence, the order of $H_{3}$ is equal to $(p-1)p$.

Let $H$ be a subgroup of $G$ of index $2$ (order $(p-1)p$). Then $H$ is a normal subgroup of $G$. As $H$ is a subgroup of $G$ of index $2$, it contains all squares of $G$ and thus, $d^{2}$ is an element of $H$. The order of $t$ is equal to $p$, an odd prime. Hence, $\left\langle t \right\rangle = \left\langle t^{2} \right\rangle$. Thus, $t$ is an element of $H$. If $H$ contains $d$, then $H = H_{1}$. Suppose that $H$ does not contain $d$. If $H$ contains $\operatorname{-Id}$, then $H = H_{3}$. If $H$ does not contain $\operatorname{-Id}$ and $H$ does not contain $d$, then it contains their product $-d$ and hence, $H = H_{2}$.
\end{proof}

\begin{corollary}\label{Corollary Three Subgroups of Index 2}
Let $p$ be an odd prime and let $E$ be an elliptic curve defined over $\Q$ such that the image of the mod-$p$ Galois representation attached to $E$ is conjugate to $\left\langle \mathcal{B}_{p}, \operatorname{-Id} \right\rangle$. Then there are three quadratic subfields of $\QQ(E[p])$. Moreover, in the case that $p \equiv 3 \mod 4$, two of these quadratic subfields are totally imaginary and one of them is totally real.
\end{corollary}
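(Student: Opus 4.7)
The plan is to pass from the group-theoretic content of Lemma \ref{Three Subgroups of Index 2} to field theory via the Galois correspondence, and then to exploit $\QQ(\zeta_p) \subseteq \QQ(E[p])$ to pin down the signatures when $p \equiv 3 \bmod 4$.

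Identify $G := \operatorname{Gal}(\QQ(E[p])/\QQ)$ with its image under $\overline{\rho}_{E,p}$; by hypothesis, $G$ is conjugate to $\left\langle \mathcal{B}_p, -\operatorname{Id}\right\rangle$. Under the Galois correspondence, quadratic subfields of $\QQ(E[p])/\QQ$ are in bijection with index-$2$ subgroups of $G$, and by Lemma \ref{Three Subgroups of Index 2} there are precisely three such subgroups $H_1, H_2, H_3$. This immediately gives the first assertion.

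Next, I argue that the three quadratic subfields $F_i := \QQ(E[p])^{H_i}$ lie inside a common biquadratic extension of $\QQ$. Since the three $H_i$ are normal (being index $2$) and exhaustive among index-$2$ subgroups of $G$, the quotient $G/(H_1 \cap H_2 \cap H_3)$ is an elementary abelian $2$-group; using the standard fact that $(\ZZ/2\ZZ)^k$ has $2^k-1$ index-$2$ subgroups, the count of three such subgroups forces $k=2$. Hence $H := H_1 \cap H_2 \cap H_3$ has index $4$ in $G$, and $F_1 F_2 F_3 = \QQ(E[p])^H$ is a biquadratic extension of $\QQ$ with intermediate quadratic subfields $F_1, F_2, F_3$. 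Writing $F_1 F_2 F_3 = \QQ(\sqrt{\alpha}, \sqrt{\beta})$, the $F_i$ are $\QQ(\sqrt{\alpha}), \QQ(\sqrt{\beta}), \QQ(\sqrt{\alpha\beta})$ in some order.

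For the signature claim, use that $\QQ(\zeta_p) \subseteq \QQ(E[p])$, and that for $p \equiv 3 \bmod 4$ the unique quadratic subfield of $\QQ(\zeta_p)$ is $\QQ(\sqrt{-p})$, which is totally imaginary. Hence one of $F_1, F_2, F_3$ equals $\QQ(\sqrt{-p})$. Since $\alpha\beta > 0$ precisely when $\alpha$ and $\beta$ have the same sign, the number of negative entries among $\alpha, \beta, \alpha\beta$ is always even, so the number of totally imaginary subfields among $F_1, F_2, F_3$ is $0$ or $2$. Combined with the previous observation, exactly two are totally imaginary and exactly one is totally real.

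The only mild obstacle is the index-$4$ computation, which reduces to the elementary-abelian-$2$-group fact cited above. Everything else is routine Galois correspondence together with a sign count.
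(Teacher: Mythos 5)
Your proof is correct and follows essentially the same route as the paper: Galois correspondence plus Lemma \ref{Three Subgroups of Index 2} for the count of quadratic subfields, and the containment $\QQ(\sqrt{-p}) \subseteq \QQ(\zeta_p) \subseteq \QQ(E[p])$ for the signature claim. Your biquadratic-compositum and sign-parity computation simply makes explicit the step the paper's proof leaves implicit (``this is enough to see that two of the three quadratic subfields are totally imaginary''), and it is carried out correctly.
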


\begin{proof}

By the fundamental theorem of Galois theory, subfields of $\QQ(E[p])$ of degree $2$ correspond to subgroups of the image of the mod-$p$ Galois representation attached to $E$ of index $2$. The three subgroups of the image of the mod-$p$ Galois representation attached to $E$ of index $2$ are the groups $H_{1}, H_{2},$ or $H_{3}$ in the proof of Lemma \ref{Three Subgroups of Index 2}. Hence, there are three quadratic subfields of $\QQ(E[p])$. Let $p \equiv 3 \bmod 4$. Note that $\QQ\left(\sqrt{-p}\right)$ is a totally imaginary quadratic subfield of $\QQ(E[p])$. This is enough to see that two of the three quadratic subfields of $\QQ(E[p])$ are totally imaginary.

\end{proof}

\begin{lemma}\label{Split Cartan}
Let $p = 3$ or $5$ and let $E / \QQ$ be an elliptic curve with a point of order $p$ defined over $\Q$. If $E$ contains two distinct $\Q$-rational subgroups of order $p$, then the image of the mod-$p$ Galois representation attached to $E$ is conjugate to
$$D = \left\langle \left(\begin{array}{cc}
    1 & 0 \\
    0 & z
\end{array}\right) : z \in \left(\ZZ / p \ZZ\right)^{\times} \right\rangle.$$
\end{lemma}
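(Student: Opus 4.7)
The plan is to combine Lemma \ref{Rational Points} with the hypothesis that $E$ has two distinct $\QQ$-rational subgroups of order $p$, and to rule out the possibility that the image of Galois is all of $\mathfrak{B}_p$.

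First, I would choose a basis $\{v_1, v_2\}$ of $E[p]$ with $v_1$ equal to a $\QQ$-rational point of order $p$ on $E$ (which exists by hypothesis). In this basis every element of the image of $\bar\rho_{E,p}$ fixes $v_1$ and so has the form $\left(\begin{array}{cc} 1 & b \\ 0 & d \end{array}\right)$, placing the image inside $\mathfrak{B}_p$. By the Weil pairing the image has full determinant mod $p$, so Lemma \ref{Rational Points} leaves exactly two possibilities: either the image equals $\mathfrak{B}_p$, or it is conjugate to the diagonal subgroup $D$.

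To rule out the first possibility I would use the dictionary between $\QQ$-rational subgroups of $E$ of order $p$ and Galois-stable lines in $E[p]$. The hypothesis provides a second such stable line, distinct from $\langle v_1 \rangle$. A direct check shows that if the image equals $\mathfrak{B}_p$ then the only Galois-stable line is $\langle v_1 \rangle$: any candidate $\langle (a, c)^T \rangle$ with $c \neq 0$ would have to satisfy $bc = (d - 1) a$ for every $\left(\begin{array}{cc} 1 & b \\ 0 & d \end{array}\right)$ in the image, and specializing to $d = 1$ with $b$ free forces $c = 0$. This contradicts the hypothesis, so the image must be conjugate to $D$.

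The restriction $p \in \{3, 5\}$ plays no role in the argument itself; it simply reflects the fact that by Theorem \ref{thm-kenku} one has $C_p(E) \leq 2$ for every prime $p \geq 7$, so for such primes no elliptic curve over $\QQ$ can even possess two distinct $\QQ$-rational subgroups of order $p$. I do not anticipate any serious obstacle: the proof is essentially a one-step reduction to Lemma \ref{Rational Points} combined with the standard linear-algebraic translation of $\QQ$-rationality of subgroups of order $p$.
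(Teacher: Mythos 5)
Your proof is correct, but it takes a slightly different route than the paper's, so a comparison is worth recording. The paper adapts the basis to \emph{both} rational structures at once: it takes $P$ to be the rational point of order $p$ and $Q$ a generator of the second $\Q$-rational subgroup, so that every Galois element fixes the first basis vector and stabilizes the line spanned by the second; in that basis the image lies inside $D$ immediately, and surjectivity of the determinant (Weil pairing) forces it to be all of $D$ — no dichotomy and no appeal to Lemma \ref{Rational Points} is needed. You instead use only the rational point to place the image inside $\mathfrak{B}_{p}$, invoke Lemma \ref{Rational Points} (via full determinant) to reduce to the two possibilities ``image $=\mathfrak{B}_{p}$'' or ``image conjugate to $D$,'' and then eliminate $\mathfrak{B}_{p}$ by the stable-line computation: for a line $\left\langle (a,c)^{T}\right\rangle$ with $c\neq 0$, stability under $\left(\begin{smallmatrix} 1 & b \\ 0 & d\end{smallmatrix}\right)$ gives $bc=(d-1)a$, and taking $d=1$ with $b$ arbitrary forces $c=0$, so $\mathfrak{B}_{p}$ stabilizes only the line $\left\langle v_{1}\right\rangle$, contradicting the existence of a second $\Q$-rational subgroup. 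Both arguments rest on the same ingredients (a fixed vector, a second stable line, full determinant); the paper's is shorter and exhibits the image as exactly $D$ in the adapted basis, while yours factors through the already-proved Lemma \ref{Rational Points} and makes explicit the useful fact that an image equal to $\mathfrak{B}_{p}$ admits a unique $\Q$-rational subgroup of order $p$. Your closing observation that the restriction to $p=3,5$ is only a reflection of Theorem \ref{thm-kenku} (for $p\geq 7$ one has $C_{p}(E)\leq 2$, so the hypothesis of two distinct order-$p$ subgroups is vacuous) is accurate as well.
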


\begin{proof}
Let $p$ and $E$ be as in the hypothesis. Let $P$ be a point of $E$ of order $p$ defined over $\Q$ and let $Q$ be a point of $E$ of order $p$, not defined over $\Q$, that generates a $\Q$-rational group. Then $E[p] = \left\langle P, Q \right\rangle$ and with this basis, the image of the mod-$p$ Galois representation attached to $E$ is a subgroup of $D$.

Noting that the image of the mod-$p$ Galois representation attached to $E$ must have full determinant mod-$p$, we can conclude that the image of the mod-$p$ Galois representation attached to $E$ is equal to $D$.
\end{proof}

\begin{lemma} \label{Quadratic Twisting Odd Graphs}
Let $p = 3, 5,$ or $7$. Let $E / \Q$ be an elliptic curve that contains an element $P$ that generates a $\Q$-rational subgroup of order $p$. Suppose the image of the mod-$p$ Galois representation attached to $E$ contains $\operatorname{-Id}$. Then neither $E$ nor $E / \langle P \rangle$ have a point of order $p$ defined over $\Q$.
\end{lemma}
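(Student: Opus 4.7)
My plan is to work with the image of $\bar\rho_{E,p}$ in a basis adapted to the $\Q$-rational subgroup $\langle P \rangle$ and then transfer the conclusion to $E' = E/\langle P\rangle$ via the dual isogeny. First I would extend $P$ to a basis $\{P,Q\}$ of $E[p]$. Since $\langle P \rangle$ is $\Q$-rational, $\bar\rho_{E,p}$ will be upper triangular in this basis, say $\bar\rho_{E,p}(\sigma) = \smallmat{\chi_1(\sigma)}{b(\sigma)}{0}{\chi_2(\sigma)}$ for characters $\chi_1,\chi_2\colon G_\Q \to (\Z/p\Z)^\times$. The hypothesis that $-\operatorname{Id}$ lies in the image then produces an element $\sigma_0 \in G_\Q$ with $\chi_1(\sigma_0) = \chi_2(\sigma_0) = -1$ and $b(\sigma_0) = 0$.

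Next I would handle the $E$ half of the statement directly. Assuming for contradiction that $R \in E(\Q)$ has order $p$, write $R = \alpha P + \beta Q$ and apply $\sigma_0$. Comparing coefficients of $P$ and $Q$ in $\sigma_0(R) = R$ gives $-\alpha = \alpha$ and $-\beta = \beta$ in $\Z/p\Z$, which for odd $p$ force $\alpha = \beta = 0$, contradicting that $R$ has order $p$.

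For $E'$, the hard part will be identifying the shape of $\bar\rho_{E',p}$. Setting $P' := \phi(Q)$, the Galois-equivariance of $\phi$ together with $\phi(P) = \mathcal{O}$ gives $\sigma(P') = \phi(\sigma Q) = \chi_2(\sigma) P'$, so Galois acts on the $\Q$-rational subgroup $\langle P' \rangle$ via $\chi_2$. Then I would choose $Q' \in E'[p]$ with $\hat\phi(Q') = P$, note that $\hat\phi(P') = [p]Q = \mathcal{O}$, and compare $\hat\phi(\sigma(Q'))$ with $\sigma(\hat\phi(Q')) = \chi_1(\sigma) P$ to pin down the $(2,2)$ entry of $\bar\rho_{E',p}$ as $\chi_1$. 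The conclusion is that $\bar\rho_{E',p}(\sigma) = \smallmat{\chi_2(\sigma)}{e(\sigma)}{0}{\chi_1(\sigma)}$ for some cocycle $e\colon G_\Q \to \Z/p\Z$; that is, the two diagonal characters swap under the isogeny.

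Finally I would rerun the $E$-argument with this new matrix and the same $\sigma_0$. A hypothetical $R' = \alpha P' + \beta Q' \in E'(\Q)$ of order $p$ would satisfy $\beta \chi_1(\sigma_0) = \beta$ (forcing $\beta = 0$) from the $Q'$-coefficient of $\sigma_0(R') = R'$, and then $\alpha \chi_2(\sigma_0) = \alpha$ (forcing $\alpha = 0$) from the $P'$-coefficient, again contradicting that $R'$ has order $p$. The only slightly delicate ingredient is the dual isogeny computation showing the diagonal characters swap; once that is in place, the remainder is linear algebra over $\Z/p\Z$, and the hypothesis $p \in \{3,5,7\}$ enters only through $p$ being odd.
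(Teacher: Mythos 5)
Your argument is correct, and its first half is the same as the paper's: the hypothesis gives a $\sigma_0$ with $\bar\rho_{E,p}(\sigma_0)=-\operatorname{Id}$, so $\sigma_0$ negates every point of order $p$ on $E$, and oddness of $p$ rules out a rational one. For the quotient $E'=E/\langle P\rangle$ you take a genuinely different route. The paper never computes $\bar\rho_{E',p}$; it instead applies Lemma~\ref{lem-necessity-for-point-rationality} to a point $R\in E$ of order $p$: if $\phi(R)$ were rational then $\sigma_0(R)-R=[-2]R\in\langle P\rangle$, forcing $\phi(R)$ to have order at most $2$. You, by contrast, push the whole representation through the dual isogeny: using $\ker\hat\phi=\phi(E[p])$ and $\hat\phi(E'[p])=\ker\phi=\langle P\rangle$ (standard facts, and your use of them is correct — $Q'=\phi(S)$ with $[p]S=P$ does the job) you show $\bar\rho_{E',p}(\sigma)=\smallmat{\chi_2(\sigma)}{e(\sigma)}{0}{\chi_1(\sigma)}$ in the basis $(P',Q')$, i.e.\ the diagonal characters swap, and then the evaluation at $\sigma_0$ is pure linear algebra. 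Both proofs are sound, but they buy different things. Your computation handles \emph{every} point of $E'[p]$ in one stroke, whereas the paper's displayed argument only directly addresses points of the form $\phi(R)$ with $R\in E[p]$, i.e.\ the subgroup $\ker\hat\phi\subset E'[p]$; the remaining order-$p$ points of $E'$ are implicitly disposed of by mapping them back to $E$ through $\hat\phi$ (a rational such point outside $\ker\hat\phi$ would give a rational point of order $p$ on $E$, contradicting the first half), a step your version does not need. You also record the extra structural fact that the diagonal characters interchange under the isogeny, which the paper's shorter argument does not reveal. The price is the small amount of dual-isogeny bookkeeping producing $Q'$, which you assert rather than prove but which is routine. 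Your closing remark is also accurate: in both proofs the restriction $p\in\{3,5,7\}$ is used only through $p$ being odd.
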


\begin{proof}
Let $Q$ be a point on $E$ of order $p$. The matrix $\operatorname{-Id}$ is in the center of $\operatorname{GL}(2, \Z / p \Z)$ so a subgroup $H$ of $\operatorname{GL}(2, \Z / p \Z)$ contains an element conjugate to $\operatorname{-Id}$ if and only if $H$ contains $\operatorname{-Id}$. Thus, the image of the mod-$p$ Galois representation attached to $E$ contains $\operatorname{-Id}$ and there exists a Galois automorphism $\tau \in G_{\Q}$ such that $\tau(Q) = -Q$. As $Q$ has odd order, $Q \neq -Q$. Hence, $Q$ is not defined over $\Q$.

Now let $\phi \colon E \to E / \left\langle P \right\rangle$ be an isogeny with kernel $\left\langle P \right\rangle$. Let $R$ be a point on $E$ of order $p$. Let $\tau \in G_{\Q}$ such that $\tau(R) = -R$. If $\phi(R)$ is defined over $\Q$, then $\tau(R)-R = [-2]R \in \left\langle P \right\rangle$ by Lemma \ref{lem-necessity-for-point-rationality}. If this is the case, then $[2]\phi(R) = \phi([2]R) = \mathcal{O}$ and so, $\phi(R)$ has order equal to $1$ or $2$, contradicting the fact that $\phi(R)$ has order $p$.

\end{proof}

\begin{corollary}\label{Corollary Quadratic Twisting Odd Graphs}

Let $p = 3, 5,$ or $7$. Let $E / \Q$ be an elliptic curve such that the image of the mod-$p$ Galois representation attached to $E$ contains $\operatorname{-Id}$. Then none of the elliptic curves over $\Q$ in the $\Q$-isogeny class of $E$ have a point of order $p$ defined over $\Q$.

\end{corollary}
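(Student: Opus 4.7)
The plan is to reduce to Lemma~\ref{Quadratic Twisting Odd Graphs}. Fix $E' \in \mathcal{E}$, the $\Q$-isogeny class of $E$. If $E'$ has no $\Q$-rational subgroup of order $p$, then in particular it has no $\Q$-rational point of order $p$, so I may assume that $E'$ carries a $\Q$-rational subgroup $\langle P' \rangle$ of order $p$. By Lemma~\ref{Quadratic Twisting Odd Graphs} applied to the pair $(E', P')$, it is then enough to show that $-\operatorname{Id} \in \im \overline{\rho}_{E', p}$.

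For this I would invoke the fact that any $\Q$-isogeny $\phi \colon E \to E'$ induces a $G_\Q$-equivariant isomorphism of rational Tate modules $V_p(E) \cong V_p(E')$. Consequently, the characteristic polynomial of $\overline{\rho}_{E,p}(\sigma)$ equals that of $\overline{\rho}_{E',p}(\sigma)$ in $(\Z/p\Z)[X]$ for every $\sigma \in G_\Q$. Pick $\sigma_0 \in G_\Q$ with $\overline{\rho}_{E,p}(\sigma_0) = -\operatorname{Id}$; its characteristic polynomial is $(X+1)^2$. Hence $A := \overline{\rho}_{E',p}(\sigma_0)$ shares this characteristic polynomial, so either $A = -\operatorname{Id}$ or $A$ is conjugate to the non-trivial Jordan block $\smallmat{-1}{1}{0}{-1}$.

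The first case is immediate. In the second, write $A = -\operatorname{Id} + N$ with $N^2 = 0$; since $-\operatorname{Id}$ is central and $p$ is odd, a binomial expansion collapses to $A^p = (-\operatorname{Id})^p + p \cdot N \cdot (-\operatorname{Id})^{p-1} = -\operatorname{Id}$ in $\GL(2, \Z/p\Z)$. Thus $\overline{\rho}_{E',p}(\sigma_0^p) = -\operatorname{Id}$, so $-\operatorname{Id} \in \im \overline{\rho}_{E',p}$, and Lemma~\ref{Quadratic Twisting Odd Graphs} applied to $(E', P')$ completes the argument.

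The main subtlety to anticipate is that $\overline{\rho}_{E,p}$ and $\overline{\rho}_{E',p}$ need not be isomorphic as $G_\Q$-modules --- only their semisimplifications must agree --- so one cannot simply transplant the matrix $-\operatorname{Id}$ from $\im \overline{\rho}_{E,p}$ to $\im \overline{\rho}_{E',p}$. The argument above sidesteps this by descending to the isogeny-invariant characteristic polynomial and then recovering $-\operatorname{Id}$ in $\im \overline{\rho}_{E',p}$ by raising $\sigma_0$ to a suitable power when necessary.
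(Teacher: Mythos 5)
Your proposal is correct, but it takes a genuinely different route from the paper's proof. The paper applies Lemma \ref{Quadratic Twisting Odd Graphs} just once, to $E$ itself (together with a $\Q$-rational subgroup of order $p$, which exists unless the conclusion is vacuous), obtaining two curves in the class without rational $p$-torsion, and then finishes by citing the classification of isogeny-torsion graphs in \cite{gcal-r}: inspecting the torsion configurations in those tables shows that no graph has both endpoints of a $p$-isogeny edge with trivial $p$-torsion while some other vertex carries a rational point of order $p$. You instead prove the self-contained statement that containing $-\operatorname{Id}$ in the mod-$p$ image is an invariant of the $\Q$-isogeny class: the $G_\Q$-equivariant isomorphism $V_p(E)\cong V_p(E')$ forces equality of the mod-$p$ characteristic polynomials (the integral characteristic polynomial on the Tate module reduces to the one on $E[p]$, i.e.\ the semisimplifications of $E[p]$ and $E'[p]$ agree), so $A=\overline{\rho}_{E',p}(\sigma_0)$ is $-\operatorname{Id}+N$ with $N^2=0$ by Cayley--Hamilton, and $A^{p}=-\operatorname{Id}$ since $p$ is odd; then the lemma (really only its first paragraph, which needs nothing beyond $-\operatorname{Id}$ in the image and $p$ odd) applies to each curve of the class separately, after your harmless case split when a curve has no $\Q$-rational subgroup of order $p$. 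Your route buys independence from the classification tables and works verbatim for every odd prime, not just $p=3,5,7$; the paper's route buys brevity, since the classification of \cite{gcal-r} is already in hand.
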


\begin{proof}
By Lemma \ref{Quadratic Twisting Odd Graphs}, at least two elliptic curves over $\QQ$ in the $\QQ$-isogeny class of $E$ do not have points of order $p$ defined over $\QQ$. By the classification of isogeny-torsion graphs, no elliptic curve over $\QQ$ in the $\QQ$-isogeny class of $E$ has a point of order $p$ defined over $\QQ$ (see the tables in Section 2 in \cite{gcal-r}).

\end{proof}

\begin{lemma} \label{No Points of Order 4}
Let $E / \Q$ be an elliptic curve with full two-torsion defined over $\Q$. Let $M$ be the positive integer such that $2^{M}$ is the order of the largest finite, cyclic, $\QQ$-rational subgroup of $E$ of $2$-power order. Suppose the image of the mod-$2^{M+1}$ Galois representation attached to $E$ contains $\operatorname{-Id}$. Then no elliptic curve over $\QQ$ that is $\QQ$-isogenous to $E$ has a point of order $4$ defined over $\Q$.

\end{lemma}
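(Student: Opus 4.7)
The plan is to argue by contradiction: assume some elliptic curve $E'/\Q$ that is $\Q$-isogenous to $E$ has a point of order $4$ defined over $\Q$, and derive a contradiction with either the hypothesis that $\operatorname{-Id}$ lies in the image of the mod-$2^{M+1}$ Galois representation attached to $E$ or the maximality of $M$. By Lemma \ref{Q-rational} (combined with the standard fact that any $\Q$-isogeny factors as a multiplication-by-$n$ map composed with an isogeny of cyclic kernel), I may write $E' \cong E/H$ over $\Q$ for some cyclic, $\Q$-rational subgroup $H \subseteq E$. Since any $\Q$-isogeny of odd degree induces a Galois-equivariant isomorphism on $2$-power torsion, writing $H = H_{2} \oplus H_{\text{odd}}$ and replacing $H$ by $H_{2}$ preserves the existence of a $\Q$-rational point of order $4$ on the quotient; hence I may assume $|H| = 2^{k}$ with $0 \leq k \leq M$.

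Next, fix a point $R \in (E/H)(\Q)$ of order $4$ and choose a preimage $P \in E$ under $\phi_{H}$. The condition that $R$ has order $4$ translates to $[4]P \in H$ and $[2]P \notin H$, and Lemma \ref{lem-necessity-for-point-rationality} gives $\sigma(P) - P \in H$ for every $\sigma \in G_{\Q}$. By hypothesis, there exists $\sigma_{0} \in G_{\Q}$ acting as $\operatorname{-Id}$ on $E[2^{M+1}]$, and I split the argument on whether $P$ belongs to $E[2^{M+1}]$.

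If $P \in E[2^{M+1}]$, then $\sigma_{0}(P) = -P$ yields $\sigma_{0}(P) - P = -[2]P \in H$, forcing $[2]P \in H$ and contradicting $[2]P \notin H$. Otherwise, since $[4]P \in H$ has order dividing $|H| \leq 2^{M}$, the point $P$ has order at most $2^{M+2}$; combined with $P \notin E[2^{M+1}]$ this forces $P$ to have order exactly $2^{M+2}$ and $|H| = 2^{M}$. Then $[4]P$ has order $2^{M}$ and lies in the cyclic group $H$ of the same order, so $H = \langle [4]P \rangle \subseteq \langle P \rangle$; the relation $\sigma(P) - P \in H \subseteq \langle P \rangle$ for all $\sigma \in G_{\Q}$ then shows $\langle P \rangle$ itself is a $\Q$-rational cyclic subgroup of $E$ of order $2^{M+2}$, contradicting the maximality of $M$. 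The main obstacle is the clean initial reduction to $|H|$ a power of $2$; once that is in place, the tension between the involution $\sigma_{0}$ and the conditions $[4]P \in H$, $[2]P \notin H$ essentially forces the dichotomy above.
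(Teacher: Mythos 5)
Your argument is correct, and it takes a genuinely different route from the paper. The paper's proof is a case-by-case computation: it splits on $C_{2}(E) = 4, 6, 8$, uses the full two-torsion hypothesis together with Remark \ref{2-powers} to list the cyclic $\Q$-rational subgroups of $2$-power order explicitly, determines the torsion subgroup of each $2$-power quotient by combining Lemma \ref{lem-necessity-for-point-rationality} with Lemma \ref{lem-Maximality-Of-Rational-2-Power-Groups}, and then appeals to the classification tables of isogeny-torsion graphs (and, in the $S$-type case, to the fact that a $3$-isogeny preserves rational points of order $4$) to cover the remaining curves in the class. You instead give one uniform contradiction argument: reduce to $E' \cong E/H$ with $H$ cyclic, $\Q$-rational and of $2$-power order $2^{k} \leq 2^{M}$ (your reduction via the factorization of an isogeny through multiplication-by-$n$, and the passage to the $2$-primary part through an odd-degree isogeny, are both legitimate and worth stating exactly as you do, with Lemma \ref{Q-rational} guaranteeing the identification over $\Q$), lift the rational point of order $4$ to $P$ with $[4]P \in H$, $[2]P \notin H$, $\sigma(P) - P \in H$, and then either the element $\sigma_{0}$ acting as $\operatorname{-Id}$ on $E[2^{M+1}]$ kills the case $P \in E[2^{M+1}]$, or the forced equalities $k = M$, $H = \langle [4]P \rangle \subseteq \langle P \rangle$ make $\langle P \rangle$ a cyclic $\Q$-rational subgroup of order $2^{M+2}$, contradicting the maximality of $M$; the dichotomy is exhaustive since $[4]P \in H$ bounds the order of $P$ by $2^{k+2}$. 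What each approach buys: yours is shorter, avoids the case analysis and any appeal to the classification tables, and never uses the full two-torsion hypothesis, so it actually proves a slightly more general statement; the paper's computation, on the other hand, produces finer information along the way (the exact torsion subgroups, e.g.\ $\Z/2\Z$, of the individual quotients $E/\langle P_{2}\rangle$, $E/\langle Q_{4}\rangle$, etc.), which is the form in which the lemma gets reused in the later sections.
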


\begin{proof}

The elliptic curve $E$ has full two-torsion defined over $\Q$ and so $E$ has a $2$-isogeny and $C_{2}(E) \geq 4$. As $2^{M} \geq 2$, we have $2^{M+1} \geq 4$, and the image of the mod-$4$ Galois representation attached to $E$ contains $\operatorname{-Id}$. Let $R$ be a point on $E$ of order $4$. There is a Galois automorphism $\tau \in G_{\QQ}$ such that $\tau(R) = -R$. As $R$ is a point of order $4$, $\tau(R) \neq R$ and so, $R$ is not defined over $\QQ$. We break up the rest of the proof into three cases, depending on $C_{2}(E)$.

\begin{itemize}
    \item $C_{2}(E) = 4$.
    
    Suppose that $C(E) = 4$. Then $E(\Q)_{\text{tors}} \cong \Z / 2 \Z \times \Z / 2 \Z$ and the cyclic, $\QQ$-rational subgroups of $E$ are the groups $\left\langle P_{2} \right\rangle$, $\left\langle Q_{2} \right\rangle$, $\left\langle P_{2} + Q_{2} \right\rangle$, and the trivial group. Let $A$ be any element of $E$ of order $2$ and $\phi \colon E \to E / \left\langle A \right\rangle$ be an isogeny with kernel generated by $A$. We claim that $E / \left\langle A \right\rangle (\Q)_{\text{tors}} \cong \Z / 2 \Z$.
    
    Indeed, $E / \left\langle A \right\rangle(\Q)_{\text{tors}}$ is cyclic by Lemma \ref{lem-Maximality-Of-Rational-2-Power-Groups}. Let $A' \in E$ such that $[2]A' = A$. Let $B$ be a point of $E$ of order $2$ not equal to $A$ and let $B' \in E$ such that $[2]B' = B$. By our hypothesis, $E$ has full two-torsion defined over $\Q$ and so, $B$ is defined over $\Q$, and $\phi(B)$ is the point of $E / \left\langle A \right\rangle$ of order $2$ defined over $\Q$ by Lemma \ref{lem-necessity-for-point-rationality}. The two cyclic groups of order $4$ containing $\phi(B)$ are $\left\langle \phi(B') \right\rangle$ and $\left\langle \phi(A' + B') \right\rangle$. Let $C$ be $B'$ or $A'+B'$. Note that $C$ is an element of $E$ of order $4$. By the fact that the image of the mod-$4$ Galois representation attached to $E$ contains $\operatorname{-Id}$, there is a Galois automorphism $\tau \in G_{\Q}$ such that $\tau(C) = -C$. Hence, $\tau(C) - C = [-2]C = B$ or $A+B$. Both $B$ and $A+B$ are elements of $E$ that are not contained in $\left\langle A \right\rangle$. Thus, $\phi(C)$ is not defined over $\Q$ by Lemma \ref{lem-necessity-for-point-rationality}. Hence, all groups $E / \left\langle P_{2} \right\rangle(\Q)_{\text{tors}}, E / \left\langle Q_{2} \right\rangle(\Q)_{\text{tors}},$ and $E / \left\langle P_{2} + Q_{2} \right\rangle(\Q)_{\text{tors}}$ are isomorphic to $\Z / 2 \Z$.
    
    If we suppose that $C(E) = 8$, then $C_{3}(E) = 2$ and the isogeny graph associated to the $\QQ$-isogeny class of $E$ is of type $S$. In this case, the finite, cyclic, $\QQ$-rational subgroups of $E$ are the ones generated by $\mathcal{O}$, $P_{2}$, $Q_{2}$, $P_{2}+Q_{2}$, $D_{3}$, $D_{3}+P_{2}$, $D_{3}+Q_{2}$, and $D_{3}+P_{2}+Q_{2}$ where $D_{3}$ is a point on $E$ of order $3$ that generates a $\QQ$-rational group. As none of $E/\left\langle P_{2} \right\rangle$, $E / \left\langle Q_{2} \right\rangle$, $E/\left\langle P_{2}+Q_{2} \right\rangle$ have points of order $4$ defined over $\QQ$, neither do $E/\left\langle D_{3}+P_{2} \right\rangle$, $E/\left\langle D_{3}+Q_{2} \right\rangle$, or $E/\left\langle D_{3}+P_{2}+Q_{2} \right\rangle$ as passing through a $3$-isogeny defined over $\QQ$ preserves points of order $4$ defined over $\QQ$.
    
    \item $C_{2}(E) = 6$.
    
    We may assume without loss of generality that the finite, cyclic, $\QQ$-rational subgroups of $E$ are the groups $\left\langle P_{2} \right\rangle$, $\left\langle Q_{2} \right\rangle$, $\left\langle P_{2} + Q_{2} \right\rangle$, $\left\langle Q_{4} \right\rangle$, and $\left\langle P_{2} + Q_{4} \right\rangle$, and the trivial group by Remark \ref{2-powers}. Thus, the largest finite, cyclic, $\QQ$-rational subgroup of $E$ is of order $4$ and so, by the hypothesis, the image of the mod-$8$ Galois representation attached to $E$ contains $\operatorname{-Id}$. Using a similar proof in the case of $C_{2}(E) = 4$, we can conclude that the groups $E / \left\langle P_{2} \right\rangle(\Q)_{\text{tors}}$ and $E / \left\langle P_{2} + Q_{2} \right\rangle(\Q)_{\text{tors}}$ are of order $2$.
    
    Let $A = Q_{4}$ or $P_{2} + Q_{4}$. We claim that $E / \left\langle A \right\rangle(\Q)_{\text{tors}} \cong \Z / 2 \Z$. Let $\phi \colon E \to E / \left\langle A \right\rangle$ be an isogeny with kernel $\left\langle A \right\rangle$. By Lemma \ref{lem-Maximality-Of-Rational-2-Power-Groups}, the group $E / \left\langle A \right\rangle (\Q)_{\text{tors}}$ is cyclic. Let $B$ be a point on $E$ of order $2$ not equal to $Q_{2}$. The point of $E / \left\langle A \right\rangle$ of order $2$ defined over $\Q$ is $\phi(B)$ by Lemma \ref{lem-necessity-for-point-rationality}. Let $A'$ be a point on $E$ such that $[2]A' = A$ and let $B'$ be a point on $E$ such that $[2]B' = B$. The two cyclic groups of order $4$ that contain $\phi(B)$ are $\left\langle \phi(B') \right\rangle$ and $\left\langle \phi(A'+B') \right\rangle$. Note that $B'$ and $A'+B'$ are points on $E$ of order $4$ and $8$ respectively. As the image of the mod-$8$ Galois representation attached to $E$ contains $\operatorname{-Id}$, there is a Galois automorphism $\tau \in G_{\Q}$ such that $\tau(B') = -B'$ and $\tau(A') = -A'$. Hence, $\tau(B') - B' = [-2]B' = B \notin \left\langle A \right\rangle$ and $\tau(A'+B') - (A'+B') = [-2](A'+B') = -A+B \notin \left\langle A \right\rangle$. Thus, both $\phi(B')$ and $\phi(A'+B')$ are not defined over $\Q$ by Lemma \ref{lem-necessity-for-point-rationality} and hence, $E / \left\langle Q_{4} \right\rangle(\Q)_{\text{tors}}$ and $E / \left\langle P_{2} + Q_{4} \right\rangle(\Q)_{\text{tors}}$ are groups of order $2$. By the classification of isogeny-torsion graphs of $T_{6}$ type, (see Table \ref{T_{6} Graphs}) we have proven that no elliptic curve over $\Q$ in the $\Q$-isogeny class of $E$ has a point of order $4$ defined over $\Q$.
    
    \item $C_{2}(E) = 8$.
    
    We may assume without loss of generality that the finite, cyclic, $\QQ$-rational subgroups of $E$ are the groups $\left\langle P_{2} \right\rangle$, $\left\langle Q_{2} \right\rangle$, $\left\langle P_{2} + Q_{2} \right\rangle$, $\left\langle Q_{4} \right\rangle$, $\left\langle P_{2} + Q_{4} \right\rangle$, $\left\langle Q_{8} \right\rangle$, $\left\langle P_{2} + Q_{8} \right\rangle$, and the trivial group. By our hypothesis, the image of the mod-$16$ Galois representation attached to $E$ contains $\operatorname{-Id}$. Using a similar proof in the case of $C_{2}(E) = 4$, we can conclude that the groups $E / \left\langle P_{2} \right\rangle(\Q)_{\text{tors}}$ and $E / \left\langle P_{2} + Q_{2} \right\rangle(\Q)_{\text{tors}}$ are of order $2$. Using a similar argument as in the case of $C_{2}(E) = 6$ with $A$ replaced with $Q_{8}$ or $P_{2} + Q_{8}$ we can prove that $E / \left\langle Q_{8} \right\rangle(\Q)_{\text{tors}}$ and $E / \left\langle P_{2} + Q_{8} \right\rangle(\Q)_{\text{tors}}$ are groups of order $2$. Making note of the classification of isogeny-torsion graphs of $T_{8}$ type (see Table \ref{T_{8} Graphs}), we can conclude that no elliptic curve over $\Q$ in the $\Q$-isogeny class has a point of order $4$ defined over $\Q$.
\end{itemize}
\end{proof}

\begin{remark}

Lemma \ref{Quadratic Twisting Odd Graphs} and Lemma \ref{No Points of Order 4} show that

\begin{itemize}

\item For any elliptic curve $E / \Q$ with full two-torsion defined over $\Q$, there is a quadratic twist that eliminates all the points of order $4$ defined over $\Q$ from all the elliptic curves over $\Q$ in the $\Q$-isogeny class of $E$.

\item For an elliptic curve $E / \Q$ that has a cyclic, $\Q$-rational subgroup of even order, there is a quadratic twist that eliminates all the points defined over $\Q$ of order $3$ and $5$ but not points of order $4$ of the elliptic curves over $\Q$ in the $\Q$-isogeny class of $E$.

\item For an elliptic curve $E / \Q$ that has a non-trivial, cyclic, $\Q$-rational subgroup of odd order, there is a quadratic twist that eliminates all the points of order $\geq 3$ defined over $\Q$ from the elliptic curves over $\Q$ in the $\Q$-isogeny class of $E$.

\end{itemize}
\end{remark}

\section{Isogeny-torsion graphs corresponding to finite sets of \textit{j}-invariants}

In this section, we determine the isogeny-torsion graphs that correspond to sets of finitely many \textit{j}-invariants (without having to mention torsion configuration). This classification follows directly from Theorem \ref{thm-ratnoncusps}. Each elliptic curve over $\Q$ represented by a vertex in the isogeny-torsion graphs in Table \ref{Finite graphs} corresponds to a non-cuspidal, $\Q$-rational point on a modular curve $\operatorname{X}_{0}(N)$ of genus $\geq 1$ for some positive integer $N$ which has finitely many non-cuspidal, $\Q$-rational points (see Theorem \ref{thm-ratnoncusps}). The \textit{j}-invariants in Table \ref{Finite graphs} came from the work in \cite{lozano0}.

\begin{proposition}\label{Finite Graphs Proposition}

Let $\mathcal{G}$ be an isogeny-torsion graph of one of the following types (regardless of torsion configuration)

\begin{enumerate}
    \item $L_{2}(p)$ where $p = 11, 17, 19, 37, 43, 67,$ or $163$,
    \item $L_{4}$,
    \item $R_{4}(pq)$ where $pq = 14, 15,$ or $21$.
\end{enumerate}
Then $\mathcal{G}$ corresponds to a finite set of \textit{j}-invariants.
\end{proposition}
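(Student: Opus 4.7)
The plan is to exhibit, for each of the three families of graphs listed, a vertex whose corresponding elliptic curve carries a cyclic $\Q$-rational subgroup of some order $N$ for which $\operatorname{X}_0(N)$ has only finitely many non-cuspidal $\Q$-rational points (by Theorem \ref{thm-ratnoncusps}, this is the set $N\in\{11,14,15,17,19,21,27,37,43,67,163\}$). Since the $j$-invariant of an elliptic curve $E/\Q$ admitting a cyclic $\Q$-rational subgroup of order $N$ is the image of a non-cuspidal $\Q$-rational point of $\operatorname{X}_0(N)$ under the $j$-line map, the set of such $j$-invariants is finite. Finally, since by Theorem \ref{thm-kenku} each $\Q$-isogeny class contains at most eight curves (so at most eight $j$-invariants), and since any vertex of $\mathcal{G}$ with finitely many associated $j$-invariants forces $\mathcal{G}$ itself to correspond to finitely many $j$-invariants, the conclusion follows.

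For the three cases I would argue as follows. For $L_2(p)$ with $p\in\{11,17,19,37,43,67,163\}$: each such graph contains an edge of prime degree $p$, so one of the two vertices (either one) represents an elliptic curve admitting a cyclic $\Q$-rational subgroup of order $p$, and hence corresponds to a non-cuspidal $\Q$-rational point on $\operatorname{X}_0(p)$. For $L_4$: by the classification (Theorem \ref{thm-mainisogenygraphs}) an $L_4$ graph is the isogeny graph with maximal cyclic $\Q$-isogeny of degree $27$, so an endpoint vertex represents an elliptic curve admitting a cyclic $\Q$-rational subgroup of order $27$, giving a non-cuspidal $\Q$-rational point on $\operatorname{X}_0(27)$. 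For $R_4(pq)$ with $pq\in\{14,15,21\}$: by construction the notation records that the maximal cyclic $\Q$-isogeny in the graph has degree $pq$, so one vertex carries a cyclic $\Q$-rational subgroup of order $pq$, giving a non-cuspidal $\Q$-rational point on $\operatorname{X}_0(pq)$.

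In all three cases we invoke Theorem \ref{thm-ratnoncusps}(2)--(3): the values $N=11,14,15,17,19,21,27$ give $\operatorname{X}_0(N)$ an elliptic curve with finite Mordell--Weil group over $\Q$, and $N=37,43,67,163$ give $\operatorname{X}_0(N)$ a curve of genus $\ge 2$ with finitely many $\Q$-rational points by Faltings. Thus each of the identified vertices admits only finitely many possible $j$-invariants, and by the observation recorded in the introduction (that a graph corresponds to infinitely many $j$-invariants iff some vertex does), the graph $\mathcal{G}$ itself corresponds to a finite set of $j$-invariants.

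There is no real obstacle here; the statement is essentially a bookkeeping consequence of Theorem \ref{thm-ratnoncusps} combined with the fact that each of the listed graph types forces the presence of a cyclic $\Q$-rational subgroup of the critical order. The only mildly nontrivial point is verifying that the graph types $L_4$ and $R_4(pq)$, as defined by the classification, indeed contain a vertex with the claimed cyclic $\Q$-rational subgroup of order $27$ or $pq$ respectively; this follows by reading off the graph from the classification in Theorem \ref{thm-mainisogenygraphs} together with the convention that the parenthetical degree is that of the maximal finite cyclic $\Q$-isogeny.
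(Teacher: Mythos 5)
Your overall strategy is the same as the paper's: reduce each listed graph type to non-cuspidal $\Q$-rational points on $\operatorname{X}_0(N)$ for $N$ in the finite list of Theorem \ref{thm-ratnoncusps}. But your concluding step has a gap. The observation you invoke says that $\mathcal{G}$ corresponds to infinitely many $j$-invariants if and only if \emph{some} vertex does; its contrapositive is that $\mathcal{G}$ corresponds to finitely many $j$-invariants if and only if \emph{every} vertex does. Exhibiting a single vertex with finitely many associated $j$-invariants therefore does not, by that observation alone, bound $j(\mathcal{G})$, since $j(\mathcal{G})$ is the union of the sets attached to all vertices; and the remark that each isogeny class contains at most eight curves does not rescue this, because infinitely many isogeny classes (for instance, all quadratic twists) realize each of these graph types, so a per-class bound says nothing about the union.

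For $L_2(p)$ and $R_4(pq)$ the gap closes immediately, and this is exactly what the paper does: by Theorem \ref{thm-kenku}, \emph{every} curve in such a class carries a cyclic $\Q$-rational subgroup of order $p$ (resp.\ $pq$), so every vertex is a non-cuspidal point of $\operatorname{X}_0(p)$ (resp.\ $\operatorname{X}_0(pq)$) and all vertex sets are finite. For $L_4$, however, the two middle vertices do \emph{not} carry a cyclic $\Q$-rational subgroup of order $27$; they carry a cyclic subgroup of order $9$ together with an independent subgroup of order $3$, so they are not points of $\operatorname{X}_0(27)$ and your argument as written says nothing about their $j$-invariants. You need an extra step: either note that each middle vertex is $3$-isogenous to an endpoint vertex, so its $j$-invariant is a root of the classical modular polynomial $\Phi_3(j_0,\cdot)$ for one of the finitely many endpoint values $j_0$ (hence only finitely many possibilities), or, as the paper does, record explicitly that the endpoint curves have $j=-12288000$ and the middle curves have $j=0$. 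With such an addition (and the all-vertices remark in the other two cases) your proof is complete and essentially coincides with the paper's.
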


\begin{center}
\begin{table}[h!]
\renewcommand{\arraystretch}{1.3}
\scalebox{0.7}{
    \begin{tabular}{ |c|c|c|c| }
    \hline
         Graph & Isogeny Graph & Torsion & \textit{j} \\
         \hline
        \multirow{7}*{\includegraphics[width=40mm]{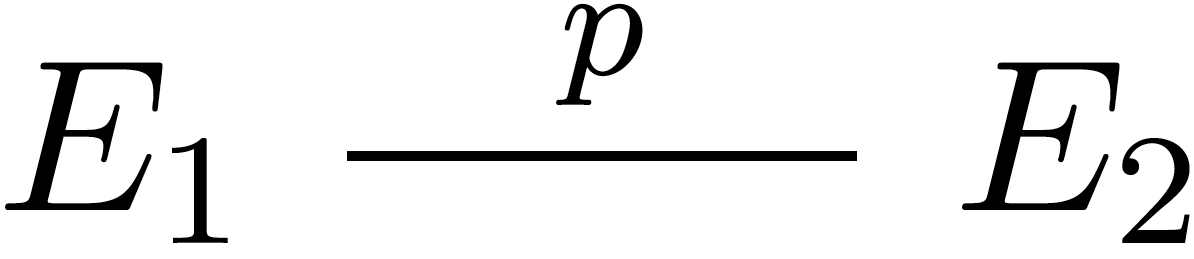}} & \multirow{3}*{$L_{2}(11)$} & \multirow{3}*{$([1],[1])$} & $-11 \cdot 131^{3}$\\
        \cline{4-4}
        & & & $-2^{15}$\\
        \cline{4-4}
        & & & $-11^{2}$ \\
        \cline{2-4}
        & \multirow{2}*{$L_{2}(17)$} & \multirow{2}*{$([1],[1])$} & $-17^{2} \cdot 101^{3}/2$ \\
        \cline{4-4}
        & & & $-17 \cdot 373^{3} / 2^{17}$ \\
        \cline{2-4}
        & $L_{2}(19)$ & $([1],[1])$ & $-2^{15} \cdot 3^{3}$ \\
        \cline{2-4}
        & \multirow{2}*{$L_{2}(37)$} & \multirow{2}*{$([1],[1])$} & $-7 \cdot 11^{3}$ \\
        \cline{4-4}
        & & & $-7 \cdot 137^{3} \cdot 2083^{3}$ \\
        \cline{2-4}
        & $L_{2}(43)$ & $([1],[1])$ & $-2^{18} \cdot 3^{3} \cdot 5^{3}$ \\
        \cline{2-4}
        & $L_{2}(67)$ & $([1],[1])$ & $-2^{15} \cdot 3^{3} \cdot 5^{3} \cdot 11^{3}$ \\
        \cline{2-4}
        & $L_{2}(163)$ & $([1],[1])$ & $-2^{18} \cdot 3^{3} \cdot 5^{3} \cdot 23^{3} \cdot 29^{3}$ \\
        \hline
        
        \multirow{2}*{\includegraphics[height=8mm]{L4.png}} & \multirow{2}*{$L_{4}$} & $([3],[3],[3],[1])$ & $-2^{15} \cdot 3 \cdot 5^{3}$ \\
        \cline{4-4}
        & & $([1],[1],[1],[1])$ & $0$ \\
        \hline
        
        \multirow{10}*{\includegraphics[width=40mm]{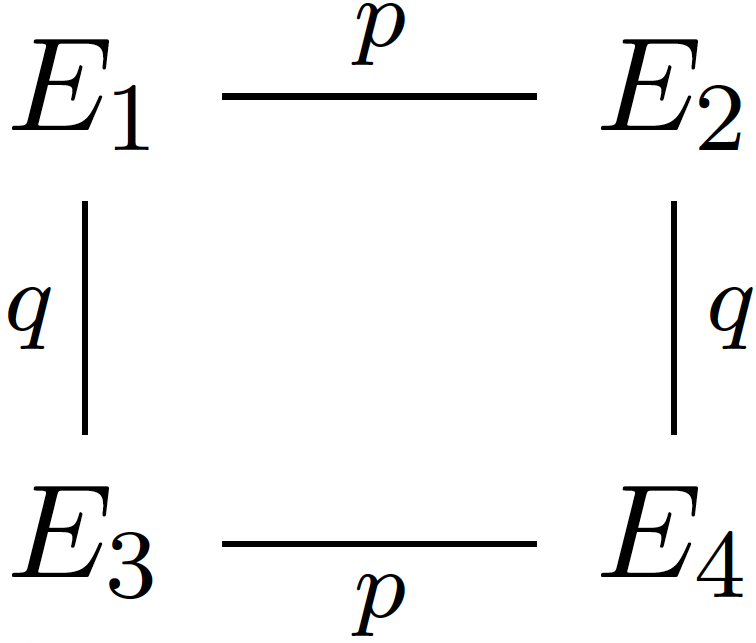}} & \multirow{2}*{$R_{4}(14)$} & \multirow{2}*{$([2],[2],[2],[2])$} & $-3^{3} \cdot 5^{3}$ \\
        \cline{4-4}
        & & & $3^{3} \cdot 5^{3} \cdot 17^{3}$\\
        \cline{2-4}
        
        & \multirow{4}*{$R_{4}(15)$} & $([5],[5],[1],[1])$ & $-5^{2} / 2$ \\
        \cline{4-4}
        & & $([3],[3],[1],[1])$ & $-5^{2} \cdot 241^{3} / 2^{3}$ \\
        \cline{4-4}
        & & $([1],[1],[1],[1])$ & $-5 \cdot 29^{3} / 2^{5}$ \\
        \cline{4-4}
        & & & $5 \cdot 211^{3} / 2^{15}$ \\
        \cline{2-4}
        & \multirow{4}*{$R_{4}(21)$} & & $-3^{2} \cdot 5^{6} / 2^{3}$ \\
        \cline{4-4}
        & & $([3],[3],[1],[1])$ & $3^{3} \cdot 5^{3} /2$ \\
        \cline{4-4}
        & & $([1],[1],[1],[1])$ & $-3^{2} \cdot 5^{3} \cdot 101^{3} / 2^{21}$ \\
        \cline{4-4}
        & & & $-3^{3} \cdot 5^{3} \cdot 383^{3} / 2^{7}$ \\
        \hline
\end{tabular}}
	\caption{Isogeny-torsion graphs corresponding to finitely many \textit{j}-invariants}
 	\label{Finite graphs}
\end{table}
\end{center}

\subsection{Isogeny-Torsion Graphs of $L_{2}(p)$ Type corresponding to finite sets of \textit{j}-invariants}
Let $E / \Q$ be an elliptic curve and let $p$ equal $11, 17, 19, 37, 43, 67,$ or $163$. If $E$ has a $\Q$-rational subgroup of order $p$, then $C(E) = C_{p}(E) = 2$ by Theorem \ref{thm-kenku}. In other words, the $\Q$-isogeny class of $E$ contains two elliptic curves over $\Q$ and both of those elliptic curves have a $\Q$-rational subgroup of order $p$ and no other non-trivial, cyclic, $\Q$-rational subgroups. As a consequence, both elliptic curves have trivial rational torsion. By Theorem \ref{thm-ratnoncusps}, there are finitely many \textit{j}-invariants corresponding to such elliptic curves.

\subsection{Isogeny-Torsion Graphs of $L_{4}$ Type}

Let $E / \Q$ be an elliptic curve such that either $E$ has a cyclic, $\Q$-rational subgroup of order $27$ or $E$ has a cyclic, $\Q$-rational subgroup of order $9$ \textit{and} an independent $\Q$-rational subgroup of order $3$. In both cases, the isogeny-torsion graph attached to the $\Q$-isogeny class of $E$ is of $L_{4}$ type. In the first case, the \textit{j}-invariant of $E$ is $-12288000$ and in the second case, the \textit{j}-invariant is $0$ (both of which are CM \textit{j}-invariants).

{\bf NB} Not \textit{every} elliptic curve of \textit{j}-invariant $0$ fits into an isogeny graph of $L_{4}$ type; only the elliptic curves with Weierstrass model $y^{2} = x^{3} + 16t^{3}$ for some non-zero, square-free, rational $t$.

\subsection{Isogeny-Torsion graphs Of $R_{4}(pq)$ type corresponding to finite sets of \textit{j}-invariants}

Let $E / \Q$ be an elliptic curve and let $pq = 14$, $15$, or $21$. Suppose $E$ has a cyclic, $\Q$-rational subgroup of order $pq$. By Theorem \ref{thm-kenku}, there are four elliptic curves over $\Q$ in the $\Q$-isogeny class of $E$ and moreover, \textit{every} elliptic curve over $\Q$ in the $\Q$-isogeny class has a cyclic, $\Q$-rational subgroup of order $pq$. There are finitely many \textit{j}-invariants corresponding to elliptic curves over $\Q$ with a cyclic, $\Q$-rational subgroup of order $14$, $15$, or $21$ by Theorem \ref{thm-ratnoncusps}. This concludes the proof of Proposition \ref{Finite Graphs Proposition}.

\section{Isogeny-torsion graphs corresponding to infinite sets of \textit{j}-invariants}
\subsection{Methodology Of The Remaining Proofs}
Table \ref{Finite graphs} lists the \textit{j}-invariants corresponding to the $15$ isogeny-torsion graphs corresponding to finite sets of \textit{j}-invariants. It remains to prove that each of the other $37$ isogeny-torsion graphs correspond to infinite sets of \textit{j}-invariants.

The following proofs are of a mostly group theoretic flavor. To prove that an isogeny-torsion graph $\mathcal{G}$ corresponds to an infinite set of \textit{j}-invariants, we will focus on proving a nicely chosen vertex on $\mathcal{G}$ corresponds to an infinite set of \textit{j}-invariants. To do this, we establish the necessary algebraic properties for an elliptic curve in a $\Q$-isogeny class to be represented by the ideal vertex of $\mathcal{G}$. By algebraic properties, we also mean the data afforded by the images of the mod-$N$ Galois representation for some prime powers $N$.

Once we have the necessary mod-$N$ data corresponding to the ideal vertex on $\mathcal{G}$, we make note if the modular curve that parametrizes such an elliptic curve has infinitely many non-cuspidal, $\Q$-rational points. Here is where the results of Rouse--Zureick-Brown \cite{Rouse} and Sutherland--Zywina \cite{SZ} come in to determine some of the isogeny-torsion graphs that correspond to infinite sets of \textit{j}-invariants. At times, the results of \cite{Rouse} and \cite{SZ} are not enough to conclude that $\mathcal{G}$ corresponds to an infinite set of \textit{j}-invariants. But other methods like the ones described in Section \ref{HIT} or Theorem \ref{thm-ratnoncusps} can be used if the results of \cite{Rouse} and \cite{SZ} are insufficient. Once we have that a vertex of an isogeny-torsion graph with some torsion configuration corresponds to an infinite set of \textit{j}-invariants, we take quadratic twists. Taking quadratic twists of all elliptic curves over $\Q$ in a $\Q$-isogeny class does not change the isogeny graph but it very likely changes the isogeny-torsion graph. Usually, a well chosen quadratic twist will toggle between the possible torsion configurations of an isogeny graph. From this fact, it should be noted that one \textit{j}-invariant may correspond to more than one isogeny-torsion graph. As the isogeny-torsion graph with one type of torsion configuration corresponds to an infinite set of \textit{j}-invariants, the isogeny-torsion graph with torsion configuration that comes from quadratic twisting the elliptic curve represented by the ideal vertex also corresponds to an infinite set of \textit{j}-invariants. Finally, at times, it will be necessary to exclude $0$, $1728$, and the \textit{j}-invariants corresponding to the isogeny graphs from Table \ref{Finite graphs}. As the complement of an infinite set with a finite subset is still an infinite set, no issues arise from this exclusion.

The philosophy of this article has been a group theoretic approach to solving questions regarding elliptic curves. Theorems \ref{thm-ratnoncusps} and \ref{thm-kenku}, and work in \cite{Rouse} and \cite{SZ} surely make use of the arithmetic and geometric properties of elliptic curves defined over $\Q$. But aside from that, our proofs appeal only to Galois theory.

\section{Isogeny-Torsion Graphs of $S$ Type}\label{S Graphs Section}

In this section we prove that each of the isogeny-torsion graphs of $S$ type correspond to infinite sets of \textit{j}-invariants.

\begin{proposition} \label{S Graphs Proposition}
Let $\mathcal{G}$ be one of the four isogeny-torsion graphs of $S$ type (regardless of torsion configuration). Then $\mathcal{G}$ corresponds to an infinite set of \textit{j}-invariants.
\end{proposition}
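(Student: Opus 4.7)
The plan is to follow the methodology outlined in Section 6: pick a distinguished ``rich'' vertex in the $S$-graph, parametrize the elliptic curves sitting at that vertex by a genus $0$ modular curve with infinitely many $\Q$-rational points, clean up with Hilbert's Irreducibility Theorem, and then sweep out the remaining three torsion configurations by quadratic twisting. Recall that an $S$-type isogeny graph arises precisely from a $\Q$-isogeny class with $C_2(E)=4$ and $C_3(E)=2$, i.e.\ full two-torsion over $\Q$, no cyclic $\Q$-rational subgroup of order $4$, and exactly one $\Q$-rational subgroup of order $3$. The natural vertex to focus on is the one carrying torsion $\Z/2\Z\oplus\Z/6\Z$; its existence forces full two-torsion plus a rational point of order $3$.

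First I would identify the mod-$12$ group $G\subseteq\GL(2,\Z/12\Z)$ cut out by the conditions: full two-torsion (i.e.\ the mod-$2$ image is trivial), and a $\Q$-rational subgroup of order $3$ containing a rational point (i.e.\ the mod-$3$ image sits inside the diagonal-with-top-entry-$1$ subgroup of $\mathfrak{B}_3$, as in Lemma \ref{Rational Points}). The corresponding modular curve $\operatorname{X}_G$ is a quotient of $\operatorname{X}_1(6)$ and is of genus $0$ with infinitely many non-cuspidal rational points. Proposition \ref{HIT proposition} then gives an infinite set of $j$-invariants corresponding to elliptic curves whose mod-$12$ image is \emph{exactly} $G$ and not a proper subgroup; the ``proper subgroups'' of $G$ that could enlarge the isogeny graph are those forcing either $C_2(E)\ge 6$ (giving a $T_6$ or $T_8$ graph) or $C_3(E)\ge 3$ (giving an $L_3(9)$-factor instead of $L_2(3)$), each coming from a modular curve of positive level index and thus cutting out a thin subset in the sense of Serre. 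After excluding the finitely many bad $j$-invariants (such as $0$, $1728$, CM values, and the values from Table \ref{Finite graphs}), the infinite remainder gives the first of the four isogeny-torsion graphs of $S$ type.

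Next, I would hit each such curve $E$ with a quadratic twist $E^{(d)}$ for various squarefree $d$ and track how the torsion migrates around the isogeny graph. As noted in the remark following Lemma \ref{No Points of Order 4}, for a curve with full two-torsion one can twist to eliminate points of order $3$ and $5$ while preserving the full two-torsion, and separate twists can move a point of order $3$ between different vertices of the two $T_4$ components joined by $3$-isogenies. Since twisting leaves $j(E)$ invariant and permutes the torsion configurations across the same underlying isogeny graph, the four possible torsion configurations of the $S$ graph are exhausted by twisting the original family, each yielding an infinite set of $j$-invariants (infinite because taking quadratic twists of an infinite family of $j$-invariants does not shrink the $j$-set).

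The main obstacle I anticipate is the HIT step, specifically proving that the union of loci forcing $C_2(E)\ge 6$ or $C_3(E)\ge 3$ really is thin inside $\operatorname{X}_G(\Q)$. This reduces to listing the proper subgroups $G'\subsetneq G$ of prime-power level that appear on the Sutherland--Zywina and Rouse--Zureick-Brown lists and checking that each of the corresponding fiber products with $\operatorname{X}_G$ is a proper cover; once that is done, Hilbert's theorem in the form of Proposition \ref{HIT proposition} and the discussion in Section \ref{HIT} delivers the infinitely many $j$-invariants landing on $G$ and nowhere smaller. The remaining bookkeeping---verifying that the four isogeny-torsion graphs of $S$ type from Theorem \ref{thm-main2} are in bijection with the twist orbits---is combinatorial and should follow directly from the structure of $E(\Q)_{\tor}$ under quadratic twist on each of the eight vertices.
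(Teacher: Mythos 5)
Your overall recipe (a genus-zero parametrization, Hilbert irreducibility, then quadratic twists) is the paper's recipe, but the way you set it up leaves a genuine gap at $\mathcal{S}^{1}$. Your distinguished data live at level $12$, and level-$12$ data cannot tell $\mathcal{S}^{1}$ from $\mathcal{S}^{2}$: both graphs in Table \ref{S Graphs} have a vertex labelled $\Z/2\Z\times\Z/6\Z$, and whether some vertex of the class carries a rational point of order $4$ (equivalently, of order $12$) is mod-$8$/mod-$24$ information. In fact, if the mod-$12$ image is \emph{exactly} your $G$ --- which is what your Hilbert-irreducibility step produces --- then the mod-$4$ image is the full preimage of the identity mod $2$, hence contains $\operatorname{-Id}$ and preserves no cyclic subgroup of order $4$, so Lemma \ref{No Points of Order 4} (with $M=1$) shows that \emph{no} curve in the class has a rational point of order $4$: your family realizes $\mathcal{S}^{2}$, not $\mathcal{S}^{1}$. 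The final twisting step cannot repair this. On an outer vertex of such a class the generator of a cyclic rational subgroup of order $4$ is defined over a quadratic field $\Q(\sqrt{a})\subseteq\Q(E[8])$ with $a$ a nonsquare, while the point of order $3$ there is already rational; twisting by $d$ replaces these characters by $\chi_{ad}$ and $\chi_{d}$, which cannot both be trivial since $\chi_a\neq 1$. So for the generic (non-entangled, maximal-image) members your construction yields, no quadratic twist has a vertex with rational $\Z/12\Z$ torsion, i.e.\ no twist lands on $\mathcal{S}^{1}$. The paper runs the argument in the opposite direction precisely for this reason: it starts at $\mathcal{S}^{1}$, which is the \emph{unique} isogeny-torsion graph with a $[12]$ vertex, so the classical one-parameter family with $E(\Q)_\tor\cong\Z/12\Z$ immediately gives infinitely many $j$-invariants for $\mathcal{S}^{1}$; Hilbert irreducibility is then invoked at level $24$ only to force the image to be $K_8\times\mathcal{B}_3$ (no entanglement between $\Q(E[8])$ and $\Q(E[3])$), so that the three explicit twists by $-3$, $-1$ and a generic prime $q$ provably land on $\mathcal{S}^{2}$, $\mathcal{S}^{3}$, $\mathcal{S}^{4}$ via Lemma \ref{No Points of Order 4} and Corollary \ref{Corollary Quadratic Twisting Odd Graphs}. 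Twisting down from the largest torsion configuration always works; twisting up from a generic smaller one does not.

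Two secondary points. Your thin-set step guards against degenerations that are already impossible: once a curve has full rational two-torsion and a rational $3$-isogeny, Theorem \ref{thm-kenku} forces $C_2(E)=4$, $C_3(E)=2$ and $C_p(E)=1$ for $p\geq 5$, so enlargements to $T_6$, $T_8$, $R_6$ or an $L_3(9)$-factor cannot occur and need no thinness argument; the genuine use of exact-image control here is to pin down quadratic subfields of the division fields so that the twists behave predictably (and, as above, to decide between $\mathcal{S}^{1}$ and $\mathcal{S}^{2}$, which requires going to level $8$ or $24$). Finally, your parenthetical description of the mod-$3$ condition (image inside the diagonal subgroup of $\mathfrak{B}_3$) is not what you want: a diagonal image gives two independent $\Q$-rational subgroups of order $3$, which together with full two-torsion violates Kenku's bound, so that locus is empty of elliptic curves over $\Q$; the condition for a rational point of order $3$ spanning the unique rational subgroup of order $3$ is simply that the image lie in $\mathfrak{B}_3$.
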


Once we prove that the isogeny-torsion graph corresponding to an elliptic curve over $\Q$ with rational $12$-torsion corresponds to an infinite set of \textit{j}-invariants, we can use quadratic twists to prove the other three isogeny-torsion graphs of $S$ type correspond to infinite set of \textit{j}-invariants. Thus, let us first talk about quadratic twists of elliptic curves over $\Q$ with rational $12$-torsion.

\begin{table}[h!]
 	\renewcommand{\arraystretch}{1.6}
	\begin{tabular} { |c|c|c|c| }
		\hline
		
		Isogeny Graph & Type & Isomorphism Types & Label\\
		\hline
		\multirow{4}*{\includegraphics[scale=0.25]{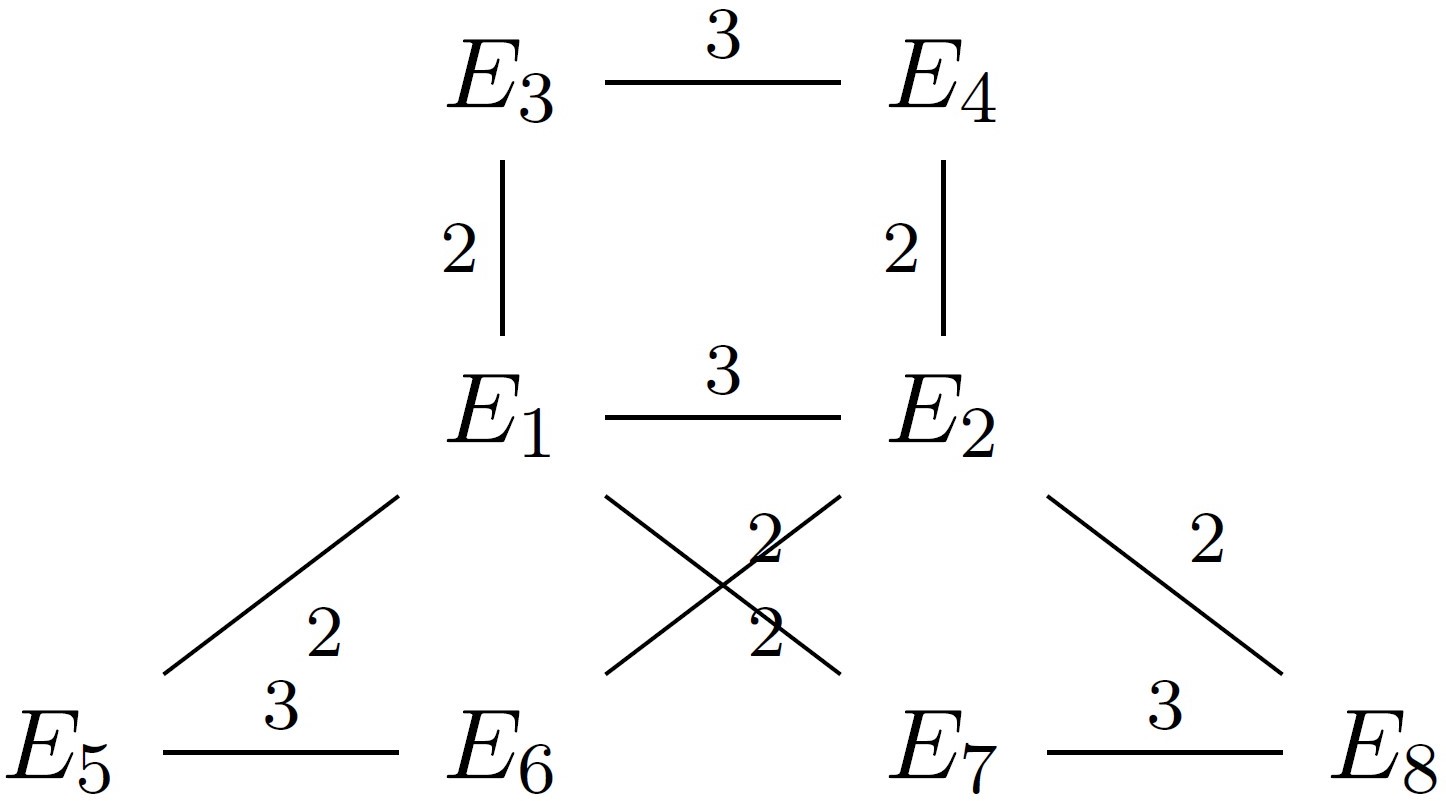}}& \multirow{4}*{$S$} &
		([2,6],[2,2],[12],[4],[6],[2],[6],[2]) & $\mathcal{S}^{1}$ \\
		\cline{3-4}
		& & ([2,6],[2,2],[6],[2],[6],[2],[6],[2]) & $\mathcal{S}^{2}$ \\
		\cline{3-4}
		& & ([2,2],[2,2],[4],[2],[4],[2],[2],[2]) & $\mathcal{S}^{3}$ \\
		\cline{3-4}
		& & ([2,2],[2,2],[2],[2],[2],[2],[2],[2]) & $\mathcal{S}^{4}$ \\
		\hline
	\end{tabular}
	\caption{Isogeny-Torsion Graphs of S type}
    \label{S Graphs}
	\end{table}

\subsection{Quadratic Twists of Elliptic Curves with Rational 12-Torsion}

To start, we note that there are infinitely many \textit{j}-invariants corresponding to elliptic curves over $\QQ$ with a point of order $12$ defined over $\QQ$. For example, let $E_{a,b}(t) : y^{2} + (1-a)xy - by = x^{3}-bx^{2}$ such that
\begin{center}
    $a = \frac{t(1-2t)(3t^{2}-3t+1)}{(t-1)^{3}}$ and $b = -a \frac{2t^{2}-2t+1}{t-1}$.
\end{center}
Let $t \in \QQ$ such that $E_{a,b}(t)$ is an elliptic curve. Then $E_{a,b}(t)(\Q)_{\text{tors}} \cong \Z / 12 \Z$ (see Appendix E of \cite{alrbook}). There is only a single isogeny-torsion graph containing a point of order $12$, namely, $\mathcal{S}^{1}$ (see Table \ref{S Graphs}). Hence, $\mathcal{S}^{1}$ corresponds to an infinite set of \textit{j}-invariants. Let $\operatorname{X}$ be the modular curve that parametrizes elliptic curves defined over $\QQ$ with a point of order $12$ defined over $\QQ$. In other words, $\operatorname{X}$ is generated by $H = \begin{bmatrix} 1 & \ast \\ 0 & \ast \end{bmatrix} \subseteq \operatorname{GL}(2, \ZZ / 12 \ZZ)$. The genus of $\operatorname{X}$ is equal to $0$ and $\operatorname{X}$ has infinitely many points defined over $\QQ$. Reducing $H$ modulo $4$, we get the group $\mathcal{B}_{4}$ which is the group of all elements of $\operatorname{GL}(2, \ZZ / 4 \ZZ)$ of the form $\begin{bmatrix} 1 & b \\ 0 & d \end{bmatrix}$ and reducing $H$ modulo $3$, we get the group $\mathcal{B}_{3}$ which is the group of all elements of $\operatorname{GL}(2, \ZZ / 3 \ZZ)$ of the form $\begin{bmatrix} 1 & b \\ 0 & d \end{bmatrix}$. Let $K_{8}$ be the full lift of $\mathcal{B}_{4}$ to level $8$. By Hilbert's irreducibility theorem, there are infinitely many \textit{j}-invariants corresponding to elliptic curves $E/\QQ$ such that the image of the mod-$24$ Galois representation attached to $E$ is conjugate to $H = K_{8} \times \mathcal{B}_{3}$. In such a case, $\QQ(E[8]) \bigcap \QQ(E[3]) = \QQ$. For more on non-trivial entanglement of division fields, see work in \cite{Entanglement2} and \cite{Entanglement1}. In the case that $\overline{\rho}_{E,12}(G_{\QQ})$ is conjugate to $H$, the quadratic subfield of $\QQ(E[3])$ is $\QQ(\sqrt{-3})$ by Corollary \ref{Corollary Unique Index 2 Subgroup}.

Now we will prove that the isogeny-torsion graphs $\mathcal{S}^{2}$, $\mathcal{S}^{3}$, and $\mathcal{S}^{4}$ each correspond to infinite sets of \textit{j}-invariants.

\begin{itemize}
    \item $\mathcal{S}^{2}$
    
    Let $E/\QQ$ be an elliptic curve such that $E$ has a point of order $12$ defined over $\QQ$ and the image of the mod-$24$ Galois representation attached to $E$ is conjugate to $H$. Let $E^{(-3)}$ be the quadratic twist of $E$ by $-3$. Twisting by $-3$ shifts the points of order $3$ to the other side of the graph. Hence, four of the elliptic curves over $\QQ$ in the isogeny-torsion graph have points of order $3$ defined over $\QQ$. As $\sqrt{-3}$ is not in $\QQ(E[8])$, we have that the image of the mod-$8$ Galois representation attached to $E^{(-3)}$ contains $\operatorname{-Id}$. Let $E'/\QQ$ be the ellipic curve that is $2$-isogenous to $E$. Then $E'$ has full two-torsion defined over $\QQ$ and the image of the mod-$4$ Galois representation attached to $E'$ contains $\operatorname{-Id}$. By Lemma \ref{No Points of Order 4}, no elliptic curve over $\QQ$ in the $\QQ$-isogeny class of $E'$ has a point of order $4$ defined over $\QQ$. Thus, the isogeny-torsion graph associated to the $\QQ$-isogeny class of $E^{(-3)}$ is $\mathcal{S}^{2}$.
    
    \item $\mathcal{S}^{3}$
    
    Let $E/\QQ$ be an elliptic curve such that $E$ has a point of order $12$ defined over $\QQ$ and the image of the mod-$24$ Galois representation attached to $E$ is conjugate to $H$. Let $E^{(-1)}$ be the quadratic twist of $E$ by $-1$. Twisting by $-1$ does not change the position of the points of order $4$ on the isogeny-torsion graph. As $\sqrt{-1}$ is not in $\QQ(E[-3])$, we have that the image of the mod-$3$ Galois representation attached to $E^{(-1)}$ contains $\operatorname{-Id}$. By Corollary \ref{Corollary Quadratic Twisting Odd Graphs}, no elliptic curve in the $\QQ$-isogeny class of $E^{(-1)}$ contains a point of order $3$ defined over $\QQ$. Thus, the isogeny-torsion graph associated to the $\QQ$-isogeny class of $E^{(-1)}$ is $\mathcal{S}^{3}$
    
    \item $\mathcal{S}^{4}$
    
    Let $E/\QQ$ be an elliptic curve such that $E$ has a point of order $12$ defined over $\QQ$ and the image of the mod-$24$ Galois representation attached to $E$ is conjugate to $H$. Let $q$ be a prime number such that $\sqrt{q}$ is not contained in $\QQ(E[8])$ and $\sqrt{q}$ is not contained in $\QQ(E[3])$. Let $E^{(q)}$ be the quadratic twist of $E$ by $q$. Then the image of the mod-$8$ Galois representation attached to $E^{(q)}$ contains $\operatorname{-Id}$ and the image of the mod-$3$ Galois representation attached to $E^{(q)}$ contains $\operatorname{-Id}$. Thus, no elliptic curve that is $\QQ$-isogenous to $E^{(q)}$ contains a point of order $3$ or $4$ defined over $\QQ$. Hence, the isogeny-torsion graph associated to the $\QQ$-isogeny class of $E^{(q)}$ is $\mathcal{S}^{4}$.
    
\end{itemize}

All isogeny-torsion graphs of type $S$ are simply twists of $\mathcal{S}^{1}$. As $\mathcal{S}^{1}$ corresponds to an infinite set of \textit{j}-invariants, so do $\mathcal{S}^{2}$, $\mathcal{S}^{3}$, and $\mathcal{S}^{4}$. This proves Proposition \ref{S Graphs Proposition}.

\section{Isogeny-Torsion Graphs of $T_{k}$ Type}

\subsection{Isogeny-Torsion Graphs of $T_{4}$ Type}

In this subsection, we prove that each of the three isogeny-torsion graphs of $T_{4}$ type correspond to infinite sets of \textit{j}-invariants.

\begin{proposition}\label{T4 Graphs Proposition}
Let $\mathcal{G}$ be any one of the three isogeny-torsion graphs of $T_{4}$ type (regardless of torsion configuration). Then $\mathcal{G}$ corresponds to an infinite set of \textit{j}-invariants.
\end{proposition}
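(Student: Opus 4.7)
The plan mirrors the strategy of Section~\ref{S Graphs Section}: first exhibit an infinite set of \textit{j}-invariants realizing the richest $T_4$ configuration $\mathcal{T}_4^1$ (torsion configuration $([2,2],[4],[4],[2])$ as in Example~\ref{T4 example}), and then obtain the other two configurations $\mathcal{T}_4^2 = ([2,2],[4],[2],[2])$ and $\mathcal{T}_4^3 = ([2,2],[2],[2],[2])$ by quadratic twists. Note that these three configurations are exhaustive: in a $T_4$ graph the center must have full $\QQ$-rational $2$-torsion, and each of the three leaves must have torsion $\Z/2\Z$ or $\Z/4\Z$, with not all three being $\Z/4\Z$ (otherwise Mazur's theorem would be violated on the center).

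For $\mathcal{T}_4^1$, the discussion after Example~\ref{T4 example} identifies the center vertex with non-cuspidal $\QQ$-rational points on the modular curve $\operatorname{X}_{24e}$ of \cite{Rouse} (LMFDB label \texttt{4.24.0.7}), whose $\textit{j}$-map is
$$J(t) = 2^{8} \cdot \frac{(t^{2}+t+1)^{3}(t^{2}-t+1)^{3}}{t^{4}(t^{2}+1)^{2}}.$$
Let $G \subseteq \operatorname{GL}(2, \Z/8\Z)$ be the subgroup defining $\operatorname{X}_{24e}$. I would verify, using the explicit generators recorded in \cite{Rouse}, that $G$ contains $\operatorname{-Id}$, a representative of complex conjugation mod-$8$, and has full determinant mod-$8$. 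Proposition~\ref{HIT proposition} then produces infinitely many $\textit{j}$-invariants corresponding to elliptic curves $E/\QQ$ whose mod-$8$ Galois image is \emph{exactly} conjugate to $G$ (not a proper subgroup), hence whose isogeny-torsion graph is precisely $\mathcal{T}_4^1$.

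For $\mathcal{T}_4^3$: fix $E/\QQ$ realizing $\mathcal{T}_4^1$ and choose a prime $q$ with $\sqrt{q} \notin \QQ(E[4])$; such a $q$ exists since $\QQ(E[4])/\QQ$ is a finite extension. The center curve $E$ admits no cyclic $\QQ$-rational subgroup of order $4$ (otherwise the isogeny graph would be larger than $T_4$), so in the notation of Lemma~\ref{No Points of Order 4} we have $M = 1$, and the mod-$4$ image of $E^{(q)}$ contains $\operatorname{-Id}$. Lemma~\ref{No Points of Order 4} then forces every elliptic curve $\QQ$-isogenous to $E^{(q)}$ to have no point of order $4$ defined over $\QQ$. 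Since twisting preserves $\textit{j}(E)$, preserves the underlying isogeny graph, and preserves the full $\QQ$-rational $2$-torsion of the center, the twist $E^{(q)}$ realizes $\mathcal{T}_4^3$, yielding infinitely many $\textit{j}$-invariants.

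For $\mathcal{T}_4^2$: the goal is a quadratic twist that kills the rational $4$-torsion on exactly one of the two leaves $E_2, E_3$ of $\mathcal{T}_4^1$. The key observation is that the rationality of the distinguished point of order $4$ on $E_i$ ($i = 2, 3$) is governed, via Lemma~\ref{lem-necessity-for-point-rationality}, by a quadratic character $\chi_i$ of $G_\QQ$ cutting out an index-$2$ subgroup $H_i$ of $G$; the two characters $\chi_2, \chi_3$ are \emph{distinct} because $E_2 \not\cong_\QQ E_3$. Twisting $E$ by the discriminant of the fixed field of $\chi_2$ will trivialize $\chi_2$ and non-trivialize $\chi_3$ on the twist, producing exactly the configuration $\mathcal{T}_4^2$. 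The main obstacle is precisely this bookkeeping: explicitly identifying the two characters $\chi_2, \chi_3$ from the structure of $G$ in the Rouse--Zureick-Brown database, and verifying via Lemma~\ref{lem-necessity-for-point-rationality} that the twist produces exactly one, rather than zero or two, rational $4$-torsion leaves. Once this is done, Proposition~\ref{T4 Graphs Proposition} follows.
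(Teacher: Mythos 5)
There are genuine gaps, concentrated in your treatments of $\mathcal{T}_4^1$ and $\mathcal{T}_4^2$. For $\mathcal{T}_4^1$, your plan to ``verify that $G$ contains $\operatorname{-Id}$'' and then invoke Proposition \ref{HIT proposition} is self-defeating: for the center curve of a $T_4$ graph one has full two-torsion and no cyclic $\Q$-rational subgroup of order $4$, so $M=1$ in Lemma \ref{No Points of Order 4}, and if the mod-$4$ (hence mod-$8$) image contained $\operatorname{-Id}$, that lemma would force every curve in the class to have no rational point of order $4$, i.e.\ the graph would be $\mathcal{T}_4^3$, not $\mathcal{T}_4^1$. In fact the group the paper uses, $H_{24e}\subseteq \operatorname{GL}(2,\Z/4\Z)$, does \emph{not} contain $\operatorname{-Id}$ (nor does its lift to level $8$), so your verification step would simply fail; the torsion configuration is a twist-sensitive datum that the coarse modular curve $\operatorname{X}_{24e}$ and its $j$-map cannot see. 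Moreover, even with the correct group, your ``hence whose isogeny-torsion graph is precisely $\mathcal{T}_4^1$'' skips the essential computation: Example \ref{T4 example} only gives the implication from the graph to a point on $\operatorname{X}_{24e}$, whereas the converse needs one to show that exact image $H_{24e}$ forces \emph{two} leaves with rational $4$-torsion. The paper does exactly this, applying Lemma \ref{lem-necessity-for-point-rationality} to the quotients $E/\langle P_2+Q_2\rangle$ and $E/\langle P_2\rangle$ to exhibit the rational points $\phi_1(Q_4)$ and $\phi_2(P_4+Q_4)$ of order $4$, and also rules out a cyclic rational $4$-subgroup and a $3$-isogeny before invoking $\operatorname{X}_{24e}$ and Hilbert irreducibility.

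For $\mathcal{T}_4^2$ your argument is only a sketch, and the twisting recipe as written is incoherent: if $E$ realizes $\mathcal{T}_4^1$ then both of your characters $\chi_2,\chi_3$ are already trivial, so ``the discriminant of the fixed field of $\chi_2$'' is $1$ and the twist does nothing; what is actually needed is the quadratic character cut out inside $\Q(E[4])$ by the index-$2$ subgroup $H_{24e}\cap H_{24d}$ of $H_{24e}$, which moves the exact mod-$4$ image from $H_{24e}$ to $H_{24d}$, followed by a verification that exact image $H_{24d}$ produces exactly one leaf with rational $4$-torsion (you yourself flag this check as ``the main obstacle'' and do not carry it out). The paper avoids all of this bookkeeping by writing down $H_{24d}$ explicitly, computing with Lemma \ref{lem-necessity-for-point-rationality} that precisely one quotient has a rational point of order $4$, and then applying \cite{Rouse} and Proposition \ref{HIT proposition} to $\operatorname{X}_{24d}$. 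By contrast, your $\mathcal{T}_4^3$ step (twist by a prime $q$ with $\sqrt q\notin\Q(E[4])$, then apply Lemma \ref{No Points of Order 4}) is correct and is a legitimate, slightly different route from the paper, which instead realizes $\mathcal{T}_4^3$ via the exact image $H_{24}=\left\langle H_{24e},\operatorname{-Id}\right\rangle$ and the curves $\operatorname{X}_{24}$ and $4E^{0}$-$4b$; but it only becomes available once the $\mathcal{T}_4^1$ case is genuinely established.
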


We will prove this proposition case by case. The methodology is finding the image of the mod-$4$ Galois representation attached to the elliptic curve in the $\Q$-isogeny class with full two-torsion defined over $\Q$ for each of the three isogeny-torsion graphs of $T_{4}$ type. Using the RZB database, we show that each of these subgroups of $\operatorname{GL}(2, \Z / 4 \Z)$ define modular curves with infinitely many non-cuspidal, $\Q$-rational points. Examples of three such modular curves from the RZB database, SZ database, and LMFDB are provided when possible.

 \begin{center}
 \begin{table}[h!]
 	\renewcommand{\arraystretch}{2.5}
 	\begin{tabular}{ |c|c|c|c|c| }
 		\hline
 		Graph Type & Type & Isomorphism Types & Label & RZB, LMFDB, SZ  \\
 		\hline
 		
 		\multirow{3}*{\includegraphics[width=30mm]{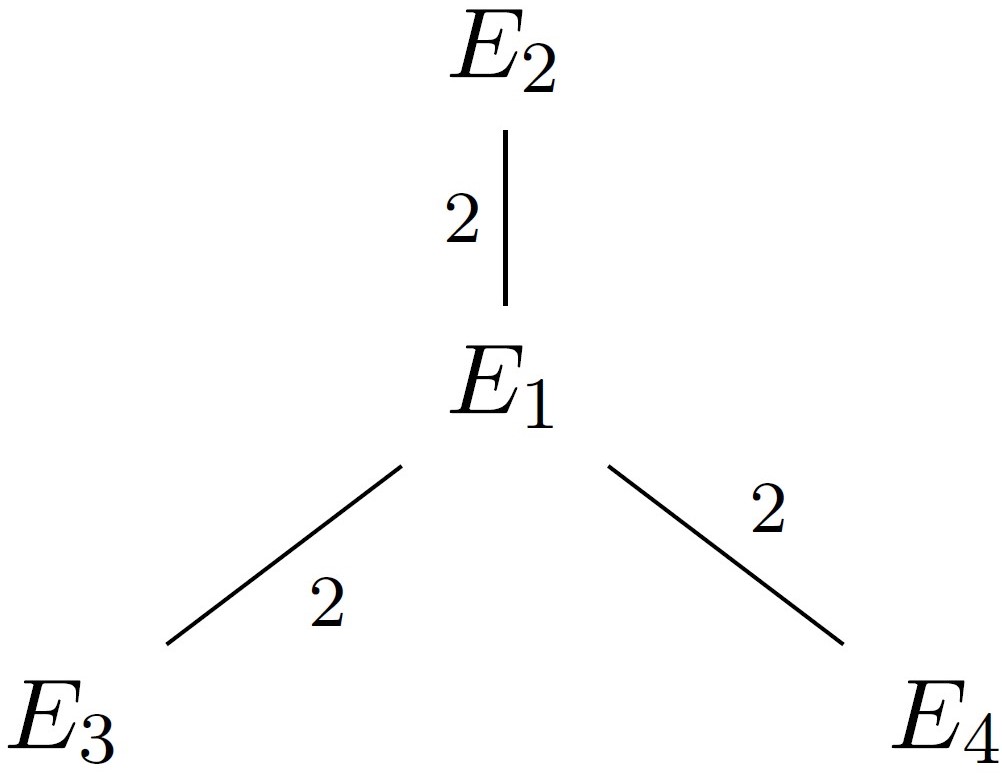}}& \multirow{3}*{$T_{4}$} & $([2,2], [4], [4], [2])$ & $\mathcal{T}_{4}^{1}$ & $\operatorname{H}_{24e}$, \texttt{4.24.0.7}, --- \\
 		\cline{3-5}
 		& & $([2,2], [4], [2], [2])$ & $\mathcal{T}_{4}^{2}$ & $\operatorname{H}_{24d}$, \texttt{4.24.0.4}, --- \\
 		\cline{3-5}
 		& & $([2,2], [2], [2], [2])$ & $\mathcal{T}_{4}^{3}$ & $\operatorname{H}_{24}$, \texttt{4.12.0.4}, $4E^{0}-4b$ \\
 		\hline
 		\end{tabular}
 		\caption{Isogeny-Torsion Graphs of $T_{4}$ Type}
        \label{T_{4} Graphs}
 		\end{table}
 		\end{center}

\begin{itemize}
    \item $\mathcal{T}_{4}^{1}$

Let $E/\Q$ be an elliptic curve with full two-torsion defined over $\Q$. Suppose the image of the mod-$4$ Galois representation attached to $E$ is conjugate to
$$H_{24e} = \left\{ \operatorname{Id}, \left(\begin{array}{cc}
    3 & 0 \\
    0 & 1
\end{array}\right), \left(\begin{array}{cc}
    1 & 2 \\
    2 & 3
\end{array}\right), \left(\begin{array}{cc}
    3 & 2 \\
    2 & 3
\end{array}\right) \right\}.$$
We claim that the isogeny-torsion graph associated to the $\Q$-isogeny class of $E$ is $\mathcal{T}_{4}^{1}$.

A Magma computation reveals that $H_{24e}$ is not conjugate to a subgroup of $\operatorname{GL}(2, \Z / 4 \Z)$ consisting of upper-triangular matrices. Hence $E$ does not have a cyclic, $\Q$-rational subgroup of order $4$. In the course of the proof in Section 13 of \cite{gcal-r} to eliminate the two ``elusive'' non-examples of isogeny-torsion graphs of $S$ type, it was shown that if the image of the mod-$4$ Galois representation attached to $E$ is conjugate to a subgroup of $\left\langle H_{24e}, \operatorname{-Id} \right\rangle$, then $E$ does not contain a $\Q$-rational subgroup of order $3$. Thus, the isogeny-torsion graph associated to the $\Q$-isogeny class of $E$ is of type $T_{4}$.

We must prove the isogeny-torsion graph associated to the $\Q$-isogeny class of $E$ is $\mathcal{T}_{4}^{1}$. To do this, we will show that $E$ is $\Q$-isogenous to two distinct elliptic curves over $\Q$ with rational $4$-torsion (see Table \ref{T_{4} Graphs}). Let us pick a basis $\{P_{4}, Q_{4}\}$ of $E[4]$ such that the image of the mod-$4$ Galois representation attached to $E$ is $H_{24e}$. Let $\phi_{1} \colon E \to E \left\langle P_{2} + Q_{2} \right\rangle$ and $\phi_{2} \colon E \to E \left\langle P_{2} \right\rangle$ be isogenies with kernel generated by $P_{2} + Q_{2}$ and $P_{2}$ respectively. Note that for each $\sigma \in G_{\Q}$, $\sigma(Q_{4}) = Q_{4}$ or $P_{2} + [3]Q_{4}$. Hence $\sigma(Q_{4}) - Q_{4} \in \left\langle P_{2} + Q_{2} \right\rangle$ for all $\sigma \in G_{\Q}$. By Lemma \ref{lem-necessity-for-point-rationality}, $\phi_{1}(Q_{4})$ is a point of order $4$ defined over $\Q$. Also, adding the two column vectors in each element of $H_{24e}$, we see that for each $\sigma \in G_{\Q}$, $\sigma(P_{4} + Q_{4}) = P_{4} + Q_{4}$ or $[3]P_{4} + Q_{4}$. Hence, $\sigma(P_{4}+Q_{4}) - (P_{4}+Q_{4}) \in \left\langle P_{2} \right\rangle$ for all $\sigma \in G_{\Q}$. By Lemma \ref{lem-necessity-for-point-rationality}, $\phi_{2}(P_{4}+Q_{4})$ is a point of order $4$ defined over $\Q$. This is enough to prove that the isogeny-torsion graph associated to the $\Q$-isogeny class of $E$ is $\mathcal{T}_{4}^{1}$.

Elliptic curves over $\Q$ whose transpose of the image of the mod-$4$ Galois representation is conjugate to a subgroup of $H_{24e}$ are in one-to-one correspondence with non-cuspidal, $\Q$-rational points on the modular curve $\operatorname{X}_{24e}$ from the list compiled in \cite{Rouse}. The modular curve $\operatorname{X}_{24e}$ is a genus $0$ curve and has infinitely many non-cuspidal, $\Q$-rational points. By Proposition \ref{HIT proposition}, there are infinitely many \textit{j}-invariants corresponding to elliptic curves over $\Q$ such that the image of the mod-$4$ Galois representation is conjugate to $H_{24e}$ (not a proper subgroup of $H_{24e}$). Thus $\mathcal{T}_{4}^{1}$ corresponds to an infinite list of \textit{j}-invariants.

\item $\mathcal{T}_{4}^{2}$

Let $E / \Q$ be an elliptic curve over $\Q$ such that the image of the mod-$4$ Galois representation attached to $E$ is conjugate to
$$\operatorname{H}_{24d} = \left\{I, \left(\begin{array}{cc}
    1 & 0 \\
    0 & 3
\end{array}\right), \left(\begin{array}{cc}
    1 & 2 \\
    2 & 3
\end{array}\right), \left(\begin{array}{cc}
    1 & 2 \\
    2 & 1
\end{array}\right)\right\}.$$

In the course of the proof in Section 13 of \cite{gcal-r} to eliminate the two ``elusive'' non-examples of isogeny-torsion graphs of $S$ type, it was shown that if the image of the mod-$4$ Galois representation attached to $E$ is conjugate to a subgroup of $\left\langle \operatorname{H}_{24d}, \operatorname{-Id} \right\rangle = \left\langle \operatorname{H}_{24e}, \operatorname{-Id} \right\rangle$, then $E$ does not contain a $\Q$-rational subgroup of order $3$. A Magma computation reveals that $\operatorname{H}_{24d}$ is not conjugate to a subgroup of $\operatorname{GL}(2, \Z / 4 \Z)$ consisting of upper-triangular matrices. Hence, $E$ does not have a cyclic, $\Q$-rational subgroup of order $4$. Thus, the isogeny-torsion graph associated to the $\Q$-isogeny class of $E$ is of type $T_{4}$. It remains to prove that $E$ is $\Q$-isogenous to one and only one elliptic curve over $\Q$ with rational $4$-torsion.

Let $\{P_{4}, Q_{4}\}$ be a basis of $E[4]$ such that the image of the mod-$4$ Galois representation attached to $E$ is $F_{2}$. Let $\phi_{1} \colon E \to E / \left\langle P_{2} \right\rangle$, $\phi_{2} \colon E \to E / \left\langle Q_{2} \right\rangle$, and $\phi_{3} \colon E \to E / \left\langle P_{2} + Q_{2} \right\rangle$ be isogenies with kernels generated by $P_{2}, Q_{2},$ and $P_{2}+Q_{2}$ respectively. Note that for each $\sigma \in G_{\Q}$, $\sigma(P_{4}) = P_{4}$ or $P_{4} + Q_{2}$. Hence, $\sigma(P_{4}) - P_{4} \in \left\langle Q_{2} \right\rangle$ for all $\sigma \in G_{\Q}$. By Lemma \ref{lem-necessity-for-point-rationality}, $\phi_{2}(P_{4})$ is defined over $\Q$ and thus, $E / \left\langle Q_{2} \right\rangle(\Q)_{\text{tors}} \cong \Z / 4 \Z$. By Lemma \ref{lem-necessity-for-point-rationality}, $\phi_{1}(Q_{2})$ and $\phi_{3}(Q_{2})$ are points of order $2$ defined over $\Q$. The point $\phi_{1}(Q_{2})$ lives in two cyclic subgroups of order $4$, namely, $\left\langle \phi_{1}(Q_{4}) \right\rangle$ and $\left\langle \phi_{1}(P_{4}+Q_{4}) \right\rangle$. Let $\sigma_{1}$ be a Galois automorphism that maps $Q_{4}$ to $[3]Q_{4}$. Then $\sigma_{1}(Q_{4}) - Q_{4} = [3]Q_{4} - Q_{4} = Q_{2} \notin \left\langle P_{2} \right\rangle$. Hence, $\phi_{1}(Q_{4})$ is not defined over $\Q$. Let $\sigma_{3}$ be a Galois automorphism that maps $P_{4}$ to $P_{4}+Q_{2}$ and $Q_{4}$ to $P_{2} + Q_{4}$. Then $\sigma_{3}(P_{4}+Q_{4}) - (P_{4}+Q_{4}) = P_{2} + Q_{2} \notin \left\langle P_{4} \right\rangle$. Hence, $\phi_{1}(P_{4}+Q_{4})$ is not defined over $\Q$ and thus, $E / \left\langle P_{4} \right\rangle(\Q)_{\text{tors}} \cong \Z / 2 \Z$. The point $\phi_{3}(Q_{2})$ lives in two cyclic, $\Q$-rational groups of order $4$, namely $\left\langle \phi_{3}(Q_{4}) \right\rangle$ and $\left\langle \phi_{3}(P_{4}) \right\rangle$. Note that $\sigma_{1}(Q_{4}) - Q_{4} = Q_{2} \notin \left\langle P_{2} + Q_{2} \right\rangle$ and $\sigma_{3}(P_{4}) - P_{4} = P_{4} + Q_{2} - P_{4} = Q_{2} \notin \left\langle P_{2} + Q_{2} \right\rangle$. Hence, $E / \left\langle P_{2} + Q_{2} \right\rangle(\Q)_{\text{tors}} \cong \Z / 2 \Z$.

Elliptic curves over $\Q$ such that the transpose of the image of the mod-$4$ Galois representation is conjugate to $\operatorname{H}_{24d}$ correspond to non-cuspidal, $\Q$-rational points on the modular curve $\operatorname{X}_{24d}$ in \cite{Rouse} which is a genus $0$ curve with infinitely many non-cuspidal, $\Q$-rational points. Using \ref{HIT proposition}, there are infinitely many \textit{j}-invariants corresponding to elliptic curves over $\Q$ such that the image of the mod-$4$ Galois representation is conjugate to $\operatorname{H}_{24d}$ (not a proper subgroup of $\operatorname{H}_{24d}$). Hence, $\mathcal{T}_{4}^{2}$ corresponds to an infinite set of \textit{j}-invariants.

\item $\mathcal{T}_{4}^{3}$

Let $E / \Q$ be an elliptic curve such that the image of the mod-$4$ Galois representation attached to $E$ is conjugate to $\operatorname{H}_{24} = \left\langle \operatorname{H}_{24e}, \operatorname{-Id} \right\rangle = \left\langle \operatorname{H}_{24d}, \operatorname{-Id} \right\rangle$. Note that again, as proven in \cite{gcal-r}, that $E$ does not have a $\Q$-rational subgroup of order $3$. Also, $E$ has full two-torsion defined over $\Q$ and does not have a cyclic, $\Q$-rational subgroup of order $4$. Thus, the isogeny-torsion graph associated to the $\Q$-isogeny class of $E$ is of $T_{4}$ type.

By Lemma \ref{No Points of Order 4}, no elliptic curve over $\Q$ that is $\Q$-isogenous to $E$ has a point of order $4$ defined over $\Q$. Hence, the isogeny-torsion graph associated to $E$ is $\mathcal{T}_{4}^{3}$. Note that elliptic curves over $\Q$ such that the image of the mod-$4$ Galois representation is conjugate to a subgroup of $\operatorname{H}_{24}$ correspond to non-cuspidal, $\Q$-rational points on the modular curve defined by the group with label $4E^{0}-4b$ in \cite{SZ}. Elliptic curves over $\Q$ such that the image of the mod-$4$ Galois representation is conjugate to a subgroup of the transpose of $\operatorname{H}_{24}$ correspond to non-cuspidal, $\Q$-rational points on the modular curve $\operatorname{X}_{24}$ in \cite{Rouse}. Both of these modular curves are genus $0$ curves with infinitely many non-cuspidal, $\Q$-rational points. Applying Proposition \ref{HIT proposition}, we can conclude that there are infinitely many \textit{j}-invariants corresponding to elliptic curves over $\QQ$ such that the image of the mod-$4$ Galois representation is conjugate to $\operatorname{H}_{24}$. Thus, $\mathcal{T}_{4}^{3}$ corresponds to an infinite set of \textit{j}-invariants.
\end{itemize}

This concludes the proof of Proposition \ref{T4 Graphs Proposition}.

{\bf N.B.} This is not the \textit{only} way to generate isogeny-torsion graphs of $T_{4}$ type nor prove that they correspond to infinite sets of \textit{j}-invariants. It is possible for example, to have an elliptic curve over $\Q$ with Galois image mod-$4$ conjugate to the full inverse image of the trivial subgroup of $\operatorname{GL}(2, \Z / 2 \Z)$ via the mod-$4$ reduction map from $\operatorname{GL}(2, \Z / 4 \Z)$ and a surjective mod-$3$ Galois representation. The isogeny-torsion graph associated to a $\Q$-isogeny class containing such an elliptic curve is $\mathcal{T}_{4}^{3}$.

		\subsection{Isogeny-Torsion Graphs of $T_{6}$ Type}
		
		In this subsection, we prove that each of the four isogeny-torsion graphs of $T_{6}$ type correspond to infinite sets of \textit{j}-invariants.
		
		\begin{proposition}\label{T6 Graphs Proposition}
		Let $\mathcal{G}$ be an isogeny-torsion graph of type $T_{6}$ (regardless of torsion configuration). Then $\mathcal{G}$ corresponds to an infinite set of \textit{j}-invariants.
		\end{proposition}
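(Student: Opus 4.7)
The plan is to mimic closely the recipe used for the $T_{4}$ graphs in Proposition \ref{T4 Graphs Proposition}. A vertex in an isogeny-torsion graph of type $T_{6}$ corresponds to an elliptic curve $E/\Q$ with $C_{2}(E)=6$, so by Remark \ref{2-powers} we may assume $E$ has full two-torsion defined over $\Q$ and that the $\Q$-rational subgroups of $E$ are $\{\mathcal{O}\},\ \langle P_{2}\rangle,\ \langle Q_{2}\rangle,\ \langle P_{2}+Q_{2}\rangle,\ \langle Q_{4}\rangle,\ \langle P_{2}+Q_{4}\rangle$. Such a curve corresponds to the central vertex of the graph. I would pick this central vertex as the ``ideal vertex'' for each torsion configuration and determine, using the image of $\overline{\rho}_{E,8}$, what is needed for $E$ to realize the desired configuration.

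First I would handle the richest configuration, say $\mathcal{T}_{6}^{1}$ (the one in which some neighbor of the central vertex has $\Z/2\Z\times\Z/4\Z$ rational torsion, or in which a $\Z/8\Z$ point appears on a neighbor depending on the entry of Table \ref{T_{6} Graphs}). Working with the basis $\{P_{2},Q_{4}\}$ of $E[4]$, I would write down the necessary congruence conditions on $\overline{\rho}_{E,4}(\sigma)$ for each $\sigma\in G_{\Q}$ in order to ensure that the images of the relevant points under the $\Q$-isogenies of kernel $\langle P_{2}\rangle,\langle Q_{2}\rangle,\langle P_{2}+Q_{2}\rangle,\langle Q_{4}\rangle,\langle P_{2}+Q_{4}\rangle$ become (or fail to become) rational; this is a direct application of Lemma \ref{lem-necessity-for-point-rationality} and Lemma \ref{lem-Maximality-Of-Rational-2-Power-Groups}, parallel to the computation carried out for $\mathcal{T}_{4}^{1}$ and $\mathcal{T}_{4}^{2}$. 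This pins down a subgroup $H\subseteq \operatorname{GL}(2,\Z/8\Z)$ (or possibly $\Z/4\Z$, after lifting fully at the next prime power) which I locate in the Rouse--Zureick-Brown database and verify to be of genus $0$ with infinitely many non-cuspidal $\Q$-rational points. Applying Proposition \ref{HIT proposition} then produces infinitely many $j$-invariants whose mod-$8$ image equals $H$ exactly, hence infinitely many realizations of $\mathcal{T}_{6}^{1}$.

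For the remaining three configurations $\mathcal{T}_{6}^{2},\mathcal{T}_{6}^{3},\mathcal{T}_{6}^{4}$, I would produce them by quadratic twisting. Starting from an elliptic curve $E$ realizing $\mathcal{T}_{6}^{1}$, I would pick a prime $q$ with prescribed splitting in $\Q(E[8])$; by varying whether $\sqrt{q}$ lies in $\Q(E[4])$ or strictly inside $\Q(E[8])$, the twist $E^{(q)}$ either preserves or destroys the various points of order $4$ on the curves $\Q$-isogenous to $E$. The key input here is Lemma \ref{No Points of Order 4}: whenever the twist forces $\operatorname{-Id}$ into $\overline{\rho}_{E^{(q)},2^{M+1}}(G_{\Q})$, every elliptic curve $\Q$-isogenous to $E^{(q)}$ loses its rational points of order $4$, collapsing to the weaker configurations. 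Since twisting preserves $j$-invariants, each of $\mathcal{T}_{6}^{2},\mathcal{T}_{6}^{3},\mathcal{T}_{6}^{4}$ inherits an infinite family of $j$-invariants from $\mathcal{T}_{6}^{1}$.

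The main obstacle I expect is the bookkeeping step of pinning down the correct subgroup $H$ at level $8$: unlike the $T_{4}$ case (where level $4$ suffices), the distinction between the different torsion configurations of $T_{6}$ involves whether certain neighbors carry a point of order $4$, and this is controlled by $\overline{\rho}_{E,8}$, not merely $\overline{\rho}_{E,4}$. In particular I must rule out that the candidate image lies inside a Borel subgroup of $\operatorname{GL}(2,\Z/8\Z)$ (which would give a cyclic $\Q$-rational subgroup of order $8$ and push us into type $T_{8}$), and must verify, as in \cite{gcal-r}, that no $\Q$-rational subgroup of order $3$ is picked up simultaneously. Once these conjugacy verifications are carried out against the Rouse--Zureick-Brown tables, the Hilbert irreducibility and twisting arguments apply verbatim and complete the proof of Proposition \ref{T6 Graphs Proposition}.
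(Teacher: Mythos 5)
Your treatment of $\mathcal{T}_{6}^{1}$ matches the paper's: pin down the mod-$8$ image of the curve with full two-torsion (level $8$ is indeed necessary, as you note), locate it in the Rouse--Zureick-Brown tables (the paper uses $H_{98e}$, whose modular curve is genus $0$ with infinitely many rational points), and apply Proposition \ref{HIT proposition}. Your $\mathcal{T}_{6}^{4}$ argument also matches in substance, since twisting by $d$ with $\sqrt{d}\notin\Q(E[8])$ puts $\operatorname{-Id}$ into the mod-$8$ image and Lemma \ref{No Points of Order 4} then kills all rational points of order $4$.

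The gap is in how you propose to reach $\mathcal{T}_{6}^{2}$ and $\mathcal{T}_{6}^{3}$. Your stated key input, Lemma \ref{No Points of Order 4}, only applies when the twist forces $\operatorname{-Id}$ into the $2$-power image, and in that case it destroys \emph{every} rational point of order $4$ in the class, i.e.\ it can only produce the bottom configuration $\mathcal{T}_{6}^{4}$; it cannot distinguish the intermediate configurations, where exactly two (for $\mathcal{T}_{6}^{2}$) or exactly one (for $\mathcal{T}_{6}^{3}$) of the outer vertices retains a point of order $4$ and no vertex has a point of order $8$. To land on those you must twist by a $d$ with $\Q(\sqrt{d})$ a specific quadratic subfield of $\Q(E[8])$, so that the twisted image is a subgroup not containing $\operatorname{-Id}$ (the paper's $H_{98h}$ and $H_{98o}$, the twists of $H_{98e}$ by $\operatorname{-Id}$ on particular generators), and then you must redo the vertex-by-vertex rationality analysis with Lemma \ref{lem-necessity-for-point-rationality} and Lemma \ref{lem-Maximality-Of-Rational-2-Power-Groups} to verify which quotients $E/\langle Q_{4}\rangle$, $E/\langle P_{2}+Q_{4}\rangle$, etc.\ keep a rational point of order $4$ and which drop to order $2$. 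That computation is the bulk of the paper's proof for these two cases and is not supplied by, or deducible from, the lemma you invoke; you would also need to note that the required quadratic subfields of $\Q(E[8])$ actually exist, which is automatic once Hilbert irreducibility gives you curves whose mod-$8$ image is exactly $H_{98e}$ (so that index-$2$ subgroups of the image correspond to quadratic subfields). (As a minor structural difference, the paper does not twist a fixed curve at all: it treats each of $H_{98h}$, $H_{98o}$, $H_{98}$ as its own group, checks the resulting torsion configuration, and applies the RZB tables plus Proposition \ref{HIT proposition} to each separately; your twisting route can be completed, but only after the missing case-by-case torsion verification is carried out.)
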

		
		We will prove this proposition case by case. The methodology is finding the image of the mod-$8$ Galois representation attached one of the two elliptic curves in the $\Q$-isogeny class with full two-torsion defined over $\Q$ for each of the four isogeny-torsion graphs of $T_{6}$ type. Using the RZB database, we show that each of these subgroups of $\operatorname{GL}(2, \Z / 8 \Z)$ define modular curves with infinitely many non-cuspidal, $\Q$-rational points. Examples of four such modular curves from the RZB database, LMFDB, and SZ database are provided when possible.
		
		\begin{center}
		 \begin{table}[h!]
 	\renewcommand{\arraystretch}{1.3}
 	\begin{tabular}{ |c|c|c|c|c| }
 		\hline
 		Isogeny Graph & Type & Isomorphism Types & Label & RZB, LMFDB, SZ  \\
 		\hline

		\multirow{4}*{\includegraphics[scale=0.21]{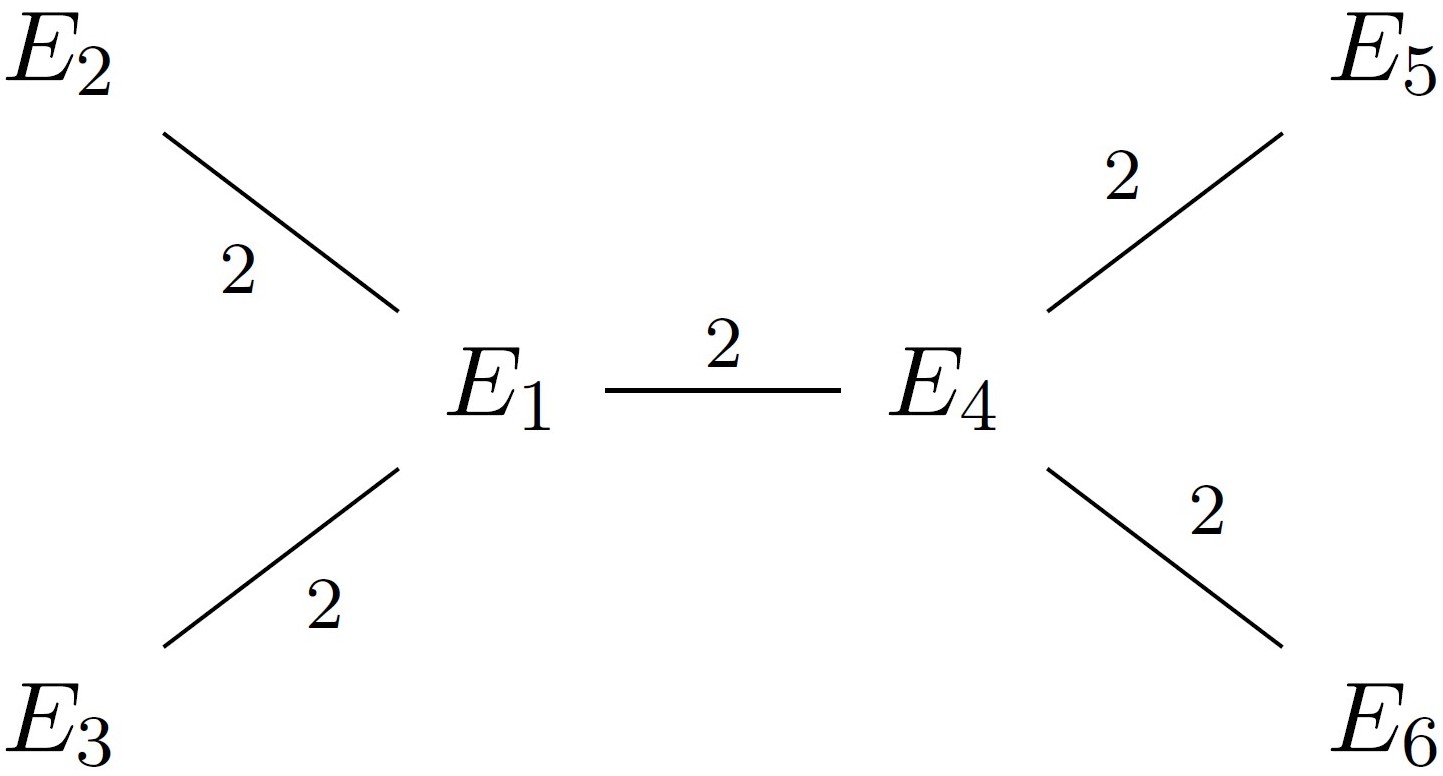}}& \multirow{4}*{$T_{6}$} & $([2,4],[8],[4],[2,2],[2],[2])$ & $\mathcal{T}^{1}_{6}$ & $\operatorname{H}_{98e}$, \texttt{8.48.0.24}, --- \\
 		\cline{3-5}
 		& & $([2,4],[4],[4],[2,2],[2],[2])$ & $\mathcal{T}^{2}_{6}$ & $\operatorname{H}_{98h}$, \texttt{8.48.0.32}, --- \\
 		\cline{3-5}
 		& & $([2,2],[4],[2],[2,2],[2],[2])$ & $\mathcal{T}^{3}_{6}$ & $\operatorname{H}_{98o}$, \texttt{8.48.0.66}, --- \\
 		\cline{3-5}
 		& & $([2,2],[2],[2],[2,2],[2],[2])$ & $\mathcal{T}^{4}_{6}$ & $\operatorname{H}_{98}$, \texttt{8.24.0.10}, $8J^{0}-8c$ \\
 		\hline
 		
 		\end{tabular}
 		\caption{Isogeny-Torsion Graphs of $T_{6}$ Type}
        \label{T_{6} Graphs}
 		\end{table}
 		\end{center}
        
		Let $E / \Q$ be an elliptic curve such that $E(\Q)_{\text{tors}} = \left\langle P_{2}, Q_{4} \right\rangle \cong \Z / 2 \Z \times \Z / 4 \Z$. By Theorem \ref{thm-kenku}, $E$ does not contain a non-trivial, cyclic, $\Q$-rational subgroup of odd order. Suppose the isogeny-torsion graph associated to $E$ is of type $T_{6}$. Then $E$ is represented in the isogeny-torsion graph in Table \ref{T_{6} Graphs} by $E_{1}$. The isogeny-torsion graph associated to the $\Q$-isogeny class of $E$ is $\mathcal{T}_{6}^{1}$ if $E$ is $\Q$-isogenous to an elliptic curve over $\Q$ with a point of order $8$ defined over $\Q$ and the isogeny-torsion graph associated to the $\Q$-isogeny class of $E$ is $\mathcal{T}_{6}^{2}$ otherwise.
		
		\begin{itemize}
		
		\item $\mathcal{T}_{6}^{1}$
		
		We may assume $E / \left\langle P_{2} \right\rangle(\Q)_{\text{tors}} \cong \Z / 8 \Z$. Let $\phi \colon E \to E / \left\langle P_{2} \right\rangle$ be an isogeny with kernel $\left\langle P_{2} \right\rangle$. By Lemma \ref{lem-necessity-for-point-rationality}, $\phi(Q_{4})$ is a point of order $4$ defined over $\Q$. The cyclic, $\Q$-rational subgroups of $E / \left\langle P_{2} \right\rangle$ of order $8$ that contain $\phi(Q_{4})$ are $\left\langle \phi(Q_{8}) \right\rangle$ and $\left\langle \phi(P_{4} + Q_{8}) \right\rangle$. Let us say that $\phi(Q_{8})$ is defined over $\Q$. Then $\sigma(Q_{8}) - Q_{8} \in \left\langle P_{2} \right\rangle$ for all $\sigma \in G_{\Q}$ by Lemma \ref{lem-necessity-for-point-rationality}. For each Galois automorphism $\sigma \in G_{\Q}$, there exist integers $a, b, c \in \Z$ (depending on $\sigma$) such that $\sigma(P_{8}) = [2a+1]P_{8} + [2b]Q_{8}$ and $\sigma(Q_{8}) = [c]P_{2} + Q_{8}$. Thus, the image of the mod-$8$ Galois representation attached to $E$ is conjugate to a subgroup of
		$$H_{98e} = \left\langle \left(\begin{array}{cc}
		    3 & 0 \\
		    0 & 1
		\end{array}\right), \left(\begin{array}{cc}
		    5 & 0 \\
		    0 & 1
		\end{array}\right), \left(\begin{array}{cc}
		    1 & 0 \\
		    2 & 1
		\end{array}\right), \left(\begin{array}{cc}
		    1 & 4 \\
		    0 & 1
		\end{array}\right) \right\rangle.$$
		
		Elliptic curves over $\Q$ whose transpose of the image of the mod-$8$ Galois representation is conjugate to a subgroup of $H_{98e}$ correspond to non-cuspidal, $\Q$-rational points on the modular curve in the list compiled in \cite{Rouse} labeled $\operatorname{X}_{98e}$ which is a genus $0$ modular curve with infinitely many non-cuspidal, $\Q$-rational points. By Proposition \ref{HIT proposition}, there are infinitely many \textit{j}-invariants corresponding to elliptic curves over $\Q$ such that the image of the mod-$8$ Galois representation is conjugate to $H_{98e}$ itself (not a proper subgroup of $H_{98e}$). The image of the mod-$8$ Galois representation being conjugate to $H_{98e}$ guarantees that the isogeny-torsion graph associated to the $\Q$-isogeny class of $E$ is precisely $\mathcal{T}_{6}^{1}$. Thus, $\mathcal{T}_{6}^{1}$ corresponds to an infinite set of \textit{j}-invariants.
		
		\item $\mathcal{T}_{6}^{2}$
		
		Let $E / \Q$ be an elliptic curve such that the image of the mod-$8$ Galois representation associated to $E$ is conjugate to
		
		$$H_{98h} = \left\langle \left(\begin{array}{cc}
		    5 & 0 \\
		    0 & 1
		\end{array}\right), \left(\begin{array}{cc}
		    1 & 0 \\
		    2 & 1
		\end{array}\right), \left(\begin{array}{cc}
		    1 & 0 \\
		    0 & 7
		\end{array}\right), \left(\begin{array}{cc}
		    1 & 4 \\
		    0 & 1
		\end{array}\right) \right\rangle.$$
		Note that $H_{98h}$ is a quadratic twist of the group $H_{98e}$ (multiply the first generator of $H_{98e}$ by $\operatorname{-Id}$ to get $H_{98h}$). The image of the mod-$8$ Galois representation attached to $E$ is generated by $\sigma_{1}, \sigma_{2}, \sigma_{3}, \sigma_{4} \in G_{\Q}$ such that $\sigma_{i}$ is represented by the i-th generator in $H_{98h}$. For example, $\sigma_{1}(P_{8}) = [5]P_{8}$ and fixes $Q_{8}$. We claim that $E / \left\langle Q_{4} \right\rangle(\Q)_{\text{tors}} \cong E / \left\langle P_{2} + Q_{4} \right\rangle(\Q)_{\text{tors}} \cong \Z / 4 \Z$ and that would be enough to conclude that the isogeny-torsion graph associated to the $\Q$-isogeny class of $E$ is $\mathcal{T}_{6}^{2}$ (see Table \ref{T_{6} Graphs}).
		
		To generalize, let $A$ equal $Q_{4}$ or $P_{2} + Q_{4}$. We will prove that $E / \left\langle A \right\rangle(\Q)_{\text{tors}} \cong \Z / 4 \Z$. Let $\phi \colon E \to E / \left\langle A \right\rangle$ be an isogeny with kernel $\left\langle A \right\rangle$. Note that $\sigma_{i}(P_{4}) - P_{4} = \mathcal{O}$ or $\sigma_{i}(P_{4}) - P_{4} = Q_{2}$ for all $i = 1,2,3,4$ and thus $\sigma(P_{4}) - P_{4} \in \left\langle Q_{2} \right\rangle$ for all $\sigma \in G_{\Q}$. By Lemma \ref{lem-necessity-for-point-rationality}, $\phi(P_{4})$ is a point of order $4$ defined over $\Q$. The group $E / \left\langle A \right\rangle(\Q)_{\text{tors}}$ is cyclic of order $4$ or $8$ by Lemma \ref{lem-2torspt-all-have-2torspt} and Lemma \ref{lem-Maximality-Of-Rational-2-Power-Groups}. We need to prove that it is \textit{not} of order $8$.
		
		Let $B \in E$ such that $B = Q_{8}$ if $A = Q_{4}$ and $B = P_{4} + Q_{8}$ if $A = P_{2} + Q_{4}$. Note that $[2]B = A$. By Lemma \ref{lem-Maximality-Of-Rational-2-Power-Groups}, $\phi(B)$ is a point of order $2$ that is not defined over $\Q$. The cyclic groups of order $8$ that contain $\phi(P_{4})$ are $\left\langle \phi(P_{8}) \right\rangle$ and $\left\langle \phi(P_{8} + B) \right\rangle$. Note that $\sigma_{1}(P_{8}) - P_{8} = [4]P_{8} = P_{2} \notin \left\langle A \right\rangle$. Also, $\sigma_{1}$ fixes $B$, so we have that $\sigma_{1}(P_{8} + B) - (P_{8} + B) = P_{2} \notin \left\langle A \right\rangle$. Thus, both $\phi(P_{8})$ and $\phi(P_{8} + B)$ are not defined over $\Q$ by Lemma \ref{lem-necessity-for-point-rationality}. Thus, $E / \left\langle Q_{4} \right\rangle(\Q)_{\text{tors}} \cong E / \left\langle P_{2} + Q_{4} \right\rangle(\Q)_{\text{tors}} \cong \Z / 4 \Z$ and so, the isogeny-torsion graph associated to the $\Q$-isogeny class associated to $E$ is $\mathcal{T}_{6}^{2}$.
		
		Elliptic curves over $\Q$ such that the transpose of the image of the mod-$8$ Galois representation is conjugate to a subgroup of $H_{98h}$ correspond to non-cuspidal, $\Q$-rational points on the modular curve that appears in the list compiled in \cite{Rouse} with label $\operatorname{X}_{98h}$ which is a genus $0$ curve with infinitely many non-cuspidal, $\Q$-rational points. By Proposition \ref{HIT proposition}, there are infinitely many \textit{j}-invariants corresponding to elliptic curves over $\Q$ such that the image of the mod-$8$ Galois representation is conjugate to $H_{98h}$ itself (not a proper subgroup of $H_{98h}$).  The image of the mod-$8$ Galois representation being conjugate to $H_{98h}$ guarantees that the isogeny-torsion graph associated to the $\Q$-isogeny class of $E$ is precisely $\mathcal{T}_{6}^{2}$. Thus, $\mathcal{T}_{6}^{2}$ corresponds to an infinite set of \textit{j}-invariants.
		\end{itemize}
		
		Now let $E / \Q$ be an elliptic curve such that $E(\Q)_{\text{tors}} \cong \Z / 2 \Z \times \Z / 2 \Z$ and the isogeny-torsion graph associated to $E$ is of type $T_{6}$ but this time, $E$ is not $\Q$-isogenous to an elliptic curve over $\Q$ with torsion subgroup of order $8$. Again, we may assume without loss of generality that $E$ is represented in the isogeny-torsion graph in Table \ref{T_{6} Graphs} by $E_{1}$. If $E$ is $\Q$-isogenous to an elliptic curve over $\Q$ with a point of order $4$ defined over $\Q$, then the isogeny-torsion graph associated to the $\Q$-isogeny class of $E$ is $\mathcal{T}_{6}^{3}$. If $E$ is \textit{not} $\Q$-isogenous to an elliptic curve over $\Q$ with a point of order $4$ defined over $\Q$, then the isogeny-torsion graph associated to the $\Q$-isogeny class of $E$ is $\mathcal{T}_{6}^{4}$.
		
		\begin{itemize}
		\item $\mathcal{T}_{6}^{3}$
	    	
		Let $E / \Q$ be an elliptic curve such that the image of the mod-$8$ Galois representation attached to $E$ is conjugate to
		$$H_{98o} = \left\langle \left(\begin{array}{cc}
		    3 & 0 \\
		    0 & 1
		\end{array}\right), \left(\begin{array}{cc}
		    5 & 0 \\
		    0 & 1
		\end{array}\right), \left(\begin{array}{cc}
		    1 & 0 \\
		    2 & 1
		\end{array}\right), \left(\begin{array}{cc}
		    7 & 4 \\
		    0 & 7
		\end{array}\right) \right\rangle.$$
		Note that $H_{3}$ is a quadratic twist of $H_{98e}$ (multiply the fourth generator of $H$ by $\operatorname{-Id}$). The image of the mod-$8$ Galois representation attached to $E$ is generated by $\sigma_{1}, \sigma_{2}, \sigma_{3}, \sigma_{4} \in G_{\Q}$ such that $\sigma_{1}(P_{8}) = [3]P_{8}$, etc. We claim that the isogeny-torsion graph associated to the $\Q$-isogeny class of $E$ is $\mathcal{T}_{6}^{3}$.
		
		Let $\phi \colon E \to E / \left\langle P_{2} + Q_{4} \right\rangle$ be an isogeny with kernel $\left\langle P_{2} + Q_{4} \right\rangle$. Note that the group $E / \left\langle P_{2} + Q_{4} \right\rangle(\Q)_{\text{tors}}$ is cyclic of even order by Lemma \ref{lem-Maximality-Of-Rational-2-Power-Groups} and Lemma \ref{lem-2torspt-all-have-2torspt}. We claim that $\phi(Q_{8})$ is a point of order $4$ defined over $\Q$. Note that $\sigma_{4}(Q_{8}) - Q_{8} = P_{2} + [7]Q_{8} - Q_{8} = P_{2} + [3]Q_{4} = [3](P_{2} + Q_{4}) \in \left\langle P_{2} + Q_{4} \right\rangle$. The other generators fix $Q_{8}$ and hence, $\phi(Q_{8})$ is a point of order $4$ defined over $\Q$ by Lemma \ref{lem-necessity-for-point-rationality} and so $E / \left\langle P_{2}+Q_{4} \right\rangle(\Q)_{\text{tors}}$ is cyclic of order $4$ or $8$.
		
		Now let $\phi' \colon E \to E / \left\langle Q_{4} \right\rangle$ be an isogeny with kernel $\left\langle Q_{4} \right\rangle$. The group $E / \left\langle Q_{4} \right\rangle(\Q)_{\text{tors}}$ is cyclic of even order by Lemma \ref{lem-2torspt-all-have-2torspt} and Lemma \ref{lem-Maximality-Of-Rational-2-Power-Groups}. The point $\phi'(P_{2})$ is of order $2$ and is defined over $\Q$ by Lemma \ref{lem-necessity-for-point-rationality}. The cyclic subgroups of $E / \left\langle Q_{4} \right\rangle$ of order $4$ that contain $\phi'(P_{2})$ are $\left\langle \phi'(P_{4}) \right\rangle$ and $\left\langle \phi'(P_{4} + Q_{8}) \right\rangle$. We need to prove both of these groups of order $4$ are not generated by points defined over $\Q$.
		
		First we prove that $\phi'(P_{4})$ is not defined over $\Q$. Observe that $\sigma_{1}(P_{8}) = [3]P_{8}$ and hence, $\sigma_{1}(P_{4}) = [3]P_{4}$. From this, we can say $\sigma_{1}(P_{4}) - P_{4} = [3]P_{4} - P_{4} = P_{2} \notin \left\langle Q_{4} \right\rangle$. Thus, $\phi'(P_{4})$ is not defined over $\Q$ by Lemma \ref{lem-necessity-for-point-rationality}. Note that $\sigma_{1}$ fixes $Q_{8}$. Thus, $\sigma_{1}(P_{4} + Q_{8}) - (P_{4} + Q_{8}) = P_{2} \notin \left\langle Q_{4} \right\rangle$. From Lemma \ref{lem-necessity-for-point-rationality}, we can conclude that $\phi'(P_{4} + Q_{8})$ is not defined over $\Q$. Thus, $E / \left\langle P_{2} + Q_{4} \right\rangle(\Q)_{\text{tors}} \cong \Z / 2 \Z$ and this forces $E / \left\langle Q_{2} \right\rangle(\Q)_{\text{tors}} \cong \Z / 4 \Z$ and for the isogeny-torsion graph associated to the $\Q$-isogeny class of $E$ to be $\mathcal{T}_{6}^{3}$ (see Table \ref{T_{6} Graphs}).
		
		Elliptic curves over $\Q$ whose transpose of the image of the mod-$8$ Galois representation is conjugate to a subgroup of $H_{3}$ correspond to non-cuspidal, $\Q$-rational points on the modular curve that appears in the list compiled in \cite{Rouse} with label $\operatorname{X}_{98o}$ which is a genus $0$ curve with infinitely many non-cuspidal, $\Q$-rational points. By Proposition \ref{HIT proposition}, there are infinitely many \textit{j}-invariants corresponding to elliptic curves over $\Q$ such that the image of the mod-$8$ Galois representation is conjugate to $H_{3}$ (not a proper subgroup of $H_{3}$). The image of the mod-$8$ Galois representation being conjugate to $H_{3}$ guarantees that the isogeny-torsion graph attached to the $\Q$-isogeny class of $E$ is $\mathcal{T}_{6}^{3}$. Thus $\mathcal{T}_{6}^{3}$ corresponds to an infinite set of \textit{j}-invariants.
		
		\item $\mathcal{T}_{6}^{4}$
		
		Let $E / \Q$ be an elliptic curve such that the image of the mod-$8$ Galois representation attached to $E$ is conjugate to $H_{98} = \left\langle H_{98e}, \operatorname{-Id} \right\rangle$. Then by Lemma \ref{No Points of Order 4}, no elliptic curve over $\Q$ in the $\Q$-isogeny class of $E$ has a point of order $4$ defined over $\Q$. Hence, the isogeny-torsion graph associated to the $\Q$-isogeny class of $E$ is $\mathcal{T}_{6}^{4}$.

		The group $H_{98}$ is conjugate to the group that appears in the list compiled in \cite{SZ} with label $8J^{0}-8c$ which defines a modular curve of genus $0$ with infinitely many non-cuspidal, $\Q$-rational points. Elliptic curves over $\Q$ such that the transpose of the image of the mod-$8$ Galois representation is conjugate to a subgroup of $H_{98}$ correspond to non-cuspidal, $\Q$-rational points on the modular curve in \cite{Rouse} with label $\operatorname{X}_{98}$ which is a modular curve of genus $0$ with infinitely many non-cuspidal, $\Q$-rational points. By Proposition \ref{HIT proposition} there are infinitely many \textit{j}-invariants corresponding to elliptic curves over $\Q$ such that the transpose of the image of the mod-$8$ Galois representation is conjugate to $H_{98}$ (not a proper subgroup of $H_{98}$). The transpose of the image of the mod-$8$ Galois representation being conjugate to $H_{98}$ guarantees that the isogeny-torsion graph attached to the $\Q$-isogeny class of $E$ is $\mathcal{T}_{6}^{4}$. Thus, $\mathcal{T}_{6}^{4}$ corresponds to an infinite set of \textit{j}-invariants.
		\end{itemize}
		
		This concludes the proof of Proposition \ref{T6 Graphs Proposition}.

		\subsection{Isogeny-Torsion Graphs of $T_{8}$ Type}
		In this subsection, we prove that each of the six isogeny-torsion graphs of $T_{8}$ type correspond to infinite sets of \textit{j}-invariants.
        
        \begin{proposition}\label{T8 Graphs Proposition}
        Let $\mathcal{G}$ be an isogeny-torsion graph of type $T_{8}$ (regardless of torsion configuration). Then $\mathcal{G}$ corresponds to an infinite set of \textit{j}-invariants.
        \end{proposition}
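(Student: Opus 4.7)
The plan is to follow the same case-by-case recipe used for the $T_4$ and $T_6$ propositions. An elliptic curve $E/\Q$ whose associated isogeny graph is of $T_8$ type has $C_2(E)=8$, full two-torsion defined over $\Q$, no cyclic $\Q$-rational subgroup of odd prime order, and (by Remark \ref{2-powers}) $\Q$-rational subgroups generated by $P_2$, $Q_2$, $P_2+Q_2$, $Q_4$, $P_2+Q_4$, $Q_8$, and $P_2+Q_8$. The six isogeny-torsion graphs of type $T_8$ are distinguished by which of the vertices $E/\langle Q_4\rangle$, $E/\langle P_2+Q_4\rangle$, $E/\langle Q_8\rangle$, $E/\langle P_2+Q_8\rangle$ carry a point of order $4$ or $8$ defined over $\Q$, together with the leaf $\Z/2\Z\times\Z/2\Z$ versus $\Z/2\Z\times\Z/4\Z$ configuration at the two centers. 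Exactly as in the $T_6$ analysis, these distinctions are controlled by the image of the mod-$16$ Galois representation attached to the central curve with full two-torsion.

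For each of the six torsion configurations I would first write down, using Lemma \ref{lem-necessity-for-point-rationality} and Remark \ref{2-powers}, explicit congruence conditions on $\sigma(P_{16}),\sigma(Q_{16})$ for $\sigma\in G_\Q$ that produce exactly that configuration. These conditions cut out a subgroup $H\subseteq\operatorname{GL}(2,\Z/16\Z)$ containing the scalar $\operatorname{-Id}$ (when needed), a representative of complex conjugation, and having full determinant; each $H$ is a quadratic twist of a single base group determined analogously to $H_{98e}$ in the $T_6$ case, the twisting being by $\operatorname{-Id}$ inserted on appropriate generators so as to swap the position of the order-$4$ or order-$8$ leaves on the graph. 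I then locate each $H$ (or the image of its transpose) in the Rouse--Zureick-Brown database cited in Theorem \ref{thm-rzb}; in every case I expect $\operatorname{X}_H$ to appear in the $2$-adic list as a genus $0$ modular curve with infinitely many non-cuspidal $\Q$-rational points, so that Proposition \ref{HIT proposition} produces infinitely many $j$-invariants whose mod-$16$ image is conjugate to $H$ itself (not to a proper subgroup), which in turn forces the torsion configuration to be exactly the desired one.

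The single exceptional case is the graph $\mathcal{T}_8$ with leaves $([2,2],[2],[2],[2,2],[2],[2],[2],[2])$: here no vertex carries a point of order $4$. For this I would take $H=\langle H_0,\operatorname{-Id}\rangle$ where $H_0$ is the base group used for the ``richest'' $T_8$ configuration; by Lemma \ref{No Points of Order 4} the presence of $\operatorname{-Id}$ in the mod-$16$ image kills all points of order $4$ in the entire isogeny class, and invoking \cite{Rouse} and \cite{SZ} (as in the $\mathcal{T}_6^4$ case, where the curve $8J^0{-}8c$ was used) yields the desired infinite family. In all six cases, after excluding the finitely many $j$-invariants from Table \ref{Finite graphs} and the CM values $0,1728$, one obtains an infinite set of $j$-invariants mapping to the given isogeny-torsion graph.

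The main obstacle is purely bookkeeping: verifying that the six candidate subgroups of $\operatorname{GL}(2,\Z/16\Z)$ really are pairwise non-conjugate twists giving the six distinct configurations, and matching them to explicit entries in the Rouse--Zureick-Brown tables whose corresponding modular curves are genus $0$ with infinitely many non-cuspidal $\Q$-points. If some configuration happens not to have a genus-$0$ representative in the RZB list, I would fall back on Hilbert irreducibility applied to a genus-$0$ cover containing all six as fibers, exactly as the paper does in the $T_4$ and $T_6$ subsections, so that the overall argument still goes through without invoking Faltings or finer geometry.
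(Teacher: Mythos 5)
Your proposal is essentially the paper's own proof: the paper likewise treats the six configurations case by case, derives from Lemma \ref{lem-necessity-for-point-rationality} and Remark \ref{2-powers} a $2$-power-level group for the central curve (mod-$8$ groups $\operatorname{H}_{193n},\operatorname{H}_{194l}$ for $\mathcal{T}_8^1,\mathcal{T}_8^2$ and mod-$16$ groups $\operatorname{H}_{215c},\operatorname{H}_{215l},\operatorname{H}_{215k},\operatorname{H}_{215}$ for the rest), checks in the Rouse--Zureick-Brown (and Sutherland--Zywina) lists that each is genus $0$ with infinitely many non-cuspidal $\Q$-rational points, applies Proposition \ref{HIT proposition}, and handles the configuration with no points of order $4$ via $\langle \operatorname{H}_{215k},\operatorname{-Id}\rangle$ and Lemma \ref{No Points of Order 4}, exactly as you outline. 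The only small inaccuracy is your expectation that all six groups are quadratic twists of one base group: in the paper only the four mod-$16$ groups are twists of $\operatorname{H}_{215c}$, while $\mathcal{T}_8^1,\mathcal{T}_8^2$ are handled at level $8$, but this does not affect your argument since each group is obtained directly from the congruence conditions.
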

        
        We will prove this proposition case by case. The methodology to proving $\mathcal{T}_{8}^{1}$ and $\mathcal{T}_{8}^{2}$ correspond to infinite sets of \textit{j}-invariants is finding the image of the mod-$8$ Galois representation attached to one of the three elliptic curves in the $\Q$-isogeny class with full two-torsion defined over $\Q$. Using the RZB database, we show that each of these subgroups of $\operatorname{GL}(2, \Z / 8 \Z)$ define modular curves with infinitely many non-cuspidal, $\Q$-rational points.
        
        For the isogeny-torsion graphs $\mathcal{T}_{8}^{3}, \mathcal{T}_{8}^{4}, \mathcal{T}_{8}^{5},$ and $\mathcal{T}_{8}^{6}$, it is most convenient to compute the image of the mod-$16$ Galois representation attached to one of the elliptic curves over $\Q$ with full two-torsion defined over $\Q$. Then, again, we will use the RZB database to show that each of these subgroups of $\operatorname{GL}(2, \Z / 16 \Z)$ define modular curves with infinitely many non-cuspidal, $\Q$-rational points. Examples of six such modular curves from the RZB database, LMFDB, and SZ database are provided when possible.

		\begin{center}
		\begin{table}[h!]
 	\renewcommand{\arraystretch}{1.3}
 	\scalebox{0.85}{
 	\begin{tabular}{ |c|c|c|c|c| }
 		\hline
 		Isogeny Graph & Type & Isomorphism Types & Label & RZB, LMFDB, SZ \\
 		\hline

		\multirow{6}*{\includegraphics[scale=0.32]{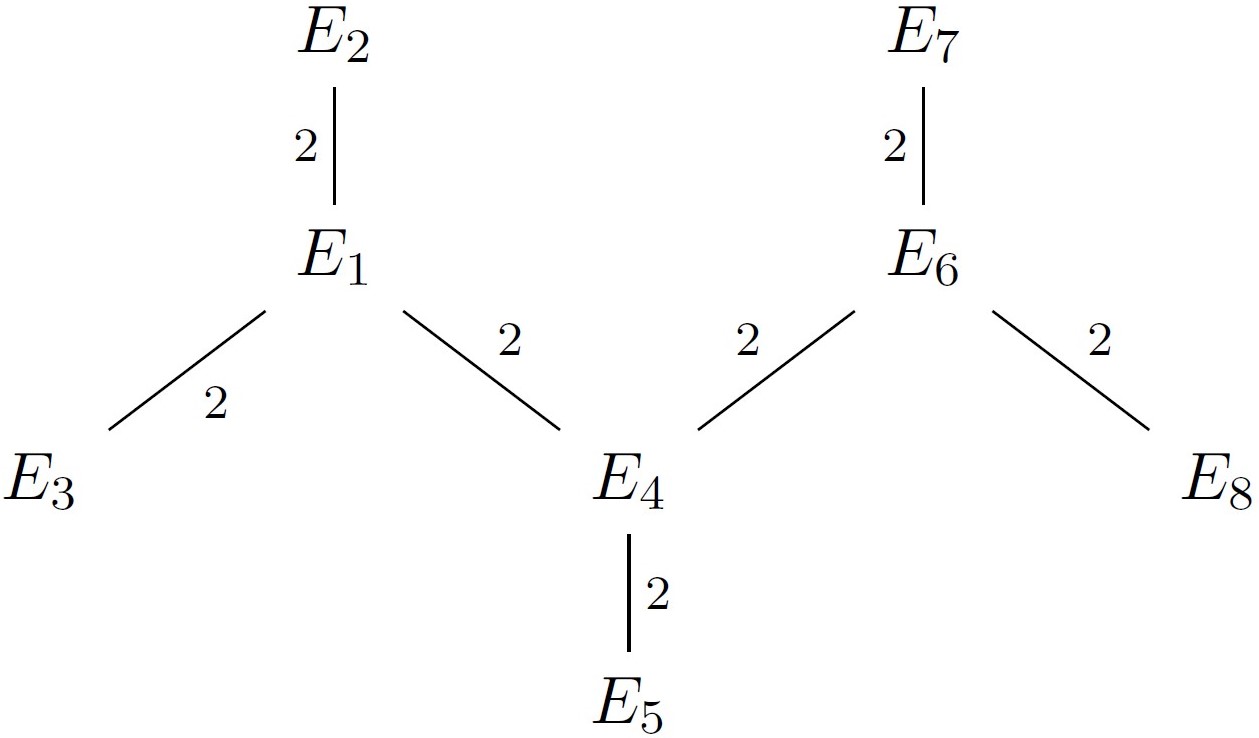}}& \multirow{6}*{$T_{8}$} & $([2,8],[8],[8],[2,4],[4],[2,2],[2],[2])$ & $\mathcal{T}_{8}^{1}$ & $\operatorname{H}_{193n}$, \texttt{8.96.0.40}, --- \\
 		\cline{3-5}
 		& & $([2,4],[8],[4],[2,4],[4],[2,2],[2],[2])$ & $\mathcal{T}_{8}^{2}$ & $\operatorname{H}_{194l}$, \texttt{8.96.0.39}, --- \\
 		\cline{3-5}
 		& & $([2,4],[4],[4],[2,4],[8],[2,2],[2],[2])$ & $\mathcal{T}_{8}^{3}$ & $\operatorname{H}_{215c}$, \texttt{16.96.0.7}, --- \\
 		\cline{3-5}
 		& & $([2,4],[4],[4],[2,4],[4],[2,2],[2],[2])$ & $\mathcal{T}_{8}^{4}$ & $\operatorname{H}_{215l}$, \texttt{16.96.0.9}, --- \\
 		\cline{3-5}
 		& & $([2,2],[4],[2],[2,2],[2],[2,2],[2],[2])$ & $\mathcal{T}_{8}^{5}$ & $\operatorname{H}_{215k}$, \texttt{16.96.0.61}, --- \\
 		\cline{3-5}
 		& & $([2,2],[2],[2],[2,2],[2],[2,2],[2],[2])$ & $\mathcal{T}_{8}^{6}$ & $\operatorname{H}_{215}$, \texttt{16.48.0.3}, $16G^{0}-16b$ \\
 		
 		\hline
 	\end{tabular}}
 	\caption{$T_{8}$ Type Isogeny Graphs}
    \label{T_{8} Graphs}
 	\end{table}
	\end{center}
	
		Let $E / \Q$ be an elliptic curve such that the isogeny-torsion graph associated to the $\Q$-isogeny class of $E$ is of $T_{8}$ type. By Theorem \ref{thm-kenku}, $E$ does not contain a $\Q$-rational subgroup of odd prime order. There are six possibilities for the torsion configuration of the isogeny-torsion graph associated to the $\Q$-isogeny class of $E$ (see Table \ref{T_{8} Graphs}).
		
		\begin{itemize}
		    \item $\mathcal{T}_{8}^{1}$
		
		It is clear to see that the isogeny-torsion graph associated to a $\Q$-isogeny class of elliptic curves over $\Q$ is $\mathcal{T}_{8}^{1}$ if and only if there is an elliptic curve over $\Q$ in the $\Q$-isogeny class with a torsion subgroup of order $16$ (see Table \ref{T_{8} Graphs}). Let $E / \Q$ be an elliptic curve such that $E(\Q)_{\text{tors}} \cong \Z / 2 \Z \times \Z / 8 \Z$. The image of the mod-$8$ Galois representation attached to $E$ is conjugate to a subgroup of
		$$H_{193n} = \left\langle \left(\begin{array}{cc}
		    3 & 0 \\
		    0 & 1
		\end{array}\right), \left(\begin{array}{cc}
		    5 & 0 \\
		    0 & 1
		\end{array}\right), \left(\begin{array}{cc}
		    1 & 0 \\
		    2 & 1
		\end{array}\right)  \right\rangle.$$
		Elliptic curves over $\Q$ such that the transpose of the image of the mod-$8$ Galois representation is conjugate to a subgroup of $H_{193n}$ are parametrized by the modular curve that appears in the list compiled in \cite{Rouse} with label $\operatorname{X}_{193n}$ which is a genus $0$ curve with infinitely many non-cuspidal, $\Q$-rational points. Thus, $\mathcal{T}_{8}^{1}$ corresponds to an infinite set of \textit{j}-invariants.
		
		\item $\mathcal{T}_{8}^{2}$
		
		Let $E / \Q$ be an elliptic curve such that the isogeny-torsion graph associated to the $\Q$-isogeny class of $E$ is $\mathcal{T}_{8}^{2}$. We may assume without loss of generality that $E(\Q)_{\text{tors}} \cong \Z / 2 \Z \times \Z / 4 \Z$ and $E / \left\langle P_{2} \right\rangle(\Q)_{\text{tors}} \cong \Z / 8 \Z$.
		
		Let $\phi \colon E \to E / \left\langle P_{2} \right\rangle$ be an isogeny with kernel $\left\langle P_{2} \right\rangle$. Clearly, we have $\phi(Q_{4})$ is a point of order $4$ defined over $\Q$ by Lemma \ref{lem-necessity-for-point-rationality}. The cyclic subgroups of $E / \left\langle P_{2} \right\rangle$ of order $8$ that contain $\phi(Q_{4})$ are $\left\langle \phi(Q_{8}) \right\rangle$ and $\left\langle \phi(P_{4} + Q_{8}) \right\rangle$. The group $\left\langle Q_{8} \right\rangle$ is $\Q$-rational. If $\phi(Q_{8})$ is defined over $\Q$, then $\sigma(Q_{8}) - Q_{8} \in \left\langle P_{2} \right\rangle$ for all $\sigma \in G_{\Q}$ by Lemma \ref{lem-necessity-for-point-rationality}. This would force $Q_{8}$ to be defined over $\Q$, a contradiction. Hence, $\phi(P_{4} + Q_{8})$ is defined over $\Q$ and so, $\sigma(P_{4} + Q_{8}) - (P_{4} + Q_{8}) \in \left\langle P_{2} \right\rangle$ for all $\sigma \in G_{\Q}$ by Lemma \ref{lem-necessity-for-point-rationality}. In this case, the image of the mod-$8$ Galois representation attached to $E$ is conjugate to a subgroup of
		$$H_{194l} = \left\langle \left(\begin{array}{cc}
		    3 & 0 \\
		    0 & 1
		\end{array}\right), \left(\begin{array}{cc}
		    5 & 0 \\
		    0 & 1
		\end{array}\right), \left(\begin{array}{cc}
		    1 & 0 \\
		    4 & 1
		\end{array}\right), \left(\begin{array}{cc}
		    1 & 0 \\
		    2 & 5
		\end{array}\right) \right\rangle.$$
		Elliptic curves over $\Q$ such that the transpose of the image of the mod-$8$ Galois representation is conjugate to a subgroup of $H_{194l}$ correspond to non-cuspidal, $\Q$-rational points on the modular curve that appears in the list compiled in \cite{Rouse} with label $\operatorname{X}_{194l}$ which is a genus $0$ curve with infinitely many non-cuspidal, $\Q$-rational points. By Proposition \ref{HIT proposition}, there are infinitely many \textit{j}-invariants corresponding to elliptic curves over $\Q$ such that the image of the mod-$8$ Galois representation is conjugate to $H_{194l}$ itself (not a proper subgroup of $H_{194l}$). The image of the mod-$8$ Galois representation being conjugate to $H_{194l}$ guarantees that the isogeny-torsion graph attached to the $\Q$-isogeny class of $E$ is $\mathcal{T}_{8}^{2}$. Thus, $\mathcal{T}_{8}^{2}$ corresponds to an infinite set of \textit{j}-invariants.
		
		\item $\mathcal{T}_{8}^{3}$
		
		Let $E / \Q$ be an elliptic curve such that the isogeny-torsion graph associated to the $\Q$-isogeny class of $E$ is $\mathcal{T}_{8}^{3}$. Then we may assume without loss of generality that $E(\Q)_{\text{tors}} \cong \Z / 2 \Z \times \Z / 4 \Z$ and that $E / \left\langle P_{2}+Q_{4} \right\rangle(\Q)_{\text{tors}} \cong \Z / 8 \Z$. Let $\phi \colon E \to E / \left\langle P_{2} + Q_{4} \right\rangle$ be an isogeny with kernel $\left\langle P_{2} + Q_{4} \right\rangle$. By Lemma \ref{lem-necessity-for-point-rationality}, $\phi(Q_{8})$ is a point of order $4$ defined over $\Q$. The cyclic subgroups of $E / \left\langle P_{2} + Q_{4} \right\rangle$ of order $8$ that contain $\phi(Q_{8})$ are $\left\langle \phi(Q_{16}) \right\rangle$ and $\left\langle \phi(P_{4} + Q_{16}) \right\rangle$. Let us say $\phi(Q_{16})$ is defined over $\Q$, so $\sigma(Q_{16}) - Q_{16} \in \left\langle P_{2} + Q_{4} \right\rangle$ for all $\sigma \in G_{\Q}$ by Lemma \ref{lem-necessity-for-point-rationality}. Hence, the image of the mod-$16$ Galois representation attached to $E$ is conjugate to a subgroup of
		$$H_{215c} = \left\langle \left(\begin{array}{cc}
		    3 & 0 \\
		    0 & 1
		\end{array}\right), \left(\begin{array}{cc}
		    7 & 0 \\
		    0 & 1
		\end{array}\right), \left(\begin{array}{cc}
		    1 & 0 \\
		    2 & 1
		\end{array}\right), \left(\begin{array}{cc}
		    1 & 8 \\
		    0 & 5
		\end{array}\right) \right\rangle.$$
		Elliptic curves over $\Q$ such that the transpose of the image of the mod-$16$ Galois representation is conjugate to a subgroup of $H_{215c}$ correspond to non-cuspidal, $\Q$-rational points on the modular curve that appears in the list compiled in \cite{Rouse} with label $\operatorname{X}_{215c}$ which is a genus $0$ curve with infinitely many non-cuspidal, $\Q$-rational points. By Proposition \ref{HIT proposition}, there are infinitely many \textit{j}-invariants corresponding to elliptic curves over $\QQ$ with image of the mod-$16$ Galois representation conjugate to $H_{215c}$ itself (not a proper subgroup of $H_{215c}$). The image of the mod-$16$ Galois representation attached to $E$ being conjugate to $H_{215c}$ guarantees the isogeny-torsion graph associated to the $\QQ$-isogeny class of $E$ is $\mathcal{T}_{8}^{3}$. Thus, $\mathcal{T}_{8}^{3}$ corresponds to an infinite set of \textit{j}-invariants.
        
        \item $\mathcal{T}_{8}^{4}$
        
        Let $E / \QQ$ be an elliptic curve such that the image of the mod-$16$ Galois representation attached to $E$ is conjugate to
        $$H_{215l} = \left\langle \left(\begin{array}{cc}
            13 & 0 \\
            0 & 15
        \end{array}\right), \left(\begin{array}{cc}
            9 & 0 \\
            0 & 15
        \end{array}\right), \left(\begin{array}{cc}
            1 & 0 \\
            2 & 1
        \end{array}\right), \left(\begin{array}{cc}
            1 & 8 \\
            0 & 5
        \end{array}\right) \right\rangle.$$
        Note that $H_{4}$ is a quadratic twist of $H_{215c}$ (multiply the first two generators of $H_{215c}$ by $\operatorname{-Id}$). We claim that the isogeny-torsion graph attached to the $\QQ$-isogeny class of $E$ is $\mathcal{T}_{8}^{4}$. Let $\sigma_{i}$ denote the $i$-th generator of $H_{215l}$ for $1 \leq i \leq 4$. Reducing the image of the mod-$16$ Galois representation attached to $E$ by $4$, we get
        $$\overline{H_{215l}}= \left\langle \left(\begin{array}{cc}
            1 & 0 \\
            0 & 3
        \end{array}\right), \left(\begin{array}{cc}
            1 & 0 \\
            2 & 1
        \end{array}\right)\right\rangle$$
        and hence, $\sigma(P_{4}) - P_{4} \in \left\langle Q_{2} \right\rangle$ for all $\sigma \in G_{\Q}$. Let $\phi_{1} \colon E \to E / \left\langle Q_{8} \right\rangle$ be an isogeny with kernel generated by $Q_{8}$ and let $\phi_{2} \colon E \to E / \left\langle P_{2} + Q_{8} \right\rangle$ be an isogeny with kernel generated by $P_{2} + Q_{8}$. Let $\psi \colon E \to E / \left\langle P_{2} + Q_{4} \right\rangle$ be an isogeny with kernel generated by $P_{2} + Q_{4}$. Note that $E / \left\langle Q_{8} \right\rangle$, $E / \left\langle P_{2} + Q_{8} \right\rangle$, and $E / \left\langle P_{2} + Q_{4} \right\rangle$, are elliptic curves over $\Q$ with cyclic torsion subgroups of even order by Lemma \ref{lem-2torspt-all-have-2torspt} and Lemma \ref{lem-Maximality-Of-Rational-2-Power-Groups}. If we can prove that their respective torsion subgroups are all cyclic of order $4$, then we would prove that the isogeny-torsion graph attached to the $\Q$-isogeny class of $E$ is $\mathcal{T}_{8}^{4}$ (see Table \ref{T_{8} Graphs}).
        
        Let us prove that $E / \left\langle Q_{8} \right\rangle(\Q)_{\text{tors}} \cong \Z / 4 \Z$. As $\sigma(P_{4}) - P_{4} \in \left\langle Q_{2} \right\rangle$ for all $\sigma \in G_{\Q}$, $\phi_{1}(P_{4})$ is a point of order $4$ defined over $\Q$ by Lemma \ref{lem-necessity-for-point-rationality}. The point $\phi_{1}(P_{4})$ lives in the two cyclic groups of order $8$, $\left\langle \phi_{1}(P_{8}) \right\rangle$ and $\left\langle \phi_{1}(P_{8} + Q_{16}) \right\rangle$. Note that $\sigma_{1}(P_{8}) - P_{8} = [13]P_{8} - P_{8} = [12]P_{8} \notin \left\langle Q_{8} \right\rangle$. Hence, $\phi_{1}(P_{8})$ is not defined over $\Q$ by Lemma \ref{lem-necessity-for-point-rationality}. Note that $\sigma_{4}(P_{8} + Q_{16}) - (P_{8} + Q_{16}) = P_{8} + P_{2} + [5]Q_{16} - (P_{8} + Q_{16}) = P_{2} + [4]Q_{16} \notin \left\langle Q_{8} \right\rangle$. Thus, $\phi_{1}(P_{8} + Q_{16})$ is not defined over $\Q$ by Lemma \ref{lem-necessity-for-point-rationality}. Hence, $E / \left\langle Q_{8} \right\rangle(\Q)_{\text{tors}} \cong \Z / 4 \Z$. A similar computation shows that $E / \left\langle P_{2} + Q_{8} \right\rangle(\Q)_{\text{tors}} \cong \Z / 4 \Z$.
        
        Now we will prove that $E / \left\langle P_{2} + Q_{4} \right\rangle(\Q)_{\text{tors}} \cong \Z / 4 \Z$. Note that again, $\psi(P_{4})$ is a point of order $4$ defined over $\Q$ by Lemma \ref{lem-necessity-for-point-rationality}. The point $\psi(P_{4})$ lives in two cyclic groups of order $8$, $\left\langle \psi(P_{8}) \right\rangle$ and $\left\langle \psi(P_{8} + Q_{8}) \right\rangle$. As $\sigma_{1}(P_{8}) - P_{8} = [13]P_{8} - P_{8} = [12]P_{8} \notin \left\langle P_{2} + Q_{4} \right\rangle$, we have that $\phi(P_{8})$ is not defined over $\Q$ by Lemma \ref{lem-necessity-for-point-rationality}. As $\sigma_{2}(P_{8} + Q_{8}) - (P_{8} + Q_{8}) = [9]P_{8} + [15]Q_{8} - (P_{8} + Q_{8}) = [14]Q_{8} = [3]Q_{4} \notin \left\langle P_{2} + Q_{4} \right\rangle$, we have that $\psi(P_{8} + Q_{8})$ is not defined over $\Q$ by Lemma \ref{lem-necessity-for-point-rationality}. This is enough to prove that the isogeny-torsion graph attached to the $\Q$-isogeny class of $E$ is $\mathcal{T}_{8}^{4}$ (see Table \ref{T_{8} Graphs}).
        
        The set of elliptic curves over $\Q$ such that the image of the mod-$16$ Galois representation is conjugate to a subgroup of the transpose of $H_{215l}$ corresponds to the non-cuspidal, $\Q$-rational points on the modular curve found in \cite{Rouse} with label $\operatorname{X}_{215l}$. This modular curve is a genus $0$ curve with infinitely many non-cuspidal, $\Q$-rational points. By Proposition \ref{HIT proposition}, there are infinitely many \textit{j}-invariants corresponding to elliptic curves over $\Q$ such that the image of the mod-$16$ Galois representation is conjugate to $H_{215l}$ (not a proper subgroup of $H_{215l}$). The image of the mod-$16$ Galois representation being conjugate to $H_{215l}$ guarantees that the isogeny-torsion graph associated to the $\Q$-isogeny class of $E$ is $\mathcal{T}_{8}^{4}$. Thus, $\mathcal{T}_{8}^{4}$ corresponds to an infinite set of \textit{j}-invariants.
        
		\item $\mathcal{T}_{8}^{5}$
		
		Let $E / \Q$ be an elliptic curve such that the image of the mod-$16$ Galois representation attached to $E$ is conjugate to
		$$H_{215k} = \left\langle \left(\begin{array}{cc}
		    13 & 0 \\
		    0 & 1
		\end{array}\right), \left(\begin{array}{cc}
		    1 & 0 \\
		    2 & 1
		\end{array}\right), \left(\begin{array}{cc}
		    1 & 0 \\
		    0 & 15
		\end{array}\right), \left(\begin{array}{cc}
		    15 & 8 \\
		    0 & 11
		\end{array}\right) \right\rangle.$$
		The group $H_{215k}$ is a quadratic twist of $H_{215c}$ (multiply the first, second, and fourth generators of $H_{215c}$ by $\operatorname{-Id}$). We claim that the isogeny-torsion graph associated to the $\Q$-isogeny class of $E$ is $\mathcal{T}_{8}^{5}$.
		
		Let $\phi \colon E \to E / \left\langle Q_{8} \right\rangle$ be an isogeny with kernel $\left\langle Q_{8} \right\rangle$. The group $E / \left\langle Q_{8} \right\rangle(\Q)_{\text{tors}}$ is cyclic of even order by Lemma \ref{lem-Maximality-Of-Rational-2-Power-Groups} and Lemma \ref{lem-2torspt-all-have-2torspt}. The point of $E / \left\langle Q_{8} \right\rangle$ of order $2$ defined over $\Q$ is $\phi(P_{2})$ by Lemma \ref{lem-necessity-for-point-rationality}. The point $\phi(P_{2})$ is contained in two cyclic groups of order $4$, $\left\langle \phi(P_{4}) \right\rangle$ and $\left\langle \phi(P_{4} + Q_{16}) \right\rangle$. The point $\phi(P_{4} + Q_{16})$ is defined over $\Q$ if and only if $\sigma(P_{4} + Q_{16}) - (P_{4} + Q_{16}) \in \left\langle Q_{8} \right\rangle$ for all $\sigma \in G_{\Q}$ by Lemma \ref{lem-necessity-for-point-rationality}. In other words, for an arbitrary $\sigma \in G_{\Q}$
		
		\begin{center} $\sigma(P_{4}) = P_{4}$ or $P_{4} + Q_{2} \iff \sigma(Q_{16}) = [2b+1]Q_{16}$ for some $b \in \Z$ \end{center}
		\begin{center} $\sigma(P_{4}) = [3]P_{4}$ or $[3]P_{4} + Q_{2} \iff \sigma(Q_{16}) = P_{2} + [2b+1]Q_{16}$ for some $b \in \Z$. \end{center}
		Lifting $P_{4}$ to level $16$, we have satisfy the following algebraic relations:
		\begin{center} $\sigma(P_{16}) = [1+4a]P_{16} + [2c]Q_{16} \iff \sigma(Q_{16}) = [2b+1]Q_{16}$ for some $a,b,c \in \Z$ \end{center}
		\begin{center} $\sigma(P_{16}) = [3+4a]P_{16} + [2c]Q_{16} \iff \sigma(Q_{16}) = P_{2} + [2b+1]Q_{16}$ for some $a,b,c \in \Z.$ \end{center}
		That is how the elements of $H_{215k}$ behave! Hence, $\phi(P_{4} + Q_{16})$ is defined over $\Q$, making $E / \left\langle Q_{8} \right\rangle(\Q)_{\text{tors}}$ a cyclic group of order $4$ or $8$.
		
		Now let $\phi' \colon E \to E / \left\langle P_{2} + Q_{8} \right\rangle$ be an isogeny with kernel $\left\langle P_{2} + Q_{8} \right\rangle$. Then the group $E / \left\langle P_{2} + Q_{8} \right\rangle(\Q)_{\text{tors}}$ is a cyclic group of even order by Lemma \ref{lem-Maximality-Of-Rational-2-Power-Groups} and Lemma \ref{lem-2torspt-all-have-2torspt}. The point of $E / \left\langle P_{2} + Q_{8} \right\rangle$ of order $2$ defined over $\Q$ is $\phi'(P_{2})$ by Lemma \ref{lem-necessity-for-point-rationality}. The point $\phi'(P_{2})$ lives in two cyclic groups of order $4$, $\left\langle \phi'(P_{4}) \right\rangle$ and $\left\langle \phi'(Q_{16}) \right\rangle$. We have that $\phi'(P_{4})$ is defined over $\Q$ if and only if $\sigma(P_{4}) - P_{4} \in \left\langle P_{2} + Q_{8} \right\rangle$ for all $\sigma \in G_{\Q}$ by Lemma \ref{lem-necessity-for-point-rationality}. Let $\tau \in G_{\Q}$ such that $\tau(P_{16}) = [15]P_{16}$. Then $\tau(P_{4}) = [15]P_{4} = [3]P_{4}$ and hence, $\tau(P_{4}) - P_{4} = [2]P_{4} = P_{2}$. As $P_{2} \notin \left\langle P_{2} + Q_{8} \right\rangle$, $\phi'(P_{4})$ is not defined over $\Q$ by Lemma \ref{lem-necessity-for-point-rationality}. Similarly, we have that $\phi'(Q_{16})$ is defined over $\Q$ if and only if $\sigma(Q_{16}) - Q_{16} \in \left\langle P_{2} + Q_{8} \right\rangle$ for all $\sigma \in G_{\Q}$ by Lemma \ref{lem-necessity-for-point-rationality}. Let $\tau \in G_{\Q}$ such that $\tau(Q_{16}) = [15]Q_{16}$. Then $\tau(Q_{16}) - Q_{16} = [14]Q_{16} = [7]Q_{8}$. As $[7]Q_{8} \notin \left\langle P_{2} + Q_{8} \right\rangle$, we have that $\phi'(Q_{16})$ is not defined over $\Q$ by Lemma \ref{lem-necessity-for-point-rationality}. This is enough to prove that $E / \left\langle P_{2} + Q_{8} \right\rangle(\Q)_{\text{tors}}$ is cyclic of order $2$. And thus, we may conclude that the isogeny-torsion graph associated to the $\Q$-isogeny class of $E$ is $\mathcal{T}_{8}^{5}$ (see Table \ref{T_{8} Graphs}).
		
		Elliptic curves over $\Q$ such that the transpose of the image of the mod-$16$ Galois representation is conjugate to a subgroup of $H_{215k}$ correspond to non-cuspidal, $\Q$-rational points on the modular curve found in \cite{Rouse} with label $\operatorname{X}_{215k}$ which is a genus $0$ curve with infinitely many non-cuspidal, $\Q$-rational points. By Proposition \ref{HIT proposition}, there are infinitely many \textit{j}-invariants corresponding to elliptic curves over $\Q$ such that the image of the mod-$16$ Galois representation is conjugate to $H_{215k}$ itself (not a proper subgroup of $H_{215k}$). The image of the mod-$16$ Galois representation being conjugate to $H_{215k}$ guarantees that the isogeny-torsion graph attached to the $\QQ$-isogeny class of $E$ is $\mathcal{T}_{8}^{5}$. Thus, $\mathcal{T}_{8}^{5}$ corresponds to an infinite set of \textit{j}-invariants.
		
		\item $\mathcal{T}_{8}^{6}$
		
		Let $E / \Q$ be an elliptic curve such that the image of the mod-$16$ Galois representation attached to $E$ is conjugate to $H_{215} = \left\langle H_{215k}, \operatorname{-Id} \right\rangle$. Then by Lemma \ref{No Points of Order 4}, there are no elliptic curves over $\Q$ in the $\Q$-isogeny class of $E$ that have a point of order $4$ defined over $\Q$. Hence, the isogeny-torsion graph of $E$ is $\mathcal{T}_{8}^{6}$.
		
		Elliptic curves over $\Q$ such that the image of the mod-$16$ Galois representation is conjugate to a subgroup of $H_{215}$ correspond to non-cuspidal, $\Q$-rational points on the modular curve defined by the group that appears in the list compiled in \cite{SZ} with label $16G^{0}-16b$ which is a genus $0$ curve with infinitely many non-cuspidal, $\Q$-rational points. This is the same modular curve that appears in the list \cite{Rouse} with label $\operatorname{X}_{215}$. By Proposition \ref{HIT proposition}, there are infinitely many \textit{j}-invariants corresponding to elliptic curves over $\Q$ such that the image of the mod-$16$ Galois representation is conjugate to $H_{215}$ itself (not a proper subgroup of $H_{215}$). The image of the mod-$16$ Galois representation attached to $E$ being conjugate to $H_{215}$ guarantees that the isogeny-torsion graph associated to the $\QQ$-isogeny class of $E$ is $\mathcal{T}_{8}^{6}$. Thus, $\mathcal{T}_{8}^{6}$ corresponds to an infinite set of \textit{j}-invariants. 
		\end{itemize}
		
		This concludes the proof of Proposition \ref{T8 Graphs Proposition}. Also, this concludes the proof that \textit{all} isogeny-torsion graphs containing an elliptic curve over $\QQ$ with full two-torsion defined over $\Q$ correspond to infinite sets of \textit{j}-invariants.
		
    \section{Isogeny Graphs of $R_{k}$ Type}

		\subsection{Isogeny-Torsion Graphs of $R_{6}$ Type}
		
		In this subsection, we prove that the two isogeny-torsion graphs of $R_{6}$ type correspond to infinite sets of \textit{j}-invariants.
		
		\begin{proposition}\label{R6 Graphs Proposition}
		Let $\mathcal{G}$ be an isogeny-torsion graph of $R_{6}$ type (regardless of torsion configuration). Then $\mathcal{G}$ corresponds to an infinite set of \textit{j}-invariants.
		\end{proposition}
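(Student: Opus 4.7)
The plan is to recognize that the $R_6$ isogeny graph arises precisely from elliptic curves $E/\Q$ admitting a cyclic, $\Q$-rational subgroup of order $18$; equivalently, from non-cuspidal $\Q$-rational points on $X_0(18)$. By Theorem \ref{thm-ratnoncusps}, $X_0(18)$ is a genus $0$ modular curve with infinitely many $\Q$-rational points, so infinitely many $j$-invariants yield an $R_6$ graph. The two torsion configurations of $R_6$ type differ by the presence of rational $3$-torsion somewhere in the class (all vertices carry rational $2$-torsion by Lemma \ref{lem-2torspt-all-have-2torspt}, and no cyclic rational $4$-subgroup is allowed since $C_2(E)=2$), and these two cases correspond, by Lemma \ref{Rational Points}, to whether the mod-$3$ Galois image of the starting curve is the diagonal subgroup of $\mathfrak{B}_3$ or the full Borel $\mathfrak{B}_3$.

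To produce infinitely many $j$-invariants in the configuration \emph{with} rational $3$-torsion, I would apply Proposition \ref{HIT proposition} to the subgroup $H \subseteq \operatorname{GL}(2,\Z/18\Z)$ whose mod-$9$ reduction is the full Borel of $\operatorname{GL}(2,\Z/9\Z)$ and whose mod-$3$ reduction is the diagonal subgroup of $\mathfrak{B}_3$; this ensures both $C_3(E)=3$ and the existence of a rational point of order $3$. Since the associated modular curve dominates $X_0(18)$ and inherits its genus $0$ structure with a $\Q$-rational point, Hilbert's irreducibility supplies infinitely many $E/\Q$ realizing mod-$18$ image exactly $H$. By Lemma \ref{lem-subsequent-rational-pts} the other curves in the class arrange themselves to form the first $R_6$ configuration.

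For the configuration \emph{without} rational $3$-torsion, I would quadratically twist such an $E$ by a square-free integer $d$ chosen so that $\sqrt{d}\notin \Q(E[9])$. This forces $\operatorname{-Id}$ into the mod-$3$ image of $E^{(d)}$, and Corollary \ref{Corollary Quadratic Twisting Odd Graphs} then guarantees that no curve in the $\Q$-isogeny class of $E^{(d)}$ carries a point of order $3$ over $\Q$. Since quadratic twisting preserves the $j$-invariant and the underlying isogeny graph, this produces the required infinite family for the second $R_6$ configuration.

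The main technical hurdle is the Hilbert-irreducibility set-up of the first step: one must verify that the subgroup $H$ defined above cuts out a genus $0$ modular cover of $X_0(18)$ with a $\Q$-rational point (so Proposition \ref{HIT proposition} applies), and that the Hilbertian complement in $X_H(\Q)$ is infinite, so the mod-$18$ image can be arranged to equal $H$ rather than a proper subgroup. Once this is in place, the quadratic-twist step for the second configuration is routine in light of the earlier arguments for graphs of $S$ and $T_k$ type.
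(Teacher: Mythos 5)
Your reduction of the $R_6$ shape to non-cuspidal rational points on $\operatorname{X}_0(18)$ is fine, and your quadratic-twist step for the configuration without rational $3$-torsion is essentially the paper's argument (the paper twists by $d\neq 1,-3$ using $\Q(E[3])=\Q(\sqrt{-3})$ together with Lemma \ref{Quadratic Twisting Odd Graphs}; your condition $\sqrt{d}\notin\Q(E[9])$ is a harmless strengthening). The genuine gap is in your construction for the configuration $\mathcal{R}_6^1$. The group $H\subseteq\operatorname{GL}(2,\Z/18\Z)$ you describe is not well defined: the mod-$3$ reduction of $H$ is determined by its mod-$9$ reduction, and if the mod-$9$ reduction is the full Borel of $\operatorname{GL}(2,\Z/9\Z)$ then the mod-$3$ reduction is the full Borel mod $3$, not the diagonal subgroup of $\mathfrak{B}_3$. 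Neither repair of this gives what you claim. If you take the full Borel mod $9$, you do get $C_3(E)=3$ for that curve, but you get no rational point of order $3$ (the Borel contains elements with upper-left entry $\not\equiv 1 \bmod 3$), so you land in the wrong torsion configuration. If instead you insist that the mod-$3$ image be the diagonal subgroup of $\mathfrak{B}_3$ \emph{and} that there be a cyclic rational $9$-subgroup, the curve acquires two independent rational $3$-subgroups together with a $9$-subgroup, so $C_3(E)=4$; by Theorem \ref{thm-kenku} this forces $C(E)=4$, kills the $2$-isogeny, and puts you in the $L_4$ graph, which corresponds to only finitely many $j$-invariants. You also never specify the mod-$2$ component of $H$, which is needed both for the $R_6$ shape and for $\operatorname{X}_H$ to dominate $\operatorname{X}_0(18)$ rather than $\operatorname{X}_0(9)$. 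Finally, Proposition \ref{HIT proposition} as stated requires $\operatorname{-Id}\in H$, which is incompatible with any group forcing a rational point of order $3$, so it cannot be invoked directly for such an $H$ without an extra twisting argument that you do not supply.

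For comparison, the paper sidesteps all of this by working with the \emph{middle} curve of the $3$-chain: for $\mathcal{R}_6^1$ it takes $E$ with a rational point of order $6$ and two independent rational $3$-subgroups, so that the mod-$2$ image is $\mathcal{B}_2$ and, by Lemma \ref{Split Cartan}, the mod-$3$ image is the split diagonal $D_3$; it then quotes the explicit one-parameter family of \cite{enriquealvaro} with mod-$6$ image $\mathcal{B}_2\times D_3\cong\Z/2\Z\times\Z/2\Z$, which immediately gives infinitely many $j$-invariants in this configuration, after which the twist argument yields $\mathcal{R}_6^2$. If you want to keep your level-$18$ route, you would need to define $H$ correctly (mod-$2$ component $\mathcal{B}_2$, mod-$9$ component the matrices $\left(\begin{smallmatrix} a & b \\ 0 & d \end{smallmatrix}\right)$ with $a\equiv 1 \bmod 3$), verify that the resulting modular curve has genus $0$ with infinitely many rational points, and handle the $\operatorname{-Id}$ issue by a separate twisting step; none of this is in your sketch, so as written the first configuration is not established.
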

		
		Let $E / \Q$ be an elliptic curve such that $E(\Q)_{\text{tors}} \cong \Z / 6 \Z$ and the isogeny-torsion graph associated to the $\Q$-isogeny class of $E$ is $\mathcal{R}_{6}^{1}$. Let $d$ be a non-zero, square-free integer not equal to $1$ or $-3$ and let $E^{(d)}$ be the quadratic twist of $E$ by $d$. Then the isogeny-torsion graph associated to $E^{(d)}$ is $\mathcal{R}_{6}^{2}$. Hence, if we prove that $\mathcal{R}_{6}^{1}$ corresponds to an infinite set of \textit{j}-invariants, we just have to take quadratic twists to show that $\mathcal{R}_{6}^{2}$ corresponds to an infinite set of \textit{j}-invariants.

		\begin{center}
        \begin{table}[h!]
 	\renewcommand{\arraystretch}{1.3}
 	\scalebox{1.1}{
 	\begin{tabular}{ |c|c|c|c| }
 		\hline
 		Graph Type & Type & Isomorphism Types & Label \\
 		\hline
 		
 		\multirow{2}*{\includegraphics[scale=0.16]{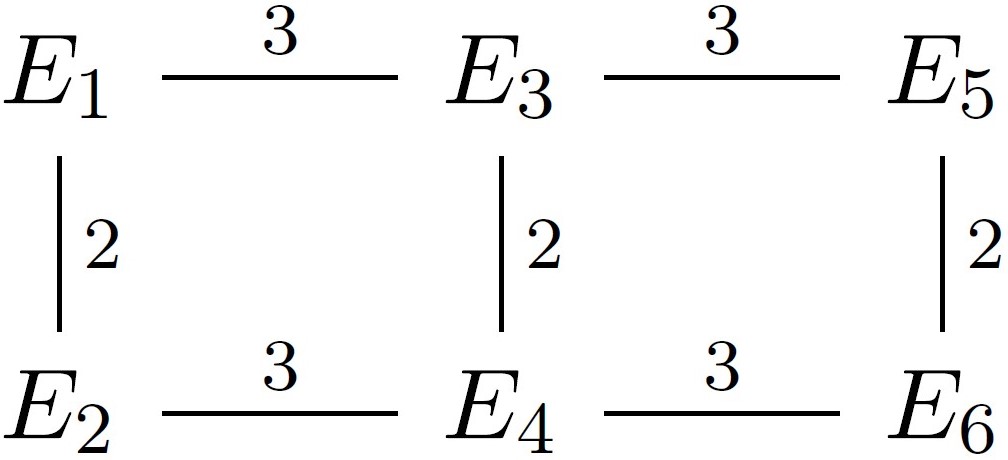}} & \multirow{2}*{$R_{6}$} & $([6],[6],[6],[6],[2],[2])$ & $\mathcal{R}_{6}^{1}$ \\
 		\cline{3-4}
 		& & $([2],[2],[2],[2],[2],[2])$ & $\mathcal{R}_{6}^{2}$ \\
 		\hline
 	\end{tabular}}
 	\caption{Isogeny Graphs of $R_{6}$ Type}
        \label{R_{6} Graphs}
 \end{table}
 		\end{center}
		
		Let $E' / \Q$ be an elliptic curve. Note that the isogeny-torsion graph associated to the $\Q$-isogeny class of $E'$ is of $R_{6}$ type if and only if $E'$ is $\Q$-isogenous to an elliptic curve $E$ over $\Q$ such that $E$ has a point of order $2$ defined over $\Q$ and has two $\QQ$-rational subgroups of order $3$.
		
		\begin{itemize}
		    \item $\mathcal{R}_{6}^{1}$
		    
		    Let $E / \Q$ be an elliptic curve. Then the isogeny-torsion graph associated to the $\Q$-isogeny class of $E$ is $\mathcal{R}_{6}^{1}$ if and only if (up to relabeling) $E$ has a point of order $6$ defined over $\Q$ and two $\Q$-rational subgroups of order $3$. In this case, the image of the mod-$2$ Galois representation attached to $E$ is conjugate to $\mathcal{B}_{2}$, the subgroup of $\operatorname{GL}(2, \Z / 2 \Z)$ consisting of upper-triangular matrices. By Lemma \ref{Split Cartan}, the image of the mod-$3$ Galois representation attached to $E$ is conjugate to $D_{3} = \left\langle \left(\begin{array}{cc}
		    1 & 0 \\
		    0 & 2
		\end{array}\right) \right\rangle$. Suppose the image of the mod-$12$ Galois representation is conjugate to $\mathcal{B}_{2} \times D_{3} \cong \Z / 2 \Z \times \Z / 2 \Z$.
		
		Work in \cite{enriquealvaro} has parametrized elliptic curves over $\Q$ such that the image of the mod-$6$ Galois representation is isomorphic to $\Z / 2 \Z \times \Z / 2 \Z$. Let $E_{t} : y^{2} = x^{3} + A(t)x + B(t)$ where
		\begin{center} $A(t) = -27t^{12}+216t^{9}-6480t^{6}+12528t^{3}-432$ and $B(t) = 54t^{18} - 648t^{15} - 25920t^{12} + 166320t^{9} - 651888t^{6} + 222912t^{3} + 3456$. \end{center}
		
		Using the infinite one-parameter family $E_{t}$, we see that $\mathcal{R}_{6}^{1}$ corresponds to an infinite set of \textit{j}-invariants.
		
		\item $\mathcal{R}_{6}^{2}$
		
		Let $E / \Q$ be an elliptic curve such that $E(\Q)_{\text{tors}} \cong \Z / 6 \Z$ and $E$ has two independent $\Q$-rational subgroups of order $3$. Then the isogeny-torsion graph associated to the $\Q$-isogeny class of $E$ is $\mathcal{R}_{6}^{1}$. Moreover, $\QQ(E[3]) = \Q(\sqrt{-3})$. Let $E' / \QQ$ be the quadratic twist of $E$ by a square-free integer not equal to $1$ or $-3$. The image of the mod-$2$ Galois representation attached to $E'$ is the same as the image of the mod-$2$ Galois representation attached to $E$ and the image of the mod-$3$ Galois representation attached to $E'$ is conjugate to $\left\{ \left(\begin{array}{cc}
		    \ast & 0 \\
		    0 & \ast
		\end{array}\right) \right\}$. By lemma \ref{Quadratic Twisting Odd Graphs}, no elliptic curve over $\Q$ that is $\Q$-isogenous to $E'$ has a point of order $3$ defined over $\Q$. Thus, the isogeny-torsion graph associated to the $\Q$-isogeny class of $E'$ is $\mathcal{R}_{6}^{2}$. As $\mathcal{R}_{6}^{1}$ corresponds to an infinite set of \textit{j}-invariants, $\mathcal{R}_{6}^{2}$ also corresponds to an infinite set of \textit{j}-invariants.
		\end{itemize}
		
		This concludes the proof of Proposition \ref{R6 Graphs Proposition}.
	    
		\subsection{Isogeny-Torsion Graphs of $R_{4}$ Type}
		
		In this subsection, we prove that each of the isogeny-torsion graphs of $R_{4}(6)$ type and $R_{4}(10)$ type correspond to infinite sets of \textit{j}-invariants.
		
		\begin{proposition}\label{R4 Graphs Proposition}
		Let $\mathcal{G}$ be an isogeny-torsion graph of $R_{4}(10)$ or $R_{4}(6)$ type (regardless of torsion configuration). Then $\mathcal{G}$ corresponds to an infinite set of \textit{j}-invariants.
		\end{proposition}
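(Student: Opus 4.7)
The plan is to combine infinite one-parameter families of elliptic curves with $\Z/pq\Z$ torsion with the quadratic-twist technique used throughout earlier sections. Throughout, set $pq \in \{6, 10\}$ and let $q$ denote the odd prime factor, so $q \in \{3, 5\}$. The modular curve $X_1(pq)$ has genus $0$ and admits infinitely many non-cuspidal $\Q$-rational points (by Mazur's Theorem \ref{thm-mazur}), and outside a thin exceptional set each such point parametrizes an $E/\Q$ with torsion exactly $\Z/pq\Z$. For such an $E$, Mazur's theorem forces $C_2(E) = 2$, and outside a further thin Serre set $C_q(E) = 2$ as well (the alternative, by Lemma \ref{Split Cartan}, places the mod-$q$ image in the split Cartan, a thin condition). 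Hence $C(E) = 4$ by Theorem \ref{thm-kenku} and the isogeny graph of the $\Q$-isogeny class of $E$ is of $R_4(pq)$ type.

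Next I would compute the torsion configuration. Let $P_2, P_q \in E(\Q)$ be points of orders $2$ and $q$, and label the four curves $E_1 = E$, $E_2 = E/\langle P_2\rangle$, $E_3 = E/\langle P_q\rangle$, $E_4 = E/\langle P_2+P_q\rangle$. By Lemma \ref{lem-necessity-for-point-rationality} the image of $P_q$ in $E_2$ is a $\Q$-rational point of order $q$, and Lemmas \ref{lem-2torspt-all-have-2torspt} and \ref{lem-Maximality-Of-Rational-2-Power-Groups} then force $E_2(\Q)_{\text{tors}} \cong \Z/pq\Z$. On the other hand, Lemma \ref{lem-subsequent-rational-pts} with $m = 1$ applied first to $E_1$ and then to $E_2$ (whose $\Q$-rational $q$-isogeny quotient, with kernel $\langle \phi_2(P_q)\rangle$, is $E_4$) eliminates $\Q$-rational $q$-torsion from $E_3$ and $E_4$; combined with Lemma \ref{lem-2torspt-all-have-2torspt} this gives $E_3(\Q)_{\text{tors}} \cong E_4(\Q)_{\text{tors}} \cong \Z/2\Z$. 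So the torsion configuration $([pq],[pq],[2],[2])$ corresponds to an infinite set of $j$-invariants.

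For the remaining configuration $([2],[2],[2],[2])$ I would quadratic-twist. By Corollary \ref{Corollary Unique Index 2 Subgroup} the unique quadratic subfield of $\Q(E[q])$ is $\Q(\sqrt{-3})$ when $q = 3$ and $\Q(\sqrt{5})$ when $q = 5$. Choosing any square-free $d \neq 1$ with $\sqrt{d}$ outside this subfield makes the mod-$q$ image of the twist $E^{(d)}$ contain $\operatorname{-Id}$, so Corollary \ref{Corollary Quadratic Twisting Odd Graphs} eliminates all $\Q$-rational points of order $q$ from the entire isogeny class of $E^{(d)}$. Since quadratic twisting commutes with $\Q$-isogenies, the $R_4(pq)$ graph is preserved, and every vertex retains a $\Q$-rational point of order $2$ by Lemma \ref{lem-2torspt-all-have-2torspt}; hence the torsion configuration of $E^{(d)}$ is $([2],[2],[2],[2])$. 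Since $j(E^{(d)}) = j(E)$ ranges over the infinite family from the previous paragraph, this configuration is likewise realized by infinitely many $j$-invariants.

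The main obstacle in executing this plan is pinning down $E_4(\Q)_{\text{tors}}$: because $E_4$ is reached from $E_1$ only via an isogeny of composite degree $pq$, one cannot invoke Lemma \ref{lem-subsequent-rational-pts} directly from $E_1$ for that vertex, and must instead realize $E_4$ as the $q$-isogeny quotient of $E_2$ (using that $\phi_2(P_q)$ is a $\Q$-rational point of order $q$ generating the kernel) before applying the lemma a second time. Once this routing is observed, all other pieces are standard applications of the lemmas already developed.
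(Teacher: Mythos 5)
Your $R_4(10)$ argument and your quadratic-twist step are essentially sound and parallel the paper's: an infinite family with a rational point of order $10$, plus a twist by $d$ with $\sqrt{d}\notin\Q(\sqrt{5})$, does produce both configurations. Note, though, that the correct reason the graph type is forced there is Theorem \ref{thm-kenku}, not Mazur's theorem and not a thin-set exclusion: a rational point of order $10$ gives $C_2(E)\geq 2$ and $C_5(E)\geq 2$, and Kenku's constraints (items (3) and (4)) then force $C(E)=4$ with $C_2(E)=C_5(E)=2$, so no exceptional set needs to be removed for $pq=10$.

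The genuine gap is in the $R_4(6)$ case, at the step where you pin down the graph type. The claim that Mazur's theorem forces $C_2(E)=2$ when $E(\Q)_{\tor}\cong\Z/6\Z$ is false: $C_2(E)$ counts all cyclic $\Q$-rational subgroups of $2$-power order, and a curve with torsion exactly $\Z/6\Z$ can carry a cyclic $\Q$-rational subgroup of order $4$ (equivalently, together with the $3$-isogeny, a cyclic $\Q$-rational $12$-isogeny); such curves have $C_2(E)=4$ and sit in $S$-type graphs --- indeed the configuration $\mathcal{S}^{1}$ of Table \ref{S Graphs} has vertices labelled $[6]$. Likewise, your parenthetical asserts that the only obstruction to $C_3(E)=2$, given a rational point of order $3$, is the split Cartan image of Lemma \ref{Split Cartan}; but a cyclic $\Q$-rational subgroup of order $9$ containing the rational $3$-torsion point is another possibility, and then the graph is of $R_6$ type (configuration $\mathcal{R}_6^{1}$ of Table \ref{R_{6} Graphs} again has $[6]$-vertices). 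These are precisely two of the four exclusions the paper builds into its Hilbert-irreducibility argument for $R_4(6)$ (its groups $G_2$ and $G_3$, corresponding to a $12$-isogeny and to an $18$-isogeny, i.e.\ $\operatorname{X}_0(18)$); as written, your exceptional set omits them, so the conclusion that the family realizes the $R_4(6)$ graph, and hence the infinitude of $j$-invariants for $\mathcal{R}_4^{1}(6)$, is not established. The gap is repairable in your framework: both omitted conditions correspond to images of coverings of $\operatorname{X}_1(6)$ of degree at least $2$, hence are thin, so enlarging your exceptional set restores the argument; with that fix your route is essentially the paper's, based at $\operatorname{X}_1(6)$ instead of $\operatorname{X}_0(6)$, and it even separates the two torsion configurations directly, so you would not need the toggling via Lemma \ref{Three Subgroups of Index 2} that the paper uses to return from $\mathcal{R}_4^{2}(6)$ to $\mathcal{R}_4^{1}(6)$.
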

		
		Proving both of the isogeny-torsion graphs of $R_{4}(10)$ type correspond to infinite sets of \textit{j}-invariants will be relatively easy. We just need to find a one-parameter family of elliptic curves over $\Q$ with a point of order $10$ defined over $\Q$ and then use an appropriate twist. On the other hand, the proof that both of the isogeny-torsion graphs of $R_{4}(6)$ type correspond to infinite sets of \textit{j}-invariants will be relatively messy, relying completely on Hilbert's Irreducibility Theorem.
		
		\begin{table}[h!]
		\renewcommand{\arraystretch}{1.35}
 	\scalebox{1.1}{
 	\begin{tabular}{ |c|c|c|c| }
 		\hline
 		Graph Type & Type & Isomorphism Types & Label \\
 		
 		\hline
 		\multirow{4}*{\includegraphics[scale=0.3]{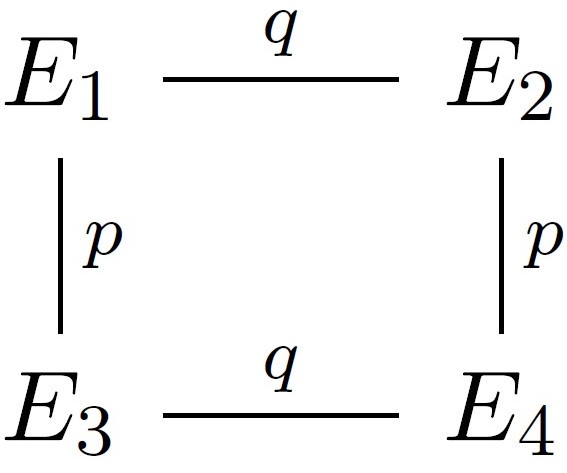}} & \multirow{2}*{$R_{4}(10)$} & $([10],[10],[2],[2])$ & $\mathcal{R}_{4}^{1}(10)$ \\
 		\cline{3-4}
 		& & $([2],[2],[2],[2])$ & $\mathcal{R}_{4}^{2}(10)$ \\
 		\cline{2-4}
 		& \multirow{2}*{$R_{4}(6)$} & $([6],[6],[2],[2])$ & $\mathcal{R}_{4}^{1}(6)$ \\
 		\cline{3-4}
 		& & $([2],[2],[2],[2])$ & $\mathcal{R}_{4}^{2}(6)$ \\
 		\hline
 		\end{tabular}}
 		\caption{Isogeny Graphs of $R_{4}$ Type}
        \label{R_{4} Graphs}
 		\end{table}

		\begin{itemize}
		
		    \item Isogeny-torsion Graphs of $R_{4}(10)$ Type
		    
		    \begin{enumerate}
		        \item $\mathcal{R}_{4}^{1}(10)$
		    
		    The isogeny-torsion graph of a $\Q$-isogeny class of elliptic curves over $\Q$ is $\mathcal{R}_{4}^{1}(10)$ if and only there is an elliptic curve over $\Q$ in the $\Q$-isogeny class with rational $10$-torsion. Let $t \in \QQ$ and let $E_{t} : y^{2} + (1-a)xy - by = x^{3}-bx^{2}$ with
		    \begin{center}
		        $a = \frac{t(t-1)(2t-1)}{t^{2}-3t+1}$ and $\frac{t^{3}(t-1)(2t-1)}{(t^{2}-3t+1)^{2}}$.
		    \end{center}
		    If $E_{t}$ is a smooth elliptic curve, then $E_{t}(\Q)_{\text{tors}} \cong \Z / 10 \Z$ (see Appendix E of \cite{alrbook}). Note that in this case, the image of the mod-$2$ Galois representation attached to $E_{t}$ is conjugate to $\left\langle \left(\begin{array}{cc}
		        1 & 1 \\
		        0 & 1
		    \end{array}\right) \right\rangle$. By Lemma \ref{Rational Points}, the image of the mod-$5$ Galois representation attached to $E_{t}$ is conjugate to $\mathcal{B}_{5}$, the subgroup of $\operatorname{GL}(2, \Z / 5 \Z)$ consisting of matrices of the form $\left(\begin{array}{cc}
		        1 & x \\
		        0 & y
		    \end{array}\right)$. Using the one-parameter family of elliptic curves $E_{t}$, we can conclude that $\mathcal{R}_{4}^{1}(10)$ corresponds to an infinite set of \textit{j}-invariants.
		    
		    \item $\mathcal{R}_{4}^{2}(10)$
		    
		    By Corollary \ref{Corollary Unique Index 2 Subgroup}, the only quadratic subfield of $\QQ(E_{t}[5])$ is $\QQ(\sqrt{5})$. Let $d$ be a non-zero, square-free integer not equal to $1$ or $5$ and let $E^{(d)}_{t} / \Q$ be the quadratic twist of $E_{t}$ by $d$. Then the image of the mod-$5$ Galois representation attached to $E^{(d)}_{t}$ is conjugate to $\left\langle \mathcal{B}_{5}, \operatorname{-Id} \right\rangle$. By Lemma \ref{Quadratic Twisting Odd Graphs}, none of the elliptic curves that are $\Q$-isogenous to $E^{(d)}_{t}$ have a point of order $5$ defined over $\Q$. The elliptic curve $E^{(d)}_{t}$ still has a point of order $2$ defined over $\Q$ and a $\Q$-rational subgroup of order $5$. The isogeny-torsion graph associated to the $\Q$-isogeny class of $E^{(d)}_{t}$ is $\mathcal{R}_{4}^{2}(10)$. As $\mathcal{R}_{4}^{1}(10)$ corresponds to an infinite set of \textit{j}-invariants, $\mathcal{R}_{4}^{2}(10)$ also corresponds to an infinite set of \textit{j}-invariants.
		    \end{enumerate}
		    
		    \item Isogeny-Torsion Graphs of $R_{4}(6)$ Type
		    
		    Let $E / \Q$ be an elliptic curve such that the isogeny-torsion graph associated to the $\Q$-isogeny class of $E$ is of $R_{4}(6)$ type. There are two possibilities for the isogeny-torsion graph, namely, $\mathcal{R}_{4}^{1}(6)$ and $\mathcal{R}_{4}^{2}(6)$. In both cases, all the elliptic curves over $\Q$ in the $\Q$-isogeny class have a point of order $2$ defined over $\Q$. In the case that an elliptic curve over $\Q$ in the $\Q$-isogeny class has a point of order $6$ defined over $\Q$, we may assume without loss of generality that $E(\Q)_{\text{tors}} \cong \Z / 6 \Z$. In this case, by Lemma \ref{Rational Points}, the image of the mod-$3$ Galois representation attached to $E$ is conjugate to $\mathfrak{B}_{3}$; the subgroup of $\operatorname{GL}(2, \Z / 3 \Z)$ consisting of matrices of the form $\left(\begin{array}{cc}
		        1 & x \\
		        0 & y
		    \end{array}\right)$. In the case that no elliptic curve over $\Q$ in the $\Q$-isogeny class has a point of order $6$ defined over $\Q$, the image of the mod-$3$ Galois representation of every elliptic curve over $\Q$ in the $\Q$-isogeny class is conjugate to $\left\langle \mathfrak{B}_{3}, \operatorname{-Id} \right\rangle$. In either case of torsion configuration, the image of the mod-$2$ Galois representation attached to the elliptic curves over $\Q$ in the $\Q$-isogeny class is conjugate to $\mathfrak{B}_{2}$; the subgroup of $\operatorname{GL}(2, \Z / 2 \Z)$ consisting of upper-triangular matrices.
		    
		    Isogeny-torsion graphs of $R_{4}(6)$ type are proper subgraphs of isogeny-torsion graphs of $S$ type and isogeny-torsion graphs of $R_{6}$ type. To prove there are infinitely many \textit{j}-invariants corresponding to isogeny-torsion graphs of $R_{4}(6)$ type, it suffices to prove that there are infinitely many \textit{j}-invariants corresponding to elliptic curves over $\Q$ with a $\Q$-rational subgroup of order $6$ and
		    \begin{enumerate}
		        \item without full two-torsion (to avoid an isogeny-torsion graph of type $S$),
		        
		        \item without a cyclic $\Q$-rational subgroup of order $12$ (to avoid an isogeny-torsion graph of type $S$),
		        
		        \item without a cyclic $\Q$-rational subgroup of order $9$ (to avoid an isogeny-torsion graph of type $R_{6}$),
		        
		        \item without two distinct $\Q$-rational subgroups of order $3$ (to avoid an isogeny-torsion graph of type $R_{6}$).
		    \end{enumerate}
		    Let $G$ be the full inverse image of the subgroup $\mathfrak{B}_{2} \times \left\langle \mathfrak{B}_{3}, \operatorname{-Id} \right\rangle$ of $\operatorname{GL}(2, \Z / 2 \Z) \times \operatorname{GL}(2, \Z / 3 \Z)$ via the mod-$6$ reduction map from $\operatorname{GL}(2, \Z / 4 \Z) \times \operatorname{GL}(2, \Z / 9 \Z)$. The group $G$ has order $2^{7} \cdot 3^{5}$ and defines a modular curve of genus $0$ with infinitely many non-cuspidal, $\Q$-rational points. Elliptic curves over $\Q$ with a $\Q$-rational subgroup of order $6$ correspond to non-cuspidal, $\Q$-rational points on the modular curve $\operatorname{X}_{0}(6)$. Note that the modular curve defined by $G$ is precisely the modular curve defined by $\mathfrak{B}_{2} \times \left\langle \mathfrak{B}_{3}, \operatorname{-Id} \right\rangle$, which is precisely the modular curve $\operatorname{X}_{0}(6)$.
		    
		    \begin{enumerate}
		    
		    \item Let $G_{1}$ be the full inverse image of the subgroup $$K_{1} = \{I\} \times \left\langle \mathfrak{B}_{3}, \operatorname{-Id} \right\rangle$$
		    of $\operatorname{GL}(2, \Z / 2 \Z) \times \operatorname{GL}(2, \Z / 3 \Z)$ via the mod-$6$ reduction map from the group $\operatorname{GL}(2, \Z / 4 \Z) \times \operatorname{GL}(2, \Z / 9 \Z)$. Elliptic curves over $\Q$ with full two-torsion defined over $\Q$ and a $\Q$-rational subgroup of order $3$ correspond to non-cuspidal $\Q$-rational points on the modular curve defined by $K_{1}$ which is precisely the modular curve defined by $G_{1}$. The group $G_{1}$ defines a modular curve of genus $0$ with infinitely many non-cuspidal, $\Q$-rational points and $G_{1}$ is an index-$2$ subgroup of $G$.
		    
		    \item Let $G_{2}$ be the full inverse image of the subgroup
		    $$K_{2} := \left\langle \left(\begin{array}{cc}
		        1 & 0 \\
		        0 & 3
		    \end{array} \right), \left(\begin{array}{cc}
		        1 & 1 \\
		        0 & 1
		    \end{array} \right), \operatorname{-Id} \right\rangle \times \left\langle \mathfrak{B}_{3}, \operatorname{-Id} \right\rangle$$
		    of $\operatorname{GL}(2, \Z / 4 \Z) \times \operatorname{GL}(2, \Z / 3 \Z)$ via the mod-$3$ reduction map from the matrix group $\operatorname{GL}(2, \Z / 4 \Z) \times \operatorname{GL}(2, \Z / 9 \Z)$. Elliptic curves over $\Q$ with a $\Q$-rational subgroup of order $12$ correspond to non-cuspidal, $\Q$-rational points on the modular curve defined by $K_{2}$ which is precisely the modular curve defined by $G_{2}$. The group $G_{2}$ is an index-$2$ subgroup of $G$ and $G_{2}$ defines a modular curve of genus $0$ with infinitely many non-cuspidal, $\Q$-rational points.
		    
		    \item Let $G_{3}$ be the full inverse image of the subgroup
		    $$K_{3} := \mathfrak{B}_{2} \times \left\langle \left(\begin{array}{cc}
		        2 & 0 \\
		        0 & 1
		    \end{array}\right), \left(\begin{array}{cc}
		        1 & 1 \\
		        0 & 1
		    \end{array}\right), \left(\begin{array}{cc}
		        1 & 0 \\
		        0 & 2
		    \end{array}\right) \right\rangle$$
		    of $\operatorname{GL}(2, \Z / 2 \Z) \times \operatorname{GL}(2, \Z / 9 \Z)$ via the mod-$2$ reduction map from the matrix group $\operatorname{GL}(2, \Z / 4 \Z) \times \operatorname{GL}(2, \Z / 9 \Z)$. Elliptic curves over $\Q$ with a $\Q$-rational subgroup of order $18$ correspond to non-cuspidal $\Q$-rational points on the modular curve defined by $K_{3}$ which is precisely the modular curve $\operatorname{X}_{0}(18)$. The group $G_{3}$ is an index-$3$ subgroup of $G$ and $G_{3}$ defines a modular curve of genus $0$ with infinitely many non-cuspidal, $\Q$-rational points.
		    
		    \item Finally, let $G_{4}$ be the full inverse image of the subgroup $\mathfrak{B}_{2} \times \left\langle \left(\begin{array}{cc}
		        1 & 0 \\
		        0 & 2
		    \end{array}\right), \operatorname{-Id} \right\rangle$ of $\operatorname{GL}(2, \Z / 2 \Z) \times \operatorname{GL}(2, \Z / 3 \Z)$ via the mod-$6$ reduction map from the group $\operatorname{GL}(2, \Z / 4 \Z) \times \operatorname{GL}(2, \Z / 9 \Z)$. Elliptic curves over $\Q$ with a point of order $2$ defined over $\Q$ and two $\Q$-rational subgroups of order $3$ correspond to non-cuspidal, $\Q$-rational points on the modular curve defined by $G_{4}$. The group $G_{4}$ is an index-$3$ subgroup of $G$ and $G_{4}$ defines a modular curve of genus $0$ with infinitely many non-cuspidal, $\Q$-rational points.
		    \end{enumerate}
		    
		    Now let
		    $$ S_{G} := \bigcup_{i=1}^{4} \pi_{G_{i}, G}(X_{G_{i}}(\Q))$$
		    where $\pi_{G_{i},G} \colon \operatorname{X}_{G_{i}} \to \operatorname{X}_{G}$ is the natural morphism induced by the inclusion $G_{i} \subseteq G$. The degree of $\pi_{G_{i}, G}$ is equal to $[G:G_{i}] = 2$ or $3$. Note that $\operatorname{X}_{G} \cong \PP^{1}$ and that $S_{G}$ is a \textit{thin} subset in the language of Serre because all the degrees of the maps $\pi_{G_{i},G}$ are at least $2$. The field $\Q$ is Hilbertian and $\PP^{1}_{\Q} \cong \operatorname{X}_{G}(\Q)$ is not thin. This implies that the complement $\operatorname{X}_{G}(\Q) \setminus S_{G}$ is not thin and must be infinite.
		    
		    This proves that there are infinitely many \textit{j}-invariants corresponding to elliptic curves over $\Q$ such that the image of mod-$2$ Galois representation is conjugate to $\mathfrak{B}_{2}$ and the image of the mod-$3$ Galois representation is conjugate to one of three subgroups of $\operatorname{GL}(2, \Z / 3 \Z)$, namely, $\mathcal{B}_{3}$, $\left\langle \left(\begin{array}{cc}
		        1 & 1 \\
		        0 & 1
		    \end{array}\right), \left(\begin{array}{cc}
		        2 & 0 \\
		        0 & 1
		    \end{array}\right) \right\rangle$, or $\left\langle \mathcal{B}_{3}, \operatorname{-Id} \right\rangle$. For the first two groups, the isogeny-torsion graph is $\mathcal{R}_{4}^{1}(6)$. For the third group, the isogeny-torsion graph is $\mathcal{R}_{4}^{2}(6)$. Thus, we have proven that $\mathcal{R}_{4}^{1}(6)$ and $\mathcal{R}_{4}^{2}(6)$ together correspond to infinitely many \textit{j}-invariants. We must prove that both correspond to infinitely many \textit{j}-invariants individually.
		    
		    Let $E / \QQ$ be an elliptic curve such that $E(\Q)_{\text{tors}} \cong \Z / 6 \Z$ and the isogeny-torsion graph associated to the $\Q$-isogeny class of $E$ is $\mathcal{R}_{4}^{1}(6)$. Let $E^{(r)}$ be the quadratic twist of $E$ by a non-zero, square-free integer $r$ not equal to $1$ or $-3$. The only quadratic subfield of $\Q(E[3])$ is $\Q(\sqrt{-3})$ and thus, $\Q(\sqrt{-3})$ does not contain $\sqrt{r}$. The image of the mod-$3$ Galois representation attached to $E^{(r)}$ is conjugate to $\left\langle B_{3}, \operatorname{-Id} \right\rangle$ and thus, the isogeny-torsion graph associated to the $\Q$-isogeny class of $E^{(r)}$ is $\mathcal{R}_{4}^{2}(6)$.
		    
		    On the other hand, let $E / \QQ$ be an elliptic curve such that the isogeny-torsion graph associated to the $\Q$-isogeny class of $E$ is $\mathcal{R}_{4}^{2}(6)$. Then the image of the mod-$3$ Galois representation attached to $E$ is conjugate to $\left\langle \mathcal{B}_{3}, \operatorname{-Id} \right\rangle$. By Lemma \ref{Three Subgroups of Index 2}, there are three subgroups of $\left\langle \mathcal{B}_{3}, \operatorname{-Id} \right\rangle$ of index $2$ and by Corollary \ref{Corollary Three Subgroups of Index 2}, these three subgroups of index $2$ correspond to three subfields of $\QQ(E[3])$ of degree $2$, one totally real and two totally imaginary. The three quadratic subfields of $\QQ(E[3])$ are $\QQ(\sqrt{-3})$, $\QQ(\sqrt{d})$, and $\QQ(\sqrt{-3d})$ for some positive, square-free integer $d$ not equal to $1$. Let $E^{(d)}$ be the quadratic twist of $E$ by $d$ and let $E^{(-3d)}$ be the quadratic twist of $E$ by $-3d$. Then the image of the mod-$3$ Galois representation attached to $E^{(d)}$ or the image of the mod-$3$ Galois representation attached to $E^{(-3d)}$ is conjugate to $\mathcal{B}_{3}$. Hence, either the isogeny-torsion graph associated to the $\Q$-isogeny class of $E^{(d)}$ is $\mathcal{R}_{4}^{1}(6)$ or the isogeny-torsion graph associated to the $\Q$-isogeny class of $E^{(-3d)}$ is $\mathcal{R}_{4}^{1}(6)$.
		    
		    From the fact that one of $\mathcal{R}_{4}^{1}(6)$ or $\mathcal{R}_{4}^{2}(6)$ corresponds to an infinite set of \textit{j}-invariants, and the graphs are interchangeable by quadratic twists, both $\mathcal{R}_{4}^{1}(6)$ and $\mathcal{R}_{4}^{2}(6)$ correspond to infinite sets of \textit{j}-invariants.
		\end{itemize}
		
		This concludes the proof of Proposition \ref{R4 Graphs Proposition}.
		
		\section{Linear Graphs}
		\subsection{Isogeny-Torsion Graphs of $L_{3}$ Type}
		
		In this subsection, we prove that the isogeny-torsion graphs of $L_{3}(25)$ and $L_{3}(9)$ type correspond to infinite sets of \textit{j}-invariants.
		
		\begin{proposition}\label{L3 Graphs Proposition}
		Let $\mathcal{G}$ be an isogeny-torsion graph of $L_{3}(9)$ type or $L_{3}(25)$ (regardless of torsion configuration). Then $\mathcal{G}$ corresponds to an infinite set of \textit{j}-invariants.
		\end{proposition}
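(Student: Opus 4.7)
The plan is to handle both cases by the standard recipe used throughout the paper: exhibit an infinite family of $j$-invariants realizing one torsion configuration of the graph, then reach the remaining configurations by quadratic twists. By parts $(3)$ and $(5)$ of Theorem \ref{thm-kenku}, every elliptic curve $E/\Q$ carrying a cyclic $\Q$-rational subgroup of order $p^2$ (for $p=3$ or $p=5$) satisfies $C(E)=C_p(E)=3$, so its $\Q$-isogeny graph is precisely of $L_3(p^2)$ type; conversely, every $L_3(p^2)$ graph has such a curve at an endpoint. Theorem \ref{thm-ratnoncusps} then guarantees that both $\operatorname{X}_0(9)$ and $\operatorname{X}_0(25)$ are genus-zero modular curves with infinitely many non-cuspidal $\Q$-rational points, so there are infinitely many $j$-invariants associated to $L_3(9)$-type and $L_3(25)$-type isogeny-torsion graphs in aggregate; since each graph admits only finitely many torsion configurations, at least one of them is realized infinitely often.

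To pin down a specific configuration explicitly, I would first treat the maximal one. For $L_3(9)$, Mazur's theorem (Theorem \ref{thm-mazur}) permits $\Z/9\Z$ as a rational torsion subgroup, and Appendix E of \cite{alrbook} provides an explicit one-parameter family $E_t/\Q$ (coming from $\operatorname{X}_1(9) \cong \PP^1$) with $E_t(\Q)_{\tor} \cong \Z/9\Z$; any such $E_t$ realizes the maximal $L_3(9)$ configuration, the one with a rational point of order $9$ at an endpoint. For $L_3(25)$, Mazur's theorem precludes a rational point of order $25$, so the maximal configuration has merely a rational point of order $5$ at an endpoint, which by Lemma \ref{lem-subsequent-rational-pts} forces the middle vertex to also carry a rational point of order $5$. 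Elliptic curves with a cyclic $\Q$-rational subgroup of order $25$ together with a rational point of order $5$ correspond to non-cuspidal $\Q$-rational points on a modular curve of genus $0$, and infinitely many such $j$-invariants can be obtained either from an explicit parametrization or by applying Proposition \ref{HIT proposition} to $\operatorname{X}_0(25)$ in the style of Section \ref{HIT}.

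Finally, to reach the remaining torsion configurations I would take quadratic twists. Twisting preserves the $j$-invariant, the isogeny graph and the $\Q$-isogeny class, so the twist of a curve realizing one $L_3(p^2)$-type configuration realizes another. By Corollary \ref{Corollary Unique Index 2 Subgroup}, $\Q(E[p])$ has a unique quadratic subfield, namely $\Q(\sqrt{-3})$ for $p=3$ and $\Q(\sqrt{5})$ for $p=5$; choosing a square-free $d$ whose square root does not lie in this subfield forces $-\mathrm{Id}$ into the mod-$p$ image of $E^{(d)}$, so by Lemma \ref{Quadratic Twisting Odd Graphs} and Corollary \ref{Corollary Quadratic Twisting Odd Graphs} every point of order $p$ is stripped from every curve in the $\Q$-isogeny class of $E^{(d)}$, producing the bottom configuration $([1],[1],[1])$. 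A choice of $d$ aligned with the unique quadratic subfield, together with a finer analysis of the mod-$p^2$ image analogous to the $R_6$ and $R_4(10)$ arguments in the proof of Proposition \ref{R6 Graphs Proposition} and Proposition \ref{R4 Graphs Proposition}, recovers the intermediate configuration in which the middle vertex retains a rational point of order $p$ while the endpoint loses its higher-order rational torsion. Since infinitely many seed $j$-invariants on the maximal configuration yield infinitely many twisted $j$-invariants in each target configuration, every torsion configuration on an $L_3(9)$- or $L_3(25)$-type graph corresponds to an infinite set of $j$-invariants. The main obstacle is the bookkeeping required to verify that each prescribed twist lands in the intended torsion configuration, which is a delicate but routine mod-$p^2$ Galois image computation in the style of Section \ref{S Graphs Section}.
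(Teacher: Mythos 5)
Your overall strategy---seed the maximal torsion configuration with an explicit infinite family and then reach the remaining configurations by quadratic twists using the $\operatorname{-Id}$ lemmas---is the same as the paper's, and for $L_{3}(9)$ your plan essentially reproduces it (the $\Z/9\Z$-torsion family for $([9],[3],[1])$, a twist by $-3$ for $([3],[3],[1])$, a generic twist for $([1],[1],[1])$), except that you defer the mod-$9$ Galois-image computation after the $-3$ twist, which is precisely the content of the paper's $\mathcal{L}_{3}^{2}(9)$ case. But two steps are genuinely problematic. First, your opening claim that a cyclic $\Q$-rational subgroup of order $p^{2}$ forces $C(E)=C_{p}(E)=3$, hence an $L_{3}(p^{2})$ graph, is false for $p=3$: Theorem \ref{thm-kenku} allows $C_{3}(E)=4$ ($L_{4}$ type) and, when $C_{3}(E)=3$, also $C_{2}(E)=2$ with $C(E)=6$ ($R_{6}$ type); since $\operatorname{X}_{0}(18)$ has infinitely many non-cuspidal $\Q$-rational points, infinitely many points of $\operatorname{X}_{0}(9)$ give $R_{6}$ graphs, so your ``aggregate'' count for $L_{3}(9)$ does not follow from $\operatorname{X}_{0}(9)$ alone without a thin-set argument. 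This does not sink the proof, because your actual $L_{3}(9)$ argument rests on rational $9$-torsion (where Mazur rules out a $2$-isogeny and $L_{4}$ contributes only finitely many $j$-invariants), but the claim as stated, and its attribution to parts (3) and (5) of Kenku, is wrong.

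Second, and more seriously, your seed for the maximal $L_{3}(25)$ configuration is not established. You assert that curves with a cyclic $25$-isogeny and a rational point of order $5$ are parametrized by a genus-$0$ modular curve with infinitely many rational points, but you give no proof (this is in fact true---the curve attached to $\Gamma_{0}(25)\cap\Gamma_{1}(5)$ is a degree-$2$ cover of $\operatorname{X}_{0}(25)$ of genus $0$ with a rational point---but it must be computed or cited), and your fallback of ``applying Proposition \ref{HIT proposition} to $\operatorname{X}_{0}(25)$'' cannot produce this configuration: that proposition requires $\operatorname{-Id}$ in the group, so it yields curves whose mod-$25$ image is the full Borel containing $\operatorname{-Id}$, and then Lemma \ref{Quadratic Twisting Odd Graphs} shows no curve in the class has rational $5$-torsion, i.e.\ it realizes $\mathcal{L}_{3}^{2}(25)$, not $([5],[5],[1])$. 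The paper avoids this entirely by citing the explicit family of \cite{enriquealvaro} with mod-$5$ image conjugate to $\left\langle \smallmat{1}{0}{0}{2} \right\rangle$ (the middle vertex: rational $5$-torsion together with two independent $5$-isogenies), which pins down $\mathcal{L}_{3}^{1}(25)$ directly; you should either carry out the genus computation for your curve or use such a family. A final small slip: a quadratic twist preserves the $j$-invariant and the shape of the isogeny graph, but $E^{(d)}$ is in general not in the same $\Q$-isogeny class as $E$.
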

		
		To prove that the isogeny-torsion graphs of $L_{3}(25)$ type correspond to infinite sets of \textit{j}-invariants, we start by proving one of the isogeny-torsion graphs correspond to an infinite set of \textit{j}-invariants. Then we use a quadratic twist to prove the other isogeny-torsion graph corresponds to an infinite set of \textit{j}-invariants. Proving the isogeny-torsion graphs of $L_{3}(9)$ correspond to infinite sets of \textit{j}-invariants is done the same way.
		
		\begin{table}[h!]
	\renewcommand{\arraystretch}{1.25}
	\begin{tabular}{ |c|c|c|c| }
		\hline
		Isogeny Graph & Type & Torsion Configuration & Label \\
		\hline
		
		\multirow{5}*{$E_{1}\myiso E_{2}\myiso E_{3}$} & \multirow{2}*{$L_{3}(25)$} & $([5],[5],[1])$ & $\mathcal{L}_{3}^{1}(25)$ \\
		\cline{3-4}
		& & $([1],[1],[1])$ & $\mathcal{L}_{3}^{2}(25)$ \\
		\cline{2-4}
		& \multirow{3}*{$L_{3}(9)$} & $([9],[3],[1])$ & $\mathcal{L}_{3}^{1}(9)$ \\
		\cline{3-4}
		& & $([3],[3],[1])$ & $\mathcal{L}_{3}^{2}(9)$\\
		\cline{3-4}
		& & $([1],[1],[1])$ & $\mathcal{L}_{3}^{3}(9)$ \\
		\hline
		\end{tabular}
		\caption{Isogeny Graphs of $L_{3}$ Type}
        \label{L_{3} Graphs}
		\end{table}
		
		\begin{itemize}
		
		    \item Isogeny-Torsion Graphs of $L_{3}(25)$ Type
		    
		    \begin{enumerate}
		    
		    \item $\mathcal{L}_{3}^{1}(25)$
		    
		    Let $E / \Q$ be an elliptic curve such that $E$ contains two $\Q$-rational subgroups of order $5$. Then the isogeny-torsion graph associated to the $\Q$-isogeny class of $E$ is $\mathcal{L}_{3}^{1}(25)$ if and only if $E(\Q)_{\text{tors}} \cong \Z / 5 \Z$ or in other words, if and only if the image of the mod-$5$ Galois representation attached to $E$ is conjugate to
		    $$H = \left\langle \left(\begin{array}{cc}
		        1 & 0 \\
		        0 & 2
		    \end{array}\right) \right\rangle.$$
		    In this case, the image of the mod-$5$ Galois representation attached to $E$ is an abelian extension of $\QQ$. There is a parametrization of elliptic curves over $\Q$ such that the image of the mod-$5$ Galois representation is conjugate to $H$ in \cite{enriquealvaro}; $E_{t} : y^{2} = x^{3} + A(t)x + B(t)$ where
		    \begin{center}
		        $A(t) = -\frac{t^{20}-228t^{15}+494t^{10}+228t^{5}+1}{48}$ and $B(t) = \frac{t^{30} + 522t^{25} - 10005t^{20} - 1005t^{10} - 522t + 1}{864}$.
		    \end{center}
		    Conversely, let $t \in \QQ$ such that $E_{t}$ is an elliptic curve. Then the isogeny-torsion graph associated to the $\QQ$-isogeny class of $E_{t}$ is $\mathcal{L}_{3}^{1}(25)$. Using the infinite one-parameter family of elliptic curves over $\Q$, $E_{t}$, we prove that $\mathcal{L}_{3}^{1}(25)$ corresponds to an infinite set of \textit{j}-invariants.
		    
		    \item $\mathcal{L}_{3}^{2}(25)$
		    
		    Let $E / \Q$ be an elliptic curve such that $E$ contains two $\Q$-rational subgroups of order $5$. Then the isogeny-torsion graph associated to the $\Q$-isogeny class of $E$ is $\mathcal{L}_{3}^{2}(25)$ if and only if $E$ has trivial rational torsion. As $H$ is a cyclic group of order $4$, $E_{t}$ has a single subgroup of index $2$. Thus, $\QQ(E_{t}[5])$ has a single quadratic subfield, namely $\QQ(\sqrt{5})$. Let $E_{t}^{(d)}$ be a quadratic twist of $E_{t}$ by $d$ where $d$ is any non-zero, square-free integer not equal to $1$ or $5$. Then the image of the mod-$5$ Galois representation attached to $E_{t}^{d}$ is conjugate to $\left\langle \left(\begin{array}{cc}
		        1 & 0 \\
		        0 & 2
		    \end{array}\right), \operatorname{-Id} \right\rangle$ and the isogeny-torsion graph associated to the $\Q$-isogeny class of $E_{t}^{d}$ is $\mathcal{L}_{3}^{2}(25)$ by Lemma \ref{Quadratic Twisting Odd Graphs}. As $\mathcal{L}_{3}^{1}(25)$ corresponds to an infinite set of \textit{j}-invariants, $\mathcal{L}_{3}^{2}(25)$ also corresponds to an infinite set of \textit{j}-invariants.
		    
		    \end{enumerate}
		    
		    \item Isogeny-Torsion Graphs of $L_{3}(9)$ Type
		    
		    \begin{enumerate}
		    
		    \item $\mathcal{L}_{3}^{1}(9)$
		    
		    Let $E / \Q$ be an elliptic curve. Then the isogeny-torsion graph associated to the $\Q$-isogeny class of $E$ is $\mathcal{L}_{3}^{1}(9)$ if and only if $E$ is $\Q$-isogenous to an elliptic curve over $\Q$ with rational $9$-torsion (see Table \ref{L_{3} Graphs}). Let us say that $E(\Q)_{\text{tors}} \cong \Z / 9 \Z$. A Magma computation reveals that the image of the mod-$9$ Galois representation attached to $E$ is conjugate to
		    $$H = \left\langle \left(\begin{array}{cc}
		        1 & 0 \\
		        0 & 2
		    \end{array}\right), \left(\begin{array}{cc}
		        1 & 1 \\
		        0 & 1
		    \end{array}\right) \right\rangle.$$
		    There are infinitely many \textit{j}-invariants corresponding to elliptic curves over $\Q$ with rational $9$-torsion. One such one-parameter family of elliptic curves is
		    $$E_{a,b}(t) : y^{2} + (1-a)xy - by = x^{3} - bx^{2}$$
		    where $a = t^{2}(t-1)$ and $b = t^{2}(t-1)(t^{2}-t+1)$ (see appendix E of \cite{alrbook}). Thus, $\mathcal{L}_{3}^{1}(9)$ corresponds to an infinite set of \textit{j}-invariants.
		    
		    \item $\mathcal{L}_{3}^{2}(9)$
		    
		    Let $E_{a,b}^{(-3)}(t)$ be the quadratic twist of $E_{a,b}(t)$ by $-3$. Then the image of the mod-$3$ Galois representation attached to $E_{a,b}^{(-3)}(t)$ is conjugate to
		    $$\overline{H'} = \left\langle -\left(\begin{array}{cc}
		        1 & 0 \\
		        0 & 2
		    \end{array}\right), \left(\begin{array}{cc}
		        1 & 1 \\
		        0 & 1
		    \end{array}\right) \right\rangle = \left\langle \left(\begin{array}{cc}
		        2 & 0 \\
		        0 & 1
		    \end{array}\right), \left(\begin{array}{cc}
		        1 & 1 \\
		        0 & 1
		    \end{array}\right) \right\rangle.$$
		    Lifting $\overline{H'}$ to level $9$, we see that the image of the mod-$9$ Galois representation attached to $E_{a,b}^{(-3)}(t)$ is conjugate to
		    $$H' = \left\langle -\left(\begin{array}{cc}
		        1 & 0 \\
		        0 & 2
		    \end{array}\right), \left(\begin{array}{cc}
		        1 & 1 \\
		        0 & 1
		    \end{array}\right) \right\rangle = \left\langle \left(\begin{array}{cc}
		        8 & 0 \\
		        0 & 7
		    \end{array}\right), \left(\begin{array}{cc}
		        1 & 1 \\
		        0 & 1
		    \end{array}\right) \right\rangle.$$
		    We will prove that the isogeny-torsion graph attached to the $\Q$-isogeny class of $E = E_{a,b}^{(-3)}(t)$ is $\mathcal{L}_{3}^{2}(9)$. Let $P_{9}$ and $Q_{9}$ be points on $E$ such that $\left\langle P_{9} \right\rangle \bigcap \left\langle Q_{9} \right\rangle = \{\mathcal{O}\}$ (equivalently, $P_{9}, Q_{9}$ form a basis of $E[9]$) and the image of the mod-$9$ Galois representation attached to $E$ is $H'$. Let $\sigma_{1}$ and $\sigma_{2}$ be a pair of Galois automorphisms such that $\sigma_{1}(P_{9}) = [8]P_{9}$ and $\sigma_{2}$ fixes $P_{9}$ and $\sigma_{2}(Q_{9}) = P_{9} + Q_{9}$ and $\sigma_{1}(Q_{9}) = [7]Q_{9}$. Then $\sigma_{1}$ and $\sigma_{2}$ generate the image of the mod-$9$ Galois representation attached to $E$. Note that the group $\left\langle P_{9} \right\rangle$ is $\Q$-rational. Let $\phi \colon E \to E / \left\langle P_{9} \right\rangle$ be an isogeny with kernel $\left\langle P_{9} \right\rangle$. Denote $Q_{3} = [3]Q_{9}$. We claim that $\phi(Q_{3})$ is a point of order $3$ defined over $\Q$. Note that $\sigma_{1}$ does not fix $Q_{9}$ but it does fix $Q_{3}$ as
		    $$\sigma_{1}(Q_{3}) = \sigma_{1}([3]Q_{9}) = [3]\sigma_{1}(Q_{9}) = [3]([7]Q_{9}) = [21]Q_{9} = [3]Q_{9} = Q_{3}.$$
		    Also, $\sigma_{2}(Q_{9}) = P_{9} + Q_{9}$ and hence,
		    $$\sigma_{2}(Q_{3}) = \sigma_{2}([3]Q_{9}) = [3]\sigma_{2}(Q_{9}) = [3](P_{9} + Q_{9}) = [3]P_{9} + Q_{3}.$$
		    Thus, $\sigma_{2}(Q_{3}) - Q_{3} = [3]P_{9}$. By Lemma \ref{lem-necessity-for-point-rationality}, $\phi(Q_{3})$ is a point of order $3$ defined over $\Q$.
		    
		    There are three cyclic subgroups of $E / \left\langle P_{9} \right\rangle$ of order $9$ that contain $\phi(Q_{3})$, namely, $\left\langle \phi(Q_{9}) \right\rangle$, $\left\langle \phi(P_{9} + Q_{9}) \right\rangle$, and $\left\langle \phi([2]P_{9} + Q_{9}) \right\rangle$. We will test to see if $E / \left\langle P_{9} \right\rangle$ has a point of order $9$ defined over $\Q$ by seeing whether or not $\phi(Q_{9})$, $\phi(P_{9} + Q_{9})$, or $\phi([2]P_{9} + Q_{9})$ is defined over $\Q$. Note that $\sigma_{1}(Q_{9}) - Q_{9} = [7]Q_{9} - Q_{9} = [6]Q_{9} \notin \left\langle P_{9} \right\rangle$. By Lemma \ref{lem-necessity-for-point-rationality}, $\phi(Q_{9})$ is not defined over $\Q$. A similar computation proves that $\sigma_{1}([c]P_{9} + Q_{9}) - (P_{9} + Q_{9}) \notin \left\langle P_{9} \right\rangle$ for $c = 1$ and $2$. By Lemma \ref{lem-necessity-for-point-rationality}, $\phi(P_{9} + Q_{9})$ and $\phi([2]P_{9} + Q_{9})$ are not defined over $\Q$. Hence, $E / \left\langle P_{9} \right\rangle(\Q)_{\text{tors}} \cong \Z / 3 \Z$ and we can conclude that the isogeny-torsion graph associated to the $\Q$-isogeny class of $E_{a,b}^{(-3)}(t)$ is $\mathcal{L}_{3}^{2}(9)$ (see Table \ref{L_{3} Graphs}). As $E_{a,b}(t)$ and $E_{a,b}^{(-3)}(t)$ are quadratic twists and $\mathcal{L}_{3}^{1}(9)$ corresponds to an infinite set of \textit{j}-invariants, so does $\mathcal{L}_{3}^{2}(9)$.
		    
		    \item $\mathcal{L}_{3}^{3}(9)$
		    
		    Let $E_{a,b}^{(d)}(t)$ be the quadratic twist of $E_{a,b}(t)$ by a non-zero, square-free integer $d$ not equal to $1$ or $-3$. Then the image of the mod-$9$ Galois representation attached to $E_{a,b}^{(d)}(t)$ is conjugate to $\left\langle H, \operatorname{-Id} \right\rangle$.
		    
		    The group $\left\langle H, \operatorname{-Id} \right\rangle$ is conjugate to the group found in \cite{SZ} with label $9I^{0}-9c$. By Lemma \ref{Quadratic Twisting Odd Graphs}, $E_{a,b}^{(d)}(t)$ is not $\Q$-isogenous to an elliptic curve over $\Q$ with a point of order $3$ defined over $\Q$. Thus, we can conclude that the isogeny-torsion graph associated to the $\Q$-isogeny class of $E_{a,b}^{(d)}(t)$ is $\mathcal{L}_{3}^{3}(9)$. As $E_{a,b}(t)$ and $E_{a,b}^{(d)}(t)$ are quadratic twists and $\mathcal{L}_{3}^{1}(9)$ corresponds to an infinite set of \textit{j}-invariants, so does $\mathcal{L}_{3}^{3}(9)$.
		    
		    \end{enumerate}
		    \end{itemize}
		    
		    This concludes the proof of Proposition \ref{L3 Graphs Proposition}.
		    
		    \subsection{Isogeny-Torsion Graphs of $L_{2}$ Type}
		    In this subsection, we prove that the isogeny-torsion graphs of $L_{2}(13)$, $L_{2}(7)$, $L_{2}(5), L_{2}(3)$, and $L_{2}(2)$ type each correspond to infinite sets of \textit{j}-invariants.
		    
		    \begin{proposition}\label{L2 Graphs Proposition}
		    Let $\mathcal{G}$ be an isogeny-torsion graph of $L_{2}(13), L_{2}(7), L_{2}(5), L_{2}(3),$ or $L_{2}(2)$ type (regardless of torsion configuration). Then $\mathcal{G}$ corresponds to an infinite set of \textit{j}-invariants.
		    \end{proposition}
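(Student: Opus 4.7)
The plan is to handle the primes $p=2,3,5,7,13$ in turn, producing for each possible torsion configuration an infinite set of $j$-invariants. In every case the recipe is the standard one used throughout the paper: exhibit one $L_2(p)$-type isogeny-torsion graph that corresponds to an infinite set of $j$-invariants, and then recover any remaining torsion configurations via a quadratic twist, controlled by Lemma \ref{Quadratic Twisting Odd Graphs}.

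The cases $p=7$ and $p=13$ are easy. When $p=13$, Mazur's theorem (Theorem \ref{thm-mazur}) forbids a rational point of order $13$, so both vertices must carry trivial rational torsion and the single resulting isogeny-torsion graph of $L_2(13)$ type corresponds to an infinite set of $j$-invariants because $\operatorname{X}_{0}(13)$ has genus $0$ with infinitely many non-cuspidal rational points (Theorem \ref{thm-ratnoncusps}). When $p=7$, any elliptic curve with rational $7$-torsion sits in an $L_2(7)$ graph except in the finitely many cases when it sits in an $R_4(14)$ or $R_4(21)$ graph (these are the only possibilities compatible with Kenku's theorem, and they are finite by Proposition \ref{Finite Graphs Proposition}); the standard one-parameter family of curves with $\Z/7\Z$ torsion (see Appendix~E of \cite{alrbook}) then realizes $\mathcal{L}_{2}^{1}(7)$ for infinitely many $j$-invariants, and a quadratic twist by a suitable non-square realizes $\mathcal{L}_{2}^{2}(7)$ via Lemma \ref{Quadratic Twisting Odd Graphs}.

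The cases $p\in\{2,3,5\}$ form the substantive content and will be handled by Hilbert's irreducibility theorem in the form of Proposition \ref{HIT proposition} together with the discussion of Section \ref{HIT}. For each such $p$, a curve whose mod-$p$ Galois image lies in a Borel subgroup may actually sit inside a strictly larger isogeny-torsion graph (of type $L_3(p^2)$, $R_4(2p)$, or, for $p=2,3$, also $R_6$, $S$, $T_k$, or $L_4$), and one must exclude these strictly larger configurations. Each such larger graph is parametrized by a proper modular subcover of the modular curve whose rational points give the minimal $L_2(p)$ configuration, so the corresponding $j$-invariants form a thin subset of an infinite $\PP^{1}(\Q)$; since $\Q$ is Hilbertian, the complement is still infinite. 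This produces infinitely many $j$-invariants whose isogeny-torsion graph is exactly $\mathcal{L}_{2}^{1}(p)$; for $p=3$ and $p=5$ a quadratic twist then yields $\mathcal{L}_{2}^{2}(p)$, while for $p=2$ the unique configuration $([2],[2])$ already exhausts the possibilities, since any rational $4$-torsion point would force $C_{2}(E)\ge 3$ and so rule out an $L_2(2)$ graph.

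The main obstacle is the bookkeeping for $p\in\{2,3,5\}$: one must enumerate the isogeny-torsion graphs that properly contain $\mathcal{L}_{2}^{1}(p)$ and verify that each corresponds to a modular cover of degree at least $2$. This catalog is finite by Theorem \ref{thm-kenku} and can be read off from the classification in Theorem \ref{thm-main2}, so the verification reduces to a routine check case by case.
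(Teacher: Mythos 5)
Your proposal is correct and takes essentially the same route as the paper: the genus-zero curve $\operatorname{X}_{0}(13)$ directly for $p=13$, a one-parameter family with rational $7$-torsion plus a quadratic twist for $p=7$, and the Hilbert-irreducibility/thin-set mechanism (the same one used for the $R_{4}(6)$ graphs) combined with quadratic twists controlled by Lemma \ref{Quadratic Twisting Odd Graphs} for $p=2,3,5$. The only deviations are cosmetic and harmless: the paper first produces one unspecified $L_{2}(p)$ configuration and then toggles by twisting, while you pin down $\mathcal{L}_{2}^{1}(p)$ first, and your catalogue of larger graphs is slightly imprecise (curves in $R_{4}(14)$ or $R_{4}(21)$ graphs never carry rational $7$-torsion, there is no $L_{3}(4)$ or $R_{4}(4)$ graph for $p=2$, and $R_{4}(15)$ should appear, albeit only as a finite exclusion, for $p=5$).
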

		  
		    We break the proof down by cases. The proof that the isogeny-torsion graph of $L_{2}(13)$ type corresponds to an infinite set of \textit{j}-invariants is a matter of looking up groups in \cite{SZ}. The proof that the isogeny-torsion graphs of $L_{2}(7)$ type correspond to infinitely many \textit{j}-invariants will be done by proving one such isogeny-torsion graph of $L_{2}(7)$ type corresponds to an infinite set of \textit{j}-invariants and then using an appropriate quadratic twist to prove the other isogeny-torsion graph of $L_{2}(7)$ type corrresponds to an infinite set of \textit{j}-invariants. The proof that the isogeny-torsion graphs of $L_{2}(2), L_{2}(3),$ and $L_{2}(5)$ type each correspond to infinite sets of \textit{j}-invariants is an application of Hilbert's Irreduciblity Theorem.
		    
		    \begin{center}
		    \begin{table}[h!]
	\renewcommand{\arraystretch}{1.25}
	\begin{tabular}{|c|c|c|c|}
	\hline
		Isogeny Graph & Type & Isomorphism Types & Label \\
		\hline
		    \multirow{8}*{$E_1 \myiso E_2$} & {$L_{2}(13)$} & $([1],[1])$ & $\mathcal{L}_{2}(13)$ \\
		\cline{2-4} 
		& \multirow{2}*{$L_{2}(7)$} & $([7],[1])$ & $\mathcal{L}_{2}^{1}(7)$ \\
		\cline{3-4}
		& & $([1],[1])$ & $\mathcal{L}_{2}^{2}(7)$ \\
		\cline{2-4} 
		& \multirow{2}*{$L_{2}(5)$} & $([5],[1])$ & $\mathcal{L}_{2}^{1}(5)$ \\
		\cline{3-4}
		& & $([1],[1])$ & $\mathcal{L}_{2}^{2}(5)$ \\
		\cline{2-4} 
		& \multirow{2}*{$L_{2}(3)$} & $([3],[1])$ & $\mathcal{L}_{2}^{1}(3)$ \\
		\cline{3-4}
		& & $([1],[1])$ & $\mathcal{L}_{2}^{2}(3)$ \\
		\cline{2-4}
		& $L_{2}(2)$ & $([2],[2])$ & $\mathcal{L}_{2}(2)$ \\
		\cline{2-4}
		
		\hline
	\end{tabular}
	\caption{Isogeny Graphs of $L_{2}$ Type}
        \label{L_{2} Graphs}
	\end{table}
	\end{center}
		    
		    \begin{itemize} \item $\mathcal{L}_{2}(13)$
		    
		    Let $E / \Q$ be an elliptic curve. Then $E$ has a $\Q$-rational subgroup of order $13$ if and only if the image of the mod-$13$ Galois representation attached to $E$ is conjugate to a subgroup of
		    $$H = \left\{ \left(\begin{array}{cc}
		        \ast & \ast \\
		        0 & \ast
		    \end{array}\right)\right\}.$$
		    The group $H$ appears in the list compiled in \cite{SZ} as $13A^{0}-13a$. Note that the modular curve defined by $13A^{0}-13a$ is isomorphic to $\operatorname{X}_{0}(13)$, which is a genus $0$ curve with infinitely many non-cuspidal, $\Q$-rational points (see Theorem \ref{thm-ratnoncusps}). Elliptic curves over $\Q$ with a $\Q$-rational subgroup of order $13$ correspond to non-cuspidal, $\Q$-rational points on the modular curve defined by $H$. As there is a single isogeny-torsion graph of $L_{2}(13)$ type, it must correspond to an infinite set of \textit{j}-invariants.
		    
		    \item Isogeny-Torsion Graphs of $L_{2}(7)$ Type
		    
		    \begin{enumerate}
		    \item $\mathcal{L}_{2}^{1}(7)$
		    
		    Let $E / \Q$ be an elliptic curve such that the isogeny-torsion graph of $E$ is of $L_{2}(7)$ type. There are two possible torsion configurations. The isogeny-torsion graph associated to the $\Q$-isogeny class of $E$ is $\mathcal{L}_{2}^{1}(7)$ if and only if $E$ is $\Q$-isogenous to an elliptic curve over $\Q$ with rational $7$-torsion.
		    
		    Let $E_{a,b}(t) : y^{2} + (1-a)xy - by = x^{3}-bx^{2}$ where
		    \begin{center}
		        $a = t^{2} -t$ and $b = t^{3}-t^{2}$.
		    \end{center}
		    
		    If $E_{a,b}(t)$ is an elliptic curve, then $E_{a,b}(t)(\Q)_{\text{tors}} \cong \Z / 7 \Z$ (see appendix E of \cite{alrbook}). Using this one-parameter family of elliptic curves over $\Q$ with rational $7$-torsion, we can conclude that $\mathcal{L}_{2}^{1}(7)$ corresponds to an infinite set of \textit{j}-invariants.
		    
		    \item $\mathcal{L}_{2}^{2}(7)$
		    
		    By Lemma \ref{Rational Points} the image of the mod-$7$ Galois representation attached to $E_{a,b}(t)$ is conjugate to $\mathcal{B}_{7}$, the subgroup of $\operatorname{GL}(2, \Z / 7 \Z)$ of matrices of the form $\left(\begin{array}{cc}
		        1 & x \\
		        0 & y
		    \end{array}\right)$. By Corollary \ref{Corollary Unique Index 2 Subgroup}, $\QQ(E[7])$ contains a single quadratic subfield, $\QQ\left(\sqrt{-7}\right)$. Let $E_{a,b}^{(d)}(t)$ be a quadratic twist of $E_{a,b}(t)$ by a non-zero, square-free integer $d$ not equal to $1$ or $-7$. Then the image of the mod-$7$ Galois representation attached to $E_{a,b}^{(d)}(t)$ is conjugate to $\left\langle \mathcal{B}_{7}, \operatorname{-Id} \right\rangle$. By Corollary \ref{Corollary Quadratic Twisting Odd Graphs}, none of the elliptic curves over $\Q$ that are $\Q$-isogenous to $E_{a,b}^{(d)}(t)$ have a point of order $7$ defined over $\Q$. Hence, the isogeny-torsion graph associated to the $\Q$-isogeny class of $E_{a,b}^{(d)}(t)$ is $\mathcal{L}_{2}^{2}(7)$. As $E_{a,b}(t)$ and $E_{a,b}^{(d)}(t)$ are quadratic twists and $\mathcal{L}_{2}^{1}(7)$ corresponds to an infinite set of \textit{j}-invariants, $\mathcal{L}_{2}^{2}(7)$ also corresponds to an infinite set of \textit{j}-invariants.
		    
		    \end{enumerate}
		    
		    \item Isogeny-Torsion Graphs of $L_{2}(5)$ Type
		    
		    An isogeny-torsion graph of $L_{2}(5)$ type is a proper subgraph of an isogeny-torsion graph of $L_{3}(25)$ type, an isogeny graph of $R_{4}(15)$ type, and an isogeny-torsion graph of $R_{4}(10)$ type. As there are finitely many \textit{j}-invariants corresponding to elliptic curves over $\Q$ with isogeny graph of $R_{4}(15)$ type (see Proposition \ref{Finite Graphs Proposition} and Theorem \ref{thm-kenku}), we can just focus on isogeny-torsion graphs of $L_{2}(5)$ type, $L_{3}(25)$ type, and $R_{4}(10)$ type.

		    After appropriate modifications, a very similar analysis that we did in subsection 9.2 to determine the two isogeny-torsion graphs of $R_{4}(6)$ type correspond to infinite sets of \textit{j}-invariants can be used to prove that one of the two isogeny-torsion graphs of $L_{2}(5)$ type, $\mathcal{L}_{2}^{1}(5)$ or $\mathcal{L}_{2}^{2}(5)$, correspond to infinite sets of \textit{j}-invariants. Now we must prove that both $\mathcal{L}_{2}^{1}(5)$ and $\mathcal{L}_{2}^{2}(5)$ correspond to infinitely many \textit{j}-invariants using quadratic twists.
		    
		    Let $E / \Q$ be an elliptic curve such that $E(\Q)_{\text{tors}} \cong \Z / 5 \Z$ and $C_{5}(E) = C(E) = 2$. Then the isogeny-torsion graph associated to the $\QQ$-isogeny class of $E$ is $\mathcal{L}_{2}^{1}(5)$. By Lemma \ref{Rational Points}, the image of the mod-$5$ Galois representation is conjugate to $\mathcal{B}_{5}$, the subgroup of $\operatorname{GL}(2, \Z / 5 \Z)$ consisting of matrices of the form $\left(\begin{array}{cc}
		        1 & x \\
		        0 & y
		    \end{array}\right)$ and by Lemma \ref{Unique Index 2 Subgroup}, $\mathcal{B}_{5}$ contains a single subgroup of index $2$. Using an application of Hilbert's irreducibility theorem, we can show that there are infinitely many \textit{j}-invariants that correspond to elliptic curves $E / \QQ$ such that the image of the mod-$5$ Galois representation attached to $E$ is conjugate to $\mathcal{B}_{5}$ and $C_{5}(E) = C(E) = 2$. Thus, $\mathcal{L}_{2}^{1}(5)$ corresponds to an infinite set of \textit{j}-invariants. Now let $d$ be a non-zero, square-free integer not equal to $1$ or $5$ and let $E^{(d)}$ be the quadratic twist of $E$ by $d$. Then, the image of the mod-$5$ Galois representation attached to $E^{(d)}$ is conjugate to $\left\langle \mathcal{B}_{5}, \operatorname{-Id} \right\rangle$ and by Corollary \ref{Corollary Quadratic Twisting Odd Graphs}, the isogeny-torsion graph associated to $E^{(d)}$ is $\mathcal{L}_{2}^{2}(5)$.
		    
		    We proved we can toggle amongst the two isogeny-torsion graphs of $L_{2}(5)$ type. Thus, both correspond to an infinite set of \textit{j}-invariants.
		    
		    \item Isogeny-Torsion Graphs of $L_{2}(3)$ Type
		    
		    An isogeny-torsion graph of $L_{2}(3)$ type is a proper subgraph of an isogeny-torsion graph of $L_{3}(9)$ type, an isogeny-torsion graph of $L_{4}$ type, an isogeny-torsion graph of $R_{4}(6)$ type, an isogeny-torsion graph of $R_{4}(15)$ type, an isogeny-torsion graph of $R_{4}(21)$ type, an isogeny-torsion graph of $R_{6}$ type, and an isogeny-torsion graph of $S$ type. As there are finitely many \textit{j}-invariants corresponding to isogeny-torsion graphs of $L_{4}$, $R_{4}(15)$, and $R_{4}(21)$ type (see Proposition \ref{Finite Graphs Proposition} and Theorem \ref{thm-kenku}), we can just focus on the isogeny-torsion graphs of $L_{2}(3)$ type, isogeny-torsion graphs of $L_{3}(9)$ type, isogeny-torsion graphs of $R_{4}(6)$ type, isogeny-torsion graphs of $R_{6}$ type, and isogeny-torsion graphs of $S$ type.
		    
		    After appropriate modifications, a very similar analysis that we did earlier to prove that both $\mathcal{L}_{2}^{1}(5)$ and $\mathcal{L}_{2}^{2}(5)$ correspond to infinite sets of \textit{j}-invariants can be used to prove that both $\mathcal{L}_{2}^{1}(3)$ and $\mathcal{L}_{2}^{2}(3)$ correspond to infinite sets of \textit{j}-invariants.

    \item $\mathcal{L}_{2}(2)$
    
    The isogeny-torsion graph of $L_{2}(2)$ type is a proper subgraph of an isogeny-torsion graph of $R_{4}(6)$ type, an isogeny-torsion graph of $R_{4}(10)$ type, the isogeny-torsion graph of $R_{4}(14)$ type, an isogeny-torsion graph of $T_{4}$ type, an isogeny-torsion graph of $T_{6}$ type, an isogeny-torsion graph of $T_{8}$ type, and an isogeny-torsion graph of $S$ type. As there are finitely many \textit{j}-invariants corresponding to the isogeny-torsion graphs of $R_{4}(14)$ type (see Proposition \ref{Finite Graphs Proposition} and Theorem \ref{thm-kenku}), we only need to focus on the other isogeny-torsion graphs that properly contain the isogeny-torsion graph of $L_{2}(2)$ type.
    
    After appropriate modifications, a very similar analysis that we did to prove that both isogeny-torsion graphs of $L_{2}(5)$ type correspond to infinite sets of \textit{j}-invariants can be used to determine that the isogeny-torsion graph of $L_{2}(2)$ type corresponds to an infinite set of \textit{j}-invariants.
		\end{itemize}
		
		 This concludes the proof of Proposition \ref{L2 Graphs Proposition}.
		
		\subsection{The Isogeny-Torsion Graph of $L_{1}$ Type}
		
		\begin{proposition}\label{L1 Graphs Proposition}
		The isogeny-torsion graph of $L_{1}$ type corresponds to an infinite set of \textit{j}-invariants.
		\end{proposition}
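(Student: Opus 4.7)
The plan is to deduce Proposition \ref{L1 Graphs Proposition} from Hilbert's Irreducibility Theorem applied directly to the $j$-line $\operatorname{X}(1) = \PP^{1}_{\Q}$, in the same spirit as the argument used in Section 9.2 for the $R_{4}(6)$ graphs, but simpler: one excludes all proper subgraphs of $L_{1}$, of which there are none, so instead one excludes every possible ``enlargement'' of $L_{1}$. Concretely, an elliptic curve $E/\Q$ has isogeny graph of $L_{1}$ type exactly when its $\Q$-isogeny class is a singleton, equivalently when $E$ admits no non-trivial cyclic $\Q$-rational subgroup of prime order. By Theorem \ref{thm-kenku}, such a subgroup can only have order in the finite set $P := \{2,3,5,7,11,13,17,19,37,43,67,163\}$, so the task reduces to showing that the set of $j$-invariants of elliptic curves over $\Q$ with a $\Q$-rational subgroup of order $p$ for some $p \in P$ has infinite complement in $\PP^{1}(\Q)$.

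For each $p \in P$, let $J_{p}$ denote the set of $j$-invariants of elliptic curves $E/\Q$ admitting a $\Q$-rational subgroup of order $p$. Then $J_{p}$ is the image of the non-cuspidal $\Q$-rational points of $\operatorname{X}_{0}(p)$ under the natural $j$-map
\[
\pi_{p}\colon \operatorname{X}_{0}(p) \longrightarrow \operatorname{X}(1) = \PP^{1}_{\Q},
\]
which is a non-constant morphism of degree $p+1 \geq 3$. For $p \in \{11,17,19,37,43,67,163\}$, Theorem \ref{thm-ratnoncusps} shows that $\operatorname{X}_{0}(p)(\Q)$ contains only finitely many non-cuspidal points, so $J_{p}$ is finite and hence trivially thin in $\PP^{1}(\Q)$. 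For $p \in \{2,3,5,7,13\}$, the curve $\operatorname{X}_{0}(p)$ is itself isomorphic to $\PP^{1}_{\Q}$ with infinitely many rational points, but because $\pi_{p}$ is a non-constant morphism of degree $\geq 3$ from an irreducible smooth rational curve to $\PP^{1}_{\Q}$, the image $J_{p} \subseteq \PP^{1}(\Q)$ is a thin subset of type $\mathrm{C}_{2}$ in Serre's terminology (see Chapter 3 of \cite{MR2363329}).

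The set $S := \bigcup_{p \in P} J_{p}$ is therefore a finite union of thin subsets of $\PP^{1}(\Q)$, hence thin. Since $\Q$ is a Hilbertian field, $\PP^{1}(\Q) = \operatorname{X}(1)(\Q)$ is not thin, so $\operatorname{X}(1)(\Q) \setminus S$ is infinite. Every $j$-invariant in this complement is the $j$-invariant of some elliptic curve $E/\Q$ with no prime-degree $\Q$-rational isogeny, and such an $E$ has isogeny-torsion graph of $L_{1}$ type (with the unique vertex labelled by the trivial group, in accordance with Mazur's theorem applied in the absence of any rational subgroup of prime order). This produces infinitely many $j$-invariants corresponding to $L_{1}$, proving Proposition \ref{L1 Graphs Proposition}.

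There is no serious obstacle in this argument: the only point that must be checked is that $\deg \pi_{p} = p+1 \geq 3$ for every prime $p$, which ensures that the type-$\mathrm{C}_{2}$ thinness applies uniformly across the genus-$0$ primes; combined with Theorem \ref{thm-ratnoncusps} for the remaining primes, the Hilbertian property of $\Q$ delivers the conclusion automatically.
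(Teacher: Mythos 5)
Your argument is correct and is essentially the paper's own proof: both reduce, via Kenku's theorem and the finiteness results for $p = 11, 17, 19, 37, 43, 67, 163$, to excluding $\Q$-rational $p$-isogenies for $p \in \{2,3,5,7,13\}$, and both do this by applying Hilbert's Irreducibility Theorem to the degree-$(p+1)$ maps $\operatorname{X}_{0}(p) \to \PP^{1}_{\Q}$. The only difference is one of phrasing: you package the excluded $j$-invariants as Serre-thin sets of type $C_{2}$, whereas the paper writes $j_{p} = F_{p}(x)/G_{p}(x)$ and intersects the basic Hilbert sets on which $G_{p}(x)t - F_{p}(x)$ is irreducible; these are two formulations of the same Hilbertianity argument (and degree $\geq 2$, not $\geq 3$, is all that thinness requires).
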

		
		Let $E / \Q$ be an elliptic curve. Then the isogeny-torsion graph associated to the $\Q$-isogeny class of $E$ is of $L_{1}$ type if and only if the $\Q$-isogeny class of $E$ consists only of $E$. In other words, $E$ does not have any non-trivial, finite, cyclic, $\Q$-rational subgroups. We need to prove there are infinitely many \textit{j}-invariants corresponding to elliptic curves over $\Q$ with no non-trivial, finite, cyclic, $\Q$-rational subgroups. By Theorem \ref{thm-kenku} and Proposition \ref{Finite Graphs Proposition}, it suffices to consider the cases when $E$ has a $\Q$-rational subgroup of order $2$, $3$, $5$, $7$, or $13$.
		
		For each $p = 2$, $3$, $5$, $7$, $13$, let $\textit{j}_{p}$ be the $\textit{j}$-invariant of some elliptic curve over $\Q$ such that the image of the mod-$p$ Galois representation is conjugate to a subgroup of the group of upper triangular matrices of $\operatorname{GL}(2, \Z / p \Z)$. For each prime $p = 2, 3, 5, 7, 13$, we have $\textit{j}_{p} = \frac{F_{p}(x)}{G_{p}(x)}$ for some $F_{p}(x), G_{p}(x) \in \Z[x]$ with $\text{deg}(F) - \text{deg}(G) \geq 2$. For each prime $p = 2, 3, 5, 7, 13$, let $H_{p}$ denote the set of all rational numbers $t$, such that $G_{p}(x)t - F_{p}(x)$ is irreducible. If the \textit{j}-invariant of an elliptic curve $E / \Q$ is an element of $H_{p}$, then $E$ does not have a $\Q$-rational subgroup of order $p$. For each $p = 2, 3, 5, 7, 13$, $H_{p}$ is a basic Hilbert set and thus, $H = \bigcap_{2 \leq p \leq 13} H_{p}$ is a Hilbert set and is dense in $\Q$. Thus, there are infinitely many \textit{j}-invariants (elements of $H$) that correspond to elliptic curves over $\Q$ which do not have $\Q$-rational subgroups of order $2$, $3$, $5$, $7$, or $13$. Hence, the isogeny-torsion graph of $L_{1}$ type corresponds to an infinite set of \textit{j}-invariants. This concludes the proofs of Proposition \ref{L1 Graphs Proposition} and Theorem \ref{main thm}.

\bibliography{bibliography}{}
\bibliographystyle{plain}

\end{document}